\newtheorem*{rep@theorem}{\rep@title}
\newcommand{\newreptheorem}[2]{%
\newenvironment{rep#1}[1]{%
 \def\rep@title{#2 \ref{##1}}%
 \begin{rep@theorem}}%
 {\end{rep@theorem}}}
\newtheorem{thm}{Theorem}[section]
\newtheorem{cor}[thm]{Corollary}
\newtheorem{lem}[thm]{Lemma}
\newtheorem{prop}[thm]{Proposition}
\newtheorem{defn}[thm]{Definition}
\newtheorem{rem}[thm]{Remark}
\newtheorem*{cor*}{Corollary}
\newtheorem{theorem}{Theorem}
\title{The strong topology of $\omega$-plurisubharmonic functions}
\author{Antonio Trusiani\footnote{\href{mailto:antonio.trusiani91@gmail.com}{antonio.trusiani91@gmail.com};}}
\date{}
\begin{document}
\maketitle
\begin{abstract}
On $(X,\omega)$ compact Kähler manifold, given a model type envelope $\psi\in PSH(X,\omega)$ (i.e. a singularity type) we prove that the Monge-Ampère operator is a homeomorphism between the set of $\psi$-relative finite energy potentials and the set of $\psi$-relative finite energy measures endowed with their strong topologies given as the coarsest refinements of the weak topologies such that the relative energies become continuous. Moreover, given a totally ordered family $\mathcal{A}$ of model type envelopes with positive total mass representing different singularities types, the sets $X_{\mathcal{A}}, Y_{\mathcal{A}}$ given respectively as the union of all $\psi$-relative finite energy potentials and of all $\psi$-relative finite energy measures varying $\psi\in\overline{\mathcal{A}}$ have two natural strong topologies which extends the strong topologies on each component of the unions. We show that the Monge-Ampère operator produces a homeomorphism between $X_{\mathcal{A}}$ and $Y_{\mathcal{A}}$.\\
As an application we also prove the strong stability of a sequence of solutions of complex Monge-Ampère equations when the measures have uniformly $L^{p}$-bounded densities for $p>1$ and the prescribed singularities are totally ordered.
\end{abstract}
\vspace{5pt}
{\small \textbf{Keywords:} Complex Monge-Ampère equations, compact Kähler manifolds, quasi-psh functions.\\
\textbf{2020 Mathematics subject classification:} 32W20 (primary); 32U05, 32Q15 (secondary).}
\section{Introduction}
Let $(X,\omega)$ be a compact Kähler manifold where $\omega$ is a fixed Kähler form, and let $\mathcal{H}_{\omega}$ denote the set of all Kähler potentials, i.e. all $\varphi \in C^{\infty}$ such that $\omega+dd^{c}\varphi$ is a Kähler form, the pioneering work of Yau (\cite{Yau78}) shows that the Monge-Ampère operator
\begin{equation}
\label{eqn:Yau}
MA_{\omega}:\mathcal{H}_{\omega,norm}\longrightarrow \Big\{dV\, \mbox{volume form}\, : \, \int_{X}dV=\int_{X}\omega^{n}\Big\},
\end{equation}
$MA_{\omega}(\varphi):=(\omega+dd^{c}\varphi)^{n}$ is a bijection, where for any subset $A\subset PSH(X,\omega)$ of all $\omega$-plurisubharmonic functions we use the notation $A_{norm}:=\{u\in A\, : \, \sup_{X}u=0\}$. Note that the assumption on the total mass of the volume forms in (\ref{eqn:Yau}) is necessary since $\mathcal{H}_{\omega,norm}$ represents all Kähler forms in the cohomology class $\{\omega\}$ and the quantity $\int_{X}\omega^{n}$ is cohomological.\\
In \cite{GZ07} the authors extended the Monge-Ampère operator using the \emph{non-pluripolar product} (as denominated successively in \cite{BEGZ10}) and the bijection (\ref{eqn:Yau}) to
\begin{equation}
\label{eqn:Bij1}
MA_{\omega}:\mathcal{E}_{norm}(X,\omega)\longrightarrow \Big\{\mu\, \mbox{non-pluripolar positive measure}\, :\, \mu(X)=\int_{X}\omega^{n}\Big\}
\end{equation}
where $\mathcal{E}(X,\omega):=\{u\in PSH(X,\omega)\, : \, \int_{X}MA_{\omega}(u)=\int_{X}MA_{\omega}(0)\}$ is the set of all $\omega$-psh functions with full Monge-Ampère mass.\\
The set $PSH(X,\omega)$ is naturally endowed with the $L^{1}$-topology which we will call \emph{weak}, but the Monge-Ampère operator in (\ref{eqn:Bij1}) is not continuous even if the set of measures is endowed with the weak topology. Thus in \cite{BBEGZ16}, setting $V_{0}:=\int_{X}MA_{\omega}(0)$, two strong topologies were respectively introduced for
\begin{gather*}
\mathcal{E}^{1}(X,\omega):=\{u\in\mathcal{E}(X,\omega)\, : \, E(u)>-\infty\}\\
\mathcal{M}^{1}(X,\omega):=\Big\{V_{0}\mu \, : \,\mu\, \mbox{is a probability measure satisfying} \,  E^{*}(\mu)<+\infty\Big\}
\end{gather*}
as the coarsest refinements of the weak topologies such that respectively the Monge-Ampère energy $E(u)$ (\cite{Aub84}, \cite{BB08}, \cite{BEGZ10}) and the energy for probability measures $E^{*}$ (\cite{BBGZ09}, \cite{BBEGZ16}) becomes continuous. The map
\begin{equation}
\label{eqn:Homeo0}
MA_{\omega}:\big(\mathcal{E}^{1}_{norm}(X,\omega),strong\big)\longrightarrow \big(\mathcal{M}^{1}(X,\omega),strong\big)
\end{equation}
is then a homeomorphism. Later Darvas (\cite{Dar14}) showed that actually $\big(\mathcal{E}^{1}(X,\omega), strong\big)$ coincides with the metric closure of $\mathcal{H}_{\omega}$ endowed with the Finsler metric $|f|_{1,\varphi}:=\int_{X}|f|MA_{\omega}(\varphi)$, $\varphi\in \mathcal{H}_{\omega}$, $f\in T_{\varphi}\mathcal{H}_{\omega}\simeq C^{\infty}(X)$ and associated distance
$$
d(u,v):=E(u)+E(v)-2E\big(P_{\omega}(u,v)\big)
$$
where $P_{\omega}(u,v)$ is the rooftop envelope given basically as the largest $\omega$-psh function bounded above by $\min(u,v)$ (\cite{RWN14}). This metric topology has played an important role in the last decade to characterize the existence of special metrics (\cite{DR15}, \cite{BDL16}, \cite{CC17}, \cite{CC18a}, \cite{CC18b}).\\

It is also important and natural to solve complex Monge-Ampère equations requiring that the solutions have some prescribed behavior, for instance along a divisor.\\
We first need to recall that on $PSH(X,\omega) $ there is a natural partial order $\preccurlyeq$ given as $u\preccurlyeq v$ if $u\leq v+ O(1)$, and the total mass through the Monge-Ampère operator respects such partial order, i.e. $V_{u}:=\int_{X}MA_{\omega}(u)\leq V_{v}$ if $u\preccurlyeq v$ (\cite{BEGZ10}, \cite{WN17}). Thus in \cite{DDNL17b} the authors introduced the $\psi$\emph{-relative} analogs of the sets $\mathcal{E}(X,\omega)$, $\mathcal{E}^{1}(X,\omega)$ for $\psi\in PSH(X,\omega)$ fixed as
\begin{gather*}
\mathcal{E}(X,\omega,\psi):=\{u\in PSH(X,\omega)\, : \, u\preccurlyeq \psi\, \mbox{and}\, V_{u}=V_{v}\}\\
\mathcal{E}^{1}(X,\omega,\psi):=\{u\in\mathcal{E}(X,\omega,\psi)\, : \, E_{\psi}(u)>-\infty\}
\end{gather*}
where $E_{\psi}$ is the $\psi$-relative energy, and they proved that
\begin{equation}
\label{eqn:Bij2}
MA_{\omega} : \mathcal{E}_{norm}(X,\omega,\psi)\longrightarrow \Big\{\mu \, \mbox{non-pluripolar positive measure}\, : \, \mu(X)=V_{\psi}\Big\}
\end{equation}
is a bijection if and only if $\psi$, up to a bounded function, is a \emph{model type envelope}, i.e. $\psi=(\lim_{C\to +\infty}P(\psi+C,0)\big)^{*}$, satisfying $V_{\psi}>0$ (the star is for the upper semicontinuous regularization). There are plenty of these functions, for instance to any $\omega$-psh function $\psi$ with analytic singularities is associated a unique model type envelope. We denote by $\mathcal{M}$ the set of all model type envelopes and with $\mathcal{M}^{+}$ those elements $\psi$ such that $V_{\psi}>0$.\\
Letting $\psi\in \mathcal{M}^{+}$, in \cite{Tru19}, we proved that $\mathcal{E}^{1}(X,\omega,\psi)$ can be endowed with a natural metric topology given by the complete distance $d(u,v):=E_{\psi}(u)+E_{\psi}(v)-2E_{\psi}\big(P_{\omega}(u,v)\big)$. \\
Analogously to $E^{*}$, we introduce in section \S \ref{sec:Varia} a natural $\psi$-relative energy for probability measures $E^{*}_{\psi}$, thus the set
$$
\mathcal{M}^{1}(X,\omega,\psi):=\{V_{\psi}\mu\,:\, \mu\, \mbox{is a probability measure satisfying} \, E^{*}_{\psi}(\mu)<+\infty\}
$$
can be endowed with its strong topology given as the coarsest refinement of the weak topology such that $E^{*}_{\psi}$ becomes continuous.
\begin{theorem}
\label{thmA}
Let $\psi\in\mathcal{M}^{+}$. Then
\begin{equation}
\label{eqn:HomeoP}
MA_{\omega}:\big(\mathcal{E}^{1}_{norm}(X,\omega,\psi),d\big)\to \big(\mathcal{M}^{1}(X,\omega,\psi),strong\big)
\end{equation}
is a homeomorphism.
\end{theorem}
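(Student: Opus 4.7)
The plan is to follow the blueprint established in \cite{BBEGZ16} for the absolute case, now adapted to the $\psi$-relative setting using the set-theoretic bijection (\ref{eqn:Bij2}) of \cite{DDNL17b} and the metric structure on $\mathcal{E}^1(X,\omega,\psi)$ from \cite{Tru19}. Starting from (\ref{eqn:Bij2}), the proof splits into two tasks: (i) show that $MA_\omega$ restricts to a bijection $\mathcal{E}^1_{norm}(X,\omega,\psi)\leftrightarrow\mathcal{M}^1(X,\omega,\psi)$; (ii) prove bicontinuity for the two strong topologies.

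The analytic core is a $\psi$-relative Legendre duality between $E_\psi$ and $E^*_\psi$. For a non-pluripolar measure $\mu$ with $\mu(X)=V_\psi$ I would introduce
$$
F_\mu(v):=E_\psi(v)-\int_X(v-\psi)\,d\mu,\qquad v\in\mathcal{E}^1(X,\omega,\psi),
$$
and argue, adapting \cite{BBGZ09} via the machinery of $E^*_\psi$ developed in Section \ref{sec:Varia}, that $u\in\mathcal{E}^1_{norm}(X,\omega,\psi)$ maximises $F_\mu$ among normalised potentials if and only if $MA_\omega(u)=\mu$, and that
$$
\sup_{v\in\mathcal{E}^1_{norm}(X,\omega,\psi)}F_\mu(v)\;=\;-V_\psi\,E^*_\psi(\mu/V_\psi).
$$
Finiteness of this supremum — equivalent to $\mu\in\mathcal{M}^1(X,\omega,\psi)$ — yields the restricted bijection, while specialising $v=u$ gives the pivotal identity
$$
V_\psi\,E^*_\psi\!\big(MA_\omega(u)/V_\psi\big)\;=\;-E_\psi(u)+\int_X(u-\psi)\,MA_\omega(u),
$$
which is the bridge linking strong convergence on the potential side to strong convergence on the measure side.

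For continuity of $MA_\omega$, take $u_j\to u$ in $d$. By \cite{Tru19} one has $E_\psi(u_j)\to E_\psi(u)$ and $L^1$-convergence, and since the non-pluripolar product is weakly continuous on sets of bounded $\psi$-relative energy (by an increasing/decreasing approximation argument extending \cite{BEGZ10}, \cite{DDNL17b}) we get $MA_\omega(u_j)\to MA_\omega(u)$ weakly. Controlling $u_j-\psi$ in an Orlicz-type norm via the $d$-distance lets me pass to the limit in $\int(u_j-\psi)\,MA_\omega(u_j)$, and the identity above then upgrades weak convergence of the measures to strong convergence.

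The harder direction, and the main obstacle, is continuity of $MA_\omega^{-1}$. Given $\mu_j\to\mu$ strongly, set $u_j:=MA_\omega^{-1}(\mu_j)$ and $u:=MA_\omega^{-1}(\mu)$. Since $u_j\le\psi$ after normalisation, the variational identity combined with the uniform bound on $E^*_\psi(\mu_j/V_\psi)$ yields a uniform bound on $-E_\psi(u_j)$, hence $d$-precompactness of $\{u_j\}$ in $\mathcal{E}^1_{norm}(X,\omega,\psi)$; any $d$-subsequential limit $v$ satisfies $MA_\omega(v)=\mu$ by forward continuity, forcing $v=u$. Promoting this to full $d$-convergence amounts to showing $E_\psi(u_j)\to E_\psi(u)$, and by the identity this reduces to the joint limit $\int(u_j-\psi)\,d\mu_j\to\int(u-\psi)\,d\mu$ where \emph{both} integrand and measure vary. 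This is the hardest step: I would establish a uniform integrability estimate for $(u_j-\psi)$ against the family $\{\mu_j\}$, obtained from the boundedness of $E^*_\psi(\mu_j/V_\psi)$ through a convex-duality/truncation argument in the spirit of \cite{BBEGZ16}, and then apply a Vitali-type theorem along the weakly convergent sequence $\mu_j\to\mu$ to conclude.
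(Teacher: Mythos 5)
Your overall skeleton (Legendre-type identity $E^*_\psi\big(MA_\omega(u)/V_\psi\big)=J^\psi_u(\psi)$ up to your sign convention, maximizer characterization, compactness plus uniqueness for the inverse) is the same as the paper's, but the hard direction as you have written it contains genuine gaps. A uniform bound on $-E_\psi(u_j)$ gives only \emph{weak} ($L^1$) compactness — the sets $\mathcal{E}^1_C(X,\omega,\psi)$ are weakly compact, not $d$-compact — so there are no ``$d$-subsequential limits'' available, and you cannot identify a weak subsequential limit $v$ with $u$ ``by forward continuity'': the Monge--Ampère operator is not continuous along weakly convergent sequences, even with uniformly bounded energy (this also undercuts your forward-direction claim that the non-pluripolar product is weakly continuous on bounded-energy sets; the paper instead derives weak convergence of $MA_\omega(u_j)$ from \emph{strong} convergence via convergence in capacity, Proposition \ref{prop:CapacityPsi} and Lemma \ref{lem:KeyConv}, using a monotone sandwich built with the $d$-distance). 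Identifying the weak limit is precisely the crux: in the paper this is done by proving $I_\psi(u_k,v_j)\to 0$ against regularized potentials $v_j=P_\omega[\psi](\varphi_j)$, showing that the weak limit maximizes $F_{MA_\omega(u)/V_\psi,\psi}$ (so that Proposition \ref{prop:OldPropC}/\ref{prop:IFF} applies), and only afterwards recovering $E_\psi(u_k)\to E_\psi(u)$ and concluding with Proposition \ref{prop:PropAOne}; none of this is supplied by your sketch.

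The step you yourself flag as hardest — passing to the limit in $\int_X(u_j-\psi)\,d\mu_j$ with both entries varying, via uniform integrability plus a Vitali-type argument ``in the spirit of \cite{BBEGZ16}'' — is exactly where the absolute-case blueprint breaks in the relative setting, as the paper remarks immediately after its proof of Theorem \ref{thmA}: for fixed $u$, the pairing $MA_\omega(v)\mapsto\int_X(u-\psi)MA_\omega(v)$ is not a priori weakly continuous on $\mathcal{M}^1_C(X,\omega,\psi)$, because $u-\psi$ is a difference of quasi-psh functions and cannot be approximated by continuous functions inside the class as in Proposition $1.7$ of \cite{BBEGZ16}; uniform integrability alone does not allow the joint limit when the integrands converge only in $L^1$. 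The paper's substitute is the two-parameter approximation $f^s_j=v^s_j-\psi^s$ by smooth functions converging in capacity with uniform $L^\infty$ bounds, combined with plurifine closedness of sets of the form $\{\psi^s-\psi\ge\delta\}$, the quantitative estimate of Proposition \ref{prop:LastLast}, the equicontinuity Lemma \ref{lem:Equicontinuity} and the upper semicontinuity of the energy (Proposition \ref{prop:USC}); without an argument of this type your plan does not close, either in the inverse direction or in the analogous limit $\int_X(u_j-\psi)MA_\omega(u_j)$ of the forward direction. Finally, a smaller point: finiteness of $\sup F_\mu$ does not by itself produce a maximizer (one lacks upper semicontinuity of $L_\mu$ for general $\mu\in\mathcal{M}^1$); surjectivity in the paper goes through capacity-dominated measures $\mathcal{C}_{A,\psi}$, Corollary \ref{cor:Useful}, Lemma \ref{lem:CapacityOk} and the truncation scheme in Proposition \ref{prop:OldPropC}, which your sketch should at least invoke.
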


Then it is natural to wonder if one can extend the bijections (\ref{eqn:Bij1}), (\ref{eqn:Bij2}) to bigger subsets of $PSH(X,\omega)$. \\
Given $\psi_{1},\psi_{2}\in\mathcal{M}^{+}$ such that $\psi_{1}\neq \psi_{2}$ the sets $\mathcal{E}(X,\omega,\psi_{1})$, $\mathcal{E}(X,\omega,\psi_{2})$ are disjoint (Theorem $1.3$ \cite{DDNL17b} quoted below as Theorem \ref{thm:ClassE}) but it may happen that $V_{\psi_{1}}=V_{\psi_{2}}$. So in these situations, at least one of $\mathcal{E}^{1}_{norm}(X,\omega,\psi_{1}), \mathcal{E}^{1}_{norm}(X,\omega,\psi_{2})$ must be ruled out to extend $(\ref{eqn:Bij2})$. However, given a totally ordered family $\mathcal{A}\subset \mathcal{M}^{+}$ of model type envelopes, the map $\mathcal{A}\ni\psi\to V_{\psi}$ is injective (again by Theorem $1.3$ \cite{DDNL17b}), i.e.
$$
MA_{\omega}: \bigsqcup_{\psi\in\mathcal{A}} \mathcal{E}_{norm}(X,\omega,\psi)\longrightarrow \Big\{\mu\, \mbox{non-pluripolar positive measure}\,: \, \mu(X)=V_{\psi}\, \mbox{for}\, \psi\in\mathcal{A}\Big\}  
$$
is a bijection. \\
In \cite{Tru19} we introduced a complete distance $d_{\mathcal{A}}$ on
$$
X_{\mathcal{A}}:=\bigsqcup_{\psi\in\overline{\mathcal{A}}}\mathcal{E}^{1}(X,\omega,\psi)
$$
where $\overline{\mathcal{A}}\subset \mathcal{M}$ is the weak closure of $\mathcal{A}$ and where we identify $\mathcal{E}^{1}(X,\omega,\psi_{\min})$ with a point $P_{\psi_{\min}}$ if $\psi_{\min}\in\mathcal{M}\setminus \mathcal{M}^{+}$ (since in this case $E_{\psi}\equiv 0$, see Remark \ref{rem:0Mass}). Here $\psi_{\min}$ is given as the smallest element in $\overline{\mathcal{A}}$, observing that the Monge-Ampère operator $MA_{\omega}:\overline{\mathcal{A}}\to MA_{\omega}(\overline{\mathcal{A}})$ is a homeomorphism when the range is endowed with the weak topology (Lemma \ref{lem:HomeoV}). We call strong topology on $X_{\mathcal{A}}$ the metric topology given by $d_{\mathcal{A}}$ since $d_{\mathcal{A}|\mathcal{E}^{1}(X,\omega,\psi)\times \mathcal{E}^{1}(X,\omega,\psi)}=d$. The precise definition of $d_{\mathcal{A}}$ is quite technical (in section \S \ref{sec:Pre} we will recall many of its properties) but the strong topology is natural since it is the coarsest refinement of the weak topology such that $E_{\cdot}(\cdot)$ becomes continuous as Theorem \ref{thm:OldPropA} shows. In particular the strong topology is independent on the set $\mathcal{A}$ chosen.\\
Also the set
$$
Y_{\mathcal{A}}:=\bigsqcup_{\psi\in \overline{\mathcal{A}}}\mathcal{M}^{1}(X,\omega,\psi)
$$
has a natural strong topology given as the coarsest refinement of the weak topology such that $E_{\cdot}^{*}(\cdot)$ becomes continuous.
\begin{theorem}
\label{thmB}
The Monge-Ampère map
$$
MA_{\omega}:\big(X_{\mathcal{A},norm},d_{\mathcal{A}}\big)\to (Y_{\mathcal{A}},strong)
$$
is a homeomorphism.
\end{theorem}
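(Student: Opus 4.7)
The plan is to bootstrap the fixed-singularity case (Theorem \ref{thmA}) to the varying singularity setting, using a Legendre-type duality between $E_{\psi}$ and $E_{\psi}^{*}$ as the main bridge. Bijectivity of $MA_{\omega}$ is already in place: on each fibre $\mathcal{E}^{1}_{norm}(X,\omega,\psi)$ it agrees with the bijection (\ref{eqn:Bij2}), and since $\overline{\mathcal{A}}\ni\psi\mapsto V_{\psi}$ is strictly monotone on a totally ordered family, the fibres are disjoint on both sides. So the real work is continuity in both directions.

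For the forward implication, suppose $u_{k}\to u$ in $d_{\mathcal{A}}$, with $u_{k}\in\mathcal{E}^{1}_{norm}(X,\omega,\psi_{k})$ and $u\in\mathcal{E}^{1}_{norm}(X,\omega,\psi)$. By the characterization of the strong topology on $X_{\mathcal{A}}$ announced in the introduction (Theorem \ref{thm:OldPropA}), this amounts to $u_{k}\to u$ and $\psi_{k}\to\psi$ weakly together with $E_{\psi_{k}}(u_{k})\to E_{\psi}(u)$. I would first show $MA_{\omega}(u_{k})\to MA_{\omega}(u)$ weakly, by extracting a weak limit (using the mass normalization $V_{\psi_{k}}\to V_{\psi}$ supplied by Lemma \ref{lem:HomeoV}) and identifying it through the uniform relative-energy control that prevents concentration on pluripolar sets, in the spirit of the weak continuity statements of \cite{BBEGZ16} and \cite{DDNL17b}. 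Next, the convergence $E^{*}_{\psi_{k}}(MA_{\omega}(u_{k})/V_{\psi_{k}})\to E^{*}_{\psi}(MA_{\omega}(u)/V_{\psi})$ is to be deduced from the Legendre identity expressing $V_{\psi}E^{*}_{\psi}(\mu/V_{\psi})$ as a supremum over $v\in\mathcal{E}^{1}(X,\omega,\psi)$ of $E_{\psi}(v)-\int(v-\psi)\,d\mu$, combined with the integration-by-parts expansion of $E_{\psi}(u)$ in terms of mixed non-pluripolar products $MA_{\omega}(u^{(j)},\psi^{(n-j)})$; the assumed energy convergence together with weak convergence of these mixed products closes the diagram.

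For the reverse implication, take $\mu_{k}\to\mu$ strongly in $Y_{\mathcal{A}}$ with $\mu_{k}\in\mathcal{M}^{1}(X,\omega,\psi_{k})$, and set $u_{k}:=MA_{\omega}^{-1}(\mu_{k})\in\mathcal{E}^{1}_{norm}(X,\omega,\psi_{k})$. Strong convergence forces $\psi_{k}\to\psi$ weakly (by Lemma \ref{lem:HomeoV}) and $E^{*}_{\psi_{k}}(\mu_{k}/V_{\psi_{k}})\to E^{*}_{\psi}(\mu/V_{\psi})$; via the Legendre identity this yields a uniform lower bound on $E_{\psi_{k}}(u_{k})$. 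Together with $\sup_{X}u_{k}=0$ and $u_{k}\preccurlyeq\psi_{k}$ this makes $(u_{k})$ weakly precompact in $PSH(X,\omega)$. Any weak subsequential limit $u_{\infty}$ satisfies $u_{\infty}\preccurlyeq\psi$ and, by the weak continuity of $MA_{\omega}$ established in the forward step, $MA_{\omega}(u_{\infty})=\mu$, so the fibrewise bijection (\ref{eqn:Bij2}) forces $u_{\infty}=u$; hence the full sequence converges weakly. The convergence $E_{\psi_{k}}(u_{k})\to E_{\psi}(u)$ then follows by running the Legendre identity in reverse, using the assumed $E^{*}$-convergence together with the now-established weak convergence of potentials and singularities.

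The main obstacle I anticipate is the stability of mixed non-pluripolar products as the singularity $\psi_{k}$ varies: one needs $\int(u_{k}-\psi_{k})\,MA_{\omega}(u_{k}^{(j)},\psi_{k}^{(n-j)})\to \int(u-\psi)\,MA_{\omega}(u^{(j)},\psi^{(n-j)})$ for each $0\leq j\leq n$, which is delicate because the effective support structure of a non-pluripolar product shifts with the singularity type. Once this multilinear convergence is set up carefully, everything else reduces to the fibrewise Theorem \ref{thmA} plus a routine diagonal argument to upgrade subsequential statements to full convergence.
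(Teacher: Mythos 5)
Your overall architecture (fibrewise bijectivity plus injectivity of $\psi\mapsto V_{\psi}$, reduction of strong convergence in $X_{\mathcal{A}}$ to weak convergence plus energy convergence via Theorem \ref{thm:OldPropA}, and the Legendre identity $E^{*}_{\psi}\big(MA_{\omega}(u)/V_{\psi}\big)=J^{\psi}_{u}(\psi)$ as the bridge) does match the paper's strategy in outline, but there is a genuine gap at the decisive step of the reverse implication. You identify the weak subsequential limit $u_{\infty}$ of the potentials by appealing to ``the weak continuity of $MA_{\omega}$ established in the forward step''. The forward step only gives continuity of $MA_{\omega}$ along $d_{\mathcal{A}}$-convergent sequences (through Theorem \ref{thm:OldPropB}, i.e. convergence in capacity); at this stage of the reverse direction you only have weak convergence of a subsequence with a uniform energy bound, and the Monge-Amp\`ere operator is \emph{not} continuous for the weak topology even on energy-bounded sets --- this failure is precisely the reason the strong topologies are introduced, so the argument as written is circular. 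The paper instead identifies $u_{\infty}$ variationally: it first proves $I_{\psi_{k}}(u_{k},v_{k,j})\to 0$ for the approximants $v_{k,j}=P_{\omega}[\psi_{k}](\varphi_{j})$, then combines Proposition \ref{prop:USC}, Lemma \ref{lem:Equicontinuity} and Proposition \ref{prop:LastLast} to show that $u_{\infty}$ maximizes $F_{MA_{\omega}(u)/V_{\psi},\psi}$, and concludes $u_{\infty}=u$ from Proposition \ref{prop:OldPropC} (via Proposition \ref{prop:IFF}). Some such maximizer or domination-principle argument is unavoidable here, and it is absent from your plan.

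Relatedly, the issue you flag as ``the main obstacle'' --- convergence of $\int_{X}(u_{k}-\psi_{k})\,MA_{\omega}(u_{k}^{j},\psi_{k}^{n-j})$ as the singularity type varies --- is indeed the heart of both directions (it is what makes the $E^{*}$-convergence in the forward step and the recovery of $E_{\psi_{k}}(u_{k})\to E_{\psi}(u)$ in the reverse step nontrivial), but the proposal offers no mechanism for it, and the obvious tool, Lemma \ref{lem:KeyConv}, is unavailable because $|u_{k}-\psi_{k}|$ is not uniformly bounded. The paper resolves it with a specific package: the equicontinuity of the actions of $MA_{\omega}\big(P_{\omega}[\psi](\varphi)\big)$ for $\varphi\in\mathcal{H}_{\omega}$ (Lemma \ref{lem:Equicontinuity}, exploiting that these measures are dominated by the volume form $MA_{\omega}(\varphi)$), the quantitative bound $\big|\int_{X}(u-v)\big(MA_{\omega}(\varphi_{1})-MA_{\omega}(\varphi_{2})\big)\big|\leq \tilde{C}\,I_{\psi}(\varphi_{1},\varphi_{2})^{1/2}$ of Proposition \ref{prop:LastLast} together with Proposition \ref{prop:ForL1}, and an approximation of $v_{j}-\psi$ by bounded quasi-continuous functions converging in capacity, with plurifine-closedness of the exceptional sets to pass to the limit against $MA_{\omega}(u_{k})$. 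Without supplying these (or equivalent) uniform estimates, the ``routine diagonal argument'' does not close either direction.
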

Obviously in Theorem \ref{thmB} we define $MA_{\omega}(P_{\psi_{\min}}):=0$ if $V_{\psi_{\min}}=0$.\\
Note that by Hartogs' Lemma and Theorem \ref{thm:OldPropA} the metric subspace $X_{\mathcal{A},norm}$ is complete and it represents the set of all closed and positive $(1,1)$-currents $T=\omega+dd^{c}u$ such that $u\in X_{\mathcal{A}}$, where $P_{\psi_{\min}}$ encases all currents whose potentials $u$ are more singular than $\psi_{\min}$ if $V_{\psi_{\min}}=0$.\\

Finally, as an application of Theorem \ref{thmB} we study an example of the stability of solutions of complex Monge-Ampère equations. Other important situations will be dealt in a future work.
\begin{theorem}
\label{thmC}
Let $\mathcal{A}:=\{\psi_{k}\}_{k\in\mathbbm{N}}\subset \mathcal{M}^{+}$ be totally ordered, and let $\{f_{k}\}_{k\in\mathbbm{N}}\subset L^{1}\setminus\{0\}$ a sequence of non-negative functions such that $f_{k}\to f\in L^{1}\setminus\{0\}$ and such that $\int_{X}f_{k}\omega^{n}=V_{\psi_{k}}$ for any $k\in\mathbbm{N}$. Assume also that there exists $p>1$ such that $||f_{k}||_{L^{p}},||f||_{L^{p}}$ are uniformly bounded. Then $\psi_{k}\to \psi\in\mathcal{M}^{+}$ weakly, the sequence $\{u_{k}\}_{k\in\mathbbm{N}}$ of solutions of
\begin{equation}
\label{eqn:LP}
\begin{cases}
MA_{\omega}(u_{k})=f_{k}\omega^{n}\\
u_{k}\in\mathcal{E}^{1}_{norm}(X,\omega,\psi_{k})
\end{cases}
\end{equation}
converges strongly to $u\in X_{\mathcal{A}}$ (i.e. $d_{\mathcal{A}}(u_{k},u)\to 0$), which is the unique solution of
\begin{equation*}
\begin{cases}
MA_{\omega}(u)=f\omega^{n}\\
u\in\mathcal{E}^{1}_{norm}(X,\omega,\psi).
\end{cases}
\end{equation*}
In particular $u_{k}\to u$ in capacity.
\end{theorem}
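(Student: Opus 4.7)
My plan is to reduce Theorem \ref{thmC} to Theorem \ref{thmB} by showing that the data measures $f_k\omega^n$ converge strongly in $Y_\mathcal{A}$ to $f\omega^n$; the Monge-Amp\`ere homeomorphism then yields $d_\mathcal{A}(u_k,u)\to 0$ automatically, and convergence in capacity follows because the $d_\mathcal{A}$-topology dominates the capacity topology up to logarithmic corrections.

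\textbf{Setup and existence.} Since $\mathcal{A}$ is totally ordered and $\psi\mapsto V_\psi$ is injective on $\overline{\mathcal{A}}$ (Theorem $1.3$ of \cite{DDNL17b}), the mass convergence $V_{\psi_k}=\int_X f_k\,\omega^n\to\int_X f\,\omega^n>0$ combined with Lemma \ref{lem:HomeoV} forces $\psi_k\to\psi$ weakly for a unique $\psi\in\overline{\mathcal{A}}$ with $V_\psi=\int_X f\,\omega^n>0$; hence $\psi\in\mathcal{M}^+$. Existence and uniqueness of each $u_k$ and of $u$ then follow from Theorem \ref{thmA} once we verify $f_k\omega^n/V_{\psi_k}\in\mathcal{M}^1(X,\omega,\psi_k)$ (and analogously for the limit), which is where the $L^p$-assumption first enters: H\"older with conjugate exponent $q=p/(p-1)$ and Skoda's uniform $L^q$-integrability of normalized $\omega$-psh functions give
$$
\int_X |v|\,f_k\,d\omega^n\le C\|f_k\|_{L^p}
$$
uniformly in $k$ and in $v\in PSH(X,\omega)_{norm}$, which bounds $E^*_{\psi_k}(f_k\omega^n/V_{\psi_k})$ uniformly in $k$.

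\textbf{Energy convergence (the core step, and the main obstacle).} Weak convergence $f_k\omega^n\to f\omega^n$ and mass convergence are immediate, so by the definition of the strong topology on $Y_\mathcal{A}$ it remains to prove
$$
E^*_{\psi_k}(f_k\omega^n/V_{\psi_k})\longrightarrow E^*_\psi(f\omega^n/V_\psi).
$$
Joint lower semicontinuity of $E^*_\cdot(\cdot)$ in the varying-singularity setting, developed in section \ref{sec:Varia}, provides the $\liminf$ inequality. For the matching $\limsup$ I plan to exploit the Legendre/variational characterization of $E^*_{\psi_k}$, test against the limit potential $u$ (pulled into $\mathcal{E}^1(X,\omega,\psi_k)$ via rooftop envelopes with $\psi_k$), and pass $\int(\psi_k-u)\,f_k\,d\omega^n\to\int(\psi-u)\,f\,d\omega^n$ by a Vitali-type argument: equi-integrability is supplied by the $L^p$-bound coupled with Skoda's $L^q$-control, while the totally ordered structure of $\mathcal{A}$ controls the varying reference envelopes $\psi_k$. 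The hardest point is exactly this $\limsup$ bound, because $E^*_\psi$ is not continuous in $\psi$ for a generic measure; the strict inequality $p>1$ is precisely what prevents mass concentration and lets the density bound compensate for the moving singularity, with the $d_\mathcal{A}$-machinery of \cite{Tru19} and the weak continuity of $MA_\omega$ on $\overline{\mathcal{A}}$ gluing the estimates into a genuine convergence.
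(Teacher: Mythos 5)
Your overall architecture is the same as the paper's: prove that $f_{k}\omega^{n}\to f\omega^{n}$ strongly in $Y_{\mathcal{A}}$, invoke the homeomorphism of Theorem \ref{thmB} to get $d_{\mathcal{A}}(u_{k},u)\to 0$, and deduce convergence in capacity (the correct reference for the last step is Theorem \ref{thm:OldPropB}; there are no ``logarithmic corrections'' involved). The identification of $\psi$ via Lemma \ref{lem:HomeoV} and the Skoda/H\"older verification that $f_{k}\omega^{n}\in\mathcal{M}^{1}(X,\omega,\psi_{k})$ with a uniform bound are fine. The problem is the core step, exactly where you admit the difficulty lies: the inequality $\limsup_{k}E^{*}_{\psi_{k}}(f_{k}\omega^{n}/V_{\psi_{k}})\leq E^{*}_{\psi}(f\omega^{n}/V_{\psi})$. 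Since $E^{*}_{\psi_{k}}$ is a supremum, testing the variational characterization against any fixed potential (your plan: the limit potential $u$ pulled down by $P_{\omega}[\psi_{k}]$, with a Vitali argument for $\int(\psi_{k}-u)f_{k}\omega^{n}$) only produces \emph{lower} bounds on $E^{*}_{\psi_{k}}$, i.e.\ it reproves the $\liminf$ half; it cannot bound the supremum from above. Your proposal contains no mechanism for the upper bound, and the appeal to ``the $d_{\mathcal{A}}$-machinery gluing the estimates'' does not supply one. Likewise, the ``joint lower semicontinuity of $E^{*}_{\cdot}(\cdot)$ developed in section \S\ref{sec:Varia}'' that you cite for the $\liminf$ does not exist as a statement in the paper; the $\liminf$ must be (and in the paper is) proved by hand, testing against $P_{\omega}[\psi_{k}](\varphi)$ for $\varphi\in\mathcal{H}_{\omega}$ and passing to the limit with Fatou, Lemma \ref{lem:KeyConv} and Lemma \ref{lem:Referee}.

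What actually closes the gap in the paper is to work with the maximizers $u_{k}$ themselves rather than with test potentials: one extracts a weak accumulation point $u$ of $\{u_{k}\}$, shows $MA_{\omega}(u)\geq f\omega^{n}$ by Lemma $2.8$ in \cite{DDNL18b} and then equality by the monotonicity of non-pluripolar masses \cite{WN17} (this also pins down $\psi$ and the uniqueness of the accumulation point); the $L^{p}$-bound enters through H\"older together with $u_{k}\to u$, $\psi_{k}\to\psi$ in every $L^{q}$ to give $\int_{X}(\psi_{k}-u_{k})f_{k}\omega^{n}\to\int_{X}(\psi-u)f\omega^{n}<\infty$, which in particular yields $u\in\mathcal{E}^{1}(X,\omega,\psi)$ by Proposition \ref{prop:PropertiesE}; and the decisive upper bound comes from the weak upper semicontinuity of the relative energy on $X_{\mathcal{A}}$ (Proposition \ref{prop:USC}), giving $\limsup_{k}E_{\psi_{k}}(u_{k})\leq E_{\psi}(u)$ and hence, since $E^{*}_{\psi_{k}}(MA_{\omega}(u_{k})/V_{\psi_{k}})=E_{\psi_{k}}(u_{k})+\int_{X}(\psi_{k}-u_{k})f_{k}\omega^{n}$, the required $\limsup$ inequality. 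None of these ingredients (the accumulation-point argument, the mass comparison, Proposition \ref{prop:USC}) appear in your proposal, so as written the proof does not go through; if you incorporate them, your reduction to Theorem \ref{thmB} becomes exactly the paper's argument.
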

The existence of the solutions of (\ref{eqn:LP}) follows by Theorem $A$ in \cite{DDNL18b}, while the fact that the strong convergence implies the convergence in capacity is our Theorem \ref{thm:OldPropB}. Note also that the convergence in capacity of Theorem \ref{thmC} was already obtained in \cite{DDNL19} (see Remark \ref{rem:Ele}).
\subsection{Structure of the paper}
Section \S \ref{sec:Pre} is dedicated to introduce some preliminaries, and in particular all necessary results presented in \cite{Tru19}. In section \S \ref{sec:Tools} we extend some known uniform estimates for $\mathcal{E}^{1}(X,\omega)$ to the relative setting, and we prove the key upper-semicontinuity of the relative energy functional $E_{\cdot}(\cdot)$ in $X_{\mathcal{A}}$. Section \S \ref{sec:ActionMeasure} regards the properties of the action of measures on $PSH(X,\omega)$ and in particular their continuity. Then Section \S \ref{sec:Varia} is dedicated to prove Theorem \ref{thmA}. We use a variational approach to show the bijection, then we need some further important properties of the strong topology on $\mathcal{E}^{1}(X,\omega,\psi)$ to conclude the proof. Section \S \ref{sec:Strong} is the heart of the article where we extends the results proved in the previous section to $X_{\mathcal{A}}$ and we present our main Theorem \ref{thmB}. Finally in the last Section \S \ref{sec:CMAE} we show Theorem \ref{thmC}.
\subsection{Future developments}
As said above, in a future work we will present some strong stability results of more general solutions of complex Monge-Ampère equations with prescribed singularities than Theorem \ref{thmC}, starting the study of a kind of \emph{continuity method} when also the singularities will vary. As an application we will study the existence of (log) Kähler-Einstein metrics with prescribed singularities with a particular focus on the relationships among them varying the singularities.
\subsection{Acknowledgments}
I want to thank David Witt Nyström and Stefano Trapani for their suggestions and comments. I am also grateful to Hoang-Chinh Lu to have pointed me out a minor mistake in the previous version.
\section{Preliminaries}
\label{sec:Pre}
We recall that given $(X,\omega)$ a Kähler complex compact manifold, the set $PSH(X,\omega)$ is the set of all $\omega$-plurisubharmonic functions ($\omega$-psh), i.e. all $u\in L^{1}$ given locally as sum of a smooth function and of a plurisubharmonic function such that $\omega+dd^{c}u\geq 0$ as $(1,1)$-current. Here $d^{c}:=\frac{i}{2\pi}(\bar{\partial}-\partial)$ so that $dd^{c}=\frac{i}{\pi}\partial \bar{\partial}$.
For any couple of $\omega$-psh functions $u,v$ the function
$$
P_{\omega}[u](v):=\Big(\lim_{C\to \infty}P_{\omega}(u+C,v)\Big)^{*}=\Big(\sup\{w\in PSH(X,\omega)\, : \, w\preccurlyeq u, w\leq v\}\Big)^{*}
$$
is $\omega$-psh where the star is for the upper semicontinuous regularization and $P_{\omega}(u,v):=\big(\sup\{w\in PSH(X,\omega)\, :\, w\leq \min(u,v)\}\big)^{*}$. Then the set of all model type envelopes is defined as
$$
\mathcal{M}:=\{\psi\in PSH(X,\omega)\, : \, \psi=P_{\omega}[\psi](0)\}.
$$
We also recall that $\mathcal{M}^{+}$ denotes the elements $\psi\in\mathcal{M}$ such that $V_{\psi}>0$ where, as said in the Introduction, $V_{\psi}:=\int_{X}MA_{\omega}(\psi)$.\\
The class of $\psi$-relative full mass functions $\mathcal{E}(X,\omega,\psi)$ complies the following characterization.
\begin{thm}[Theorem $1.3$, \cite{DDNL17b}]
\label{thm:ClassE}
Suppose $v\in PSH(X,\omega)$ such that $V_{v}>0$ and $u\in PSH(X,\omega)$ more singular than $v$. The followings are equivalent:
\begin{itemize}
\item[(i)] $u\in\mathcal{E}(X,\omega,v)$;
\item[(ii)] $P_{\omega}[u](v)=v$;
\item[(iii)] $P_{\omega}[u](0)=P_{\omega}[v](0)$.
\end{itemize}
\end{thm}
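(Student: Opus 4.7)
My plan is to prove the cyclic chain (i)$\Rightarrow$(ii)$\Rightarrow$(iii)$\Rightarrow$(i) by running everything through the envelope $\phi := P_{\omega}[u](v) = \big(\lim_{C\to\infty} P_{\omega}(u+C,v)\big)^{*}$. Two preparatory facts drive the whole argument: \emph{(A)} the mass invariance $V_{P_{\omega}[u](w)} = V_{u}$ for any $w \in PSH(X,\omega)$ with $u \preccurlyeq w$; and \emph{(B)} a relative domination principle, saying that if $w_{1}, w_{2} \in \mathcal{E}(X,\omega,\psi)$ satisfy $w_{1} \leq w_{2}$ and $V_{w_{1}} = V_{w_{2}}$, then $w_{1} = w_{2}$. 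Fact (A) would rest on the balayage-type inequality $MA_{\omega}(P_{\omega}[u](w)) \leq \mathbbm{1}_{\{P_{\omega}[u](w)=w\}}MA_{\omega}(w)$ combined with a mass-conservation argument along the increasing approximation $P_{\omega}(u+C,w) \nearrow P_{\omega}[u](w)$; fact (B) follows from the comparison principle once both $w_{i}$ are in the same relative full-mass class.

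For (i)$\Rightarrow$(ii), assume $V_{u} = V_{v}$. Fact (A) with $w = v$ gives $V_{\phi} = V_{u} = V_{v}$, while $\phi \leq v$ by construction. After checking that $\phi \in \mathcal{E}(X,\omega,v)$ (which follows from $V_{\phi} = V_{v}$ together with $\phi \preccurlyeq v$), fact (B) in that class forces $\phi = v$. For (ii)$\Rightarrow$(iii), note $\phi \preccurlyeq u$ always holds by the defining supremum, so (ii) yields $v = \phi \preccurlyeq u$; together with the hypothesis $u \preccurlyeq v$, this means $u$ and $v$ share a singularity type. The construction $w \mapsto P_{\omega}[w](0)$ depends only on this singularity type: if $|u-u'| \leq A$ then sandwiching $P_{\omega}(u+C-A,0) \leq P_{\omega}(u'+C,0) \leq P_{\omega}(u+C+A,0)$ and letting $C\to\infty$ gives $P_{\omega}[u](0) = P_{\omega}[u'](0)$. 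Hence $P_{\omega}[u](0) = P_{\omega}[v](0)$. For (iii)$\Rightarrow$(i), set $\tilde\psi := P_{\omega}[u](0) = P_{\omega}[v](0)$; applying fact (A) with $w = 0$ separately to $u$ and to $v$ gives $V_{u} = V_{\tilde\psi} = V_{v}$, which together with the standing hypothesis $u \preccurlyeq v$ yields $u \in \mathcal{E}(X,\omega,v)$.

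The main obstacle is fact (A). Establishing the mass invariance $V_{P_{\omega}[u](w)} = V_{u}$ needs two separate efforts: first, localizing the support of $MA_{\omega}(P_{\omega}[u](w))$ onto the contact set $\{P_{\omega}[u](w)=w\}$ via a plurifine-locality/balayage argument so that it is controlled by $MA_{\omega}(w)$; and second, propagating mass upward along the increasing sequence $P_{\omega}(u+C,w)$ by a Bedford-Taylor-type convergence together with careful bookkeeping of the non-pluripolar part as $C\to\infty$. This invariance, together with the domination principle (B), bundles essentially all the hard Monge-Ampère analysis; the rest of the theorem is algebraic manipulation of envelopes.
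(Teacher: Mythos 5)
Your chain breaks at (ii)$\Rightarrow$(iii), because the assertion that $P_{\omega}[u](v)\preccurlyeq u$ ``always holds by the defining supremum'' is false. Each competitor $w$ in the envelope is indeed $\preccurlyeq u$, but an upper envelope of functions more singular than $u$ need not be more singular than $u$; what is true is only the opposite relation $P_{\omega}[u](v)\succcurlyeq u$ (since $u-\sup_{X}(u-v)$ is a competitor), and in general the envelope is strictly less singular than $u$ --- that is precisely why model type envelopes are a nontrivial notion. Your claim would make (ii) equivalent to ``$u$ and $v$ have the same singularity type'', which contradicts the theorem itself: since (i)$\Rightarrow$(ii) is true, every unbounded $u\in\mathcal{E}(X,\omega)=\mathcal{E}(X,\omega,0)$ satisfies $P_{\omega}[u](0)=0$, and your claim would then give $0\preccurlyeq u$, i.e.\ $u$ bounded. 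So the passage from (ii) to (iii) cannot go through a comparison of the singularity types of $u$ and $v$; it has to be run at the level of the envelopes themselves (for instance via the mass identity in your fact (A) applied to $P_{\omega}[u](0)\leq P_{\omega}[v](0)$, Theorem \ref{thm:3.8}, the domination principle, and $P_{\omega}[P_{\omega}[u]]=P_{\omega}[u]$), and this idea is missing from the proposal.

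There are two further problems. Your fact (B) is false as stated: $w_{1}:=\psi-1\leq w_{2}:=\psi$ both lie in $\mathcal{E}(X,\omega,\psi)$ and have equal masses. Hence the last step of your (i)$\Rightarrow$(ii) (``$\phi\leq v$, $\phi\in\mathcal{E}(X,\omega,v)$, hence $\phi=v$'') is a non sequitur; no comparison-principle argument yields uniqueness from ``$\leq$ plus equal masses'' alone. The correct conclusion uses the measure-theoretic contact information: by Theorem \ref{thm:3.8}, $MA_{\omega}(\phi)\leq \mathbbm{1}_{\{\phi=v\}}MA_{\omega}(v)$, so $V_{v}=V_{\phi}\leq MA_{\omega}(v)(\{\phi=v\})$ forces $MA_{\omega}(v)$ to be carried by $\{\phi=v\}$, and then the domination principle (Proposition 3.11 in \cite{DDNL17b}), whose hypothesis is ``$v\leq\phi$ a.e.\ with respect to $MA_{\omega}(v)$'', gives $\phi=v$. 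Finally, your sketch of fact (A) is incomplete: the balayage step only bounds $MA_{\omega}(P_{\omega}[u](w))$ by $MA_{\omega}(w)$, whose total mass is $V_{w}$, not $V_{u}$; the inequality $V_{P_{\omega}[u](w)}\leq V_{u}$ comes from the monotonicity theorem applied to $P_{\omega}(u+C,w)\preccurlyeq u$ together with convergence of masses along the increasing limit, and the reverse inequality from $P_{\omega}[u](w)\succcurlyeq u$. Fact (A) is true (it is part of \cite{DDNL17b}), and granting it your (iii)$\Rightarrow$(i) is fine and (ii)$\Rightarrow$(i) would even be immediate; but as written neither (i)$\Rightarrow$(ii) nor (ii)$\Rightarrow$(iii) is established. (Note also that the paper itself does not prove this statement: it quotes it from \cite{DDNL17b}.)
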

The clear inclusion $\mathcal{E}(X,\omega,v)\subset \mathcal{E}(X,\omega,P_{\omega}[v](0))$ may be strict, and it seems more natural in many cases to consider only functions $\psi\in \mathcal{M}$. For instance as showed in \cite{DDNL17b} $\psi$ being a model type envelope is a necessary assumption to make the equation
$$
\begin{cases}
MA_{\omega}(u)=\mu\\
u\in\mathcal{E}(X,\omega,\psi)
\end{cases}
$$
always solvable where $\mu$ is a non-pluripolar measure such that $\mu(X)=V_{\psi}$. It is also worth to recall that there are plenty of elements in $\mathcal{M}$ since $P_{\omega}[P_{\omega}[\psi]]=P_{\omega}[\psi]$ for any $\psi\in PSH(X,\omega)$ with $\int_{X}MA_{\omega}(\psi)>0$ (Theorem $3.12$, \cite{DDNL17b}). Indeed $v\to P_{\omega}[v]$ may be thought as a projection from the set of negative $\omega$-psh functions with positive Monge-Ampère mass to $\mathcal{M}^{+}$.\\
We also retrieve the following useful result.
\begin{thm}[Theorem 3.8, \cite{DDNL17b}]
\label{thm:3.8}
Let $u,\psi\in PSH(X,\omega)$ such that $u\succcurlyeq \psi$. Then
$$
MA_{\omega}\big(P_{\omega}[\psi](u)\big)\leq \mathbbm{1}_{\{P_{\omega}[\psi](u)=u\}}MA_{\omega}(u).
$$
In particular if $\psi\in\mathcal{M}$ then $MA_{\omega}(\psi)\leq \mathbbm{1}_{\{\psi=0\}}MA_{\omega}(0)$.
\end{thm}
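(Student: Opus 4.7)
The plan is to combine the classical envelope inequality for pairs of $\omega$-psh functions with the defining approximation $P_{\omega}[\psi](u)=\bigl(\lim_{C\to+\infty} P_{\omega}(\psi+C,u)\bigr)^{*}$, and then pass to the limit as $C\to+\infty$. The key intermediate fact I would first establish (or recall from \cite{BEGZ10}) is that for any pair of $\omega$-psh functions $f,g$ the non-pluripolar Monge--Amp\`ere of $P_{\omega}(f,g)$ is concentrated on the contact sets, with the sharper control
\[
MA_{\omega}\bigl(P_{\omega}(f,g)\bigr)\;\leq\;\mathbbm{1}_{\{P_{\omega}(f,g)=f\}}MA_{\omega}(f)\;+\;\mathbbm{1}_{\{P_{\omega}(f,g)=g\}}MA_{\omega}(g).
\]
For bounded $f,g$ this is Bedford--Taylor theory (plurifine locality together with the fact that the envelope is maximal on the complement of the contact set). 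The general case follows by the canonical approximation $f_{k}:=\max(f,-k)$, $g_{k}:=\max(g,-k)$, using continuity of the non-pluripolar product along the resulting decreasing sequences.

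Next I would apply this inequality with $f=\psi+C$ and $g=u$. Writing $\varphi_{C}:=P_{\omega}(\psi+C,u)$ and $\varphi_{\infty}:=P_{\omega}[\psi](u)$, and using that $MA_{\omega}(\psi+C)=MA_{\omega}(\psi)$ is insensitive to additive constants, one obtains
\[
MA_{\omega}(\varphi_{C})\;\leq\;\mathbbm{1}_{\{\varphi_{C}=\psi+C\}}MA_{\omega}(\psi)\;+\;\mathbbm{1}_{\{\varphi_{C}=u\}}MA_{\omega}(u).
\]
Since $\varphi_{C}\nearrow\varphi_{\infty}\leq u$, one has $\{\varphi_{C}=u\}\subseteq\{\varphi_{\infty}=u\}$, so the second summand is dominated by $\mathbbm{1}_{\{\varphi_{\infty}=u\}}MA_{\omega}(u)$ uniformly in $C$.

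For the first summand, at any point $x$ with $\psi(x)>-\infty$ one has $\psi(x)+C>u(x)\geq\varphi_{C}(x)$ for $C$ sufficiently large, so the indicator $\mathbbm{1}_{\{\varphi_{C}=\psi+C\}}(x)$ vanishes eventually. Since $MA_{\omega}(\psi)$ is non-pluripolar and hence puts no mass on $\{\psi=-\infty\}$, dominated convergence gives $\mathbbm{1}_{\{\varphi_{C}=\psi+C\}}MA_{\omega}(\psi)\to 0$ weakly as $C\to+\infty$. Combined with the monotone convergence theorem for the non-pluripolar product along the increasing sequence $\varphi_{C}$ (as used systematically in \cite{DDNL17b}), passing to the weak limit in the inequality above yields the desired bound $MA_{\omega}(\varphi_{\infty})\leq\mathbbm{1}_{\{\varphi_{\infty}=u\}}MA_{\omega}(u)$. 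The ``in particular'' assertion is then immediate by setting $u=0$: if $\psi\in\mathcal{M}$ then $\psi=P_{\omega}[\psi](0)$, hence $MA_{\omega}(\psi)\leq\mathbbm{1}_{\{\psi=0\}}MA_{\omega}(0)$.

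The main obstacle I expect is the handling of the residual term $\mathbbm{1}_{\{\varphi_{C}=\psi+C\}}MA_{\omega}(\psi)$ in the limit: a priori the contact sets $\{\varphi_{C}=\psi+C\}$ could cluster along $\{\psi=-\infty\}$, and it is precisely the non-pluripolarity of $MA_{\omega}(\psi)$ that rules out any surviving mass there. A secondary subtlety is the weak continuity of the non-pluripolar product along increasing sequences of $\omega$-psh functions whose singularity types are not fixed, which requires the refined monotonicity results from the DDNL framework rather than the older BEGZ version valid only in a fixed singularity class.
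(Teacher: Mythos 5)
The paper itself does not prove this statement (it is quoted verbatim from Theorem 3.8 of \cite{DDNL17b}), and your argument is essentially the original proof given there: apply the contact-set inequality $MA_{\omega}\big(P_{\omega}(f,g)\big)\leq \mathbbm{1}_{\{P_{\omega}(f,g)=f\}}MA_{\omega}(f)+\mathbbm{1}_{\{P_{\omega}(f,g)=g\}}MA_{\omega}(g)$ (Lemma 3.7 in \cite{DDNL17b}) to $P_{\omega}(\psi+C,u)$, use non-pluripolarity of $MA_{\omega}(\psi)$ to make the term $\mathbbm{1}_{\{P_{\omega}(\psi+C,u)=\psi+C\}}MA_{\omega}(\psi)$ disappear as $C\to+\infty$, bound the other term by $\mathbbm{1}_{\{P_{\omega}[\psi](u)=u\}}MA_{\omega}(u)$ via monotonicity of the contact sets, and pass to the limit along the increasing family using convergence of non-pluripolar products (Theorem 2.3 in \cite{DDNL17b}, exactly as this paper invokes it in the proof of Lemma \ref{lem:Referee}). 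The one imprecise point is your justification of the auxiliary lemma for unbounded potentials by ``continuity of the non-pluripolar product along decreasing sequences'' --- such continuity fails in general, and the published proof of that lemma instead relies on plurifine locality --- but since the lemma is itself a quotable result this does not affect the correctness of your argument.
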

Note also that in Theorem \ref{thm:3.8} the equality holds if $u$ is continuous with bounded distributional laplacian with respect to $\omega$ as a consequence of \cite{DNT19}. In particular $MA_{\omega}(\psi)=\mathbbm{1}_{\{\psi=0\}}MA_{\omega}(0)$ for any $\psi\in\mathcal{M}$.
\subsection{The metric space $\big(\mathcal{E}^{1}(X,\omega,\psi),d\big)$.}
\label{ssec:Preli21}
In this subsection we assume $\psi\in\mathcal{M}^{+}$ where $\mathcal{M}^{+}:=\{\psi\in\mathcal{M}\, :\, V_{\psi}>0\}$.\\
As in \cite{DDNL17b} we also denote by $PSH(X,\omega,\psi)$ the set of all $\omega$-psh functions which are more singular than $\psi$, and we recall that a function $u\in PSH(X,\omega,\psi)$ has $\psi$-\emph{relative minimal singularities} if $|u-\psi|$ is globally bounded on $X$. We also use the notation 
$$
MA_{\omega}(u_{1}^{j_{1}},\dots,u_{l}^{j_{l}}):=(\omega+dd^{c}u_{1})^{j_{1}}\wedge \cdots\wedge (\omega+dd^{c}u_{l})^{j_{l}}
$$
for $u_{1},\dots,u_{l}\in PSH(X,\omega)$ where $j_{1},\dots,j_{l}\in\mathbbm{N}$ such that $j_{1}+\cdots+j_{l}=n$.
\begin{defn}[Section \S 4.2, \cite{DDNL17b}]
The $\psi$-\emph{relative energy functional} $E_{\psi}:PSH(X,\omega,\psi)\to \mathbbm{R}\cup \{-\infty\}$ is defined as
$$
E_{\psi}(u):=\frac{1}{n+1}\sum_{j=0}^{n}\int_{X}(u-\psi) MA_{\omega}(u^{j},\psi^{n-j})
$$
if $u$ has $\psi$-relative minimal singularities, and as
$$
E_{\psi}(u):=\inf\{E_{\psi}(v)\, :\, v\in\mathcal{E}(X,\omega,\psi)\, \mbox{with} \, \psi\mbox{-relative minimal singularities}, v\geq u\}
$$
otherwise. The subset $\mathcal{E}^{1}(X,\omega,\psi)\subset \mathcal{E}(X,\omega,\psi)$ is defined as
$$
\mathcal{E}^{1}(X,\omega,\psi):=\{u\in\mathcal{E}(X,\omega,\psi)\,:\, E_{\psi}(u)>-\infty\}.
$$
\end{defn}
When $\psi=0$ the $\psi-$relative energy functional is the \emph{Aubin-Mabuchi energy functional}, also called \emph{Monge-Ampére energy} (see \cite{Aub84},\cite{Mab86}).
\begin{prop}
\label{prop:PropertiesE}
The following properties hold:
\begin{itemize}
\item[(i)] $E_{\psi}$ is non decreasing \emph{(Theorem $4.10$, \cite{DDNL17b})};
\item[(ii)] $E_{\psi}(u)=\lim_{j\to \infty}E_{\psi}\big(\max(u,\psi-j)\big)$ \emph{(Lemma $4.12$, \cite{DDNL17b})};
\item[(iii)] $E_{\psi}$ is continuous along decreasing sequences \emph{(Lemma $4.14$, \cite{DDNL17b})};
\item[(iv)] $E_{\psi}$ is concave along affine curves \emph{(Theorem $4.10$, Corollary $4.16$, \cite{DDNL17b})};
\item[(v)] $u\in \mathcal{E}^{1}(X,\omega,\psi)$ if and only if $u\in\mathcal{E}(X,\omega,\psi)$ and $\int_{X}(u-\psi)MA_{\omega}(u)>-\infty$ \emph{(Lemma $4.13$, \cite{DDNL17b})};
\item[(vi)] $E_{\psi}(u)\geq \limsup_{k\to \infty}E_{\psi}(u_{k})$ if $u_{k},u\in\mathcal{E}^{1}(X,\omega,\psi)$ and $u_{k}\to u$ with respect to the weak topology \emph{(Proposition $4.19$, \cite{DDNL17b})};
\item[(vii)] letting $u\in\mathcal{E}^{1}(X,\omega,\psi)$, $\chi\in \mathcal{C}^{0}(X)$ and $u_{t}:=\sup\{v\in PSH(X,\omega) \, v\leq u+t\chi\}^{*}$ for any $t>0$, then $t\to E_{\psi}(u_{t})$ is differentiable and its derivative is given by
$$
\frac{d}{dt}E_{\psi}(u_{t})=\int_{X}\chi MA_{\omega}(u_{t})
$$
\emph{(Proposition $4.20$, \cite{DDNL17b})};
\item[(viii)] if $u,v\in \mathcal{E}^{1}(X,\omega,\psi)$ then
$$
E_{\psi}(u)-E_{\psi}(v)=\frac{1}{n+1}\sum_{j=0}^{n}\int_{X}(u-v)MA_{\omega}(u^{j},v^{n-j})
$$
and the function $\mathbbm{N}\ni j\to \int_{X}(u-v)MA_{\omega}(u^{j},v^{n-j})$ is decreasing. In particular
$$
\int_{X}(u-v)MA_{\omega}(u)\leq E_{\psi}(u)-E_{\psi}(v)\leq \int_{X}(u-v)MA_{\omega}(v)
$$
\emph{(Theorem $4.10$, \cite{DDNL17b})};
\item[(ix)] if $u\leq v$ then $E_{\psi}(u)-E_{\psi}(v)\leq \frac{1}{n+1}\int_{X}(u-v)MA_{\omega}(u)$ \emph{(Theorem $4.10$, \cite{DDNL17b})}.
\end{itemize}
\end{prop}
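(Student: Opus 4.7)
The overall plan is to mirror the classical Aubin--Mabuchi / BEGZ theory of the Monge--Ampère energy (i.e.\ the case $\psi=0$) in the $\psi$-relative setting. The master identity from which almost everything flows is the polarisation formula in (viii); I would therefore establish (viii) first for potentials with $\psi$-relative minimal singularities, deduce (i) and (ix) as immediate corollaries, obtain (iv) by specialising (viii) to affine families, and then bootstrap to general $u\in \mathcal{E}^{1}(X,\omega,\psi)$ via monotone approximation by $u_{j}:=\max(u,\psi-j)$.

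First, for $u,v$ with $\psi$-relative minimal singularities the definition of $E_{\psi}$ expands into a sum of non-pluripolar integrals of the bounded function $u-\psi$ (respectively $v-\psi$) against relative Monge--Ampère products, and integration by parts in the style of \cite{BEGZ10}, adapted to the $\psi$-relative non-pluripolar product, telescopes $E_{\psi}(u)-E_{\psi}(v)$ into the symmetric sum appearing in (viii). The monotonicity of $j\mapsto \int_{X}(u-v)MA_{\omega}(u^{j},v^{n-j})$ comes from a further integration by parts that rewrites two consecutive terms as $-\int(u-v)\,dd^{c}(u-v)\wedge T$, which is non-positive once the sign of $u-v$ is fixed. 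From (viii) the monotonicity (i), the sharp one-sided bound (ix), and the concavity (iv) along $u_{t}=(1-t)u_{0}+tu_{1}$ are then a matter of sign bookkeeping applied to the resulting polynomial in $t$.

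The extension to general $\mathcal{E}^{1}$ will go through the canonical decreasing approximation $u_{j}=\max(u,\psi-j)\searrow u$. Property (ii) is essentially tautological once the sequence $E_{\psi}(u_{j})$ is known to be decreasing (a consequence of (i) applied at the minimal-singularities level), and (iii) then follows by a diagonal argument using continuity of the relative non-pluripolar product along uniformly decreasing sequences. The characterisation (v) I would obtain by applying (viii) with $v=\psi$ to $u_{j}$: the $k=n$ term $\int(u_{j}-\psi)MA_{\omega}(u_{j})$ dominates all cross terms thanks to the monotonicity in the $k$-index just proved, so finiteness of $\int(u-\psi)MA_{\omega}(u)$ is equivalent to $E_{\psi}(u)>-\infty$. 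For the upper semicontinuity (vi) I would write $u=\big(\lim_{N}\sup_{k\geq N}u_{k}\big)^{*}$ as a decreasing limit of $\omega$-psh envelopes and combine (iii) with the trivial bound $E_{\psi}(u_{k})\leq E_{\psi}\big((\sup_{k\geq N}u_{k})^{*}\big)$ coming from (i).

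The hard part will be (vii): for the envelope deformation $u_{t}=\sup\{v\in PSH(X,\omega):v\leq u+t\chi\}^{*}$ one wants $\frac{d}{dt}E_{\psi}(u_{t})=\int_{X}\chi\,MA_{\omega}(u_{t})$. Here I would combine the concavity from (iv), which forces existence of the left and right derivatives, with the orthogonality relation that $MA_{\omega}(u_{t})$ is concentrated on the contact set $\{u_{t}=u+t\chi\}$. On that set $u_{s}-u_{t}$ agrees with $(s-t)\chi$ up to a higher-order term in $s-t$, and plugging this back into (viii) applied to the pair $(u_{s},u_{t})$ sandwiches both one-sided derivatives between quantities converging to $\int\chi\,MA_{\omega}(u_{t})$. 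The main obstacle throughout is technical rather than conceptual: one has to control the relative non-pluripolar products and their integration by parts when the potentials are only $\psi$-relative, not globally, bounded, which is exactly the machinery that \cite{DDNL17b} sets up and that I would invoke as a black box.
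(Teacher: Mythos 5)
The paper itself contains no proof of this proposition: it is a compendium of results quoted from \cite{DDNL17b} (Theorem 4.10, Lemmas 4.12--4.14, Propositions 4.19--4.20), with the remark immediately following it noting that the small-unbounded-locus hypothesis of that reference is removed via the general integration-by-parts formula of \cite{X19} and Proposition 2.7/Remark 2.10 of \cite{Tru19}. Your sketch reconstructs essentially the same arguments as those cited proofs — polarisation identity (viii) for potentials with $\psi$-relative minimal singularities by integration by parts, then (i), (iv), (ix) by sign bookkeeping, extension to all of $\mathcal{E}^{1}(X,\omega,\psi)$ through the cut-offs $\max(u,\psi-j)$, and (vii) via orthogonality of $MA_{\omega}(u_{t})$ on the contact set combined with the two-sided bound in (viii) — so the route is the expected one. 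Two small repairs: the difference of consecutive terms in the monotonicity claim of (viii) is $\int_{X}(u-v)\,dd^{c}(u-v)\wedge T=-\int_{X}d(u-v)\wedge d^{c}(u-v)\wedge T\leq 0$, which requires no sign assumption on $u-v$, so your displayed expression and the proviso ``once the sign of $u-v$ is fixed'' should be corrected; and (iv) does not apply directly to $t\mapsto u_{t}$ in (vii), since that curve is not affine, although concavity of $t\mapsto E_{\psi}(u_{t})$ is recovered from the envelope inequality $u_{(1-\lambda)s+\lambda t}\geq (1-\lambda)u_{s}+\lambda u_{t}$ together with (i) and (iv). Finally, for general $\psi\in\mathcal{M}$ the integration by parts you invoke as a black box is precisely where \cite{X19} is needed, as the paper's remark records.
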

\begin{rem}
\emph{All the properties of Proposition \ref{prop:PropertiesE} are showed in \cite{DDNL17b} assuming $\psi$ having \emph{small unbounded locus}, but the general integration by parts formula proved in \cite{X19} and Proposition $2.7$ in \cite{Tru19} allow to extend these properties to the general case as desribed in Remark $2.10$ in \cite{Tru19}.}
\end{rem} 
Recalling that for any $u,v\in\mathcal{E}^{1}(X,\omega,\psi)$ the function $P_{\omega}(u,v)=\sup\{w\in PSH(X,\omega)\, : \, w\leq \min(u,v)\}^{*}$ belongs to $\mathcal{E}^{1}(X,\omega,\psi)$ (see Proposition $2.13.$ in \cite{Tru19}), the function $d:\mathcal{E}^{1}(X,\omega,\psi)\times\mathcal{E}^{1}(X,\omega,\psi)\to \mathbbm{R}_{\geq 0}$ defined as
$$
d(u,v)=E_{\psi}(u)+E_{\psi}(v)-2E_{\psi}\big(P_{\omega}(u,v)\big)
$$
assumes finite values. Moreover it is a complete distance as the next result shows.
\begin{thm}[Theorem A, \cite{Tru19}]
$\big(\mathcal{E}^{1}(X,\omega,\psi),d\big)$ is a complete metric space.
\end{thm}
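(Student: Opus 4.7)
The plan is to adapt the approach Darvas used for $\psi=0$ in \cite{Dar14} to the $\psi$-relative setting, where every analytic ingredient (rooftop envelope, concavity of the energy along affine paths, monotonicity, continuity along decreasing sequences) is already available through Proposition \ref{prop:PropertiesE} and the stated fact that $P_{\omega}(u,v)\in\mathcal{E}^{1}(X,\omega,\psi)$ whenever $u,v\in\mathcal{E}^{1}(X,\omega,\psi)$. First I would check that $d$ is finite-valued, symmetric and nonnegative: finiteness is immediate from the stability of $\mathcal{E}^{1}(X,\omega,\psi)$ under $P_{\omega}$, symmetry is obvious, and nonnegativity follows from $P_{\omega}(u,v)\leq u,v$ combined with Proposition \ref{prop:PropertiesE}(i), which also yields the useful quantitative bound $d(u,v)\geq |E_{\psi}(u)-E_{\psi}(v)|$.

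For non-degeneracy, assume $d(u,v)=0$. Then $E_{\psi}(P_{\omega}(u,v))=E_{\psi}(u)=E_{\psi}(v)$. Applying Proposition \ref{prop:PropertiesE}(ix) to the pair $P_{\omega}(u,v)\leq u$ forces $\int_{X}(u-P_{\omega}(u,v))MA_{\omega}(u)=0$; together with a domination-principle argument built from Theorem \ref{thm:3.8} (comparing $u$ and $P_{\omega}(u,v)$ as $\psi$-relative potentials with equal total mass) this yields $u=P_{\omega}(u,v)$, hence $u\leq v$. Symmetry gives the reverse inequality, so $u=v$.

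The main obstacle is the triangle inequality. I would first prove the Pythagorean-type identity
$$
d(u,v)=d\bigl(u,P_{\omega}(u,v)\bigr)+d\bigl(v,P_{\omega}(u,v)\bigr)
$$
for $u,v$ with $\psi$-relative minimal singularities. To obtain this I would reduce, via Proposition \ref{prop:PropertiesE}(viii), to showing that $\int_{X}(u-P_{\omega}(u,v))MA_{\omega}\bigl(P_{\omega}(u,v)\bigr)=0$, which rests on the fact that $MA_{\omega}(P_{\omega}(u,v))$ is concentrated on $\{P_{\omega}(u,v)=u\}\cup\{P_{\omega}(u,v)=v\}$, a direct consequence of Theorem \ref{thm:3.8} applied to the envelope. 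Extending this identity to general $u,v\in\mathcal{E}^{1}(X,\omega,\psi)$ by approximating with $\max(u,\psi-j)$ and using continuity along decreasing sequences (Proposition \ref{prop:PropertiesE}(ii)--(iii)), I would then deduce the triangle inequality $d(u,w)\leq d(u,v)+d(v,w)$ from the chain $P_{\omega}(u,w)\geq P_{\omega}(u,v,w)$ combined with the monotonicity of $E_{\psi}$, exactly as in the unweighted case; this is the step where all tools (rooftop envelope, concentration of the Monge-Ampère measure, concavity) come together and where any argument specific to the relative setting (notably, controlling the behavior near the singular locus of $\psi$) must be inserted.

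For completeness, given a $d$-Cauchy sequence $\{u_{k}\}\subset\mathcal{E}^{1}(X,\omega,\psi)$, the bound $|E_{\psi}(u_{k})-E_{\psi}(u_{m})|\leq d(u_{k},u_{m})$ shows that $E_{\psi}(u_{k})$ is bounded. Using that $u_{k}\preccurlyeq\psi\leq 0$ gives an $L^{1}$ bound, so up to passing to a subsequence I can extract a weak limit $u\in PSH(X,\omega,\psi)$, and the upper semicontinuity of $E_{\psi}$ along weakly convergent sequences in $\mathcal{E}^{1}(X,\omega,\psi)$ (Proposition \ref{prop:PropertiesE}(vi)) together with the bound on $E_{\psi}(u_{k})$ places $u\in\mathcal{E}^{1}(X,\omega,\psi)$. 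To upgrade to $d$-convergence, I would consider the double envelope $\varphi_{k,m}:=P_{\omega}(u_{k},u_{m})$ and exploit Cauchyness to show that $E_{\psi}(\varphi_{k,m})$ approaches $E_{\psi}(u_{k})$ uniformly in $m$; passing to the limit $m\to\infty$ via Hartogs' lemma and Proposition \ref{prop:PropertiesE}(vi) identifies the limit with $P_{\omega}(u_{k},u)$ and yields $d(u_{k},u)\to 0$. The hardest part will remain the triangle inequality, because all the subsequent arguments freely reuse $d$ as a genuine distance.
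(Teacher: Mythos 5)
This theorem is not proved in the paper at all: it is quoted verbatim from \cite{Tru19}, so your outline can only be compared with the known Darvas--Darvas-Di Nezza-Lu strategy that \cite{Tru19} adapts, and in that comparison your plan is the right one in outline but has two real gaps. First, the triangle inequality. The Pythagorean identity you single out as the main obstacle is in fact immediate from the definition of $d$: since $P_{\omega}\big(u,P_{\omega}(u,v)\big)=P_{\omega}(u,v)$ one has $d\big(u,P_{\omega}(u,v)\big)=E_{\psi}(u)-E_{\psi}\big(P_{\omega}(u,v)\big)$, and summing the two such terms gives $d(u,v)$ with no input about $MA_{\omega}\big(P_{\omega}(u,v)\big)$. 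The genuine difficulty sits exactly where you wave at monotonicity: writing out your reduction, monotonicity of $E_{\psi}$ and $P_{\omega}(u,v)\geq P_{\omega}(u,v,w)$ reduce $d(u,v)\leq d(u,w)+d(v,w)$ to the inequality $E_{\psi}(a)+E_{\psi}(b)-E_{\psi}(w)\leq E_{\psi}\big(P_{\omega}(a,b)\big)$ for $a:=P_{\omega}(u,w)$, $b:=P_{\omega}(v,w)\leq w$, i.e.\ to $d\big(P_{\omega}(a,b),P_{\omega}(w,b)\big)\leq d(a,w)$, the $1$-Lipschitz property of $\varphi\mapsto P_{\omega}(\varphi,b)$. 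This does not follow from monotonicity plus the chain of envelopes; it is precisely here that the orthogonality relation $\int_{X}\big(a-P_{\omega}(a,b)\big)MA_{\omega}\big(P_{\omega}(a,b)\big)=0$ (a rooftop-envelope statement, not Theorem \ref{thm:3.8}, which concerns $P_{\omega}[\psi](u)$) must be combined with Proposition \ref{prop:PropertiesE}(viii). As written, your triangle-inequality step would not go through.

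Second, completeness. Upper semicontinuity (Proposition \ref{prop:PropertiesE}(vi)) only yields $E_{\psi}(u)\geq\limsup_{k}E_{\psi}(u_{k})$ for a weak subsequential limit $u$, and your identification of $\lim_{m}P_{\omega}(u_{k},u_{m})$ with $P_{\omega}(u_{k},u)$ is unjustified: a weak limit $g_{k}$ of $P_{\omega}(u_{k},u_{m})$ only satisfies $g_{k}\leq P_{\omega}(u_{k},u)$. That one-sided bound does give $d\big(u_{k},P_{\omega}(u_{k},u)\big)\leq\sup_{m\geq k}d(u_{k},u_{m})$, but to conclude $d(u_{k},u)\to 0$ you also need $d\big(P_{\omega}(u_{k},u),u\big)=E_{\psi}(u)-E_{\psi}\big(P_{\omega}(u_{k},u)\big)\to 0$, i.e.\ the reverse inequality $\liminf_{k}E_{\psi}(u_{k})\geq E_{\psi}(u)$, which upper semicontinuity cannot provide and which your sketch never establishes; a priori the weak limit could carry strictly more energy than any strong limit. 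The standard repair — the one used in \cite{Tru19} and mirrored in this paper's Proposition \ref{prop:CapacityPsi} and Theorem \ref{thm:OldPropB} — is to pass to a subsequence with $d(u_{k},u_{k+1})\leq 2^{-k}$ and build the monotone envelopes $v_{k}:=P_{\omega}(u_{k},u_{k+1},\dots)$, shown to lie in $\mathcal{E}^{1}(X,\omega,\psi)$ with $d(u_{k},v_{k})$ summably small, so that $v_{k}$ increases to the candidate limit and continuity of $E_{\psi}$ along monotone sequences closes the argument. Two smaller slips: in the non-degeneracy step, Proposition \ref{prop:PropertiesE}(ix) applied to $P_{\omega}(u,v)\leq u$ gives the vanishing of $\int_{X}\big(u-P_{\omega}(u,v)\big)MA_{\omega}\big(P_{\omega}(u,v)\big)$, not of the integral against $MA_{\omega}(u)$, and it is the former that feeds the domination principle (Proposition $3.11$ of \cite{DDNL17b}), which again is not Theorem \ref{thm:3.8}.
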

We call \emph{strong topology} on $\mathcal{E}^{1}(X,\omega,\psi)$ the metric topology given by the distance $d$. Note that by construction $d(u_{k},u)\to 0$ as $k\to \infty$ if $u_{k}\searrow u$, and that $d(u,v)=d(u,w)+d(w,v)$ if $u\leq w\leq v$ (see Lemma $3.1$ in \cite{Tru19}).\\
Moreover as a consequence of Proposition \ref{prop:PropertiesE} it follows that for any $C\in\mathbbm{R}_{>0}$ the set
$$
\mathcal{E}_{C}^{1}(X,\omega,\psi):=\{u\in\mathcal{E}^{1}(X,\omega,\psi)\, :\, \sup_{X}u\leq C \, \mbox{and}\, E_{\psi}(u)\geq -C\}
$$
is a weakly compact convex set.
\begin{rem}
\label{rem:0Mass}
\emph{As described in Remark $3.10$ in \cite{Tru19}, if $\psi\in\mathcal{M}\setminus \mathcal{M}^{+}$ then $\mathcal{E}^{1}(X,\omega,\psi)=PSH(X,\omega,\psi)$ since $E_{\psi}\equiv 0$ by definition. In particular $d\equiv 0$ and it is natural to identify $\big(\mathcal{E}^{1}(X,\omega,\psi),d\big)$ with a point $P_{\psi}$. Moreover we recall that $\mathcal{E}^{1}(X,\omega,\psi_{1})\cap \mathcal{E}^{1}(X,\omega,\psi_{2})=\emptyset$ if $\psi_{1},\psi_{2}\in\mathcal{M}$, $\psi_{1}\neq \psi_{2}$ and $V_{\psi_{2}}>0$.}
\end{rem}
\subsection{The space $(X_{\mathcal{A}},d_{\mathcal{A}})$.}
From now on we assume $\mathcal{A}\subset \mathcal{M}^{+}$ to be a totally ordered set of model type envelopes, and we denote by $\overline{\mathcal{A}}$ its closure as subset of $PSH(X,\omega)$ endowed with the weak topology. Note that $\overline{\mathcal{A}}\subset PSH(X,\omega)$ is compact by Lemma $2.6$ in \cite{Tru19}. Indeed we will prove in Lemma \ref{lem:HomeoV} that actually $\overline{\mathcal{A}}$ is homeomorphic to its image through the Monge-Ampère operator $MA_{\omega}$ when the set of measure is endowed with the weak topology, which yields that $\overline{\mathcal{A}}$ is also homeomorphic to a closed set contained in $[0,\int_{X}\omega^{n}]$ through the map $\psi\to V_{\psi}$.
\begin{defn}
We define the set
$$
X_{\mathcal{A}}:=\bigsqcup_{\psi\in\overline{\mathcal{A}}}\mathcal{E}^{1}(X,\omega,\psi)
$$
if $\psi_{\min}:=\inf \mathcal{A}$ satisfies $V_{\psi_{\min}}>0$, and
$$
X_{\mathcal{A}}:=P_{\psi_{\min}}\sqcup \bigsqcup_{\psi'\in\overline{\mathcal{A}},\psi\not=\psi_{\min}}\mathcal{E}^{1}(X,\omega,\psi)
$$
if $V_{\psi_{\min}}=0$, where $P_{\psi_{\min}}$ is a singleton.
\end{defn}
$X_{\mathcal{A}}$ can be endowed with a natural metric structure as section $4$ in \cite{Tru19} shows. 
\begin{thm}[Theorem B, \cite{Tru19}]
$(X_{\mathcal{A}},d_{\mathcal{A}})$ is a complete metric space such that $d_{\mathcal{A}|\mathcal{E}^{1}(X,\omega,\psi)\times\mathcal{E}^{1}(X,\omega,\psi)}=d$ for any $\psi\in\overline{\mathcal{A}}\cap\mathcal{M}^{+}$.
\end{thm}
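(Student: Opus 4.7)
The plan is to build $d_{\mathcal{A}}$ out of the in-fiber distances $d$ and the singularity-type projections $P_\omega[\cdot]$, then establish the metric axioms and completeness. Since $\mathcal{A}$ is totally ordered, for any $u\in\mathcal{E}^1(X,\omega,\psi_1)$ and $v\in\mathcal{E}^1(X,\omega,\psi_2)$ with (say) $\psi_1\leq\psi_2$, the function $\tilde v:=P_\omega[\psi_1](v)$ lies in $\mathcal{E}^1(X,\omega,\psi_1)$ by Theorem \ref{thm:ClassE}, giving a canonical way to transport $v$ into the smaller fiber. I would define
$$
d_{\mathcal{A}}(u,v):=d\bigl(u,\tilde v\bigr)+\delta(\psi_1,\psi_2;v),
$$
where $\delta$ is a vertical correction built from the relative energies $E_{\psi_1}(\tilde v)$, $E_{\psi_2}(v)$ and from $V_{\psi_2}-V_{\psi_1}$, designed so that $\delta=0$ whenever $\psi_1=\psi_2$. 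Since in that case $\tilde v=v$ by Theorem \ref{thm:ClassE}(ii), the restriction property $d_{\mathcal{A}}|_{\mathcal{E}^1(X,\omega,\psi)\times\mathcal{E}^1(X,\omega,\psi)}=d$ would be automatic. The degenerate fiber with $V_{\psi_{\min}}=0$ is collapsed to the point $P_{\psi_{\min}}$ in accordance with Remark \ref{rem:0Mass}.

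Next I would verify the metric axioms. Symmetry is forced by checking that the two orderings $\psi_1\leq\psi_2$ and $\psi_2\leq\psi_1$ yield the same value, which pins down the admissible form of $\delta$. Positive definiteness uses $\tilde v\leq v$ together with the monotonicity in Proposition \ref{prop:PropertiesE}(i) and Theorem \ref{thm:3.8}: if $d_{\mathcal{A}}(u,v)=0$ then the in-fiber piece forces $u=\tilde v$ while the vertical piece forces $\tilde v=v$, hence $\psi_1=\psi_2$. The triangle inequality is the genuine technical core: for three elements in three distinct fibers $\psi_1\leq\psi_2\leq\psi_3$ the key fact is the idempotency $P_\omega[\psi_1]\bigl(P_\omega[\psi_2](w)\bigr)=P_\omega[\psi_1](w)$ (a consequence of the projection results in \cite{DDNL17b}), which lets one project the top-most element to the bottom fiber in two steps; one then combines the triangle inequality for $d$ inside $\mathcal{E}^1(X,\omega,\psi_1)$ with a telescoping of the vertical corrections $\delta(\psi_1,\psi_2;\cdot)+\delta(\psi_2,\psi_3;\cdot)$.

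For completeness, let $\{u_k\}$ be $d_{\mathcal{A}}$-Cauchy with $u_k\in\mathcal{E}^1(X,\omega,\psi_{i_k})$. By Lemma \ref{lem:HomeoV} (or its forthcoming proof) the vertical part of $d_{\mathcal{A}}$ dominates the oscillation of $V_{\psi_{i_k}}$, so the sequence $\{\psi_{i_k}\}$ in the weakly compact set $\overline{\mathcal{A}}$ has a unique weak limit $\psi_\infty\in\overline{\mathcal{A}}$. The uniform $d_{\mathcal{A}}$ bound combined with Proposition \ref{prop:PropertiesE}(ix) gives uniform control on $\sup_X u_k$ and on $E_{\psi_{i_k}}(u_k)$, hence up to subsequence $u_k\to u_\infty$ weakly and Proposition \ref{prop:PropertiesE}(vi) together with the stability of the projections $P_\omega[\cdot]$ under weak convergence places $u_\infty\in\mathcal{E}^1(X,\omega,\psi_\infty)$. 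Convergence $d_{\mathcal{A}}(u_k,u_\infty)\to 0$ then follows by separately handling the in-fiber piece $d(u_k,P_\omega[\psi_{i_k}](u_\infty))$ and the vertical piece, each controlled by lower/upper semicontinuity of the relative energies.

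The main obstacle I expect is the triangle inequality across three distinct fibers: each $P_\omega[\cdot]$-projection is lossy in the relative energy, and one must both choreograph the order of projections and rearrange the vertical corrections so they cancel telescopically with the in-fiber distances without sign ambiguities. A secondary difficulty is making the completeness argument produce convergence of the singularity types $\psi_{i_k}$ in a quantitative way — i.e.\ showing that a jump in singularity type across $\overline{\mathcal{A}}$ yields a strictly positive lower bound on $d_{\mathcal{A}}$ — which forces one to prove a comparison between $\delta$ and $|V_{\psi_2}-V_{\psi_1}|$ and closes the loop with the homeomorphism $\psi\mapsto V_\psi$ on $\overline{\mathcal{A}}$ of Lemma \ref{lem:HomeoV}.
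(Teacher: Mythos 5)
This theorem is not proved in the present paper: it is imported from \cite{Tru19}, where the construction of $d_{\mathcal{A}}$ occupies an entire section, and whose key outputs are quoted here as Proposition \ref{prop:PropProie} and Proposition \ref{prop:AllNecessary}. Measured against that construction, your proposal has the right architecture (project the less singular potential into the more singular fiber via $P_{\omega}[\cdot]$, exploit idempotency and the $d$-contraction property, add a ``vertical'' term comparable to $|V_{\psi_{1}}-V_{\psi_{2}}|$, collapse the zero-mass fiber to $P_{\psi_{\min}}$ as in Remark \ref{rem:0Mass}, and use compactness of $\overline{\mathcal{A}}$ plus energy semicontinuity for completeness), but it is a plan rather than a proof. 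The central object, the correction $\delta(\psi_{1},\psi_{2};v)$, is never defined; it is only required to ``be designed so that'' symmetry, positivity and the restriction property hold. Consequently none of the metric axioms is actually verified: the symmetry check, the telescoping of vertical corrections in the three-fiber triangle inequality, and the comparison $\delta\gtrsim|V_{\psi_{1}}-V_{\psi_{2}}|$ (which is exactly Proposition \ref{prop:AllNecessary}$.(iv)$ and is what separates distinct fibers and drives completeness) are all flagged by you as the main obstacles and then left open. These are precisely the points where the work of \cite{Tru19} lies, so the gap is genuine. A smaller but real inaccuracy: that $\tilde v:=P_{\omega}[\psi_{1}](v)$ lies in $\mathcal{E}^{1}(X,\omega,\psi_{1})$ and that $P_{\omega}[\psi_{1}]$ is a $d$-contraction is not a consequence of Theorem \ref{thm:ClassE}; it is the nontrivial content of Proposition \ref{prop:PropProie}, which any self-contained proof must establish.

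The completeness argument is also too coarse where it matters. Given a $d_{\mathcal{A}}$-Cauchy sequence $u_{k}\in\mathcal{E}^{1}(X,\omega,\psi_{i_{k}})$, the convergence $V_{\psi_{i_{k}}}\to V_{\psi_{\infty}}$ and Lemma \ref{lem:HomeoV} do identify $\psi_{\infty}$, but the delicate step is to show that a weak accumulation point of $\{u_{k}\}$ is not \emph{more} singular than $\psi_{\infty}$ and has finite $\psi_{\infty}$-relative energy, and then that $d_{\mathcal{A}}(u_{k},u_{\infty})\to 0$; upper semicontinuity of $E_{\cdot}(\cdot)$ alone does not give this. In \cite{Tru19} (and in the related arguments of this paper) this requires the uniform estimate of Lemma \ref{lem:UniEst} to control $\sup_{X}u_{k}$ (your appeal to Proposition \ref{prop:PropertiesE}$.(ix)$ is not what gives the sup bound), a no-mass-loss statement of the type of Lemma \ref{lem:Contra}, and the monotone continuity properties of $\psi\mapsto P_{\omega}[\psi]$ quoted as Proposition \ref{prop:AllNecessary}$.(ii)$--$(iii)$, none of which your sketch supplies. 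So the proposal correctly identifies the skeleton of the construction but does not close any of the steps that make the statement a theorem.
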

We call \emph{strong topology} on $X_{\mathcal{A}}$ the metric topology given by the distance $d_{\mathcal{A}}$. Note that the denomination is coherent with that of subsection \ref{ssec:Preli21} since the induced topology on $\mathcal{E}^{1}(X,\omega,\psi)\subset X_{\mathcal{A}}$ coincides with the strong topology given by $d$.\\
We will also need the following contraction property which is the starting point to construct $d_{\mathcal{A}}$.
\begin{prop}[Lemma 4.2., Proposition 4.3., \cite{Tru19}]
\label{prop:PropProie}
Let $\psi_{1},\psi_{2},\psi_{3}\in\mathcal{M}$ such that $\psi_{1}\preccurlyeq\psi_{2}\preccurlyeq \psi_{3}$. Then $P_{\omega}[\psi_{1}]\big(P_{\omega}[\psi_{2}](u)\big)=P_{\omega}[\psi_{1}](u)$ for any $u\in\mathcal{E}^{1}(X,\omega,\psi_{3})$ and $|P_{\omega}[\psi_{1}](u)-\psi_{1}|\leq C$ if $|u-\psi_{3}|\leq C$. Moreover the map
$$
P_{\omega}[\psi_{1}](\cdot): \mathcal{E}^{1}(X,\omega,\psi_{2})\to PSH(X,\omega,\psi_{1})
$$
has image in $\mathcal{E}^{1}(X,\omega,\psi_{1})$ and it is a Lipschitz map of constant $1$ when the sets $\mathcal{E}^{1}(X,\omega,\psi_{i})$, $i=1,2$, are endowed with the $d$ distances, i.e.
$$
d\big(P_{\omega}[\psi_{1}](u), P_{\omega}[\psi_{1}](v)\big)\leq d(u,v)
$$
for any $u,v\in\mathcal{E}^{1}(X,\omega,\psi_{2})$.
\end{prop}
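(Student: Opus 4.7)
The plan is to handle the four claims in order, with the heart of the argument being the final Lipschitz estimate.

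For the envelope identity $P_{\omega}[\psi_{1}](P_{\omega}[\psi_{2}](u))=P_{\omega}[\psi_{1}](u)$ (and, analogously, for the rooftop identity $P_{\omega}(P_{\omega}[\psi_{1}](u),P_{\omega}[\psi_{1}](v))=P_{\omega}[\psi_{1}](P_{\omega}(u,v))$ needed later), I argue by a double inequality between the defining suprema: every $w$ with $w\preccurlyeq\psi_{1}$ and $w\leq u$ automatically satisfies $w\preccurlyeq\psi_{2}$ (since $\psi_{1}\preccurlyeq\psi_{2}$), hence $w\leq P_{\omega}[\psi_{2}](u)$; the reverse inclusion is immediate from $P_{\omega}[\psi_{2}](u)\leq u$. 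A preliminary observation used throughout: if $\varphi,\eta\in\mathcal{M}$ and $\varphi\preccurlyeq\eta$, then $\varphi$ is a competitor in $\eta=P_{\omega}[\eta](0)$ (being $\omega$-psh, $\preccurlyeq\eta$, and $\leq 0$ by normalization), so $\varphi\leq\eta$. In particular $\psi_{1}\leq\psi_{3}$.

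For the minimal-singularities control $|P_{\omega}[\psi_{1}](u)-\psi_{1}|\leq C$ under $|u-\psi_{3}|\leq C$, the model property $\psi_{1}=P_{\omega}[\psi_{1}](0)$ gives the upper bound, since $P_{\omega}[\psi_{1}](u)-C$ is $\preccurlyeq\psi_{1}$ and $\leq u-C\leq\psi_{3}\leq 0$, hence competes in $P_{\omega}[\psi_{1}](0)$. For the lower bound, $\psi_{1}-C\leq\psi_{3}-C\leq u$ (using $\psi_{1}\leq\psi_{3}$) and $\psi_{1}-C\preccurlyeq\psi_{1}$ exhibit $\psi_{1}-C$ as a competitor in $P_{\omega}[\psi_{1}](u)$. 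Specializing to $\psi_{3}=\psi_{2}$ transfers $\psi_{2}$-relative minimal singularities to $\psi_{1}$-relative minimal singularities of $P_{\omega}[\psi_{1}](u)$, placing it in $\mathcal{E}^{1}(X,\omega,\psi_{1})$. For general $u\in\mathcal{E}^{1}(X,\omega,\psi_{2})$, I take the approximation $u_{j}:=\max(u,\psi_{2}-j)\searrow u$ and combine the monotonicity and upper semicontinuity of $P_{\omega}[\psi_{1}](\cdot)$ along decreasing sequences, the mass characterization of Theorem \ref{thm:ClassE}, Proposition \ref{prop:PropertiesE}(iii), and the pairwise contraction on minimal-singularity approximants (a consequence of the Lipschitz estimate proved below in that easier case) to get $P_{\omega}[\psi_{1}](u)\in\mathcal{E}^{1}(X,\omega,\psi_{1})$.

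Finally, the Lipschitz estimate. The rooftop identity, combined with the telescoping $d(u,v)=d(u,P_{\omega}(u,v))+d(P_{\omega}(u,v),v)$ (a direct consequence of the definition of $d$ and $P_{\omega}(u,v)\leq u,v$), reduces the claim to the monotone case: for $b\leq a$ in $\mathcal{E}^{1}(X,\omega,\psi_{2})$,
\[
E_{\psi_{1}}(P_{\omega}[\psi_{1}](a))-E_{\psi_{1}}(P_{\omega}[\psi_{1}](b))\leq E_{\psi_{2}}(a)-E_{\psi_{2}}(b).
\]
This monotone energy inequality is the main obstacle. Proposition \ref{prop:PropertiesE}(viii) bounds the left side by $\int(P_{\omega}[\psi_{1}](a)-P_{\omega}[\psi_{1}](b))MA_{\omega}(P_{\omega}[\psi_{1}](b))$, and Theorem \ref{thm:3.8} concentrates $MA_{\omega}(P_{\omega}[\psi_{1}](b))$ on the contact set $\{P_{\omega}[\psi_{1}](b)=b\}$, where the integrand is dominated by $a-b$, producing the bound $\int(a-b)MA_{\omega}(b)$. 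However, Proposition \ref{prop:PropertiesE}(viii) only provides $\int(a-b)MA_{\omega}(a)\leq E_{\psi_{2}}(a)-E_{\psi_{2}}(b)$, so the two endpoints of the Monge--Amp\`ere measure do not match directly. To bridge the gap I plan to reduce first, by monotone approximation and Proposition \ref{prop:PropertiesE}(iii), to the case where $a,b$ have $\psi_{2}$-relative minimal singularities (where equality holds in Theorem \ref{thm:3.8} and the projected measure has total mass $V_{\psi_{1}}$); then apply the differentiation formula Proposition \ref{prop:PropertiesE}(vii) along a suitably chosen curve interpolating $b$ to $a$ (and its projected counterpart), and use concavity of both $E_{\psi_{1}}$ and $E_{\psi_{2}}$ along affine curves (Proposition \ref{prop:PropertiesE}(iv)) to match the derivatives of $E_{\psi_{1}}\circ P_{\omega}[\psi_{1}]$ and $E_{\psi_{2}}$ and integrate up to the desired inequality.
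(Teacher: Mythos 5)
This proposition is not proved in the paper at all: it is imported verbatim from \cite{Tru19} (Lemma 4.2 and Proposition 4.3 there), so there is no in-paper argument to compare against and your attempt has to be judged on its own. The elementary parts of your proposal are correct and cleanly done: the composition identity $P_{\omega}[\psi_{1}]\big(P_{\omega}[\psi_{2}](u)\big)=P_{\omega}[\psi_{1}](u)$ and the rooftop identity $P_{\omega}\big(P_{\omega}[\psi_{1}](u),P_{\omega}[\psi_{1}](v)\big)=P_{\omega}[\psi_{1}]\big(P_{\omega}(u,v)\big)$ by comparing competitors, the two-sided bound $|P_{\omega}[\psi_{1}](u)-\psi_{1}|\leq C$ using $\psi_{1}=P_{\omega}[\psi_{1}](0)$ and $\psi_{1}\leq\psi_{3}$, the telescoping $d(u,v)=d\big(u,P_{\omega}(u,v)\big)+d\big(P_{\omega}(u,v),v\big)$, and the resulting reduction of the Lipschitz bound to the monotone inequality $E_{\psi_{1}}\big(P_{\omega}[\psi_{1}](a)\big)-E_{\psi_{1}}\big(P_{\omega}[\psi_{1}](b)\big)\leq E_{\psi_{2}}(a)-E_{\psi_{2}}(b)$ for $b\leq a$, as well as the scheme of deducing the image statement by decreasing minimal-singularity approximation once that inequality is available.

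The genuine gap is precisely that monotone inequality, which is the actual content of the cited Lemma 4.2 in \cite{Tru19}, and your plan for it does not go through as stated. Proposition \ref{prop:PropertiesE}(vii) only covers curves of the form $t\mapsto P_{\omega}(u+t\chi)$ with $\chi\in C^{0}(X)$ and the unconstrained envelope; here the relevant curves are $t\mapsto P_{\omega}[\psi_{1}]\big(\max(a-t,b)\big)$ or $t\mapsto P_{\omega}[\psi_{1}]\big((1-t)b+ta\big)$, where the increment $a-b$ is merely bounded (not continuous) and, more importantly, the projection carries the singularity constraint $\preccurlyeq\psi_{1}$, so (vii) simply does not apply, and ``matching derivatives by concavity'' is not an argument. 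What is needed is a dedicated differentiability (or two-sided difference-quotient) lemma, e.g.\ that along $w_{t}:=\max(a-t,b)$ one has $\frac{d}{dt}E_{\psi_{1}}\big(P_{\omega}[\psi_{1}](w_{t})\big)=-\int_{\{a-t>b\}}MA_{\omega}\big(P_{\omega}[\psi_{1}](w_{t})\big)$ together with the analogous formula for $E_{\psi_{2}}(w_{t})$; then Theorem \ref{thm:3.8} gives $\mathbbm{1}_{\{a-t>b\}}MA_{\omega}\big(P_{\omega}[\psi_{1}](w_{t})\big)\leq \mathbbm{1}_{\{a-t>b\}}MA_{\omega}(w_{t})$ and integration in $t$ yields the inequality. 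Proving that derivative formula (via the contact-set concentration and convergence of Monge--Amp\`ere measures along monotone limits) is the technical heart of the proposition and is missing from your proposal. A minor additional inaccuracy: you assert that for minimal-singularity potentials equality holds in Theorem \ref{thm:3.8}; the paper only records equality for continuous potentials with bounded Laplacian, and in general only the inequality is available --- fortunately the inequality is all the argument needs, but you should not rely on the equality.
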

Here we report some properties of the distance $d_{\mathcal{A}}$ and some consequences which will be useful in the sequel.
\begin{prop}
\label{prop:AllNecessary}
The following properties hold:
\begin{itemize}
\item[i)] if $u\in\mathcal{E}^{1}(X,\omega,\psi_{1}), v\in \mathcal{E}^{1}(X,\omega,\psi_{2})$ for $\psi_{1},\psi_{2}\in \overline{\mathcal{A}}$, $\psi_{1}\succcurlyeq \psi_{2}$ then
$$
d_{\mathcal{A}}(u,v)\geq d\big(P_{\omega}[\psi_{2}](u), v\big)
$$
\emph{(Proposition $4.14$, \cite{Tru19})};
\item[ii)] if  $\{\psi_{k}\}_{k\in\mathbbm{N}}\subset \mathcal{M}^{+},\psi\in \mathcal{M}$ with $\psi_{k}\searrow \psi$ (resp. $\psi_{k}\nearrow \psi$ a.e.), $u_{k}\searrow u$, $v_{k}\searrow v$ (resp. $u_{k}\nearrow u$ a.e., $v_{k}\nearrow v$ a.e.) for $u_{k},v_{k}\in \mathcal{E}^{1}(X,\omega,\psi_{k})$, $u,v\in\mathcal{E}^{1}(X,\omega,\psi)$ and $|u_{k}-v_{k}|$ is uniformly bounded, then
$$
d(u_{k},v_{k})\to d(u,v)
$$
\emph{(Lemma $4.6$, \cite{Tru19})};
\item[iii)] if $\{\psi_{k}\}_{k\in\mathbbm{N}}\subset \mathcal{M}^{+},\psi\in \mathcal{M}$ such that $\psi_{k}\to\psi$ monotonically a.e., then for any $\psi'\in \mathcal{M}$ such that $\psi'\succcurlyeq \psi_{k}$ for any $k\gg 1$ big enough, and for any strongly compact set $K\subset \big(\mathcal{E}^{1}(X,\omega,\psi'),d\big)$,
$$
d\big(P_{\omega}[\psi_{k}](\varphi_{1}),P_{\omega}[\psi_{k}](\varphi_{2})\big)\to d\big(P_{\omega}[\psi](\varphi_{1}),P_{\omega}[\psi](\varphi_{2})\big)
$$
uniformly on $K\times K$, i.e. varying $(\varphi_{1},\varphi_{2})\in K\times K$. In particular if $\psi_{k},\psi\in \overline{\mathcal{A}}$ then
\begin{gather*}
d_{\mathcal{A}}\big(P_{\omega}[\psi](u),P_{\omega}[\psi_{k}](u)\big)\to 0\\
d\big(P_{\omega}[\psi_{k}](u),P_{\omega}[\psi_{k}](v)\big)\to d\big(P_{\omega}[\psi](u),P_{\omega}[\psi](v)\big)
\end{gather*}
monotonically for any $(u,v)\in\mathcal{E}^{1}(X,\omega,\psi')\times \mathcal{E}^{1}(X,\omega,\psi')$ \emph{(Proposition $4.5$, \cite{Tru19})};
\item[iv)] $d_{\mathcal{A}}(u_{1},u_{2})\geq |V_{\psi_{1}}-V_{\psi_{2}}|$ if $u_{1}\in\mathcal{E}^{1}(X,\omega,\psi_{1})$, $u_{2}\in\mathcal{E}^{1}(X,\omega,\psi_{2})$ and the equality holds if $u_{1}=\psi_{1}$, $u_{2}=\psi_{2}$ \emph{(by definition of $d_{\mathcal{A}}$, see section \S $4.2$ in \cite{Tru19})}.
\end{itemize}
\end{prop}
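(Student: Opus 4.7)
The proposition assembles four features of $d_{\mathcal{A}}$; my plan is to reduce each one to the defining formula $d(u,v)=E_\psi(u)+E_\psi(v)-2E_\psi(P_\omega(u,v))$, the contraction property of Proposition \ref{prop:PropProie}, and the regularity of $E_\psi$ listed in Proposition \ref{prop:PropertiesE}.

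Parts (i) and (iv) should fall out of the very construction of $d_{\mathcal{A}}$. For $u\in\mathcal{E}^1(X,\omega,\psi_1)$ and $v\in\mathcal{E}^1(X,\omega,\psi_2)$ with $\psi_1\succcurlyeq\psi_2$, the natural ansatz is to build $d_{\mathcal{A}}(u,v)$ as a sum of a horizontal part $d(P_\omega[\psi_2](u),v)$ computed in the common space $\mathcal{E}^1(X,\omega,\psi_2)$ plus a non-negative vertical contribution that accounts for the mass gap $V_{\psi_1}-V_{\psi_2}$. Discarding the vertical part immediately yields (i). For (iv), the vertical part is arranged to realize exactly $|V_{\psi_1}-V_{\psi_2}|$; in the model case $u_i=\psi_i$ the identity $P_\omega[\psi_2](\psi_1)=\psi_2$ makes the horizontal contribution vanish and equality is achieved.

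For (ii), I would use the explicit integral expression
\[
d(u_k,v_k)=E_{\psi_k}(u_k)+E_{\psi_k}(v_k)-2E_{\psi_k}\bigl(P_\omega(u_k,v_k)\bigr).
\]
The uniform bound $|u_k-v_k|\leq C$ together with $u_k,v_k\preccurlyeq\psi_k$ forces $P_\omega(u_k,v_k)$ to have $\psi_k$-relative minimal singularities, so each $E_{\psi_k}$ is given by the finite wedge-product formula of the definition. With $\psi_k\to\psi$, $u_k\to u$, $v_k\to v$ monotonically and $|u_k-\psi_k|,|v_k-\psi_k|$ staying uniformly bounded, continuity of the non-pluripolar product along monotone sequences together with dominated convergence passes each of the three terms to the limit. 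The increasing case needs an additional squeezing argument, employing the upper-semicontinuity in Proposition \ref{prop:PropertiesE}(vi) to match the limit from above and a suitable lower bound (decreasing approximants with minimal singularities) from below.

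Part (iii) reduces to (ii). By Proposition \ref{prop:PropProie} each $P_\omega[\psi_k](\varphi_i)$ lies in $\mathcal{E}^1(X,\omega,\psi_k)$ with $|P_\omega[\psi_k](\varphi_i)-\psi_k|$ controlled by $|\varphi_i-\psi'|$, and the sequences $P_\omega[\psi_k](\varphi_i)$ are monotone in $k$, so (ii) delivers pointwise convergence of the distances. The uniformity over the strongly compact $K\times K$ would then follow from Dini: the function $k\mapsto d(P_\omega[\psi_k](\varphi_1),P_\omega[\psi_k](\varphi_2))$ is monotone in $k$ by the contraction property, and jointly continuous in $(\varphi_1,\varphi_2)\in K\times K$ for each $k$, so pointwise monotone convergence on a compact metric space upgrades to uniform. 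The main obstacle I anticipate is precisely this last step: making the joint continuity in $(\varphi_1,\varphi_2)$ hold uniformly in $k$ requires a quantitative form of the contraction in Proposition \ref{prop:PropProie} whose constants do not degenerate as $\psi_k$ moves within $\overline{\mathcal{A}}$, and this bookkeeping is where the real technical content of \cite{Tru19} enters.
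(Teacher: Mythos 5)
The paper itself gives no proof of this proposition: all four items are quoted from \cite{Tru19} (Proposition $4.14$, Lemma $4.6$, Proposition $4.5$, and the construction of $d_{\mathcal{A}}$ in Section $4.2$ there), and the definition of $d_{\mathcal{A}}$ is deliberately treated as a black box here. Your sketch therefore has to stand on its own, and as written it has genuine gaps. For (i) and (iv) you argue from an ansatz about the shape of the definition (a ``horizontal'' term $d\big(P_{\omega}[\psi_{2}](u),v\big)$ plus a non-negative ``vertical'' mass term): since neither you nor the paper states the actual construction, deriving (i) by ``discarding the vertical part'' is circular --- the decomposition has been assumed precisely so that (i) holds. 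That the paper cites (i) as a proposition of \cite{Tru19}, while only (iv) is ``by definition'', already signals that (i) requires an argument based on the actual construction of $d_{\mathcal{A}}$ and the contraction property of Proposition \ref{prop:PropProie}, not a definitional unwinding; a proof would at minimum have to state the definition and verify it.

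For (ii) there is a concrete error: the bound $|u_{k}-v_{k}|\leq C$ does not force $u_{k}$, $v_{k}$ or $P_{\omega}(u_{k},v_{k})$ to have $\psi_{k}$-relative minimal singularities (take $u_{k}=v_{k}$ any element of $\mathcal{E}^{1}(X,\omega,\psi_{k})$ without minimal singularities), and the uniform bound on $|u_{k}-\psi_{k}|$ you invoke is not among the hypotheses; hence the three energies are not given by the bounded wedge-product formula and ``dominated convergence'' does not apply termwise. What is uniformly bounded is $|u_{k}-P_{\omega}(u_{k},v_{k})|$ and $|v_{k}-P_{\omega}(u_{k},v_{k})|$, so the argument should run through Proposition \ref{prop:PropertiesE}.(viii) applied to these pairs together with Lemma \ref{lem:KeyConv}, after checking that $P_{\omega}(u_{k},v_{k})$ converges monotonically (hence in capacity) to $P_{\omega}(u,v)$. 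For (iii), the reduction to (ii) fails as stated: two arbitrary elements of a strongly compact $K\subset\mathcal{E}^{1}(X,\omega,\psi')$ need not have uniformly bounded difference, so (ii) does not give pointwise convergence on $K\times K$; one should first prove it on pairs from the dense set $P_{\omega}[\psi'](\mathcal{H}_{\omega})$ of Lemma \ref{lem:Density}, whose elements do have $\psi'$-relative minimal singularities, and then extend by the uniform $1$-Lipschitz property of $P_{\omega}[\psi_{k}]$ from Proposition \ref{prop:PropProie} --- which also shows that your worry about degenerating constants is unfounded, since the contraction constant is $1$ independently of $k$. You are moreover implicitly identifying $\lim_{k}P_{\omega}[\psi_{k}](\varphi)$ with $P_{\omega}[\psi](\varphi)$; in the increasing case this is not automatic and needs an argument as in Lemma \ref{lem:Referee} (domination principle). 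Finally, the assertion $d_{\mathcal{A}}\big(P_{\omega}[\psi](u),P_{\omega}[\psi_{k}](u)\big)\to 0$ compares potentials in different singularity classes and cannot be reached by (ii)-type arguments alone; it again requires the actual definition of $d_{\mathcal{A}}$.
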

The following Lemma is a special case of Theorem $2.2$ in \cite{X19} (see also Lemma $4.1.$ in \cite{DDNL17b}).
\begin{lem}[Proposition $2.7$, \cite{Tru19}]
\label{lem:KeyConv}
Let $\{\psi_{k}\}_{k\in\mathbbm{N}}\subset \mathcal{M}^{+},\psi\in\mathcal{M}$ such that $\psi_{k}\to \psi$ monotonically almost everywhere. Let also $u_{k},v_{k}\in\mathcal{E}^{1}(X,\omega,\psi_{k})$ converging in capacity respectively to $u,v\in\mathcal{E}^{1}(X,\omega,\psi)$. Then for any $j=0,\dots,n$
$$
MA_{\omega}(u_{k}^{j},v_{k}^{n-j})\to MA_{\omega}(u^{j},v^{n-j})
$$
weakly. Moreover if $|u_{k}-v_{k}|$ is uniformly bounded, then for any $j=0,\dots,n$
$$
(u_{k}-v_{k})MA_{\omega}(u_{k}^{j},v_{k}^{n-j})\to (u-v)MA_{\omega}(u^{j},v^{n-j})
$$
weakly.
\end{lem}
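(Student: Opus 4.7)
The plan is to truncate both sides by their singularity envelopes, reduce to the case of potentials with relative minimal singularities where a Bedford-Taylor-type convergence applies, and then absorb the truncation error using the full-mass hypothesis on $u$ and $v$.

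Concretely, set $u_k^N := \max(u_k, \psi_k - N)$, $v_k^N := \max(v_k, \psi_k - N)$, $u^N := \max(u, \psi - N)$, $v^N := \max(v, \psi - N)$; each of these has $\psi_k$- (resp. $\psi$-) relative minimal singularities. Since $\psi_k \to \psi$ monotonically (hence in capacity) and taking maxima of sequences converging in capacity produces sequences converging in capacity, we obtain $u_k^N \to u^N$ and $v_k^N \to v^N$ in capacity as $k \to \infty$, with the uniform relative bound $|u_k^N - \psi_k|, |v_k^N - \psi_k| \leq N$. For fixed $N$, a Bedford-Taylor-type argument (using that the bounded perturbations $u_k^N - \psi_k$ and $v_k^N - \psi_k$ converge in capacity, together with the weak convergence $MA_\omega(\psi_k) \to MA_\omega(\psi)$ along the monotone family $\psi_k \in \mathcal{M}$) yields the weak convergence $MA_\omega((u_k^N)^j, (v_k^N)^{n-j}) \to MA_\omega((u^N)^j, (v^N)^{n-j})$ for every $j$.

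To pass to the untruncated statement, I would apply the plurifine locality of the non-pluripolar product: setting $\Omega_{k,N} := \{u_k > \psi_k - N\} \cap \{v_k > \psi_k - N\}$,
$$
\mathbbm{1}_{\Omega_{k,N}} MA_\omega(u_k^j, v_k^{n-j}) = \mathbbm{1}_{\Omega_{k,N}} MA_\omega((u_k^N)^j, (v_k^N)^{n-j}),
$$
and the analogous identity in the limit. Because $u, v \in \mathcal{E}(X,\omega,\psi)$, Theorem \ref{thm:ClassE} gives the total mass $MA_\omega(u^j, v^{n-j})(X) = V_\psi$, so the mass on $X \setminus \Omega_{\infty, N}$ tends to $0$ as $N \to \infty$. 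The uniform-in-$k$ analogue is the main technical point: using that $MA_\omega(u_k^j, v_k^{n-j})(X) = V_{\psi_k} \to V_\psi$, together with capacity convergence $u_k \to u$, $v_k \to v$, one shows that $MA_\omega(u_k^j, v_k^{n-j})(X \setminus \Omega_{k,N})$ can be made small, uniformly in $k$, by choosing $N$ large. A diagonal $\varepsilon/N$ argument then combines the fixed-$N$ convergence with these uniform tail estimates to yield the first assertion.

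For the second assertion, fix $\chi \in C^0(X)$ and observe that, since $|u_k - v_k|$ is uniformly bounded and $u_k - v_k \to u - v$ in capacity, the bounded family $\chi(u_k - v_k)$ converges to $\chi(u - v)$ in capacity. A quasi-continuity approximation of $u - v$ by continuous functions off a set of arbitrarily small capacity, combined with the first assertion and the fact that non-pluripolar measures do not charge pluripolar sets, gives
$$
\int_X \chi(u_k - v_k) MA_\omega(u_k^j, v_k^{n-j}) \longrightarrow \int_X \chi(u - v) MA_\omega(u^j, v^{n-j}).
$$
The main obstacle throughout is the uniform-in-$k$ vanishing of the truncation tails $MA_\omega(u_k^j, v_k^{n-j})(X \setminus \Omega_{k,N})$, which is precisely where capacity convergence of $u_k \to u$ (as opposed to mere weak convergence) becomes essential, since it transfers the relative full-mass property from the limit back to the sequence uniformly.
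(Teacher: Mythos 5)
Note first that the paper does not prove this lemma: it is quoted from [Proposition 2.7, \cite{Tru19}] and described as a special case of Theorem 2.2 in \cite{X19} (see also Lemma 4.1 in \cite{DDNL17b}), so your attempt has to be measured against those proofs, and it has two genuine gaps. The first is the fixed-$N$ step. The truncations $u_k^N=\max(u_k,\psi_k-N)$ have $\psi_k$-relative minimal singularities but are still unbounded (exactly as singular as $\psi_k$), so the classical Bedford--Taylor convergence theorems, which require locally bounded potentials, do not apply to them; there is no citable principle of the form ``bounded relative perturbations converging in capacity plus weak convergence of $MA_{\omega}(\psi_k)$ imply weak convergence of the mixed non-pluripolar products'', and proving such a statement is exactly as hard as the lemma itself. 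Worse, the ingredient you invoke, $MA_{\omega}(\psi_k)\to MA_{\omega}(\psi)$ weakly, is the special case $u_k=v_k=\psi_k$ of the statement being proved (monotone a.e.\ convergence of $\omega$-psh functions gives convergence in capacity), so as written the reduction is circular. The proofs in the cited sources avoid this by truncating at \emph{absolute} levels $\max(u_k,-N)$, where the potentials genuinely are bounded, Bedford--Taylor applies, and one then combines plurifine locality with convergence of total masses.

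The second gap is the tail control, which you yourself flag as the main technical point but do not prove. The natural Chebyshev-type bound for $MA_{\omega}(u_k^j,v_k^{n-j})(X\setminus\Omega_{k,N})$ would need a bound on $\int_X(\psi_k-u_k)\,MA_{\omega}(u_k^j,v_k^{n-j})$ that is uniform in $k$, i.e.\ uniform $E^1$-type bounds, which are not among the hypotheses: convergence in capacity gives no such control, and the measures $MA_{\omega}(u_k^j,v_k^{n-j})$ are not uniformly dominated by capacity without it. The alternative bookkeeping through total masses requires $V_{\psi_k}\to V_{\psi}$, which you assert without proof; this is a nontrivial input, since for a decreasing sequence of arbitrary quasi-psh functions the masses need not converge to the mass of the limit (e.g.\ $\max(\psi,-k)\searrow\psi$ with $V_{\psi}=0$), and the correct statement relies on the $\psi_k$ being model-type envelopes (via $MA_{\omega}(\psi_k)\leq\mathbbm{1}_{\{\psi_k=0\}}MA_{\omega}(0)$ as in Theorem \ref{thm:3.8}, or by Lemma 2.6 in \cite{Tru19}). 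In short, your skeleton (truncation, plurifine locality, quasi-continuity for the second assertion, which is indeed how the statement with the bounded factor $u_k-v_k$ is handled once the first part is known) is the right one, but the two places where the actual proofs of \cite{DDNL17b} and \cite{X19} do the real work --- convergence of the truncated products and the ``no escape of mass'' step --- are left unjustified or rest on a special case of the lemma itself.
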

It is well-known that the set of Kähler potentials $\mathcal{H}_{\omega}:=\{\varphi\in PSH(X,\omega)\cap C^{\infty}(X)\, : \, \omega+dd^{c}\varphi>0\}$ is dense into $\big(\mathcal{E}^{1}(X,\omega),d\big)$. The same holds for $P_{\omega}[\psi](\mathcal{H}_{\omega})$ into $\big(\mathcal{E}^{1}(X,\omega,\psi),d\big)$.
\begin{lem}[Lemma $4.8$, \cite{Tru19}]
\label{lem:Density}
The set $\mathcal{P}_{\mathcal{H}_{\omega}}(X,\omega,\psi):=P_{\omega}[\psi](\mathcal{H})\subset \mathcal{P}(X,\omega,\psi)$ is dense in $\big(\mathcal{E}^{1}(X,\omega,\psi),d\big)$.
\end{lem}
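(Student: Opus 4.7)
The plan is to approximate $u\in\mathcal{E}^{1}(X,\omega,\psi)$ in two stages: first by a decreasing sequence of potentials with $\psi$-relative minimal singularities, and then by elements of $P_{\omega}[\psi](\mathcal{H}_{\omega})$ obtained by applying $P_{\omega}[\psi]$ to Demailly-type regularizations. Throughout I exploit the simple identity $d(u,v)=E_{\psi}(v)-E_{\psi}(u)$ valid whenever $u\leq v$ (since then $P_{\omega}(u,v)=u$), together with Proposition \ref{prop:PropertiesE}(iii) on continuity of $E_{\psi}$ along decreasing sequences.

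For the first stage, I set $u_{k}:=\max(u,\psi-k)$. Since $u\preccurlyeq\psi$, the sequence $u_{k}$ decreases pointwise to $u$, each $u_{k}$ has $\psi$-relative minimal singularities and belongs to $\mathcal{E}^{1}(X,\omega,\psi)$ (being squeezed between $\psi-k$ and $\psi+O(1)$), and
$d(u_{k},u)=E_{\psi}(u_{k})-E_{\psi}(u)\to 0$ by Proposition \ref{prop:PropertiesE}(iii). This reduces the problem to approximating a fixed $v\in\mathcal{E}^{1}(X,\omega,\psi)$ with $\psi$-relative minimal singularities.

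For the second stage, I further truncate: $v^{N}:=\max(v,-N)\searrow v$ is a sequence of bounded $\omega$-psh functions. By Demailly's regularization in the bounded case, each $v^{N}$ admits a decreasing approximation $\varphi_{N,m}\in\mathcal{H}_{\omega}$ with $\varphi_{N,m}\searrow v^{N}$ as $m\to\infty$. The key observation is that, since $v\preccurlyeq\psi$ is itself $\omega$-psh, it is admissible in the sup defining $P_{\omega}[\psi](v)$, so $P_{\omega}[\psi](v)=v$; monotonicity of $P_{\omega}[\psi](\cdot)$ then forces
$$
v=P_{\omega}[\psi](v)\leq P_{\omega}[\psi](v^{N})\leq P_{\omega}[\psi](\varphi_{N,m})\leq \varphi_{N,m}.
$$
A diagonal extraction $\varphi_{j}:=\varphi_{N_{j},m_{j}}$ yields $P_{\omega}[\psi](\varphi_{j})\searrow v$; since $P_{\omega}[\psi](\varphi_{j})\geq v$ implies $P_{\omega}(P_{\omega}[\psi](\varphi_{j}),v)=v$, I obtain $d(P_{\omega}[\psi](\varphi_{j}),v)=E_{\psi}(P_{\omega}[\psi](\varphi_{j}))-E_{\psi}(v)\to 0$ again by Proposition \ref{prop:PropertiesE}(iii). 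A final diagonal combining the two stages produces the desired sequence.

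The main obstacle is the regularization step: passing from a bounded $\omega$-psh function $v^{N}$ to a decreasing sequence in $\mathcal{H}_{\omega}$. Demailly's analytic approximation directly yields only smooth $(1+\epsilon_{m})\omega$-psh approximants, not genuine Kähler potentials; however, in the Kähler setting one can invoke the Błocki--Kołodziej strengthening to obtain strict $\omega$-psh smooth approximants. One should also check that the double diagonal extraction preserves the monotonicity used to apply continuity of $E_{\psi}$ — which is routine since $P_{\omega}[\psi]$ is order-preserving and sandwiched between $v$ and $\varphi_{N,m}$.
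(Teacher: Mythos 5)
Your argument is correct in substance, and its engine is the same as in the paper's source for this lemma ([Tru19], Lemma 4.8): approximate from above by Kähler potentials, push down with $P_{\omega}[\psi](\cdot)$, and conclude via the identity $d(w,v)=E_{\psi}(w)-E_{\psi}(v)$ for $w\geq v$ together with continuity of $E_{\psi}$ along decreasing sequences. The difference is that you insert two truncation layers and a double diagonal that the paper's route does not need: since any $u\in\mathcal{E}^{1}(X,\omega,\psi)$ already satisfies $u\preccurlyeq\psi$, the identity $P_{\omega}[\psi](u)=u$ holds for $u$ itself (not only for potentials with $\psi$-relative minimal singularities), and the Błocki--Kołodziej regularization used throughout the paper applies to an arbitrary $\omega$-psh function, so one can take $\varphi_{j}\in\mathcal{H}_{\omega}$ with $\varphi_{j}\searrow u$ directly and check, exactly as you do for $v^{N}$, that $P_{\omega}[\psi](\varphi_{j})\searrow u$; this makes both $\max(u,\psi-k)$ and $\max(v,-N)$ superfluous. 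One point in your write-up is glossed: the claim that the diagonal $\varphi_{N_{j},m_{j}}$ can be chosen so that $P_{\omega}[\psi](\varphi_{j})$ is genuinely decreasing is not a consequence of $P_{\omega}[\psi]$ being order-preserving alone (the families for different $N$ need not be comparable); it requires a Dini-type selection, e.g. $\varphi_{N+1,m_{N+1}}\leq \varphi_{N,m_{N}}+2^{-N}$ followed by adding constants. Alternatively you can bypass monotonicity of the diagonal entirely: for fixed $N$ one has $P_{\omega}[\psi](\varphi_{N,m})\searrow P_{\omega}[\psi](v^{N})$ and $P_{\omega}[\psi](v^{N})\searrow v$, so choosing $m_{N}$ with $E_{\psi}\big(P_{\omega}[\psi](\varphi_{N,m_{N}})\big)\leq E_{\psi}\big(P_{\omega}[\psi](v^{N})\big)+1/N$ gives $E_{\psi}\big(P_{\omega}[\psi](\varphi_{N,m_{N}})\big)\to E_{\psi}(v)$, and since $P_{\omega}[\psi](\varphi_{N,m_{N}})\geq v$ this is exactly $d\big(P_{\omega}[\psi](\varphi_{N,m_{N}}),v\big)\to 0$. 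With either fix your proof is complete, just longer than necessary.
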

The following Lemma shows that, for $u\in PSH(X,\omega)$ fixed, the map $\mathcal{M}^{+}\ni\psi\to P_{\omega}[\psi](u)$ is weakly continuous over any totally ordered set of model type envelopes that are more singular than $u$.
\begin{lem}
\label{lem:Referee}
Let $u\in PSH(X,\omega)$, and let $\{\psi_{k}\}_{k\in\mathbbm{N}}\subset \mathcal{M}^{+}$ be a totally ordered sequence of model type envelopes converging to $\psi\in\mathcal{M}$. Assume also that $\psi_{k} \preccurlyeq u$ for any $k\gg 1$ big enough. Then $P_{\omega}[\psi_{k}](u)\to P_{\omega}[\psi](u)$ weakly.
\end{lem}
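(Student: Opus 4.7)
The plan is to reduce to two monotone cases, handle bounded $u$ via Proposition \ref{prop:AllNecessary}(iii), and extend to general $u$ by approximation from above by bounded functions. Since on $\mathcal{M}$ the relation $\preccurlyeq$ agrees with pointwise $\leq$ (as $\psi = P_{\omega}[\psi](0)$ is monotone in the candidate set defining it), the totally ordered sequence $\{\psi_k\} \subset \mathcal{M}$ is pointwise monotone, and together with weak convergence to $\psi$ this forces either $\psi_k \searrow \psi$ or $\psi_k \nearrow \psi$ a.e.

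For $u \in L^{\infty} \cap PSH(X,\omega)$, one has $u \in \mathcal{E}^{1}(X,\omega,0)$ and $\psi_k \preccurlyeq 0$ (since $\sup \psi_k \leq 0$ for $\psi_k \in \mathcal{M}$); hence Proposition \ref{prop:AllNecessary}(iii) with $\psi' := 0$ and $\mathcal{A} := \{\psi_k\}_k \cup \{\psi\}$ gives $d_{\mathcal{A}}(P_{\omega}[\psi_k](u), P_{\omega}[\psi](u)) \to 0$, hence weak convergence. For general $u$, set $u_j := \max(u,-j) \searrow u$ (bounded): the bounded case yields $P_{\omega}[\psi_k](u_j) \to P_{\omega}[\psi](u_j)$ weakly for each $j$, while $P_{\omega}[\psi](u_j) \searrow P_{\omega}[\psi](u)$ as $j \to \infty$ follows directly from the definition of $P_{\omega}[\psi]$.

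In the decreasing case $\psi_k \searrow \psi$, put $v := \lim_k P_{\omega}[\psi_k](u)$. Any candidate for $P_{\omega}[\psi](u)$ is a candidate for $P_{\omega}[\psi_k](u)$, so $v \geq P_{\omega}[\psi](u)$; conversely $P_{\omega}[\psi_k](u) \leq P_{\omega}[\psi_k](u_j)$ gives $v \leq \lim_k P_{\omega}[\psi_k](u_j) = P_{\omega}[\psi](u_j)$, and letting $j \to \infty$ closes the sandwich.

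The increasing case $\psi_k \nearrow \psi$ is the main obstacle. With $w := P_{\omega}[\psi](u)$ and $v^* := (\sup_k P_{\omega}[\psi_k](u))^*$, the inequality $v^* \leq w$ is immediate from $P_{\omega}[\psi_k](u) \leq P_{\omega}[\psi](u)$; for the reverse the plan is: (a) to show $P_{\omega}[\psi_k](u) \in \mathcal{E}(X,\omega,\psi_k)$ with $V_{P_{\omega}[\psi_k](u)} = V_{\psi_k}$, using $\psi_k \preccurlyeq u$ and idempotence of the $P$-operator from \cite{DDNL17b}; (b) to apply Lemma \ref{lem:KeyConv} to the monotone increasing sequence $P_{\omega}[\psi_k](u) \nearrow v^*$ to deduce $V_{v^*} = \lim V_{\psi_k} = V_{\psi} = V_w$, so $v^* \in \mathcal{E}(X,\omega,\psi)$; and (c) to invoke Theorem \ref{thm:3.8}, by which $MA_{\omega}(v^*)$ concentrates on $\{v^* = u\} \subset \{w = u\}$, where $w = v^* = u$, yielding $\int (w - v^*)\,MA_{\omega}(v^*) = 0$; Proposition \ref{prop:PropertiesE}(viii) then forces $E_{\psi}(v^*) = E_{\psi}(w)$ and the standard domination principle in $\mathcal{E}^{1}(X,\omega,\psi)$ yields $v^* = w$. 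The delicate technical points are (a) in the unbounded setting, applying Lemma \ref{lem:KeyConv} to an unbounded monotone increasing sequence, and closing the final domination step.
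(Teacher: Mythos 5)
Your reduction to the two monotone cases and the overall skeleton of your increasing case do track the paper, but there are genuine gaps. The first is the step ``$d_{\mathcal{A}}\big(P_{\omega}[\psi_k](u),P_{\omega}[\psi](u)\big)\to 0$, hence weak convergence'' in your bounded case. Proposition \ref{prop:AllNecessary}(iii) does give the $d_{\mathcal{A}}$-convergence, but the passage from $d_{\mathcal{A}}$-convergence to $L^1$-convergence across the different classes $\mathcal{E}^1(X,\omega,\psi_k)$ is not among the quoted properties of $d_{\mathcal{A}}$: it is precisely Proposition \ref{prop:StrongImpliesWeak}, which appears only in Section 5 and whose proof invokes Lemma \ref{lem:Referee} itself (to get $\|P_{\omega}[\psi_k](h_j)-P_{\omega}[\psi](h_j)\|_{L^1}\to 0$), so using it here is circular; Proposition \ref{prop:L1} only covers two potentials in the \emph{same} class. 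Worse, if $V_{\psi}=0$ the fiber $\mathcal{E}^1(X,\omega,\psi)$ is collapsed to the point $P_{\psi_{\min}}$, so $d_{\mathcal{A}}$-convergence cannot even distinguish the target function $P_{\omega}[\psi](u)$. Since your decreasing case is built on this bounded case (via $u_j=\max(u,-j)$), it inherits the gap; note the detour is unnecessary, as the decreasing case is a direct sandwich from $P_{\omega}[\psi](u)\le P_{\omega}[\psi_k](u)\le P_{\omega}[\psi_k](\sup_X u)=\psi_k+\sup_X u$, which is exactly what the paper does.

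In the increasing case your plan parallels the paper (class membership, concentration of the Monge--Amp\`ere measure, domination principle), but two steps fail as stated. First, Theorem \ref{thm:3.8} cannot be invoked directly for $v^{*}$: $v^{*}$ is not known to be an envelope $P_{\omega}[\cdot](u)$ (that $v^{*}=P_{\omega}[\psi](u)$ is what you are proving). One must apply it to each approximant, writing $P_{\omega}[\psi_k](u)=P_{\omega}[\psi_k]\big(P_{\omega}[\psi](u)\big)$ by Proposition \ref{prop:PropProie}, note $\{P_{\omega}[\psi_k](u)=w\}\subset\{v^{*}=w\}$ where $w:=P_{\omega}[\psi](u)$, and pass to the limit of the measures against the fixed dominating measure $\mathbbm{1}_{\{v^{*}=w\}}MA_{\omega}(w)$; moreover Lemma \ref{lem:KeyConv} presupposes the limit already lies in $\mathcal{E}^1(X,\omega,\psi)$, so the weak convergence $MA_{\omega}\big(P_{\omega}[\psi_k](u)\big)\to MA_{\omega}(v^{*})$ should instead be taken from Theorem 2.3 in \cite{DDNL17b} (the mass identity $V_{v^{*}}=V_{\psi}$ follows anyway from $\psi_k\preccurlyeq v^{*}\preccurlyeq\psi$ and monotonicity of masses). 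Second, your finish via Proposition \ref{prop:PropertiesE}(viii) and a domination principle ``in $\mathcal{E}^1(X,\omega,\psi)$'' needs $w,v^{*}\in\mathcal{E}^1(X,\omega,\psi)$, which is not guaranteed: for instance for $\psi=0$ one has $w=P_{\omega}[0](u)=u$, which may lie in $\mathcal{E}(X,\omega)\setminus\mathcal{E}^1(X,\omega)$, so both relative energies may be $-\infty$ and (viii) gives nothing. The correct tool is the domination principle in the full-mass class $\mathcal{E}(X,\omega,\psi)$ (Proposition 3.11 in \cite{DDNL17b}), applied to $0\le\int_X(w-v^{*})MA_{\omega}(v^{*})\le\int_{\{v^{*}=w\}}(w-v^{*})MA_{\omega}(w)=0$, which is how the paper concludes.
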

\begin{proof}
As $\{\psi_{k}\}_{k\in\mathbbm{N}}$ is totally ordered, without loss of generality we may assume that $\psi_{k}\to \psi$ monotonically almost everywhere. Set $\tilde{u}:=\lim_{k\to \infty}P_{\omega}[\psi_{k}](u)$, and we want to prove that $\tilde{u}=P_{\omega}[\psi](u)$.\newline
Suppose $\psi_{k}\searrow \psi$. It is immediate to check that $P_{\omega}[\psi_{k}](u)\leq P_{\omega}[\psi_{k}](\sup_{X}u)=\psi_{k}+\sup_{X}u$, which implies $\tilde{u}\leq \psi+\sup_{X}u$ letting $k\to +\infty$. Thus, $\tilde{u}\leq P_{\omega}[\psi](u)$ as the inequality $\tilde{u}\leq u$ is trivial. Moreover, since $\psi\leq \psi_{k}$ we also have $P_{\omega}[\psi](u)\leq P_{\omega}[\psi_{k}](u)$, which clearly yields $P_{\omega}[\psi](u)\leq \tilde{u}$ and concludes this part.\newline
Suppose $\psi_{k}\nearrow \psi$. Then the inequality $\tilde{u}\leq P_{\omega}[\psi](u)$ is immediate. Next, combining Theorem \ref{thm:3.8} and Proposition \ref{prop:PropProie} we have
\begin{multline*}
MA_{\omega}\big(P_{\omega}[\psi_{k}](u)\big)=MA_{\omega}\Big(P_{\omega}[\psi_{k}]\big(P_{\omega}[\psi](u)\big)\Big)\leq \mathbbm{1}_{\{P_{\omega}[\psi_{k}](u)=P_{\omega}[\psi](u)\}}MA_{\omega}\big(P_{\omega}[\psi](u)\big)\leq\\
\leq\mathbbm{1}_{\{\tilde{u}=P_{\omega}[\psi](u)\}}MA_{\omega}\big(P_{\omega}[\psi](u)\big)
\end{multline*}
where the last inequality follows from $P_{\omega}[\psi_{k}](u)\leq \tilde{u}\leq P_{\omega}[\psi](u)$. Thus, as $MA_{\omega}\big(P_{\omega}[\psi_{k}](u)\big)\to MA_{\omega}(\tilde{u})$ weakly by Theorem $2.3$ in \cite{DDNL17b}, we deduce that $\tilde{u}\in \mathcal{E}(X,\omega,\psi)$ and that $MA_{\omega}(\tilde{u})\leq\mathbbm{1}_{\{\tilde{u}=P_{\omega}[\psi](u)\}}MA_{\omega}\big(P_{\omega}[\psi](u)\big)$. Moreover we also have $P_{\omega}[\psi](u)\in \mathcal{E}(X,\omega,\psi)$. Indeed $P_{\omega}[\psi](u)\leq P_{\omega}[\psi](\sup_{X}u)=\psi+\sup_{X}$, i.e. $P_{\omega}[\psi](u)\preccurlyeq \psi$, while $P_{\omega}[\psi](u)\geq P_{\omega}[\psi](\psi_{k}-C_{k})=\psi_{k}-C_{k}$ for non-negative constants $C_{k}$ and for any $k\gg 1$ big enough as $u,\psi$ are less singular than $\psi_{k}$. Thus $P_{\omega}[\psi](u)\succcurlyeq \psi_{k}$ for any $k$, which yields $\int_{X}MA_{\omega}\big(P_{\omega}[\psi](u)\big)\geq V_{\psi}>0$ and gives $P_{\omega}[\psi](u)\in\mathcal{E}(X,\omega,\psi)$. Hence
$$
0\leq \int_{X}\big(P_{\omega}[\psi](u)-\tilde{u}\big)MA_{\omega}(\tilde{u})\leq \int_{\{\tilde{u}=P_{\omega}[\psi](u)\}}\big(P_{\omega}[\psi](u)-\tilde{u}\big)MA_{\omega}\big(P_{\omega}[\psi](u)\big)=0,
$$
which by the domination principle of Proposition $3.11$ in \cite{DDNL17b} implies $\tilde{u}\geq P_{\omega}[\psi](u)$ and concludes the proof.
\end{proof}
\section{Tools.}
\label{sec:Tools}
In this section we collect some uniform estimates on $\mathcal{E}^{1}(X,\omega,\psi)$ for $\psi\in\mathcal{M}^{+}$, we recall the $\psi$-relative capacity and we will prove the upper semicontinuity of $E_{\cdot}(\cdot)$ on $X_{\mathcal{A}}$.
\subsection{Uniform estimates.}
Let $\psi\in\mathcal{M}^{+}$.\\
We first define in the $\psi$-relative setting the analogous of some well-known functionals of the variational approach (see \cite{BBGZ09} and reference therein).\\

We introduce respectively the \emph{$\psi$-relative $I$-functional} and the \emph{$\psi$-realtive $J$-functional} (see also \cite{Aub84}) $ I_{\psi}, J_{\psi}:\mathcal{E}^{1}(X,\omega,\psi)\times \mathcal{E}^{1}(X,\omega,\psi)\to \mathbbm{R} $ where $\psi\in\mathcal{M}^{+}$ as
\begin{gather*}
I_{\psi}(u,v):=\int_{X}(u-v)\big(MA_{\omega}(v)-MA_{\omega}(u)\big),\\
J_{\psi}(u,v):=J^{\psi}_{u}(v):=E_{\psi}(u)-E_{\psi}(v)+\int_{X}(v-u)MA_{\omega}(u).
\end{gather*}
They assume non-negative values by Proposition \ref{prop:PropertiesE}, $I_{\psi}$ is clearly symmetric while $J_{\psi}$ is convex again by Proposition \ref{prop:PropertiesE}.
Moreover the $\psi$-relative $I$ and $J$ functionals are related each other by the following result.
\begin{lem}
\label{lem:Related}
Let $u,v\in\mathcal{E}^{1}(X,\omega,\psi)$. Then
\begin{itemize}
\item[(i)] $\frac{1}{n+1}I_{\psi}(u,v)\leq J^{\psi}_{u}(v)\leq \frac{n}{n+1}I_{\psi}(u,v)$;
\item[(ii)] $\frac{1}{n} J_{u}^{\psi}(v)\leq J^{\psi}_{v}(u)\leq n J_{u}^{\psi}(v)$.
\end{itemize}
In particular
$$
d(\psi,u)\leq n J^{\psi}_{u}(\psi)+\big(||\psi||_{L^{1}}+||u||_{L^{1}}\big)
$$
for any $u\in\mathcal{E}^{1}(X,\omega,\psi)$ such that $u\leq \psi$.
\end{lem}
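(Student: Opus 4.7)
The plan is to obtain (i) by a direct telescoping computation relying on Proposition~\ref{prop:PropertiesE}(viii), to deduce (ii) as a formal consequence of (i) together with the symmetry of $I_{\psi}$, and to derive the final inequality by expressing $d(\psi,u)$ in terms of $E_{\psi}$ and then applying (ii) combined with the mass bound $MA_{\omega}(\psi) \leq \omega^{n}$ coming from Theorem~\ref{thm:3.8}.

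For (i), I will set $a_{j} := \int_{X}(u-v)\, MA_{\omega}(u^{j},v^{n-j})$ for $j = 0,\dots,n$ and invoke Proposition~\ref{prop:PropertiesE}(viii), which both gives the identity $E_{\psi}(u) - E_{\psi}(v) = \frac{1}{n+1}\sum_{j=0}^{n} a_{j}$ and asserts that $j \mapsto a_{j}$ is non-increasing. Rewriting $J_{u}^{\psi}(v)$ in these terms yields $J_{u}^{\psi}(v) = \frac{1}{n+1}\sum_{j=0}^{n-1}(a_{j} - a_{n})$, whereas by definition $I_{\psi}(u,v) = a_{0} - a_{n}$. Since $0 \leq a_{j} - a_{n} \leq a_{0} - a_{n}$ for $0 \leq j \leq n-1$, termwise summation of these bounds produces exactly (i). Part (ii) is then automatic: applying (i) with the roles of $u$ and $v$ swapped and using $I_{\psi}(u,v) = I_{\psi}(v,u)$ shows that $J_{v}^{\psi}(u)$ lies in the same bracket $[\tfrac{1}{n+1}I_{\psi}(u,v),\, \tfrac{n}{n+1}I_{\psi}(u,v)]$, so comparing worst cases gives $\tfrac{1}{n} J_{u}^{\psi}(v) \leq J_{v}^{\psi}(u) \leq n\, J_{u}^{\psi}(v)$; the degenerate case $J_{u}^{\psi}(v) = 0$ forces $I_{\psi}(u,v) = 0$ and hence $J_{v}^{\psi}(u) = 0$, so it is trivial.

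For the final inequality, assuming $u \leq \psi$, the function $u$ is itself the largest $\omega$-psh function dominated by $\min(u,\psi) = u$, so $P_{\omega}(\psi, u) = u$; together with $E_{\psi}(\psi) = 0$, which is obvious from the integral defining $E_{\psi}$, this yields $d(\psi, u) = -E_{\psi}(u)$. Unpacking the definition of $J_{\psi}^{\psi}(u)$ gives the identity $-E_{\psi}(u) = J_{\psi}^{\psi}(u) + \int_{X}(\psi - u)\, MA_{\omega}(\psi)$. The first term is bounded by $n\, J_{u}^{\psi}(\psi)$ thanks to (ii). For the second, Theorem~\ref{thm:3.8} implies $MA_{\omega}(\psi) \leq MA_{\omega}(0) = \omega^{n}$, and since $\psi - u \geq 0$ we obtain $\int_{X}(\psi - u)\, MA_{\omega}(\psi) \leq \int_{X}(\psi - u)\, \omega^{n} \leq \|\psi\|_{L^{1}} + \|u\|_{L^{1}}$. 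Adding these two bounds gives the claim. There is no real obstacle; the whole argument is essentially bookkeeping around Proposition~\ref{prop:PropertiesE}(viii), and the only slightly delicate step is the conversion of the Monge-Amp\`ere integral against $MA_{\omega}(\psi)$ into an $L^{1}$ quantity via the mass domination from Theorem~\ref{thm:3.8}.
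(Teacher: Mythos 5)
Your proof is correct and takes essentially the same route as the paper: both rely on Proposition \ref{prop:PropertiesE}$.(viii)$ (the formula for $E_{\psi}(u)-E_{\psi}(v)$ together with the monotonicity of $j\mapsto \int_{X}(u-v)MA_{\omega}(u^{j},v^{n-j})$) to obtain (i) and (ii), and then set $v=\psi$, apply the second inequality of (ii), and control $\int_{X}(\psi-u)MA_{\omega}(\psi)$ via $MA_{\omega}(\psi)\leq MA_{\omega}(0)$ from Theorem \ref{thm:3.8}. One small point of phrasing: the lower bound in (i) does not follow by summing the termwise bound $a_{j}-a_{n}\geq 0$; it follows because the $j=0$ summand is exactly $I_{\psi}(u,v)$ and the remaining summands are nonnegative.
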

\begin{proof}
By Proposition \ref{prop:PropertiesE} it follows that
\begin{multline*}
n\int_{X}(u-v)MA_{\omega}(u) +\int_{X}(u-v)MA_{\omega}(v)\leq\\
\leq (n+1) \big(E_{\psi}(u)-E_{\psi}(v)\big)\leq \int_{X}(u-v)MA_{\omega}(u)+n\int_{X}(u-v)MA_{\omega}(v)
\end{multline*}
for any $u,v\in\mathcal{E}^{1}(X,\omega,\psi)$, which yields $(i)$ and $(ii)$.\\
Next considering $v=\psi$ and assuming $u\leq \psi$ from the second inequality in $(ii)$ we obtain
$$
d(u,\psi)=-E_{\psi}(u)\leq nJ^{\psi}_{u}(\psi)+\int_{X}(\psi-u)MA_{\omega}(\psi),
$$
which implies the assertion since $MA_{\omega}(\psi)\leq MA_{\omega}(0)$ by Theorem \ref{thm:3.8}. 
\end{proof}
We can now proceed showing the uniform estimates, adapting some results in \cite{BBGZ09}.
\begin{lem}[Lemma $3.7$, \cite{Tru19}]
\label{lem:UniEst}
Let $\psi\in\mathcal{M}^{+}$. Then there exists positive constants $A>1$, $B>0$ depending only on $n,\omega$ such that for any $u\in\mathcal{E}^{1}(X,\omega,\psi)$
$$
-d(\psi,u)\leq V_{\psi}\sup_{X}(u-\psi)=V_{\psi}\sup_{X}u\leq Ad(\psi,u)+B
$$
\end{lem}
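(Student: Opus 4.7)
The plan is to prove the chain of inequalities in three logically separate pieces, using the model-envelope structure and the comparison identities from Proposition \ref{prop:PropertiesE} together with the ``in particular'' part of Lemma \ref{lem:Related}.

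First, I would handle the middle equality $\sup_X(u-\psi)=\sup_X u$. The direction $\sup_X u\leq\sup_X(u-\psi)$ is immediate: from $u\leq\psi+\sup_X(u-\psi)$ and $\psi\leq 0$ (since $\psi=P_{\omega}[\psi](0)\in\mathcal{M}^+$ forces $\sup_X\psi=0$) one gets $u\leq\sup_X(u-\psi)$. The reverse relies on the model-envelope property: the attainment of $\sup_X\psi=0$ together with $u\in\mathcal{E}^{1}(X,\omega,\psi)$ forces the supremum of $u-\psi$ to be reached at points where $\psi=0$, so $\sup_X(u-\psi)\leq\sup_X u$. I would then dispatch the lower bound $-d(\psi,u)\leq V_\psi\sup_X u$ by two observations. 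First, Proposition \ref{prop:PropertiesE}(viii) with $v=\psi$ and Theorem \ref{thm:3.8} (giving $MA_{\omega}(\psi)\leq\mathbbm{1}_{\{\psi=0\}}MA_\omega(0)$) yield
\[
E_\psi(u)\leq \int_X(u-\psi)MA_\omega(\psi)=\int_{\{\psi=0\}}u\, MA_\omega(\psi)\leq V_\psi\sup_X u.
\]
Second, writing $d(\psi,u)+E_\psi(u)=2(E_\psi(u)-E_\psi(P_\omega(\psi,u)))\geq 0$ (using monotonicity of $E_\psi$, Proposition \ref{prop:PropertiesE}(i), with $P_\omega(\psi,u)\leq u$) gives $-d(\psi,u)\leq E_\psi(u)$, and chaining with the previous bound yields the lower estimate.

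For the upper bound $V_\psi\sup_X u\leq A\, d(\psi,u)+B$, the case $\sup_X u\leq 0$ is trivial, so I may assume $c:=\sup_X u>0$. The key reduction is to set $\tilde u:=u-c$; then $\tilde u\leq\psi$ and $\sup_X\tilde u=0$ by the equality established above. Applying the ``in particular'' statement of Lemma \ref{lem:Related} to $\tilde u$ gives
\[
d(\psi,\tilde u)\leq n J^{\psi}_{\tilde u}(\psi)+\|\psi\|_{L^1}+\|\tilde u\|_{L^1}.
\]
I now exploit three facts: (i) $d(\psi,\tilde u)=-E_\psi(\tilde u)=V_\psi c-E_\psi(u)$, using $E_\psi(u-c)=E_\psi(u)-cV_\psi$; (ii) translation invariance $J^{\psi}_{\tilde u}(\psi)=J^{\psi}_u(\psi)$, which is a direct computation from $MA_\omega(u-c)=MA_\omega(u)$; and (iii) the standard $L^1$-compactness of $\omega$-psh functions with vanishing supremum gives $\|\psi\|_{L^1},\|\tilde u\|_{L^1}\leq C_0(\omega)$. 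Combining with $E_\psi(u)\leq d(\psi,u)$ from the first paragraph produces
\[
V_\psi c\leq n J^{\psi}_u(\psi)+d(\psi,u)+2C_0(\omega).
\]

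The main obstacle is the final bound on $J^{\psi}_u(\psi)$ by a linear function of $d(\psi,u)$. My plan is to use Lemma \ref{lem:Related}(i), $J^{\psi}_u(\psi)\leq\tfrac{n}{n+1}I_\psi(u,\psi)$, together with the decreasing-integral property of Proposition \ref{prop:PropertiesE}(viii). Applied to $\tilde u\leq\psi$, the identity $(n+1)d(\psi,\tilde u)=\sum_{j=0}^{n}\int_X(\psi-\tilde u)MA_\omega(\tilde u^{j},\psi^{n-j})$ with non-negative and monotone summands forces the maximal ($j=n$) term to satisfy $\int_X(\psi-\tilde u)MA_\omega(\tilde u)\leq(n+1)d(\psi,\tilde u)$, hence $I_\psi(\tilde u,\psi)\leq(n+1)d(\psi,\tilde u)$ and $J^{\psi}_u(\psi)=J^{\psi}_{\tilde u}(\psi)\leq n\,d(\psi,\tilde u)$. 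The delicate point is that the naive use $d(\psi,\tilde u)\leq d(\psi,u)+d(u,\tilde u)=d(\psi,u)+V_\psi c$ leads to a circular estimate; the resolution I would pursue is to replace this triangle bound by a finer comparison that keeps $V_\psi c$ out of the right-hand side, for instance by using the $L^1$-controlled quantity $\int_{X}(\psi-\tilde u)MA_\omega(\psi)\leq\|\tilde u\|_{L^1}\leq C_0(\omega)$ already appearing in Lemma \ref{lem:Related}'s proof, and absorbing the $V_\psi c$ coefficient after rearrangement. Carrying this through yields universal constants $A>1$ and $B>0$ depending only on $n$ (from the multiplicative factors in Proposition \ref{prop:PropertiesE}(viii) and Lemma \ref{lem:Related}) and $\omega$ (through $C_0(\omega)$), completing the proof.
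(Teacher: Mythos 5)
Your treatment of the lower bound is fine: $E_{\psi}(u)\leq \int_X(u-\psi)MA_{\omega}(\psi)\leq V_{\psi}\sup_X u$ via Proposition \ref{prop:PropertiesE}(viii) and Theorem \ref{thm:3.8}, together with $-d(\psi,u)\leq E_{\psi}(u)$, is exactly right. Two remarks on the rest. First, your justification of $\sup_X(u-\psi)=\sup_X u$ ("the sup of $u-\psi$ is forced to be reached where $\psi=0$") is not an argument; the correct one-liner is that $u-\sup_X u$ is $\omega$-psh, nonpositive and more singular than $\psi$, hence $u-\sup_X u\leq P_{\omega}[\psi](0)=\psi$, which gives $\sup_X(u-\psi)\leq \sup_X u$ (the reverse inequality follows from $\psi\leq 0$). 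This is fixable. Second, and this is the genuine gap: the upper bound is never proved. You correctly reduce to $V_{\psi}c\leq nJ^{\psi}_u(\psi)+d(\psi,u)+2C_0(\omega)$ and correctly observe that bounding $J^{\psi}_u(\psi)=J^{\psi}_{\tilde u}(\psi)$ through $d(\psi,\tilde u)=V_{\psi}c-E_{\psi}(u)$ is circular; but the proposed rescue does not work. The quantity you propose to exploit, $\int_X(\psi-\tilde u)MA_{\omega}(\psi)\leq C_0(\omega)$, is the harmless term: what dominates $J^{\psi}_{\tilde u}(\psi)$ (equivalently $I_{\psi}(\tilde u,\psi)$) is $\int_X(\psi-\tilde u)MA_{\omega}(\tilde u)$, which has no a priori bound independent of $c$ — it is of size $d(\psi,\tilde u)\approx V_{\psi}c+|E_{\psi}(u)|$. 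Concretely, the only available estimate without already knowing the lemma is $J^{\psi}_{\tilde u}(\psi)\leq n\,d(\psi,\tilde u)\leq n\big(V_{\psi}c+d(\psi,u)\big)$, which plugged into your inequality yields $V_{\psi}c\leq n^{2}V_{\psi}c+(n^{2}+1)d(\psi,u)+2C_0$; since $n^{2}\geq 1$ the $V_{\psi}c$ term cannot be absorbed by rearrangement. Note also that the target inequality $J^{\psi}_u(\psi)\leq A'd(\psi,u)+B'$ is in fact equivalent to the lemma you are trying to prove, so no amount of reshuffling of Lemma \ref{lem:Related} will produce it; Lemma \ref{lem:Related} bounds $d$ \emph{from above} by $J$, whereas here one needs a lower bound on $d$ in terms of $\sup_X u$.

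What is missing is a new input that discards the negative part of $u-\psi$. The natural route (essentially that of Lemma $3.7$ in \cite{Tru19}, which this paper only cites) is: with $c:=\sup_X u\geq 0$ set $v:=\max(u,\psi)$, so that $\psi\leq v\leq \psi+c$ and $\sup_X v=c$; since $v-c$ is $\omega$-psh with vanishing supremum, the uniform $L^{1}$ bound gives $\int_X(c-v)\omega^{n}\leq C_0(\omega)$, and since $MA_{\omega}(\psi)\leq \mathbbm{1}_{\{\psi=0\}}\omega^{n}$ one gets $\int_X(v-\psi)MA_{\omega}(\psi)\geq cV_{\psi}-C_0$; on the other hand all the terms of $E_{\psi}(v)$ are non-negative and the $j=0$ term is the largest, so $(n+1)E_{\psi}(v)\geq \int_X(v-\psi)MA_{\omega}(\psi)$, i.e. $(n+1)\,d(\psi,v)\geq cV_{\psi}-C_0$. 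One then concludes with the monotonicity of the distance under taking the maximum with $\psi$, $d(\psi,\max(u,\psi))\leq d(\psi,u)$, a contraction-type property established in \cite{Tru19} (it is not among the facts you invoke, and it is precisely the ingredient your rearrangement strategy was trying to do without). Without this, or some equivalent device, your proof of the right-hand inequality does not close.
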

\begin{rem}
\label{rem:Usef}
\emph{As a consequence of Lemma \ref{lem:UniEst} if $d(\psi,u)\leq C$ then $\sup_{X}u\leq (AC+B)/V_{\psi}$ while $-E_{\psi}(u)=d(\psi+(AC+B)/V_{\psi},u)-(AC+B)\leq d(\psi,u)\leq C$, i.e. $u\in \mathcal{E}^{1}_{D}(X,\omega,\psi)$ where $D:=\max\big(C,(AC+B)/V_{\psi}\big)$. Conversely, it is easy to check that $d(u,\psi)\leq C(2V_{\psi}+1) $ for any $u\in\mathcal{E}^{1}_{C}(X,\omega,\psi)$ using the definitions and the triangle inequality}.
\end{rem}
\begin{prop}
\label{prop:ForL1}
Let $C\in\mathbbm{R}_{>0}$. Then there exists a continuous increasing function $f_{C}:\mathbbm{R}_{\geq 0}\to \mathbbm{R}_{\geq 0}$ depending only on $C,\omega,n$ with $f_{C}(0)=0$ such that
\begin{gather}
\label{eqn:Second2}
\Big|\int_{X}(u-v)\big(MA_{\omega}(\varphi_{1})-MA_{\omega}(\varphi_{2})\big)\Big|\leq  f_{C}\big(d(u,v)\big)
\end{gather}
for any $u,v,\varphi_{1},\varphi_{2}\in\mathcal{E}^{1}(X,\omega,\psi)$ with $d(u,\psi),d(v,\psi),d(\varphi_{1},\psi),d(\varphi_{2},\psi)\leq C$.
\end{prop}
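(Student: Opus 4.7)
My plan is to adapt the standard Cauchy–Schwarz iteration of \cite{BBGZ09} to the $\psi$-relative setting, combined with approximation by potentials of $\psi$-relative minimal singularities and the integration by parts formula of \cite{X19} (as recorded in Proposition $2.7$ of \cite{Tru19}).

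The first step is to reduce to the case where $u, v, \varphi_1, \varphi_2$ all have $\psi$-relative minimal singularities, via the canonical truncations $u^{(j)} := \max(u, \psi - j)$ and analogously for $v, \varphi_1, \varphi_2$. These decrease to their targets, and by Proposition \ref{prop:PropertiesE}(ii)--(iii) the $\psi$-relative energies (hence the distances to $\psi$) converge, so the bound $d(\,\cdot\,, \psi)\leq C$ is preserved up to an arbitrarily small error. Since decreasing sequences converge in capacity, Lemma \ref{lem:KeyConv} gives weak convergence of $(u^{(j)} - v^{(j)})(MA_\omega(\varphi_1^{(j)}) - MA_\omega(\varphi_2^{(j)}))$, and Proposition \ref{prop:AllNecessary}(ii) gives $d(u^{(j)}, v^{(j)}) \to d(u,v)$, so the desired inequality with an $f_C$ independent of $j$ passes to the limit.

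In the minimal-singularity case, I write $MA_\omega(\varphi_1) - MA_\omega(\varphi_2) = dd^c(\varphi_1 - \varphi_2)\wedge T$ with $T := \sum_{k=0}^{n-1}\omega_{\varphi_1}^k\wedge\omega_{\varphi_2}^{n-1-k}$ and $\omega_\cdot := \omega + dd^c(\,\cdot\,)$, and integrate by parts:
$$\int_X(u-v)\bigl(MA_\omega(\varphi_1) - MA_\omega(\varphi_2)\bigr) = -\int_X d(u-v)\wedge d^c(\varphi_1-\varphi_2)\wedge T.$$
Cauchy–Schwarz for positive currents, applied summand by summand in $T$, then bounds the left-hand side in absolute value by $C_n\sqrt{A\,B}$, with $B := I_\psi(\varphi_1,\varphi_2)$ and $A := \int_X d(u-v)\wedge d^c(u-v)\wedge T$. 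For $w := P_\omega(\varphi_1,\varphi_2)$, Proposition \ref{prop:PropertiesE}(ix) at $(w, \varphi_i)$ gives $I_\psi(\varphi_i, w) \leq (n+1)(E_\psi(\varphi_i) - E_\psi(w))$; together with a Cauchy–Schwarz quasi-triangle inequality $I_\psi(\varphi_1, \varphi_2) \leq C_n'(I_\psi(\varphi_1, w) + I_\psi(\varphi_2, w))$ this yields $B \leq C_n''\, d(\varphi_1, \varphi_2) \leq 2 C_n''\, C$. The factor $A$ still carries the currents $\omega_{\varphi_i}$, and here I would iterate the same Cauchy–Schwarz trick, at each stage replacing one $\omega_{\varphi_i}$-factor by $\omega_u$, $\omega_v$, or $\omega$ at the cost of another $\sqrt{I_\psi(\varphi_i, \psi)}$-type factor (itself controlled by $C$). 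After at most $n-1$ iterations the wedge reduces to a polynomial in $\omega_u, \omega_v, \omega$, and $A$ is dominated by $I_\psi(u,v)^{1/2^m}$ for some $m \leq n-1$. Applying the same argument as for $B$ gives $I_\psi(u,v) \leq C_n''\, d(u,v)$, so collecting constants one finds
$$\Bigl|\int_X(u-v)\bigl(MA_\omega(\varphi_1) - MA_\omega(\varphi_2)\bigr)\Bigr| \leq C'(C, n, \omega)\, d(u,v)^{\alpha}$$
for some $\alpha = \alpha(n) > 0$. The function $f_C(t) := C'(C,n,\omega)\, t^{\alpha}$ is continuous, increasing, with $f_C(0) = 0$, and finishes the proof upon passage to the limit.

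The main obstacle is organising the Cauchy–Schwarz iteration so that every intermediate integral is dominated by $I_\psi$-type quantities and the $d$-distance bounds, rather than by sup-norms such as $\sup|u^{(j)} - \psi|$ which diverge along the truncations. The systematic use of Proposition \ref{prop:PropertiesE}(viii)--(ix) is what makes this work: each invocation allows one to replace an integral of a difference of potentials against a mixed Monge–Ampère measure by a difference of $E_\psi$, and hence ultimately by the distance $d$.
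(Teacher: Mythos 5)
Your overall strategy is the paper's: after normalizing, an integration-by-parts/Cauchy--Schwarz iteration in the spirit of \cite{BBGZ09} (made legitimate for relative finite-energy potentials by \cite{X19}) bounds the integral by $I_{\psi}$-type quantities whose constants depend only on the energy bound $C$, and the rooftop envelope converts those $I_{\psi}$-quantities into the distance $d$. Indeed the paper, after subtracting $(AC+B)/V_{\psi}$ via Lemma \ref{lem:UniEst}, quotes Lemma $5.8$ of \cite{BBGZ09} verbatim to get $\big|\int_{X}(\tilde{u}-\tilde{v})(MA_{\omega}(\varphi_{1})-MA_{\omega}(\varphi_{2}))\big|\leq I_{\psi}(\tilde{u},\tilde{v})+h_{D}\big(I_{\psi}(\tilde{u},\tilde{v})\big)$ and then splits $\tilde{u}-\tilde{v}$ through $P_{\omega}(\tilde{u},\tilde{v})$. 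So the core of your proposal is an unpacking of that quoted lemma rather than a different route; what you must still supply is exactly its content, namely that every intermediate term in the iteration is controlled by relative energies (for which Lemma \ref{lem:UniBounded} is the right tool) and not by sup-norms.

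Two specific steps are not justified as written. First, the reduction to $\psi$-relative minimal singularities: Lemma \ref{lem:KeyConv} concerns $(u_{k}-v_{k})MA_{\omega}(u_{k}^{j},v_{k}^{n-j})$, where the integrand and the measure are built from the \emph{same} pair, and it requires $|u_{k}-v_{k}|$ uniformly bounded; in your limit passage the measures come from $\varphi_{i}^{(j)}$ while the integrand comes from $u^{(j)},v^{(j)}$, and $|u^{(j)}-v^{(j)}|$ is only dominated by $|u-v|$, which is unbounded, so the cited lemma does not apply (the same mismatch affects your appeal to Proposition \ref{prop:AllNecessary}(ii), though $d(u^{(j)},v^{(j)})\to d(u,v)$ does hold simply by the triangle inequality, truncations converging strongly). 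The cleanest fix is the paper's: skip the truncation altogether, since the integration-by-parts formula of \cite{X19} is valid for finite-energy potentials after merely subtracting constants. Second, you twice invoke a quasi-triangle inequality for $I_{\psi}$ (to bound $B=I_{\psi}(\varphi_{1},\varphi_{2})$ and to get $I_{\psi}(u,v)\lesssim d(u,v)$); this is not established in the relative setting in the paper or in the results you may quote, and while it is provable by the same machinery, it can be avoided: apply the core estimate to the comparable pairs $(\tilde{u},P_{\omega}(\tilde{u},\tilde{v}))$ and $(\tilde{v},P_{\omega}(\tilde{u},\tilde{v}))$, for which $I_{\psi}(w_{1},w_{2})\leq\int_{X}(w_{1}-w_{2})MA_{\omega}(w_{2})\leq (n+1)d(w_{1},w_{2})$ and $d(\tilde{u},P_{\omega}(\tilde{u},\tilde{v}))+d(\tilde{v},P_{\omega}(\tilde{u},\tilde{v}))=d(u,v)$ follow from Proposition \ref{prop:PropertiesE} alone, and bound the remaining $\varphi$-factor by Lemma \ref{lem:UniBounded}. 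With these repairs your argument closes, but as it stands the limit passage and the unproved $I_{\psi}$-quasi-triangle inequality are genuine gaps.
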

\begin{proof}
As said in Remark \ref{rem:Usef} if $w\in\mathcal{E}^{1}(X,\omega,\psi)$ with $d(\psi,w)\leq C$ then $\tilde{w}:=w-(AC+B)/V_{\psi}$ satisfies $\sup_{X}\tilde{w}\leq 0$ and
$$
-E_{\psi}(\tilde{w})=d(\psi,\tilde{w})\leq d(\psi,w)+d(w,\tilde{w})\leq C+AC+B=:D.
$$
Therefore setting $\tilde{u}:=u-(AC+B)/V_{\psi},\tilde{v}:=v-(AC+B)/V_{\psi}$ we can proceed exactly as in Lemma $5.8$ in \cite{BBGZ09} using the integration by parts formula in \cite{X19} (see also Theorem $1.14$ in \cite{BEGZ10}) to get
\begin{equation}
\label{eqn:First1}
\Big|\int_{X}(\tilde{u}-\tilde{v})\big(MA_{\omega}(\varphi_{1})-MA_{\omega}(\varphi_{2})\big)\Big|\leq I_{\psi}(\tilde{u},\tilde{v})+h_{D}\big(I_{\psi}(\tilde{u},\tilde{v})\big)
\end{equation}
where $h_{D}:\mathbbm{R}_{\geq 0}\to \mathbbm{R}_{\geq 0}$ is an increasing continuous function depending only on $D$ such that $h_{D}(0)=0$. Furthermore, by definition
$$
d\big(\psi,P_{\omega}(\tilde{u},\tilde{v})\big)\leq d(\psi,\tilde{u})+d\big(\tilde{u},P_{\omega}(\tilde{u},\tilde{v})\big)\leq d(\psi,\tilde{u})+d(\tilde{u},\tilde{v})\leq 3D,
$$
so, by the triangle inequality and (\ref{eqn:First1}), we have
\begin{multline}
\label{eqn:ForUni}
\Big|\int_{X}(u-v)\big(MA_{\omega}(\varphi_{1})-MA_{\omega}(\varphi_{2})\big)\Big|\leq\\
\leq  I_{\psi}\big(\tilde{u},P_{\omega}(\tilde{u},\tilde{v})\big)+I_{\psi}\big(\tilde{v},P_{\omega}(\tilde{u},\tilde{v})\big)+h_{3D}\big(I_{\psi}(\tilde{u},P_{\omega}(\tilde{u},\tilde{v}))\big)+h_{3D}\big(I_{\psi}(\tilde{v},P_{\omega}(\tilde{u},\tilde{v}))\big).
\end{multline}
On the other hand, if $w_{1},w_{2}\in\mathcal{E}^{1}(X,\omega,\psi)$ with $w_{1}\geq w_{2}$ then by Proposition \ref{prop:PropertiesE}
\begin{equation*}
I_{\psi}(w_{1},w_{2})\leq \int_{X}(w_{1}-w_{2})MA_{\omega}(w_{2})\leq (n+1) d(w_{1},w_{2}).
\end{equation*}
Hence from (\ref{eqn:ForUni}) it is sufficient to set $f_{C}(x):=(n+1)x+2h_{3D}\big((n+1)x\big)$ to conclude the proof since clearly $d(\tilde{u},\tilde{v})=d(u,v)$.
\end{proof}
\begin{cor}
\label{cor:ForL1}
Let $\psi\in\mathcal{M}^{+}$ and let $C\in\mathbbm{R}_{>0}$. Then there exists a continuous increasing functions $f_{C}:\mathbbm{R}_{\geq 0}\to \mathbbm{R}_{\geq 0}$ depending only on $C, \omega,n$ with $f_{C}(0)=0$ such that
\begin{gather*}
\int_{X}|u-v|MA_{\omega}(\varphi)\leq f_{C}\big(d(u,v)\big)
\end{gather*}
for any $u,v,\varphi\in\mathcal{E}^{1}(X,\omega,\psi)$ with $d(\psi,u),d(\psi,v),d(\psi,\varphi)\leq C$.
\end{cor}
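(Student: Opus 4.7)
The plan is to reduce to Proposition \ref{prop:ForL1} via the rooftop envelope $P_{\omega}(u,v)$, using it to split the absolute value into pieces where the functions are ordered. Concretely, since $P_{\omega}(u,v)\leq \min(u,v)$, we have the pointwise estimate
$$
|u-v|\leq \bigl(u-P_{\omega}(u,v)\bigr)+\bigl(v-P_{\omega}(u,v)\bigr),
$$
with both summands non-negative, so it suffices to bound each term of the form $\int_{X}\bigl(w-P_{\omega}(u,v)\bigr)MA_{\omega}(\varphi)$ for $w\in\{u,v\}$.

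For such a term, I would insert and subtract $MA_{\omega}\bigl(P_{\omega}(u,v)\bigr)$ and write
$$
\int_{X}\bigl(w-P_{\omega}(u,v)\bigr)MA_{\omega}(\varphi)=\int_{X}\bigl(w-P_{\omega}(u,v)\bigr)MA_{\omega}\bigl(P_{\omega}(u,v)\bigr)+\int_{X}\bigl(w-P_{\omega}(u,v)\bigr)\bigl(MA_{\omega}(\varphi)-MA_{\omega}(P_{\omega}(u,v))\bigr).
$$
The first piece is controlled by Proposition \ref{prop:PropertiesE}(viii): since $w\geq P_{\omega}(u,v)$, the term with $j=0$ is the largest in the decreasing sequence, so
$$
\int_{X}\bigl(w-P_{\omega}(u,v)\bigr)MA_{\omega}\bigl(P_{\omega}(u,v)\bigr)\leq (n+1)\bigl(E_{\psi}(w)-E_{\psi}(P_{\omega}(u,v))\bigr)=(n+1)\,d\bigl(w,P_{\omega}(u,v)\bigr),
$$
and since $d(u,v)=d(u,P_{\omega}(u,v))+d(v,P_{\omega}(u,v))$, this is bounded by $(n+1)d(u,v)$.

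For the second piece, I apply Proposition \ref{prop:ForL1} directly with arguments $\bigl(w,P_{\omega}(u,v),\varphi,P_{\omega}(u,v)\bigr)$. The required hypothesis is that each of these four potentials is at $d$-distance at most some explicit constant from $\psi$: by the triangle inequality,
$$
d\bigl(\psi,P_{\omega}(u,v)\bigr)\leq d(\psi,u)+d\bigl(u,P_{\omega}(u,v)\bigr)\leq C+d(u,v)\leq 3C,
$$
so all four arguments lie within $d$-distance $3C$ of $\psi$. Proposition \ref{prop:ForL1} with constant $3C$ therefore yields
$$
\Bigl|\int_{X}\bigl(w-P_{\omega}(u,v)\bigr)\bigl(MA_{\omega}(\varphi)-MA_{\omega}(P_{\omega}(u,v))\bigr)\Bigr|\leq f_{3C}\bigl(d(w,P_{\omega}(u,v))\bigr)\leq f_{3C}\bigl(d(u,v)\bigr).
$$
Summing the two bounds over $w\in\{u,v\}$ gives
$$
\int_{X}|u-v|\,MA_{\omega}(\varphi)\leq 2(n+1)\,d(u,v)+2f_{3C}\bigl(d(u,v)\bigr),
$$
so defining the new $f_{C}(x):=2(n+1)x+2f_{3C}(x)$ finishes the proof. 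No step is particularly delicate; the only thing to track carefully is the inflation of the constant from $C$ to $3C$ when applying Proposition \ref{prop:ForL1}, since $P_{\omega}(u,v)$ need not satisfy $d(\psi,P_{\omega}(u,v))\leq C$.
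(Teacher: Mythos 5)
Your argument is correct and is essentially the paper's own proof: the same splitting of $|u-v|$ via the rooftop envelope $P_{\omega}(u,v)$, the same insertion of $MA_{\omega}\big(P_{\omega}(u,v)\big)$ handled by Proposition \ref{prop:PropertiesE} together with $d(u,v)=d\big(u,P_{\omega}(u,v)\big)+d\big(v,P_{\omega}(u,v)\big)$, and the same application of Proposition \ref{prop:ForL1} at level $3C$ after checking $d\big(\psi,P_{\omega}(u,v)\big)\leq 3C$. The only (immaterial) difference is your slightly larger constant $2(n+1)$ in place of the paper's $(n+1)$.
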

\begin{proof}
Since $d\big(\psi,P_{\omega}(u,v)\big)\leq 3C$, letting $g_{3C}:\mathbbm{R}_{\geq 0}\to \mathbbm{R}_{\geq 0}$ be the map (\ref{eqn:Second2}) of Proposition \ref{prop:ForL1}, it follows that
\begin{multline*}
\int_{X}\big(u-P_{\omega}(u,v)\big)MA_{\omega}(\varphi)\leq \int_{X}\big(u-P_{\omega}(u,v)\big)MA_{\omega}\big(P_{\omega}(u,v)\big)+g_{3C}\Big(d\big(u,P_{\omega}(u,v)\big)\Big)\leq \\
\leq(n+1)d\big(u,P_{\omega}(u,v)\big)+g_{3C}\Big(d(u,v))\Big),
\end{multline*}
where in the last inequality we used Proposition \ref{prop:PropertiesE}. Hence by the triangle inequality we get
\begin{multline*}
\int_{X}|u-v|MA_{\omega}(\varphi)\leq (n+1)d\big(u,P_{\omega}(u,v)\big)+(n+1)d\big(v,P_{\omega}(u,v)\big)+2g_{3C}\big(d(u,v)\big)=\\
=(n+1)d(u,v)+2g_{3C}\big(d(u,v)\big).
\end{multline*}
Defining $f_{C}(x):=(n+1)x+2g_{3C}(x)$ concludes the proof. 
\end{proof}
As first important consequence we obtain that the strong convergence in $\mathcal{E}^{1}(X,\omega,\psi)$ implies the weak convergence.
\begin{prop}
\label{prop:L1}
Let $\psi\in\mathcal{M}^{+}$ and let $C\in\mathbbm{R}_{>0}$. Then there exists a continuous increasing function $f_{C,\psi}:\mathbbm{R}_{\geq 0}\to \mathbbm{R}_{\geq 0}$ depending on $C, \omega,n, \psi $ with $f_{C,\psi}(0)=0$ such that
$$
||u-v||_{L^{1}}\leq f_{C,\psi}\big(d(u,v)\big)
$$
for any $u,v\in\mathcal{E}^{1}(X,\omega,\psi)$ with $d(\psi,u),d(\psi,v)\leq C$. In particular $u_{k}\to u$ weakly if $u_{k}\to u$ strongly.
\end{prop}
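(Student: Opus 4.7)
The strategy is to compare the smooth reference measure $\omega^{n}=MA_{\omega}(0)$ with the Monge-Ampère measure of a cleverly chosen potential in $\mathcal{E}^{1}(X,\omega,\psi)$, so that Corollary \ref{cor:ForL1} can be applied directly. Since $0\notin PSH(X,\omega,\psi)$ in general, we cannot use $\varphi=0$ in Corollary \ref{cor:ForL1}; instead I produce an auxiliary $\varphi_{0}\in\mathcal{E}^{1}(X,\omega,\psi)_{\text{norm}}$ whose Monge-Ampère is a constant multiple of $\omega^{n}$.

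Concretely, the plan is first to set
$$
\mu:=\frac{V_{\psi}}{V_{0}}\omega^{n},
$$
a non-pluripolar measure with total mass $V_{\psi}$. By the bijection (\ref{eqn:Bij2}) there exists a unique $\varphi_{0}\in\mathcal{E}_{\text{norm}}(X,\omega,\psi)$ solving $MA_{\omega}(\varphi_{0})=\mu$; since $\mu$ has bounded density w.r.t.\ $\omega^{n}$, Theorem $A$ of \cite{DDNL18b} guarantees that $\varphi_{0}$ has $\psi$-relative minimal singularities, in particular $\varphi_{0}\in\mathcal{E}^{1}(X,\omega,\psi)$ and $C_{\psi}:=d(\psi,\varphi_{0})<\infty$.

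Next, set $C':=\max(C,C_{\psi})$ and apply Corollary \ref{cor:ForL1} with $\varphi=\varphi_{0}$: for any $u,v\in\mathcal{E}^{1}(X,\omega,\psi)$ with $d(\psi,u),d(\psi,v)\leq C$,
$$
\frac{V_{\psi}}{V_{0}}\int_{X}|u-v|\,\omega^{n}=\int_{X}|u-v|\,MA_{\omega}(\varphi_{0})\leq f_{C'}\bigl(d(u,v)\bigr).
$$
Defining $f_{C,\psi}(x):=\frac{V_{0}}{V_{\psi}}f_{C'}(x)$ yields a continuous increasing function with $f_{C,\psi}(0)=0$, depending on $C,\omega,n,\psi$, and proves the main inequality. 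For the ``in particular'' clause, if $u_{k}\to u$ strongly in $\mathcal{E}^{1}(X,\omega,\psi)$, then $d(\psi,u_{k})\leq d(\psi,u)+1$ for $k$ large by the triangle inequality, so setting $C:=d(\psi,u)+1$ the inequality gives $\|u_{k}-u\|_{L^{1}}\leq f_{C,\psi}(d(u_{k},u))\to 0$; this is the weak (i.e.\ $L^{1}$) convergence of $u_{k}$ to $u$.

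The only non-routine point is the very first step, namely producing an auxiliary $\varphi_{0}\in\mathcal{E}^{1}(X,\omega,\psi)$ whose Monge-Ampère measure is comparable to $\omega^{n}$; once this is available, Corollary \ref{cor:ForL1} does the work. Note that this is precisely the reason why $f_{C,\psi}$ must depend on $\psi$ (through both the constant $V_{0}/V_{\psi}$ and the energy distance $C_{\psi}=d(\psi,\varphi_{0})$), in contrast with the purely geometric dependence of $f_{C}$ in Corollary \ref{cor:ForL1}.
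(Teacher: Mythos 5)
Your proof is correct and follows essentially the same route as the paper: both construct an auxiliary potential $\varphi_{0}\in\mathcal{E}^{1}(X,\omega,\psi)$ with $MA_{\omega}(\varphi_{0})=\frac{V_{\psi}}{V_{0}}MA_{\omega}(0)$ via Theorem $A$ of \cite{DDNL18b} and then apply Corollary \ref{cor:ForL1} with the constant enlarged to $\max\big(C,d(\psi,\varphi_{0})\big)$, rescaling by $V_{0}/V_{\psi}$. No substantive difference.
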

\begin{proof}
Theorem $A$ in \cite{DDNL18b} (see also Theorem $1.4$ in \cite{DDNL17b}) implies that there exists $\phi\in\mathcal{E}^{1}(X,\omega,\psi)$ with $\sup_{X}\phi=0$ such that
$$
MA_{\omega}(\phi)=c MA_{\omega}(0)
$$
where $c:=V_{\psi}/V_{0}>0$. Therefore it follows that
$$
||u-v||_{L^{1}}\leq \frac{1}{c}g_{\hat{C}}\big(d(u,v)\big)
$$
where $\hat{C}:=\max\big(d(\psi,\phi),C\big)$ and $g_{\hat{C}}$ is the continuous increasing function with $g_{\hat{C}}(0)=0$ given by Corollary \ref{cor:ForL1}. Setting $f_{C,\psi}:=\frac{1}{c}g_{\hat{C}}$ concludes the proof.
\end{proof}
Finally we also get the following useful estimate.
\begin{prop}
\label{prop:LastLast}
Let $\psi\in\mathcal{M}^{+}$ and let $C\in\mathbbm{R}_{>0}$. Then there exists a constant $\tilde{C}$ depending only on $C,\omega,n$ such that
\begin{equation}
\label{eqn:LastLast}
\Big|\int_{X}(u-v)\big(MA_{\omega}(\varphi_{1})-MA_{\omega}(\varphi_{2})\big)\Big|\leq \tilde{C} \,I_{\psi}(\varphi_{1},\varphi_{2})^{\frac{1}{2}}
\end{equation}
for any $u,v,\varphi_{1},\varphi_{2}\in\mathcal{E}^{1}(X,\omega,\psi)$ with $d(u,\psi),d(v,\psi),d(\varphi_{1},\psi),d(\varphi_{2},\psi)\leq C$.
\end{prop}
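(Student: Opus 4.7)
\smallskip

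\noindent\textbf{Proof proposal for Proposition \ref{prop:LastLast}.}
The plan is to prove this as a Cauchy--Schwarz type estimate, adapting the argument of Lemma $5.8$ in \cite{BBGZ09} to the $\psi$-relative setting via the integration by parts formula of \cite{X19} together with Proposition $2.7$ in \cite{Tru19}. First I would normalize: by Remark \ref{rem:Usef}, setting $K:=(AC+B)/V_{\psi}$ and replacing each of $u,v,\varphi_{1},\varphi_{2}$ by itself minus $K$ gives four potentials with $\sup_{X}(\cdot)\leq 0$ and $-E_{\psi}(\cdot)\leq D:=C+AC+B$, without changing either side of \eqref{eqn:LastLast}. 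I may then approximate (via property (ii) of Proposition \ref{prop:PropertiesE} and Lemma \ref{lem:KeyConv}) by sequences with $\psi$-relative minimal singularities and pass to the limit at the end.

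\smallskip

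\noindent The core computation is the telescoping identity
\[
MA_{\omega}(\varphi_{1})-MA_{\omega}(\varphi_{2})=\sum_{j=0}^{n-1}dd^{c}(\varphi_{1}-\varphi_{2})\wedge T_{j},\qquad T_{j}:=(\omega+dd^{c}\varphi_{1})^{j}\wedge(\omega+dd^{c}\varphi_{2})^{n-1-j}.
\]
Integrating the quantity $\int_{X}(u-v)\,dd^{c}(\varphi_{1}-\varphi_{2})\wedge T_{j}$ by parts against the closed positive currents $T_{j}$ (legitimate by the integration by parts formula of \cite{X19}), and then applying Cauchy--Schwarz with respect to the positive form $T_{j}$, I obtain
\[
\Bigl|\int_{X}(u-v)\,dd^{c}(\varphi_{1}-\varphi_{2})\wedge T_{j}\Bigr|\leq\Bigl(\int_{X}d(u-v)\wedge d^{c}(u-v)\wedge T_{j}\Bigr)^{\!1/2}\!\Bigl(\int_{X}d(\varphi_{1}-\varphi_{2})\wedge d^{c}(\varphi_{1}-\varphi_{2})\wedge T_{j}\Bigr)^{\!1/2}.
\]
For the second factor, one more integration by parts yields $-\int_{X}(\varphi_{1}-\varphi_{2})\,dd^{c}(\varphi_{1}-\varphi_{2})\wedge T_{j}$; summing these identities over $j=0,\dots,n-1$ (with telescoping) recovers $\int_{X}(\varphi_{1}-\varphi_{2})\bigl(MA_{\omega}(\varphi_{2})-MA_{\omega}(\varphi_{1})\bigr)=I_{\psi}(\varphi_{1},\varphi_{2})$. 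In particular each individual term is bounded by $I_{\psi}(\varphi_{1},\varphi_{2})$.

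\smallskip

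\noindent The main obstacle is controlling the first factor uniformly in $C$, independently of the varying potentials $\varphi_{1},\varphi_{2}$. After integrating by parts, this factor becomes
\[
\int_{X}d(u-v)\wedge d^{c}(u-v)\wedge T_{j}=\int_{X}(u-v)\bigl((\omega+dd^{c}v)-(\omega+dd^{c}u)\bigr)\wedge T_{j},
\]
which is a finite sum of integrals of $|u-v|$ against mixed non-pluripolar products of the four potentials $u,v,\varphi_{1},\varphi_{2}$, all of which satisfy $d(\psi,\cdot)\leq C$. I would bound these by an estimate completely analogous to Corollary \ref{cor:ForL1}: namely, one replaces the single measure $MA_{\omega}(\varphi)$ in the proof of Corollary \ref{cor:ForL1} by a generic mixed Monge--Amp\`ere current, reducing (through Proposition \ref{prop:PropertiesE} and the rooftop envelope $P_{\omega}(u,v)$) to estimates of the form $\int_{X}(w-P_{\omega}(u,v))MA_{\omega}(\cdots)\leq (n+1)d(w,P_{\omega}(u,v))$. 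This produces a bound depending only on $C,\omega,n$.

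\smallskip

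\noindent Finally, summing the Cauchy--Schwarz estimates over $j=0,\dots,n-1$, the uniform constant from the first factor multiplies the $I_{\psi}(\varphi_{1},\varphi_{2})^{1/2}$ coming from the second factor, yielding \eqref{eqn:LastLast} with $\tilde{C}$ depending only on $C,\omega,n$ (the dependence on $V_{\psi}$ disappears because both sides are translation-invariant, and only the energy $D$ enters the uniform estimates). The most delicate point is verifying that the uniform estimate on the first factor is truly independent of the mixed current $T_{j}$, which is exactly the generalization of the BBGZ mass estimate to mixed non-pluripolar products in the $\psi$-relative setting.
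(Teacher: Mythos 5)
Your proposal follows essentially the same route as the paper: normalize via Remark \ref{rem:Usef}, then run the Cauchy--Schwarz/integration-by-parts argument of Lemma $3.13$ in \cite{BBGZ09} (legitimized by \cite{X19}), with the second factor telescoping to $I_{\psi}(\varphi_{1},\varphi_{2})$ and the first factor bounded uniformly by a mixed-mass estimate. The one point you flag as delicate — the uniform bound on $\int_{X}|u-v|$ against mixed products of $(\omega+dd^{c}\varphi_{i})$, $(\omega+dd^{c}u)$, $(\omega+dd^{c}v)$ — is exactly Lemma \ref{lem:UniBounded} of the paper; note that its proof does not go by adapting Corollary \ref{cor:ForL1} term by term (the inequalities from Proposition \ref{prop:PropertiesE} only control full Monge--Amp\`ere measures of a single potential), but by the averaging trick: setting $v:=\frac{1}{n+1}(v_{0}+\cdots+v_{n})$ one has $MA_{\omega}(v)\geq E\,(\omega+dd^{c}u_{1})\wedge\cdots\wedge(\omega+dd^{c}u_{n})$ with $E=E(n)$, reducing the mixed mass to $|E_{\psi}(v)|$, which is controlled by the energies of the $v_{j}$. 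With that substitution your argument is the paper's proof.
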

\begin{proof}
As seen during the proof of Proposition \ref{prop:ForL1} and with the same notations, the function $\tilde{u}:=u-(AC+B)/V_{\psi}$ satisfy $\sup_{X}u\leq 0$ (by Lemma \ref{lem:UniEst}) and $-E_{\psi}(u)\leq C+AC+B=:D$ (and similarly for $v,\varphi_{1},\varphi_{2}$). Therefore by integration by parts and using Lemma \ref{lem:UniBounded} below, it follows exactly as in Lemma $3.13$ in \cite{BBGZ09} that there exists a constant $\tilde{C}$ depending only on $D,n$ such that 
$$
\Big|\int_{X}(\tilde{u}-\tilde{v})\big(MA_{\omega}(\tilde{\varphi}_{1})-MA_{\omega}(\tilde{\varphi}_{2})\big)\Big|\leq \tilde{C} \,I_{\psi}(\tilde{\varphi}_{1},\tilde{\varphi}_{2})^{\frac{1}{2}},
$$
which clearly implies (\ref{eqn:LastLast}).
\end{proof}
\begin{lem}
\label{lem:UniBounded}
Let $C\in\mathbbm{R}_{>0}$. Then there exists a constant $\tilde{C}$ depending only on $C,\omega,n$ such that
$$
\int_{X}|u_{0}-\psi|(\omega+dd^{c}u_{1})\wedge \cdots \wedge (\omega+dd^{c}u_{n})\leq \tilde{C}
$$
for any $u_{0},\cdots,u_{n}\in\mathcal{E}^{1}(X,\omega,\psi)$ with $d(u_{j},\psi)\leq C$ for any $j=0,\dots,n$.
\end{lem}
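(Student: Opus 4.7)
\emph{Proof plan.} The plan is to reduce the estimate to a classical BBGZ-style uniform bound (\cite{BBGZ09}) via a normalization step, and then execute the reduced bound by iterated integration by parts, using the general IBP formula of \cite{X19}. The first step mirrors the normalization already carried out in the proofs of Propositions \ref{prop:ForL1} and \ref{prop:LastLast}: setting $\tilde{u}_{j}:=u_{j}-(AC+B)/V_{\psi}$ with $A,B$ as in Lemma \ref{lem:UniEst}, we obtain $\tilde{u}_{j}\in\mathcal{E}^{1}(X,\omega,\psi)$ with $\sup_{X}\tilde{u}_{j}\leq 0$ and $d(\psi,\tilde{u}_{j})\leq D:=C+AC+B$. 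Lemma \ref{lem:UniEst} further yields $\tilde{u}_{j}\leq\psi$ pointwise (since $\sup_{X}(\tilde{u}_{j}-\psi)=\sup_{X}\tilde{u}_{j}\leq 0$), while the Monge-Amp\`ere measures are unchanged, $MA_{\omega}(u_{j})=MA_{\omega}(\tilde{u}_{j})$.

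Next I would split the absolute value: from $\tilde{u}_{0}\leq\psi$ one has $|u_{0}-\psi|\leq (AC+B)/V_{\psi}+(\psi-\tilde{u}_{0})$, so integrating against the measure $MA_{\omega}(\tilde{u}_{1},\ldots,\tilde{u}_{n})$ (whose total mass is $V_{\psi}$) and using $\psi\leq 0$, i.e.\ $\int \psi\, MA_{\omega}(\tilde{u}_{1},\ldots,\tilde{u}_{n})\leq 0$, reduces the lemma to a uniform bound of the shape
$$\int_{X}(-\varphi_{0})\,MA_{\omega}(\varphi_{1},\ldots,\varphi_{n})\leq \tilde{C}'$$
for arbitrary $\varphi_{0},\ldots,\varphi_{n}\in\mathcal{E}^{1}(X,\omega,\psi)$ with $\sup_{X}\varphi_{j}\leq 0$ and $d(\psi,\varphi_{j})\leq D$.

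For this reduced bound I would argue exactly as in \cite{BBGZ09}, Lemma $3.13$. The basic swap identity, valid by the general integration-by-parts formula of \cite{X19} for any positive closed $(n-1,n-1)$-current $T$ built from the $\omega_{\varphi_{k}}$, is
$$\int (-\varphi_{0})\,\omega_{\varphi_{1}}\wedge T=\int(-\varphi_{0})\,\omega\wedge T+\int (-\varphi_{1})\,\omega_{\varphi_{0}}\wedge T+\int \varphi_{1}\,\omega\wedge T,$$
and the last term is non-positive because $\varphi_{1}\leq 0$. This lets one trade an $\omega_{\varphi_{1}}$ factor for an $\omega_{\varphi_{0}}$ factor at the cost of gaining one $\omega$ factor. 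Iterating the swap on $T=\omega_{\varphi_{2}}\wedge\cdots\wedge\omega_{\varphi_{n}}$, the mixed integral is dominated by a finite combination of pure integrals $\int(-\varphi_{i})\,\omega^{n}=\|\varphi_{i}\|_{L^{1}(\omega^{n})}$. Each such integral is uniformly bounded by Proposition \ref{prop:L1} applied to the pair $(\varphi_{i},\psi)$, which gives $\|\varphi_{i}-\psi\|_{L^{1}}\leq f_{D,\psi}(D)$, together with $\|\psi\|_{L^{1}}<\infty$.

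The main obstacle is the careful combinatorial organization of the iterated IBP: a naive induction on the number of non-flat factors does not strictly decrease on each swap (one gets one genuinely simpler term plus one permuted term of the same complexity), so the reduction must be arranged cyclically as in \cite{BBGZ09}. Apart from this bookkeeping, the $\psi$-relative setting introduces no new analytic difficulty, because the general IBP formula of \cite{X19} handles unbounded potentials and is already used systematically throughout the paper.
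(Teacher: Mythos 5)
Your normalization step is fine and coincides with the paper's (shift by $(AC+B)/V_{\psi}$ using Lemma \ref{lem:UniEst}, so that the shifted potentials lie below $\psi$ and keep $d(\psi,\cdot)\leq C+AC+B$), and so is the reduction of $|u_{0}-\psi|$ to a constant plus $\psi-\tilde{u}_{0}$. The gap is in the key step. The claim that iterating the swap identity dominates $\int_{X}(-\varphi_{0})\,\omega_{\varphi_{1}}\wedge\cdots\wedge\omega_{\varphi_{n}}$ by a finite combination of the pure integrals $\int_{X}(-\varphi_{i})\,\omega^{n}$ cannot be correct: your scheme never uses the hypothesis $d(\psi,\varphi_{j})\leq D$, so if it worked it would prove the bound for \emph{all} negative potentials in $\mathcal{E}^{1}(X,\omega,\psi)$, which is false. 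Already for $n=1$ and $\varphi_{0}=\varphi_{1}=u$ the swap identity degenerates to $0=0$ and gives no information, while $\int_{X}(-u)\,\omega_{u}=\int_{X}(-u)\,\omega+\int_{X}du\wedge d^{c}u$ contains the Dirichlet energy, which is unbounded on $\{u\in\mathcal{E}^{1}:\sup_{X}u=0\}$ even though $\|u\|_{L^{1}}$ is uniformly bounded on that set. This is exactly the cycling you flag yourself: each swap produces one simpler term plus one term of the same complexity, and no cyclic bookkeeping can make the latter disappear, because the quantity being estimated is genuinely an energy and not an $L^{1}$ quantity; the estimate of Lemma $3.13$ in \cite{BBGZ09} is of a different nature (it is the $I^{1/2}$ bound used in Proposition \ref{prop:LastLast}, and its proof consumes energy bounds via Cauchy--Schwarz), and there is no argument there reducing mixed energy integrals to $L^{1}$ norms. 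A secondary defect: even if the reduction held, invoking Proposition \ref{prop:L1} (and $\|\psi\|_{L^{1}}$) makes the final constant depend on $\psi$ through $f_{C,\psi}$ and $V_{\psi}$, whereas the lemma asserts dependence on $C,\omega,n$ only.

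The correct route — the one the paper takes — feeds the energy bound in directly via the averaging trick instead of iterated integration by parts: with $v_{j}:=u_{j}-(AC+B)/V_{\psi}$ and $v:=\frac{1}{n+1}(v_{0}+\cdots+v_{n})$ one has $\omega_{v_{1}}\wedge\cdots\wedge\omega_{v_{n}}\leq \frac{(n+1)^{n}}{n!}MA_{\omega}(v)$ and $\psi-u_{0}\leq (n+1)(\psi-v)$, so the mixed integral is controlled by $\int_{X}(\psi-v)MA_{\omega}(v)\leq (n+1)|E_{\psi}(v)|$ (Proposition \ref{prop:PropertiesE}), and concavity of $E_{\psi}$ gives $|E_{\psi}(v)|\leq\frac{1}{n+1}\sum_{j}|E_{\psi}(v_{j})|\leq\frac{1}{n+1}\sum_{j}\big(d(\psi,u_{j})+AC+B\big)$, yielding a constant depending only on $C,\omega,n$. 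If you want to salvage an IBP-style proof, the endpoint must be integrals of the form $\int_{X}(\psi-\varphi_{i})MA_{\omega}(\varphi_{i})\leq (n+1)\,d(\psi,\varphi_{i})$ rather than $L^{1}$ norms, and the energy hypothesis has to enter the estimate explicitly.
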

\begin{proof}
As in Proposition \ref{prop:ForL1} and with the same notations $v_{j}:=u_{j}-(AC+B)/V_{\psi}$ satisfies $\sup_{X}v_{j}\leq 0$, and setting $v:=\frac{1}{n+1}(v_{0}+\cdots+v_{n})$ we obtain $\psi-u_{0}\leq (n+1)(\psi-v)$. Thus by Proposition \ref{prop:PropertiesE} it follows that
\begin{gather*}
\int_{X}(\psi-v_{0})MA_{\omega}(v)\leq (n+1)\int_{X}(\psi-v)MA_{\omega}(v)\leq (n+1)^{2}|E_{\psi}(v)|\leq\\
\leq (n+1)\sum_{j=0}^{n}|E_{\psi}(v_{j})|\leq (n+1)\sum_{j=0}^{n}\big(d(\psi,u_{j})+D\big)\leq (n+1)^{2}(C+D)
\end{gather*}
where $D:=AC+B$. On the other hand $MA_{\omega}(v)\geq E (\omega+dd^{c}u_{1})\wedge\cdots (\omega+dd^{c}u_{n})$ where the constant $E$ depends only on $n$. Finally we get
\begin{multline*}
\int_{X}|u_{0}-\psi|(\omega+dd^{c}u_{1})\wedge \cdots\wedge (\omega+dd^{c}u_{n})\leq D+\frac{1}{E}\int_{X}(\psi-v_{0})MA_{\omega}(v)\leq D+\frac{(n+1)^{2}(C+D)}{E},
\end{multline*}
which concludes the proof.
\end{proof}
\subsection{$\psi$-relative Monge-Ampère capacity.}
\begin{defn}[Section \S 4.1, \cite{DDNL17b}; Definition $3.1$, \cite{DDNL18b}]
Let $B\subset X$ be a Borel set, and let $\psi\in \mathcal{M}^{+}$. Then its $\psi$-\emph{relative Monge-Ampère capacity} is defined as
$$
\mathrm{Cap}_{\psi}(B):=\sup\Big\{\int_{B}MA_{\omega}(u)\, :\, u\in PSH(X,\omega),\,  \psi-1\leq u\leq \psi\Big\}.
$$
\end{defn}

In the absolute setting the Monge-Ampère capacity is very useful to study the existence and the regularity of solutions of degenerate complex Monge-Ampère equation (\cite{Kol98}), and analog holds in the relative setting (\cite{DDNL17b}, \cite{DDNL18b}). We refer to these articles just cited to many properties of the Monge-Ampère capacity.\\
For any fixed constant $A$, $\mathcal{C}_{A,\psi}$ denotes the set of all probability measures $\mu$ on $X$ such that
$$
\mu(B)\leq A\mbox{Cap}_{\psi}(B)
$$
for any Borel set $B\subset X$ (Section \S 4.3, \cite{DDNL17b}).
\begin{prop}
\label{prop:CapMS}
Let $u\in\mathcal{E}^{1}(X,\omega,\psi)$ with $\psi$-relative minimal singularities. Then $MA_{\omega}(u)/V_{\psi}\in \mathcal{C}_{A,\psi}$ for a constant $A>0$.
\end{prop}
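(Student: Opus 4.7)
The plan is to compare $MA_{\omega}(u)$ directly with the Monge-Ampère measure of a test function admissible for $\mathrm{Cap}_{\psi}$. Since $u$ has $\psi$-relative minimal singularities, there exists $M\geq 1$ such that, after replacing $u$ by $u-\sup_{X}(u-\psi)$ (an operation that leaves $MA_{\omega}(u)$ unchanged and preserves membership in $\mathcal{E}(X,\omega,\psi)$), I may assume $\psi-M\leq u\leq \psi$. This normalization is harmless but essential for matching the range $[\psi-1,\psi]$ used in the definition of the $\psi$-relative capacity.

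Next I would introduce the convex combination $w:=\frac{1}{M}u+\frac{M-1}{M}\psi$. Since $\omega+dd^{c}w=\frac{1}{M}(\omega+dd^{c}u)+\frac{M-1}{M}(\omega+dd^{c}\psi)$ is a positive current, $w\in PSH(X,\omega)$; the bounds on $u$ then immediately give $\psi-1\leq w\leq \psi$, so $w$ is a legitimate candidate in the supremum defining $\mathrm{Cap}_{\psi}$. The heart of the argument is then the multinomial expansion, which holds for non-pluripolar products by the multilinearity established in \cite{BEGZ10}:
$$
MA_{\omega}(w)=\sum_{j=0}^{n}\binom{n}{j}\frac{(M-1)^{n-j}}{M^{n}}MA_{\omega}(u^{j},\psi^{n-j}).
$$
All the mixed non-pluripolar measures on the right are non-negative, so keeping only the $j=n$ term yields the pointwise bound $MA_{\omega}(w)\geq M^{-n}MA_{\omega}(u)$.

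From here the conclusion is immediate: for any Borel set $B\subset X$,
$$
MA_{\omega}(u)(B)\leq M^{n}\int_{B}MA_{\omega}(w)\leq M^{n}\,\mathrm{Cap}_{\psi}(B),
$$
and since $u\in\mathcal{E}(X,\omega,\psi)$ ensures that $MA_{\omega}(u)/V_{\psi}$ is a probability measure on $X$, the desired membership $MA_{\omega}(u)/V_{\psi}\in\mathcal{C}_{A,\psi}$ holds with $A:=M^{n}/V_{\psi}$. I do not anticipate a genuine obstacle here: the only delicate point is that the multinomial expansion of $(\omega+dd^{c}w)^{n}$ in the non-pluripolar sense relies on the multilinearity of the non-pluripolar product from \cite{BEGZ10} (also used repeatedly in \cite{DDNL17b}), while everything else is routine bookkeeping on the constant $M$ provided by $\psi$-relative minimal singularities.
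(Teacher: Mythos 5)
Your argument is correct and matches the paper's proof essentially verbatim: the paper also normalizes so that $\psi-j\leq u\leq\psi$, takes the convex combination $v:=j^{-1}u+(1-j^{-1})\psi$ as a candidate in the definition of $\mathrm{Cap}_{\psi}$, and uses the multilinearity bound $MA_{\omega}(u)\leq j^{n}MA_{\omega}(v)$ to conclude. Your explicit tracking of the factor $1/V_{\psi}$ in the constant $A$ is a minor (and welcome) precision over the paper's statement $A=j^{n}$, but the substance is identical.
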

\begin{proof}
Let $j\in\mathbbm{R}$ such that $u\geq \psi-j$ and assume without loss of generality that $u\leq \psi$ and that $j\geq 1$. Then the function $v:=j^{-1}u+(1-j^{-1})\psi$ is a candidate in the definition of $\mathrm{Cap}_{\psi}$, which implies that $ MA_{\omega}(v)\leq \mathrm{Cap}_{\psi}$. Hence, since $MA_{\omega}(u)\leq j^{n}MA(v)$ we get that $MA_{\omega}(u)\in \mathcal{C}_{A,\psi}$ for $A=j^{n}$ and the result follows.
\end{proof}
We also need to quote the following result.
\begin{lem}[Lemma $4.18$, \cite{DDNL17b}]
\label{lem:4.18}
If $\mu\in \mathcal{C}_{A,\psi}$ then there is a constant $B>0$ depending only on $A,n$ such that
$$
\int_{X}(u-\psi)^{2}\mu\leq B\big(|E_{\psi}(u)|+1\big)
$$
for any $u\in PSH(X,\omega,\psi)$ such that $\sup_{X}u=0$.
\end{lem}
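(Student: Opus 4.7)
The approach is a standard layer-cake / capacity estimate, adapted to the $\psi$-relative setting.

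First I would reduce to the case where $u$ has $\psi$-relative minimal singularities by truncation. Set $u_{k}:=\max(u,\psi-k)$. Then $u_{k}$ has $\psi$-relative minimal singularities, Proposition \ref{prop:PropertiesE}$(ii)$ gives $E_{\psi}(u_{k})\searrow E_{\psi}(u)$, and since $u\leq 0$ and $\psi\leq 0$ one has $u_{k}\leq 0$ with $\sup_{X}u_{k}=0$ for $k$ large. Pointwise $(u_{k}-\psi)^{2}\nearrow (u-\psi)^{2}$, so monotone convergence reduces the statement to a bound on $\int_{X}(u_{k}-\psi)^{2}d\mu$ with a constant independent of $k$.

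Assume now $u$ has $\psi$-relative minimal singularities. By Fubini and the capacity hypothesis,
\[
\int_{X}(u-\psi)^{2}d\mu = 2\int_{0}^{\infty} t\,\mu\big(\{|u-\psi|>t\}\big)\,dt\leq 2A\int_{0}^{\infty} t\,\mathrm{Cap}_{\psi}\big(\{|u-\psi|>t\}\big)\,dt.
\]
The positive tail $\{u>\psi+t\}$ is contained in $\{\psi<-t\}$ (because $\sup u=0$ forces $u\leq 0$), and its contribution is benign: the rigid structure $MA_{\omega}(\psi)\leq \mathbbm{1}_{\{\psi=0\}}MA_{\omega}(0)$ from Theorem \ref{thm:3.8} together with the capacity definition gives a decay of $\mathrm{Cap}_{\psi}(\{\psi<-t\})$ depending only on $n,\omega$. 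The main effort is therefore to estimate the negative tail $\mathrm{Cap}_{\psi}(\{u<\psi-t\})$.

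For this I would adapt the BBGZ argument (\cite{BBGZ09}, Lemma $2.4$) to the relative setting. After reducing to $u\leq\psi$ by replacing $u$ with $P_{\omega}(u,\psi)\in\mathcal{E}^{1}(X,\omega,\psi)$ (which only changes $E_{\psi}$ by a controlled amount), for an arbitrary competitor $\phi$ with $\psi-1\leq\phi\leq\psi$ one tests against the auxiliary function
\[
\phi_{t}:=\max\!\Big(\tfrac{u-\psi}{t+1}+\phi,\,\phi-1\Big),
\]
which lies in the admissible range of the capacity definition and equals $\phi-1$ on $\{u<\psi-(t+1)\}$. The comparison principle, combined with the $\psi$-relative integration by parts of Proposition \ref{prop:PropertiesE}$(viii)$, then yields a polynomial decay of the form
\[
\mathrm{Cap}_{\psi}\big(\{u<\psi-t\}\big)\leq \frac{C\big(|E_{\psi}(u)|+1\big)}{t^{n+1}},
\]
which inserted into the Fubini integral converges and produces the asserted linear bound. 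The main obstacle is precisely this capacity estimate: the comparison argument must be carried out with $\psi$ possibly highly singular, so all the integration-by-parts identities used to control $\int MA_{\omega}(\phi_{t})$ on the sublevel sets must be justified via the general non-pluripolar product machinery of \cite{X19}, with careful bookkeeping of the mixed Monge-Amp\`ere masses in terms of $E_{\psi}(u)$.
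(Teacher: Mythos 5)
First, note that the paper does not prove this lemma at all: it is quoted verbatim from \cite{DDNL17b} (Lemma $4.18$ there), so your argument has to stand on its own — and as written it has two genuine gaps.

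The first concerns your ``benign'' positive tail. Theorem \ref{thm:3.8} controls $MA_{\omega}(\psi)$ only; it says nothing about $MA_{\omega}(\phi)$ for the capacity competitors $\psi-1\leq\phi\leq\psi$, which can charge the deep sublevel sets $\{\psi<-t\}$, so no decay of $\mathrm{Cap}_{\psi}(\{\psi<-t\})$ ``depending only on $n,\omega$'' follows from what you cite; and even the classical rate $O(1/t)$ would make $\int_{0}^{\infty}t\,\mathrm{Cap}_{\psi}(\{\psi<-t\})\,dt$ diverge. Luckily the step is unnecessary: since $\psi=P_{\omega}[\psi](0)$ and $u\preccurlyeq\psi$ with $\sup_{X}u=0$, the function $u$ is a competitor in the envelope defining $P_{\omega}[\psi](0)$, hence $u\leq\psi$ and the positive tail is empty. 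The second, more serious, gap is the asserted pointwise decay $\mathrm{Cap}_{\psi}(\{u<\psi-t\})\leq C(1+|E_{\psi}(u)|)t^{-(n+1)}$, which carries the whole proof. (i) Your test function $\max\big(\tfrac{u-\psi}{t+1}+\phi,\phi-1\big)$ is in general not $\omega$-psh: $\tfrac{u-\psi}{t+1}+\phi$ subtracts the positive current $\tfrac{1}{t+1}(\omega+dd^{c}\psi)$, so it is not an admissible competitor; the relative substitute must be a convex combination such as $\tfrac{1}{t}u+(1-\tfrac{1}{t})\psi$, as in the proof of Proposition \ref{prop:CapMS}. (ii) What the scaling/comparison argument actually yields with only $E^{1}$-control is a rate of order $(1+|E_{\psi}(u)|)/t^{2}$ (the term with one factor of $\omega+dd^{c}u$ dominates the expansion), and with that rate your layer-cake integral $\int_{0}^{\infty}t\,\mathrm{Cap}_{\psi}\,dt$ is logarithmically divergent; note moreover that for $n=1$ even your own claimed exponent $n+1=2$ makes it diverge, so the plan is internally inconsistent.

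The standard repair is to renounce any pointwise rate and integrate the comparison estimate first. After your (correct) truncation reduction, for $t\geq1$ and any competitor $\phi\in[\psi-1,\psi]$ one has, with $u_{t}:=\tfrac{1}{t}u+(1-\tfrac{1}{t})\psi$, the inclusions $\{u<\psi-2t\}\subset\{u_{t}<\phi-1\}\subset\{u<\psi-t\}$ and hence, by the comparison principle in $\mathcal{E}(X,\omega,\psi)$,
$$
\mathrm{Cap}_{\psi}\big(\{u<\psi-2t\}\big)\leq\int_{\{u<\psi-t\}}MA_{\omega}\Big(\tfrac{1}{t}u+\big(1-\tfrac{1}{t}\big)\psi\Big).
$$
Plugging this into $\int_{X}(\psi-u)^{2}d\mu\leq 2A\int_{0}^{\infty}t\,\mathrm{Cap}_{\psi}(\{u<\psi-t\})\,dt$, expanding the Monge--Ampère measure and applying Fubini in $t$, the terms containing $j\geq1$ factors of $\omega+dd^{c}u$ are bounded by $\int_{X}(\psi-u)\,(\omega+dd^{c}u)^{j}\wedge(\omega+dd^{c}\psi)^{n-j}\leq C\big(1+|E_{\psi}(u)|\big)$ by Proposition \ref{prop:PropertiesE}(viii), while the $j=0$ term produces $\tfrac12\int_{X}(\psi-u)^{2}MA_{\omega}(\psi)\leq\tfrac12\int_{X}u^{2}MA_{\omega}(0)\leq C(\omega,n)$, using Theorem \ref{thm:3.8} (this is where it is genuinely useful) and the uniform $L^{2}$ bound for sup-normalized $\omega$-psh functions. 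This yields the linear bound $B(|E_{\psi}(u)|+1)$ without ever needing a decay faster than $t^{-2}$.
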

Similarly to the case $\psi=0$ (see \cite{GZ17}), we say that a sequence $u_{k}\in PSH(X,\omega) $ converges to $u\in PSH(X,\omega)$ in $\psi$-relative capacity for $\psi\in\mathcal{M}$ if
$$
\mathrm{Cap}_{\psi}\big(\{|u_{k}-u|\geq \delta\}\big)\to 0
$$
as $k\to \infty$ for any $\delta>0$.\\
By Theorem $10.37$ in \cite{GZ17} (see also Theorem $5.7$ in \cite{BBGZ09}) the convergence in $\big(\mathcal{E}^{1}(X,\omega),d\big)$  implies the convergence in capacity. The analogous holds for $\psi\in\mathcal{M}^{+}$, i.e. that the strong convergence in $\mathcal{E}^{1}(X,\omega,\psi)$ implies the convergence in $\psi$-relative capacity. Indeed in Proposition \ref{prop:CapacityPsi} we will prove the the strong convergence implies the convergence in $\psi'$-relative capacity for any $\psi'\in\mathcal{M}^{+}$.
\subsection{(Weak) Upper Semicontinuity of $u\to E_{P_{\omega}[u]}(u)$ over $X_{\mathcal{A}}$.}
One of the main feature of $E_{\psi}$ for $\psi\in \mathcal{M}$ is its upper semicontinuity with respect to the weak topology. Here we prove the analogous for $E_{\cdot}(\cdot)$ over $X_{\mathcal{A}}$.
\begin{lem}
\label{lem:HomeoV}
The map $MA_{\omega}:\overline{\mathcal{A}}\to MA_{\omega}(\overline{\mathcal{A}})\subset \{\mu\,\, \mbox{positive measure on}\,\, X\}$ is a homeomorphism considering the weak topologies. In particular $\overline{\mathcal{A}}$ is homeomorphic to a closed set contained in $[0,\int_{X}MA_{\omega}(0)]$ through the map $\psi\to V_{\psi}$.
\end{lem}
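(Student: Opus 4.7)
The plan is to verify three things in turn: (a) $MA_\omega$ is continuous on $\overline{\mathcal{A}}$ for the weak topologies, (b) $MA_\omega$ is injective on $\overline{\mathcal{A}}$, and (c) the resulting continuous bijection is a homeomorphism via the compact-to-Hausdorff principle.

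For (a), I will exploit the total ordering of $\overline{\mathcal{A}}$: given $\psi_k \to \psi$ weakly in $\overline{\mathcal{A}}$, the standard peak argument applied to the chain $(\overline{\mathcal{A}},\preccurlyeq)$ extracts a monotone subsequence $\psi_{k_j}$. Weak monotone convergence of $\omega$-plurisubharmonic functions forces weak convergence of their Monge-Amp\`ere measures (Theorem $2.3$ in \cite{DDNL17b}, or as a special case of Lemma \ref{lem:KeyConv}). Hence $MA_\omega(\psi_{k_j})\to MA_\omega(\psi)$ weakly; since every subsequence of $\{MA_\omega(\psi_k)\}$ has a further sub-subsequence with this limit and the total masses $V_{\psi_k}\leq V_0$ keep the sequence tight, the whole sequence converges to $MA_\omega(\psi)$.

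For (b), the main input is Theorem \ref{thm:ClassE}. Let $\psi_1\neq \psi_2$ in $\overline{\mathcal{A}}$; total ordering lets me assume $\psi_1\preccurlyeq\psi_2$. If $V_{\psi_2}>0$, Theorem \ref{thm:ClassE}$(iii)$ shows that $\psi_1\in\mathcal{E}(X,\omega,\psi_2)$ would give $P_\omega[\psi_1](0)=P_\omega[\psi_2](0)$, i.e.\ $\psi_1=\psi_2$ since both are model type envelopes, a contradiction. Hence $\psi_1\notin\mathcal{E}(X,\omega,\psi_2)$, forcing $V_{\psi_1}<V_{\psi_2}$ and the two measures differ through their total masses. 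The case $V_{\psi_2}=0$ is reduced to $\psi_2=\psi_{\min}$: if instead $\psi_2\succ\psi_{\min}$ strictly, then $\psi_2$ fails to be a lower bound for $\mathcal{A}$ (by minimality of $\psi_{\min}$ in $\overline{\mathcal{A}}\supset \mathcal{A}$), so total ordering yields some $\psi'\in\mathcal{A}$ with $\psi'\prec\psi_2$, whence $V_{\psi_2}\geq V_{\psi'}>0$ by $\psi'\in\mathcal{M}^+$, a contradiction. Thus $\psi_2=\psi_{\min}$, and $\psi_1\preccurlyeq\psi_{\min}$ combined with $\psi_1\in\overline{\mathcal{A}}$ forces $\psi_1=\psi_{\min}=\psi_2$, again a contradiction.

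Finally for (c), $\overline{\mathcal{A}}$ is weakly compact (Lemma $2.6$ in \cite{Tru19}) and the space of positive Radon measures with the weak topology is Hausdorff, so the continuous bijection $MA_\omega: \overline{\mathcal{A}}\to MA_\omega(\overline{\mathcal{A}})$ is automatically a homeomorphism. For the \emph{in particular} assertion, composing with the continuous total-mass functional $\mu\mapsto\mu(X)$ yields a continuous map $\psi\to V_\psi$; it is injective by the strict monotonicity established in (b), and once more the compact-to-Hausdorff principle upgrades it to a homeomorphism onto its image, which is compact in $[0,V_0]$ and therefore closed. I expect the main obstacle to be the $V_{\psi_2}=0$ case of (b), namely ensuring that no element of $\overline{\mathcal{A}}$ other than $\psi_{\min}$ can have vanishing Monge-Amp\`ere mass; the remaining ingredients are direct applications of the tools already collected in Section \ref{sec:Pre}.
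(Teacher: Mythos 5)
Your proposal is correct and follows essentially the same route as the paper: injectivity of $\psi\mapsto V_{\psi}$ via Theorem \ref{thm:ClassE}, monotonicity of total masses and $\mathcal{A}\subset\mathcal{M}^{+}$, continuity of $MA_{\omega}$ via monotone subsequences and Lemma \ref{lem:KeyConv}, and bicontinuity from the weak compactness of $\overline{\mathcal{A}}$ (the paper checks continuity of the inverse directly by the same compactness-plus-subsequence argument where you invoke the compact-to-Hausdorff principle, a cosmetic difference). One caveat: the blanket claim that monotone weak convergence of $\omega$-psh functions forces weak convergence of their non-pluripolar Monge-Amp\`ere measures is false in general for decreasing sequences (the total mass can drop in the limit), so the continuity step should rest on your alternative citation, Lemma \ref{lem:KeyConv} applied with $u_{k}=v_{k}=\psi_{k}$ (monotone a.e.\ convergence giving convergence in capacity), which is exactly the tool the paper uses, rather than on that general principle.
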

\begin{proof}
The map is well-defined and continuous by Lemma $2.6$ in \cite{Tru19}. Moreover the injectivity follows from the fact that $V_{\psi_{1}}=V_{\psi_{2}}$ for $\psi_{1},\psi_{2}\in\overline{\mathcal{A}}$ implies $\psi_{1}=\psi_{2}$ using Theorem \ref{thm:ClassE} and the fact that $\mathcal{A}\subset \mathcal{M}^{+}$. \\
Finally to conclude the proof it is enough to prove that $\psi_{k}\to \psi$ weakly assuming $V_{\psi_{k}}\to V_{\psi}$ and it is clearly sufficient to show that any subsequence of $\{\psi_{k}\}_{k\in\mathbbm{N}}$ admits a subsequence weakly convergent to $\psi$. Moreover since $\overline{\mathcal{A}}$ is totally ordered and $\succcurlyeq$ coincides with $\geq $ on $\mathcal{M}$, we may assume $\{\psi_{k}\}_{k\in\mathbbm{N}}$ monotonic sequence. Then, up to considering a further subsequence, $\psi_{k}$ converges almost everywhere to an element $\psi'\in\overline{\mathcal{A}}$ by compactness, and Lemma \ref{lem:KeyConv} implies that $V_{\psi'}=V_{\psi}$, i.e $\psi=\psi'$.
\end{proof}
In the case $\mathcal{A}:=\{\psi_{k}\}_{k\in\mathbbm{N}}\subset \mathcal{M}^{+}$, we say that $u_{k}\in\mathcal{E}^{1}(X,\omega,\psi_{k})$ converges weakly to $P_{\psi_{\min}}$ where $\psi_{\min}\in\mathcal{M}\setminus\mathcal{M}^{+}$ if $|\sup_{X}u_{k}|\leq C$ for any $k\in\mathbbm{N}$ and any weak accumulation point $u$ of $\{u_{k}\}_{k\in\mathbbm{N}}$ satisfies $u\preccurlyeq \psi_{\min}$. This definition is the most natural since $PSH(X,\omega,\psi)=\mathcal{E}^{1}(X,\omega,\psi_{\min})$.
\begin{lem}
\label{lem:Contra}
Let $\{u_{k}\}_{k\in\mathbbm{N}}\subset X_{\mathcal{A}}$ be a sequence converging weakly to $u\in X_{\mathcal{A}}$. If $E_{P_{\omega}[u_{k}]}(u_{k})\geq C$ uniformly, then $P_{\omega}[u_{k}]\to P_{\omega}[u]$ weakly.
\end{lem}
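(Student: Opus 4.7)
The plan is a contradiction argument combined with subsequence extraction, exploiting the totally ordered structure of $\overline{\mathcal{A}}$ and the upper semicontinuity of the non-pluripolar Monge-Amp\`ere mass under $L^{1}$-convergence. Set $\psi_{k}:=P_{\omega}[u_{k}]$ and $\psi:=P_{\omega}[u]$, so $\psi_{k},\psi\in\overline{\mathcal{A}}$ by Theorem \ref{thm:ClassE}. Suppose towards a contradiction that $\psi_{k}\not\to\psi$ weakly; by weak compactness of $\overline{\mathcal{A}}$ and the fact that any sequence in a totally ordered set admits a monotone subsequence, extract $\psi_{k_{j}}\to\tilde{\psi}\in\overline{\mathcal{A}}$ monotonically with $\tilde{\psi}\neq\psi$. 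Since $\preccurlyeq$ coincides with the pointwise order on $\overline{\mathcal{A}}\subset\mathcal{M}$, exactly one of $\tilde{\psi}<\psi$ or $\tilde{\psi}>\psi$ holds.

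The main analytical step is to show $u\preccurlyeq\tilde{\psi}$. The $L^{1}$-convergence $u_{k}\to u$ forces $\sup_{X}u_{k}$ to be uniformly bounded by standard $\omega$-psh estimates, and Lemma \ref{lem:UniEst} combined with the hypothesis $E_{\psi_{k}}(u_{k})\geq C$ then yields a uniform constant $M$ such that $u_{k}\leq\psi_{k}+M$. In the case $\psi_{k_{j}}\nearrow\tilde{\psi}$ this already gives $u_{k_{j}}\leq\tilde{\psi}+M$, and the inequality is preserved under the $L^{1}$-limit (pass to an a.e.\ convergent subsequence and use that $\tilde{\psi}+M$ is upper semicontinuous). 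In the case $\psi_{k_{j}}\searrow\tilde{\psi}$ a double limit is needed: for each fixed $j_{0}$, monotonicity gives $u_{k_{j}}\leq\psi_{k_{j_{0}}}+M$ for all $j\geq j_{0}$, so in the $L^{1}$-limit $u\leq\psi_{k_{j_{0}}}+M$, and letting $j_{0}\to\infty$ with $\psi_{k_{j_{0}}}\searrow\tilde{\psi}$ pointwise produces $u\leq\tilde{\psi}+M$. In either case $u\preccurlyeq\tilde{\psi}$, hence $\psi=P_{\omega}[u]\leq P_{\omega}[\tilde{\psi}]=\tilde{\psi}$ pointwise by the monotonicity of $v\mapsto P_{\omega}[v]$.

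This directly rules out the case $\tilde{\psi}<\psi$. In the remaining case $\tilde{\psi}>\psi$, Lemma \ref{lem:HomeoV} gives $V_{\psi_{k_{j}}}\to V_{\tilde{\psi}}$ together with $V_{\tilde{\psi}}>V_{\psi}$ by the injectivity of $\varphi\mapsto V_{\varphi}$ on $\overline{\mathcal{A}}$. Since $\int_{X}MA_{\omega}(u_{k_{j}})=V_{\psi_{k_{j}}}$ and $\int_{X}MA_{\omega}(u)=V_{\psi}$, it suffices to invoke the upper semicontinuity of the non-pluripolar Monge-Amp\`ere mass under $L^{1}$-convergence, which follows by applying Theorem $2.3$ of \cite{DDNL17b} to the regularized decreasing envelopes $v_{k}:=(\sup_{m\geq k}u_{m})^{*}\searrow u$ and combining with the total mass monotonicity $w_{1}\preccurlyeq w_{2}\Rightarrow V_{w_{1}}\leq V_{w_{2}}$ of \cite{BEGZ10}, \cite{WN17}: indeed $V_{v_{k}}\geq\sup_{m\geq k}V_{u_{m}}$ and $V_{v_{k}}\to V_{u}$, so $V_{\psi}=V_{u}\geq\limsup_{j}V_{u_{k_{j}}}=\limsup_{j}V_{\psi_{k_{j}}}=V_{\tilde{\psi}}$, contradicting $V_{\tilde{\psi}}>V_{\psi}$.

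The main obstacle is the careful double-limit passage in the $\searrow$ case (one must fix $j_{0}$ before taking the $L^{1}$-limit in $j$, then let $j_{0}\to\infty$) and the clean justification of the USC-of-mass step via regularized envelopes; both rely on the monotone convergence theorem for non-pluripolar products and on mass monotonicity, which are by now standard in this circle of ideas.
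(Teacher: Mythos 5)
Your reduction to a monotone subsequence $\psi_{k_j}\to\tilde\psi$ and the first half of the argument (the uniform bound $u_k\leq\psi_k+M$ and the double-limit passage giving $u\preccurlyeq\tilde\psi$, hence $\psi\leq\tilde\psi$) are fine; this is the analogue of the lower-semicontinuity step the paper takes from Theorem $2.3$ of \cite{DDNL17b}, and it correctly rules out $\tilde\psi\prec\psi$. The gap is in the remaining case $\tilde\psi\succ\psi$: the ``upper semicontinuity of the non-pluripolar Monge--Amp\`ere mass under $L^{1}$-convergence'' you invoke is false, and so is the key sub-claim $V_{v_k}\to V_u$ for the decreasing envelopes $v_k:=(\sup_{m\geq k}u_m)^{*}\searrow u$. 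The monotone convergence results of \cite{DDNL17b} (and Theorem $2.3$ there) concern increasing sequences, or give exactly the opposite inequality $\liminf_k V_{u_k}\geq V_u$; along decreasing sequences the total mass can drop strictly in the limit. Concretely, take $u\in\mathcal{E}^{1}(X,\omega,\psi)$ with $V_{\psi}<V_{0}$ and $u_m:=\max(u,-m)$: then $v_k=u_k$ and $V_{v_k}=V_{0}\not\to V_u=V_{\psi}$. With this inequality gone, your chain $V_{\psi}=V_u\geq\limsup_j V_{\psi_{k_j}}=V_{\tilde\psi}$ collapses, and this is precisely the hard half of the lemma.

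The structural symptom is that your argument never genuinely uses the hypothesis $E_{P_{\omega}[u_k]}(u_k)\geq C$: the bound $u_k\leq\psi_k+M$ already follows from the uniform bound on $\sup_X u_k$ together with the equality $\sup_X(u_k-\psi_k)=\sup_X u_k$ in Lemma \ref{lem:UniEst}, so the energy bound is decorative in your write-up. But the statement is false without it: in the same example, with $0,\psi\in\mathcal{A}$, the sequence $u_k=\max(u,-k)$ converges weakly to $u$, satisfies every hypothesis except the energy bound, and yet $P_{\omega}[u_k]=0\not\to\psi$. The paper closes the case $\psi_{k}\searrow\psi'$ with $V_{\psi'}>V_{\psi}$ by using the energy bound through Proposition \ref{prop:PropProie} and Remark \ref{rem:Usef}: the projections $P_{\omega}[\psi'](u_k)$ lie in a fixed $\mathcal{E}^{1}_{C'}(X,\omega,\psi')$ (the $1$-Lipschitz property transfers the uniform bound on $d(\psi_k,u_k)$, which is where $E_{\psi_k}(u_k)\geq C$ enters), so a weak limit $w$ stays in $\mathcal{E}^{1}(X,\omega,\psi')$ while $w\leq u$, contradicting $V_{\psi'}>V_{\psi}$ via mass monotonicity (equivalently, disjointness of $\mathcal{E}^{1}(X,\omega,\psi')$ and $\mathcal{E}^{1}(X,\omega,\psi)$). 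Some argument of this kind, exploiting the energy bound to keep the limit in the less singular class, is needed to replace your semicontinuity step.
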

\begin{proof}
By Lemma \ref{lem:HomeoV} the convergence requested is equivalent to $V_{\psi_{k}}\to V_{\psi}$, where we set $\psi_{k}:=P_{\omega}[u_{k}], \psi:=P_{\omega}[u]$.\\
Moreover by a simple contradiction argument it is enough to show that any subsequence $\{\psi_{k_{h}}\}_{h\in\mathbbm{N}}$ admits a subsequence $\{\psi_{k_{h_{j}}}\}_{j\in\mathbbm{N}}$ such that $V_{\psi_{k_{h_{j}}}}\to V_{\psi}$. Thus up to considering a subsequence, by abuse of notations and by the lower semicontinuity $\liminf_{k\to \infty}V_{\psi_{k}}\geq V_{\psi}$ of Theorem $2.3.$ in \cite{DDNL17b}, we may suppose by contradiction that $\psi_{k}\searrow \psi'$ for $\psi'\in \mathcal{M}$ such that $V_{\psi'}>V_{\psi}$. In particular $V_{\psi'}>0$ and $\psi'\succcurlyeq  \psi$. Then by Proposition \ref{prop:PropProie} and Remark \ref{rem:Usef} the sequence $\{P_{\omega}[\psi'](u_{k})\}_{k\in\mathbbm{N}}$ is bounded in $\big(\mathcal{E}^{1}(X,\omega,\psi'),d\big)$ and it belongs to $\mathcal{E}^{1}_{C'}(X,\omega,\psi')$ for some $C'\in\mathbbm{R}$. Therefore, up to considering a subsequence, we have that $\{u_{k}\}_{k\in\mathbbm{N}}$ converges weakly to an element $v\in \mathcal{E}^{1}(X,\omega,\psi)$ (which is the element $u$ itself when $u\neq P_{\psi_{\min}}$) while the sequence $P_{\omega}[\psi'](u_{k})$ converges weakly to an element $w \in\mathcal{E}^{1}(X,\omega,\psi')$. Thus the contradiction follows from $w\leq v$ since $\psi'\succcurlyeq \psi$, $V_{\psi'}>0$ and $\mathcal{E}^{1}(X,\omega,\psi')\cap \mathcal{E}^{1}(X,\omega,\psi)=\emptyset$.
\end{proof}
\begin{prop}
\label{prop:USC}
Let $\{u_{k}\}_{k\in\mathbbm{N}}\subset X_{\mathcal{A}}$ be a sequence converging weakly to $u\in X_{\mathcal{A}}$. Then
\begin{equation}
\label{eqn:USC}
\limsup_{k\to \infty }E_{P_{\omega}[u_{k}]}(u_{k})\leq E_{P_{\omega}[u]}(u).
\end{equation}
\end{prop}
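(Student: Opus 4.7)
If $\limsup_k E_{P_\omega[u_k]}(u_k) = -\infty$ there is nothing to prove, so after passing to a subsequence I may assume $E_{\psi_k}(u_k) \geq -C$ uniformly, where $\psi_k := P_\omega[u_k]$. Lemma \ref{lem:Contra} then forces $\psi_k \to \psi := P_\omega[u]$ weakly, and since $\overline{\mathcal{A}}$ is totally ordered, a further subsequence makes $\psi_k \to \psi$ monotonically a.e. Up to shifting $u_k$ by a constant---an operation which shifts both sides of (\ref{eqn:USC}) by $c V_{\psi_k} \to c V_\psi$---I normalize $\sup_X u_k = 0$, which gives $u_k \leq \psi_k$ since $\psi_k = P_\omega[u_k](0) \geq u_k$. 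The case $V_\psi = 0$ (i.e.\ $u = P_{\psi_{\min}}$) is then trivial, for $E_{\psi_k}(u_k) \leq E_{\psi_k}(\psi_k) = 0 = E_{\psi_{\min}}(u)$, so from here on I assume $\psi \in \mathcal{M}^+$.

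The proof proceeds by a double approximation. Setting $u_k^{(N)} := \max(u_k, \psi_k - N)$ and $u^{(N)} := \max(u, \psi - N)$, Proposition \ref{prop:PropertiesE}(i)--(ii) give $E_{\psi_k}(u_k) \leq E_{\psi_k}(u_k^{(N)})$ and $E_\psi(u^{(N)}) \searrow E_\psi(u)$ as $N \to \infty$, so it suffices to prove $\limsup_k E_{\psi_k}(u_k^{(N)}) \leq E_\psi(u^{(N)})$ for each fixed $N$. I introduce the weak-$\limsup$ envelope $v_k := (\sup_{l \geq k} u_l)^*$, which decreases a.e.\ to $u$. In Case A ($\psi_k \searrow \psi$) the truncation $v_k^{(N)} := \max(v_k, \psi_k - N)$ satisfies $\psi_k - N \leq v_k^{(N)} \leq \psi_k$---the upper bound from $u_l \leq \psi_l \leq \psi_k$ for $l \geq k$---so it belongs to $\mathcal{E}^1(X,\omega,\psi_k)$ with $\psi_k$-relative minimal singularities, satisfies $u_k^{(N)} \leq v_k^{(N)}$, and decreases monotonically (hence in capacity) to $u^{(N)}$. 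Applied with the uniform bound $|v_k^{(N)} - \psi_k| \leq N$, Lemma \ref{lem:KeyConv} then yields weak convergence of the measures $(v_k^{(N)} - \psi_k) MA_\omega((v_k^{(N)})^j, \psi_k^{n-j})$ to their analogues for $(u^{(N)}, \psi)$, and integrating against $1$ gives $E_{\psi_k}(v_k^{(N)}) \to E_\psi(u^{(N)})$.

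In Case B ($\psi_k \nearrow \psi$) instead $v_k \in \mathcal{E}^1(X,\omega,\psi)$, so I first truncate at the limit level, $v_k^{(N)} := \max(v_k, \psi - N)$, giving $\psi$-relative minimal singularities and $v_k^{(N)} \searrow u^{(N)}$ in $\mathcal{E}^1(X,\omega,\psi)$, and then project to the $\psi_k$-class: $\tilde v_k^{(N)} := P_\omega[\psi_k](v_k^{(N)}) \in \mathcal{E}^1(X,\omega,\psi_k)$, for which Proposition \ref{prop:PropProie} provides the crucial contraction $|\tilde v_k^{(N)} - \psi_k| \leq N$. The inequality $u_k^{(N)} \leq \tilde v_k^{(N)}$ follows because $u_k^{(N)} \preccurlyeq \psi_k$ and $u_k^{(N)} \leq v_k^{(N)}$, while the squeeze $P_\omega[\psi_k](u^{(N)}) \leq \tilde v_k^{(N)} \leq v_k^{(N)}$ combined with Lemma \ref{lem:Referee}---whose proof in fact supplies monotone convergence $P_\omega[\psi_k](u^{(N)}) \nearrow u^{(N)}$ when $\psi_k \nearrow \psi$---yields $\tilde v_k^{(N)} \to u^{(N)}$ in capacity, and a second application of Lemma \ref{lem:KeyConv} gives $E_{\psi_k}(\tilde v_k^{(N)}) \to E_\psi(u^{(N)})$. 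The main obstacle is precisely this Case B: the natural envelope $v_k$ has the singularity type of the weak limit $u$, strictly weaker than that of $u_k$, so no direct energy comparison is possible and the projection step is essential to reconcile the two classes while preserving the bound $|\tilde v_k^{(N)} - \psi_k| \leq N$ required by Lemma \ref{lem:KeyConv}. Combining the two cases and letting $N \to \infty$ completes the proof.
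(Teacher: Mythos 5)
Your argument is correct, and while it shares the paper's skeleton (reduce to a monotone subsequence $\psi_k\to\psi$ via Lemma \ref{lem:Contra}, normalize, and compare $u_k$ with the decreasing envelope $v_k=(\sup_{l\geq k}u_l)^*$), the convergence mechanism is genuinely different. The paper never truncates: in the decreasing case it bounds $E_{\psi_k}(u_k)\leq E_{\psi_k}(v_k)\leq E_{\psi}\big(P_{\omega}[\psi](v_k)\big)$ and concludes by continuity of $E_{\psi}$ along the decreasing sequence $P_{\omega}[\psi](v_k)\searrow u$, while in the increasing case it picks near-supremal indices $j_k$, projects $u_{j_k}$ onto a \emph{fixed} more singular class $\psi_l$, extracts weak limits $v_l$, uses upper semicontinuity of $E_{\psi_l}$ within that fixed class, and finally lets $l\to\infty$ via Proposition \ref{prop:AllNecessary}. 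You instead reduce everything to comparison potentials with relative minimal singularities ($v_k^{(N)}$, resp. $\tilde v_k^{(N)}=P_{\omega}[\psi_k](v_k^{(N)})$, projected onto the \emph{varying} class), establish convergence in capacity (monotonicity, and in the increasing case the squeeze $P_{\omega}[\psi_k](u^{(N)})\leq\tilde v_k^{(N)}\leq v_k^{(N)}$ together with Lemma \ref{lem:Referee}), and then get genuine convergence $E_{\psi_k}(\cdot)\to E_{\psi}(u^{(N)})$ from Lemma \ref{lem:KeyConv} applied to the explicit energy formula, before letting $N\to\infty$. What this buys: your route avoids the diagonal/weak-limit extraction of the paper's increasing case and leans only on Lemma \ref{lem:KeyConv}, Proposition \ref{prop:PropProie} and Lemma \ref{lem:Referee}; the paper's route avoids any capacity-convergence bookkeeping by staying inside the energy/metric formalism already developed in \cite{Tru19}. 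Two cosmetic points: the assertion $v_k\in\mathcal{E}^1(X,\omega,\psi)$ in the increasing case is neither justified nor needed (only $v_k^{(N)}$ enters), and the normalization step implicitly uses that $\sup_X u_k\to\sup_X u$ (Hartogs) when $u\neq P_{\psi_{\min}}$, resp. boundedness of $\sup_X u_k$ together with $V_{\psi_k}\to 0$ when $u=P_{\psi_{\min}}$; both are immediate but worth a word.
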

\begin{proof}
Let $\psi_{k}:=P_{\omega}[u_{k}],\psi:=P_{\omega}[u]\in \overline{\mathcal{A}}$. We may clearly assume $\psi_{k}\neq \psi_{\min}$ for any $k\in\mathbbm{N}$ if $\psi=\psi_{\min}$ and $V_{\psi_{\min}}=0$.\\
Moreover we can also suppose that $E_{\psi_{k}}(u_{k})$ is bounded from below, which implies that $u_{k}\in \mathcal{E}^{1}_{C}(X,\omega,\psi_{k})$ for a uniform constant $C$ and that $\psi_{k}\to \psi$ weakly by Lemma \ref{lem:Contra}. Thus since $E_{\psi_{k}}(u_{k})=E_{\psi_{k}}(u_{k}-C)+CV_{\psi_{k}}$ for any $k\in\mathbbm{N}$, Lemma \ref{lem:HomeoV} implies that we may assume that $\sup_{X}u_{k}\leq 0$. Furthermore since $\mathcal{A}$ is totally ordered, it is enough to show (\ref{eqn:USC}) when $\psi_{k}\to \psi$ a.e. monotonically.\\
If $\psi_{k}\searrow \psi$, setting $v_{k}:=\big(\sup\{u_{j}\, :\, j\geq k\}\big)^{*}\in\mathcal{E}^{1}(X,\omega,\psi_{k})$, we easily have
$$
\limsup_{k\to\infty}E_{\psi_{k}}(u_{k})\leq \limsup_{k\to \infty} E_{\psi_{k}}(v_{k})\leq\limsup_{k\to \infty}E_{\psi}\big(P_{\omega}[\psi](v_{k})\big)
$$
using the monotonicity of $E_{\psi_{k}}$ and Proposition \ref{prop:PropProie}. Hence if $\psi=\psi_{\min}$ and $V_{\psi_{\min}}=0$ then $E_{\psi}\big(P_{\omega}[\psi](v_{k})\big)=0=E_{\psi}(u)$, while otherwise the conclusion follows from Proposition \ref{prop:PropertiesE} since $P_{\omega}[\psi](v_{k})\searrow u$ by construction.\\
If instead $\psi_{k}\nearrow \psi$, fix $\epsilon>0$ and for any $k\in\mathbbm{N}$ let $j_{k}\geq k$ such that
$$
\sup_{j\geq k}E_{\psi_{j}}(u_{j})\leq E_{\psi_{j_{k}}}(u_{j_{k}})+\epsilon.
$$
Thus again by Proposition \ref{prop:PropProie}, $E_{\psi_{j_{k}}}(u_{j_{k}})\leq E_{\psi_{l}}\big(P_{\omega}[\psi_{l}](u_{j_{k}})\big)$ for any $l\leq j_{k}$. Moreover, assuming $E_{\psi_{j_{k}}}(u_{j_{k}})$ bounded from below, $-E_{\psi_{l}}\big(P_{\omega}[\psi_{l}](u_{j_{k}})\big)=d\big(\psi_{l},P_{\omega}[\psi_{l}](u_{j_{k}})\big)$ is uniformly bounded in $l,k$, which implies that $\sup_{X}P_{\omega}[\psi_{l}](u_{j_{k}})$ is uniformly bounded by Remark \ref{rem:Usef} since $V_{\psi_{j_{k}}}\geq a>0$ for $k\gg 0$ big enough. By compactness, up to considering a subsequence, we obtain $P_{\omega}[\psi_{l}](u_{j_{k}})\to v_{l}$ weakly where $v_{l}\in\mathcal{E}^{1}(X,\omega,\psi_{l})$ by the upper semicontinuity of $E_{\psi_{l}}(\cdot)$ on $\mathcal{E}^{1}(X,\omega, \psi_{l})$. Hence
$$
\limsup_{k\to \infty}E_{\psi_{k}}(u_{k})\leq \limsup_{k\to \infty}E_{\psi_{l}}\big(P_{\omega}[\psi_{l}](u_{j_{k}})\big)+\epsilon= E_{\psi_{l}}(v_{l}) +\epsilon
$$
for any $l\in\mathbbm{N}$. Moreover by construction $v_{l}\leq P_{\omega}[\psi_{l}](u)$ since $P_{\omega}[\psi_{l}](u_{j_{k}})\leq u_{j_{k}}$ for any $k$ such that $j_{k}\geq l$ and $u_{j_{k}}\to u$ weakly. Therefore by the monotonicity of $E_{\psi_{l}}(\cdot)$ and by Proposition \ref{prop:AllNecessary}$.(ii)$ we conclude that
$$
\limsup_{k\to \infty}E_{\psi_{k}}(u_{k})\leq \lim_{l\to \infty}E_{\psi_{l}}\big(P_{\omega}[\psi_{l}](u)\big)+\epsilon=E_{\psi}(u)+\epsilon
$$
letting $l\to \infty$.
\end{proof}
As a consequence, defining
$$
X_{\mathcal{A},C}:=\bigsqcup_{\psi\in\overline{\mathcal{A}}}\mathcal{E}^{1}_{C}(X,\omega,\psi),
$$ 
we get the following compactness result.
\begin{prop}
\label{prop:CompactL1}
Let $C,a\in\mathbbm{R}_{>0}$. The set
$$
X_{\mathcal{A},C}^{a}:=X_{\mathcal{A},C}\cap\Big( \bigsqcup_{\psi\in\overline{\mathcal{A}}\, : \, V_{\psi}\geq a}\mathcal{E}^{1}(X,\omega,\psi)\Big)
$$
is compact with respect to the weak topology.
\end{prop}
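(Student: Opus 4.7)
Given a sequence $\{u_{k}\}_{k\in\mathbbm{N}}\subset X_{\mathcal{A},C}^{a}$ with $u_{k}\in\mathcal{E}^{1}_{C}(X,\omega,\psi_{k})$, $\psi_{k}\in\overline{\mathcal{A}}$ and $V_{\psi_{k}}\geq a$, my plan is to extract a subsequence converging weakly to some $u\in X_{\mathcal{A},C}^{a}$. Since $\overline{\mathcal{A}}$ is totally ordered and weakly compact, I can pass to a subsequence along which $\psi_{k}\to\psi$ monotonically almost everywhere for some $\psi\in\overline{\mathcal{A}}$; by the continuity of $\psi\mapsto V_{\psi}$ furnished by Lemma \ref{lem:HomeoV}, the limit satisfies $V_{\psi}\geq a$, so $\psi\in\overline{\mathcal{A}}\cap\mathcal{M}^{+}$.

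To extract an $L^{1}$-convergent subsequence of $\{u_{k}\}$, I need a uniform lower bound on $\sup_{X}u_{k}$. Remark \ref{rem:Usef} gives $d(\psi_{k},u_{k})\leq C(2V_{\psi_{k}}+1)\leq C(2V_{0}+1)$, where $V_{0}:=\int_{X}\omega^{n}$; combined with Lemma \ref{lem:UniEst} this yields $V_{\psi_{k}}\sup_{X}u_{k}\geq -d(\psi_{k},u_{k})\geq -C(2V_{0}+1)$, so $\sup_{X}u_{k}\geq -C(2V_{0}+1)/a$ uniformly. The classical weak compactness of $\omega$-plurisubharmonic functions with uniformly bounded $\sup_{X}$ then produces a subsequence $u_{k}\to u$ in $L^{1}$ with $u\in PSH(X,\omega)$ and $\sup_{X}u\leq C$.

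To finish, I must identify $u\in\mathcal{E}^{1}_{C}(X,\omega,\psi)$. The pointwise bound $u_{k}\leq \psi_{k}+\sup_{X}(u_{k}-\psi_{k})=\psi_{k}+\sup_{X}u_{k}\leq \psi_{k}+C$, obtained from the equality in Lemma \ref{lem:UniEst}, passes to the $L^{1}$-limit using the monotone convergence $\psi_{k}\to\psi$, yielding $u\leq\psi+C$ and hence $u\preccurlyeq\psi$, which anchors $u$ in $PSH(X,\omega,\psi)$. Applying Proposition \ref{prop:USC} then gives $E_{P_{\omega}[u]}(u)\geq\limsup_{k}E_{\psi_{k}}(u_{k})\geq -C$, so $u\in\mathcal{E}^{1}(X,\omega,P_{\omega}[u])$. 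Finally, Lemma \ref{lem:Contra} applied to the sequence with its uniform lower energy bound gives $\psi_{k}=P_{\omega}[u_{k}]\to P_{\omega}[u]$ weakly; combined with $\psi_{k}\to\psi$, uniqueness of the weak limit forces $P_{\omega}[u]=\psi$, so $u\in\mathcal{E}^{1}_{C}(X,\omega,\psi)\subset X_{\mathcal{A},C}^{a}$. The main obstacle is this identification step, as both Proposition \ref{prop:USC} and Lemma \ref{lem:Contra} require the limit to lie in $X_{\mathcal{A}}$; the preliminary reduction $u\preccurlyeq\psi$ via pointwise bounds is precisely what makes their invocation legitimate.
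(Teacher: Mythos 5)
Your proposal is in substance the paper's own argument: the paper likewise extracts weak (relative) compactness from the uniform bound $|\sup_X u|\le\max(C,C/a)$ via Proposition $8.5$ in \cite{GZ17}, and gets closedness from Proposition \ref{prop:USC} together with Hartogs' Lemma; you simply phrase it sequentially and make the identification of the limit class explicit (monotone reduction of $\psi_k$, the bound $u\le\psi+C$, Lemma \ref{lem:Contra} to pin down $P_{\omega}[u]=\psi$), which is detail the paper leaves implicit. One fine point: as stated, Proposition \ref{prop:USC} and Lemma \ref{lem:Contra} require the weak limit to lie in $X_{\mathcal{A}}$, i.e. $u\in\mathcal{E}^1(X,\omega,\psi')$ for some $\psi'\in\overline{\mathcal{A}}$, and your bound $u\preccurlyeq\psi$ only places $u$ in $PSH(X,\omega,\psi)$, so their hypotheses are not literally satisfied at the moment you invoke them; this is the same leap the paper's one-line closedness argument makes, and it is repaired by running the argument inside the proof of Proposition \ref{prop:USC} rather than citing its statement: when $\psi_k\searrow\psi$ your bound $u\le\psi+C$ gives $P_{\omega}[\psi]\big((\sup_{j\ge k}u_j)^*\big)\searrow u$, so continuity of $E_{\psi}$ along decreasing sequences yields $E_{\psi}(u)\ge\limsup_k E_{\psi_k}(u_k)\ge -C$, and finiteness of $E_{\psi}(u)$ then forces $u\in\mathcal{E}^1(X,\omega,\psi)$ (the increasing case is handled with the projections $P_{\omega}[\psi_l]$ as in that proof), after which your conclusion $u\in\mathcal{E}^1_C(X,\omega,\psi)\subset X^a_{\mathcal{A},C}$ stands.
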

\begin{proof}
It follows directly from the definition that
$$
X_{\mathcal{A},C}^{a}\subset \Big\{u\in PSH(X,\omega)\, :\, |\sup_{X}u|\leq C' \Big\}
$$
where $C':=\max(C,C/a)$. Therefore by Proposition $8.5$ in \cite{GZ17}, $X_{\mathcal{A},C}^{a}$ is weakly relatively compact. Finally Proposition \ref{prop:USC} and Hartogs' Lemma imply that $X_{\mathcal{A},C}^{a}$ is also closed with respect to the weak topology, concluding the proof.
\end{proof}
\begin{rem}
\label{rem:ImpRem}
\emph{The whole set $X_{\mathcal{A},C}$ may not be weakly compact. Indeed assuming $V_{\psi_{\min}}=0$ and letting $\psi_{k}\in\overline{\mathcal{A}}$ such that $\psi_{k}\searrow \psi_{\min}$, the functions $u_{k}:=\psi_{k}-1/\sqrt{V_{\psi_{k}}}$ belong to $X_{\mathcal{A},V}$ for $V=\int_{X}MA_{\omega}(0)$ since $E_{\psi_{k}}(u_{k})=-\sqrt{V_{\psi_{k}}}$ but $\sup_{X}u_{k}=-1/\sqrt{V_{\psi_{k}}}\to -\infty$.}
\end{rem}
\section{The action of measures on $PSH(X,\omega)$.}
\label{sec:ActionMeasure}
In this section we want to replace the action on $PSH(X,\omega)$ defined in $\cite{BBGZ09}$ given by a probability measure $\mu$ with an action which assume finite values on elements $u\in PSH(X,\omega)$ with $\psi$-relative minimal singularities where $\psi=P_{\omega}[u]$ for almost all $\psi\in\mathcal{M}$. On the other hand for any $\psi\in\mathcal{M}$ we want that there exists many measures $\mu$ whose action over $\{u\in PSH(X,\omega)\, : \, P_{\omega}[u]=\psi\}$ is well-defined. The problem is that $\mu$ varies among \emph{all} probability measures while $\psi$ among \emph{all} model type envelopes. So it may happen that $\mu$ takes mass on non-pluripolar sets and that the unbounded locus of $\psi\in\mathcal{M}$ is very nasty.
\begin{defn}
Let $\mu$ be a probability measure on $X$. Then $\mu$ acts on $PSH(X,\omega)$ through the functional $L_{\mu}:PSH (X,\omega)\to \mathbbm{R}\cup\{-\infty\}$ defined as $L_{\mu}(u)=-\infty$ if $\mu$ charges $\{P_{\omega}[u]=-\infty\}$, as
$$
L_{\mu}(u):=\int_{X}\big(u-P_{\omega}[u]\big)\mu
$$
if $u$ has $P_{\omega}[u]$-relative minimal singularities and $\mu$ does not charge $\{P_{\omega}[u]=-\infty\}$ and as
$$
L_{\mu}(u):=\inf\{L_{\mu}(v)\, : \, v\in PSH(X,\omega)\,\, \mbox{with} \,\, P_{\omega}[u]\mbox{-relative minimal singularities,}\,\, v\geq u\}
$$
otherwise.
\end{defn}
\begin{prop}
\label{prop:PropertiesL}
The following properties hold:
\begin{itemize}
\item[(i)] $L_{\mu}$ is affine, i.e. it satisfies the scaling property $L_{\mu}(u+c)=L_{\mu}(u)+c$ for any $c\in\mathbbm{R}$, $u\in PSH(X,\omega)$;
\item[(ii)] $L_{\mu}$ is non-decreasing on $\{u\in PSH(X,\omega)\, :\, P_{\omega}[u]=\psi\}$ for any $\psi\in\mathcal{M}$;
\item[(iii)] $L_{\mu}(u)=\lim_{j\to \infty}L_{\mu}\big(\max(u,P_{\omega}[u]-j)\big)$ for any $u\in PSH(X,\omega)$;
\item[(iv)] if $\mu$ is non-pluripolar then $L_{\mu}$ is convex;
\item[(v)] if $\mu$ is non-pluripolar and $u_{k}\to u$ and $P_{\omega}[u_{k}]\to P_{\omega}[u]$ weakly as $k\to \infty$ then $L_{\mu}(u)\geq \limsup_{k\to \infty}L_{\mu}(u_{k})$;
\item[(vi)] if $u\in\mathcal{E}^{1}(X,\omega,\psi)$ for $\psi\in\mathcal{M}^{+}$ then $L_{MA_{\omega}(u)/V_{\psi}}$ is finite on $\mathcal{E}^{1}(X,\omega,\psi)$.
\end{itemize}
\end{prop}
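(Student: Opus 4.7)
The plan is to verify each property in turn, using the truncations $u_j:=\max(u,P_{\omega}[u]-j)$ as the main tool to reduce statements about arbitrary $u$ to the case of $P_{\omega}[u]$-minimal singularities. Properties (i)--(iii) follow directly from the definition. For (i), I first observe that $P_{\omega}[u+c]=P_{\omega}[u]$ because the constant $c$ is absorbed into the limit $\lim_{C\to\infty}P_{\omega}(u+C,0)$; the scaling $L_{\mu}(u+c)=L_{\mu}(u)+c$ then holds on $\psi$-minimally singular functions by linearity of the integral and extends to all of $PSH(X,\omega)$ via the infimum definition. For (ii), if $u\leq v$ share the model envelope $\psi$ and both have $\psi$-minimal singularities, the inequality is pointwise; otherwise any admissible competitor $w$ for $L_{\mu}(v)$ in the infimum is automatically admissible for $L_{\mu}(u)$, so monotonicity passes to the infimum. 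For (iii), Theorem \ref{thm:ClassE} gives $P_{\omega}[u_{j}]=\psi:=P_{\omega}[u]$ with $\psi$-minimal singularities, so by (ii) the sequence $L_{\mu}(u_{j})$ is decreasing; conversely, every competitor $w\geq u$ with $\psi$-minimal singularities satisfies $w\geq u_{C}$ for some $C$, yielding $L_{\mu}(w)\geq L_{\mu}(u_{C})\geq \lim_{j}L_{\mu}(u_{j})$ and hence equality.

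For (iv), the crucial observation is that $tP_{\omega}[u_{1}]+(1-t)P_{\omega}[u_{2}]$ is $\omega$-psh and satisfies $\preccurlyeq tu_{1}+(1-t)u_{2}$, hence is a competitor in the envelope defining $P_{\omega}[tu_{1}+(1-t)u_{2}]$, so $P_{\omega}[tu_{1}+(1-t)u_{2}]\geq tP_{\omega}[u_{1}]+(1-t)P_{\omega}[u_{2}]$. When $u_{1},u_{2}$ have minimal singularities relative to their own envelopes and $\mu$ charges no pluripolar set, integrating yields
\[
L_{\mu}\bigl(tu_{1}+(1-t)u_{2}\bigr)\leq \int_{X}\bigl(tu_{1}+(1-t)u_{2}-tP_{\omega}[u_{1}]-(1-t)P_{\omega}[u_{2}]\bigr)\mu = tL_{\mu}(u_{1})+(1-t)L_{\mu}(u_{2}).
\]
The general case is recovered by applying this to the truncations of $u_{1},u_{2}$ and invoking (iii) to pass to the limit.

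Property (v) is the main obstacle. The plan is to split $L_{\mu}(u_{k})=\int u_{k}\,\mu-\int P_{\omega}[u_{k}]\,\mu$ and handle the two pieces separately. For non-pluripolar $\mu$ the functional $w\mapsto \int w\,\mu$ on $\omega$-psh functions uniformly bounded above is upper semicontinuous along weak convergence (via the Hartogs-type envelope $v_{k}:=(\sup_{j\geq k}u_{j})^{*}\searrow u$ and bounded convergence), giving $\limsup_{k}\int u_{k}\,\mu\leq \int u\,\mu$. The real difficulty is that the $\limsup$ does not pass naively through subtraction; one needs $\liminf_{k}\int P_{\omega}[u_{k}]\,\mu\geq \int P_{\omega}[u]\,\mu$. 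I expect to handle this by reducing, via the totally ordered structure of $\overline{\mathcal{A}}$ in which the proposition will later be used, to the two monotone cases $P_{\omega}[u_{k}]\searrow P_{\omega}[u]$ and $P_{\omega}[u_{k}]\nearrow P_{\omega}[u]$; in both Lebesgue monotone convergence yields $\int P_{\omega}[u_{k}]\,\mu\to \int P_{\omega}[u]\,\mu$, and combining with the usc of the first piece concludes the proof. A truncation at a common lower level $\psi_{\min}-C$ followed by an application of (iii) removes any boundedness assumption.

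Finally, for (vi), let $u\in\mathcal{E}^{1}(X,\omega,\psi)$ and $\mu:=MA_{\omega}(u)/V_{\psi}$. For any $v\in\mathcal{E}^{1}(X,\omega,\psi)$, Theorem \ref{thm:ClassE} gives $P_{\omega}[v]=\psi$, and after shifting by a constant (using (i)) we may assume $v\leq \psi$, so $L_{\mu}(v)\leq 0$. By (iii) it suffices to bound $L_{\mu}(v_{j})=V_{\psi}^{-1}\int(v_{j}-\psi)MA_{\omega}(u)$ from below uniformly in $j$, where $v_{j}:=\max(v,\psi-j)$. Decomposing $v_{j}-\psi=(v_{j}-u)+(u-\psi)$, Proposition \ref{prop:PropertiesE}(viii) gives $\int(v_{j}-u)MA_{\omega}(u)\geq E_{\psi}(v_{j})-E_{\psi}(u)\geq E_{\psi}(v)-E_{\psi}(u)>-\infty$ by monotonicity of $E_{\psi}$ and $v_{j}\geq v$, while $\int(u-\psi)MA_{\omega}(u)$ is finite by Proposition \ref{prop:PropertiesE}(v). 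Hence $L_{\mu}(v)\in\mathbbm{R}$, finishing the proof.
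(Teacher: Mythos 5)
Parts (i)--(iii) are correct and essentially identical to the paper's argument, and your (vi) is a valid alternative to the paper's (which simply invokes Lemma \ref{lem:UniBounded}; your route through Proposition \ref{prop:PropertiesE}(v),(viii) works equally well). The problems are in (iv) and (v). For (iv), your ``crucial observation'' that $tP_{\omega}[u_{1}]+(1-t)P_{\omega}[u_{2}]\preccurlyeq tu_{1}+(1-t)u_{2}$ is false for general $u_{1},u_{2}$: one always has $u_{i}\preccurlyeq P_{\omega}[u_{i}]$ and not the reverse (for an unbounded $u\in\mathcal{E}(X,\omega)$, Theorem \ref{thm:ClassE} gives $P_{\omega}[u]=0\not\preccurlyeq u$), so the convex combination of the envelopes is in general \emph{not} a competitor for $P_{\omega}[tu_{1}+(1-t)u_{2}]$. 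The relation does hold when the $u_{i}$ have minimal singularities relative to their own envelopes, which is the only case where you use it; but then your reduction of the general case via truncations does not close. Indeed, for $u_{i}^{j}:=\max(u_{i},P_{\omega}[u_{i}]-j)$ the envelope of $tu_{1}^{j}+(1-t)u_{2}^{j}$ is $\tilde{\psi}:=P_{\omega}\big[tP_{\omega}[u_{1}]+(1-t)P_{\omega}[u_{2}]\big]$ for every $j$, so letting $j\to\infty$ in the minimal-singularity inequality only gives $\int_{X}(v-\tilde{\psi})\mu\leq tL_{\mu}(u_{1})+(1-t)L_{\mu}(u_{2})$ with $v:=tu_{1}+(1-t)u_{2}$; since $P_{\omega}[v]\leq\tilde{\psi}$, this is weaker than the desired $L_{\mu}(v)=\int_{X}(v-P_{\omega}[v])\mu\leq tL_{\mu}(u_{1})+(1-t)L_{\mu}(u_{2})$. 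What is really needed is $P_{\omega}[v]\geq tP_{\omega}[u_{1}]+(1-t)P_{\omega}[u_{2}]$ for \emph{arbitrary} $u_{1},u_{2}$ (after normalizing $\sup_{X}\leq 0$), and this is exactly what the paper proves, by noting that $tP_{\omega}(u_{1}+C,0)+(1-t)P_{\omega}(u_{2}+C,0)\leq P_{\omega}(v+C,0)\leq P_{\omega}[v]$ and letting $C\to\infty$; your argument does not supply this step.

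For (v) there is a second genuine gap: the statement assumes only weak convergence $u_{k}\to u$ and $P_{\omega}[u_{k}]\to P_{\omega}[u]$, with no order structure, whereas you propose to reduce to the monotone cases ``via the totally ordered structure of $\overline{\mathcal{A}}$ in which the proposition will later be used''. That proves a strictly weaker statement than the one asserted, and no total order is available in the hypotheses. Moreover the splitting $L_{\mu}(u_{k})=\int_{X}u_{k}\,\mu-\int_{X}P_{\omega}[u_{k}]\,\mu$ is not legitimate in general: a non-pluripolar $\mu$ need not integrate quasi-psh functions, so both pieces can equal $-\infty$ while their difference is finite, and the fix you suggest (truncating at $\psi_{\min}-C$) again presupposes the $\overline{\mathcal{A}}$ setting. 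The paper's proof never separates the two terms: it uses $L_{\mu}(u_{k})\leq L_{\mu}\big(\max(u_{k},P_{\omega}[u_{k}]-j)\big)$, where the relevant integrand $\max(u_{k}-P_{\omega}[u_{k}],-j)$ is bounded, together with $\limsup_{k}\max(u_{k},P_{\omega}[u_{k}]-j)\leq\max(u,P_{\omega}[u]-j)$ and then (iii); you should rework (v) along these lines rather than through the two separate integrals.
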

\begin{proof}
The first two points follow by definition.\\
For the third point, setting $\psi:=P_{\omega}[u]$, clearly $L_{\mu}(u)\leq \lim_{j\to \infty}L_{\mu}\big(\max(u,\psi-j)\big)$. Conversely, for any $v\geq u$ with $\psi$-relative minimal singularities $v\geq \max(u,\psi-j)$ for $j\gg 0$ big enough, hence by $(ii)$ we get $L_{\mu}(v)\geq \lim_{j\to \infty}L_{\mu}\big(\max(u,\psi-j)\big)$ which implies $(iii)$ by definition.\\
Next, we prove $(iv)$. Let $v=\sum_{l=1}^{m}a_{l}u_{l}$ be a convex combination of elements $u_{l}\in PSH(X,\omega)$, and without loss of generality we may assume $\sup_{X}v,\sup_{X}u_{l}\leq 0$. In particular we have $L_{\mu}(v),L_{\mu}(u_{l})\leq 0$.\\
Suppose $L_{\mu}(v)>-\infty$ (otherwise it is trivial) and let $\psi:=P_{\omega}[v]$, $\psi_{l}:=P_{\omega}[u_{l}]$. Then for any $C\in\mathbbm{R}_{>0}$ it is easy to see that
$$
\sum_{l=1}^{m}a_{l}P_{\omega}(u_{l}+C,0)\leq P_{\omega}(v+C,0)\leq \psi,
$$
which leads to $\sum_{l=1}^{m}a_{l}\psi_{l}\leq \psi$ letting $C\to \infty$. Hence $(iii)$ yields
$$
-\infty<L_{\mu}(v)=\int_{X}(v-\psi)\mu\leq \sum_{l=1}^{n}a_{l}\int_{X}(u_{l}-\psi_{l})\mu=\sum_{l=1}^{n}a_{l}L_{\mu}(u_{l}).
$$
The point $(v)$ easily follows from $\limsup_{k\to \infty}\max\big(u_{k},P_{\omega}[u_{k}]-j\big)\leq \max\big(u,P_{\omega}[u]-j\big)$ and $(iii)$, while the last point is a consequence of Lemma \ref{lem:UniBounded}.
\end{proof}
Next, since for any $t\in [0,1]$ and any $u,v\in\mathcal{E}^{1}(X,\omega,\psi)$
\begin{multline*}
\int_{X}(u-v)MA_{\omega}\big(tu+(1-t)v\big)=\\
=(1-t)^{n}\int_{X}(u-v)MA_{\omega}(v)+\sum_{j=1}^{n}\binom{n}{j}t^{j}(1-t)^{n-j}\int_{X}(u-v)MA_{\omega}(u^{j},v^{n-j})\geq\\
\geq (1-t)^{n}\int_{X}(u-v)MA_{\omega}(v)+\big(1-(1-t)^{n}\big)\int_{X}(u-v)MA_{\omega}(u),
\end{multline*}
we can proceed exactly as in Proposition $3.4$ in \cite{BBGZ09} (see also Lemma $2.11.$ in \cite{GZ07}), replacing $V_{\theta}$ with $\psi$, to get the following result.
\begin{prop}
\label{prop:Imp}
Let $A\subset PSH(X,\omega)$ and let $L:A\to \mathbbm{R}\cup\{-\infty\}$ be a convex and non-decreasing function satisfying the scaling property $L(u+c)=L(u)+c$ for any $c\in\mathbbm{R}$. Then
\begin{itemize}
\item[(i)] if $L$ is finite valued on a weakly compact convex set $K\subset A$, then $L(K)$ is bounded;
\item[(ii)] if $ \mathcal{E}^{1}(X,\omega,\psi)\subset A$ and $L$ is finite valued on $\mathcal{E}^{1}(X,\omega,\psi)$ then $\sup_{\{u\in \mathcal{E}_{C}^{1}(X,\omega,\psi)\, :\, \sup_{X}u\leq 0\}}|L|=O(C^{1/2})$ as $C\to \infty$.
\end{itemize}
\end{prop}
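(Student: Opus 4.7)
The proof parallels Proposition $3.4$ in \cite{BBGZ09}, with the model type envelope $\psi$ playing the role of $V_\theta$ throughout; the key convexity inequality stated just before the proposition is what drives the sharp exponent in part (ii).

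For (i), the plan is as follows. By Hartogs' Lemma applied to the weakly compact set $K$, the quantity $\sup_X u$ is uniformly bounded for $u \in K$, and using the scaling property $L(\cdot + c) = L(\cdot) + c$ we may translate so that $\sup_X u \leq 0$ on $K$. The upper bound on $L|_K$ then follows from monotonicity by comparison with a suitable reference element of $K$ (taking the weak upper envelope inside $K$, which lies in $K$ by convexity and closedness). For the lower bound, I would argue by contradiction: suppose $L(u_k)\to -\infty$ for some $u_k \in K$; by weak compactness extract a subsequential limit $u_\infty \in K$, and apply convexity of $L$ along the segments $t u_k + (1-t) u_\infty \subset K$. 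Since $L$ is finite at $u_\infty$, convexity forces $L$ to remain bounded below on a full segment ending at $u_\infty$, contradicting $L(u_k)\to -\infty$.

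For (ii), first observe that monotonicity together with $u \leq \psi$ gives the trivial upper bound $L(u) \leq L(\psi)$. The substantive part is the lower bound. Set $f(C):=\sup\{|L(u)|:u\in\mathcal{E}^{1}_{C}(X,\omega,\psi),\sup_X u\leq 0\}$ and introduce, for each $s\geq 1$, the affine combination
$$
u_s := \Bigl(1-\frac{1}{s}\Bigr)\psi + \frac{1}{s}u,
$$
which is $\omega$-psh. By Proposition \ref{prop:PropertiesE}(iv) ($E_\psi$ concave along affine curves), $E_\psi(u_s)\geq -C/s$, and $\sup_X u_s \leq 0$ since $\sup_X\psi=\sup_X u=0$. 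Convexity of $L$ applied to this combination reads
$$
L(u_s)\leq \Bigl(1-\frac{1}{s}\Bigr) L(\psi)+\frac{1}{s} L(u),
$$
which rearranges to
$$
-L(u)\leq s\bigl(-L(u_s)\bigr)+(s-1)\bigl|L(\psi)\bigr|.
$$

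The final step, which is where I expect the main obstacle, is to turn this into the sharp $O(C^{1/2})$ bound. Iterating the naive inequality $f(C)\leq s\,f(C/s)+O(s)$ only yields $f(C)=O(C)$, so one must use the refined convexity estimate recalled just before the proposition,
$$
\int_X(u-v)MA_\omega\bigl(tu+(1-t)v\bigr)\geq (1-t)^{n}\int_X(u-v)MA_\omega(v)+\bigl(1-(1-t)^{n}\bigr)\int_X(u-v)MA_\omega(u),
$$
applied with $v=\psi$, to extract second-order information on the concavity of $E_\psi$ along $\xi\mapsto \xi u+(1-\xi)\psi$. Plugging this improved control of $E_\psi(u_s)$ (sharper than $-C/s$) into the convexity inequality for $L$ and then optimizing the parameter $s$ around $s\sim \sqrt{C}$ gives the desired bound $|L(u)|\leq \mathrm{const}\cdot\sqrt{-E_\psi(u)+1}+\mathrm{const}$, hence $f(C)=O(C^{1/2})$ as $C\to\infty$, exactly as in loc. cit.
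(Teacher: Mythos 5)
Your overall strategy is the paper's: the paper merely records the expansion of $\int_X(u-v)MA_\omega(tu+(1-t)v)$ and then invokes Proposition $3.4$ of \cite{BBGZ09} with $\psi$ in place of $V_\theta$. As a self-contained argument, however, your proposal has a genuine gap in part (i), and part (ii) stops precisely at the step that produces the exponent $1/2$. For (i), the lower bound is the substantive half, and convexity along the segments $tu_k+(1-t)u_\infty$ cannot give it: convexity only bounds $L$ from \emph{above} by the chord joining $L(u_\infty)$ and $L(u_k)$, so finiteness of $L$ at the weak limit $u_\infty$ is perfectly compatible with $L(u_k)\to-\infty$ and no contradiction arises. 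The idea actually needed (and used in \cite{BBGZ09}) is the countable convex combination trick: if $u_j\in K$ (normalized via the scaling property so that $\sup_X u_j\le 0$) satisfied $L(u_j)\le -2^j$, then $u:=\sum_j 2^{-j}u_j$ still lies in $K$ because $K$ is convex \emph{and} weakly compact, while monotonicity, convexity and the scaling property applied to the partial combinations force $L(u)\le\sum_{j\le N}2^{-j}L(u_j)+O(2^{-N})\to-\infty$, contradicting finiteness of $L$ on $K$. Your justification of the upper bound is also incorrect: the upper envelope $\big(\sup_{u\in K}u\big)^*$ of a compact convex set need not belong to $K$ (already for a segment it is $P$-unrelated to the segment); the upper bound comes instead from monotonicity and scaling by comparing with a fixed translate lying above $K$ (in the paper's applications, $u\le\psi+C_K$ gives $L(u)\le L(\psi)+C_K$).

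For (ii), the setup ($u_s=(1-\tfrac1s)\psi+\tfrac1s u$, convexity of $L$, concavity of $E_\psi$) is right, but "optimizing $s\sim\sqrt C$" does not by itself yield $O(C^{1/2})$; two ingredients are missing. First, expanding $MA_\omega(u_t^j,\psi^{n-j})$ with $u_t=(1-t)\psi+tu$ gives $E_\psi(u_t)\ge t\int_X(u-\psi)MA_\omega(\psi)-n(n+1)Ct^2$, and the crucial point is that the \emph{linear} coefficient is bounded by a constant $A_0$ independent of $C$: by Theorem \ref{thm:3.8} one has $MA_\omega(\psi)\le\mathbbm{1}_{\{\psi=0\}}MA_\omega(0)$, so $\int_X(\psi-u)MA_\omega(\psi)\le\int_X(-u)MA_\omega(0)\le A_0$ for $\sup_X u=0$. (Your sketch only records the bound $-C/s$ and never isolates this uniform first-order term, which is exactly why the second-order refinement helps.) Second, with $t=C^{-1/2}$ this places $u_t$ in the \emph{fixed} weakly compact convex set $\{v\in\mathcal{E}^1_{C_1}(X,\omega,\psi):\sup_X v\le0\}$, $C_1=A_0+n(n+1)$, and one must invoke part (i) there to get $|L(u_t)|\le M_1$ before feeding it into $-L(u)\le s\big(-L(u_s)\big)+(s-1)|L(\psi)|$; since your part (i) is not established, the chain is incomplete. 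Note finally that your computation silently assumes $\sup_X u=0$ rather than $\sup_X u\le 0$; this is in fact the normalization under which the $O(C^{1/2})$ bound holds and under which the proposition is applied later (e.g. in Proposition \ref{prop:Coerc}), so it is worth making explicit.
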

\subsection{When is $L_{\mu}$ continuous?}
The continuity of $L_{\mu}$ is a hard problem. However we can characterize its continuity on some weakly compact sets as the next Theorem shows.
\begin{thm}
\label{thm:3.10}
Let $\mu$ be a non-pluripolar probability measure, and let $K\subset PSH(X,\omega)$ be a compact convex set such that $L_{\mu}$ is finite on $K$, the set $\{P_{\omega}[u]\, :\, u\in K\}\subset \mathcal{M}$ is totally ordered and its closure in $PSH(X,\omega)$ has at most one element in $\mathcal{M}\setminus \mathcal{M}^{+}$. Suppose also that there exists $C\in\mathbbm{R}$ such that $|E_{P_{\omega}[u]}(u)|\leq C$ for any $u\in K$. Then the following properties are equivalent:
\begin{itemize}
\item[(i)] $L_{\mu}$ is continuous on $K$;
\item[(ii)] the map $\tau: K\to L^{1}(\mu)$, $\tau(u):=u-P_{\omega}[u]$ is continuous;
\item[(iii)] the set $\tau(K)\subset L^{1}(\mu)$ is uniformly integrable, i.e.
$$
\int_{t=m}^{\infty}\mu\{u\leq P_{\omega}[u]-t\}\to 0
$$
as $m\to \infty$, uniformly for $u\in K$. 
\end{itemize}
\end{thm}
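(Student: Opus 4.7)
The strategy is to establish the cycle (ii) $\Rightarrow$ (i) $\Rightarrow$ (iii) $\Rightarrow$ (ii). The first link is immediate: since $L_\mu(v)=\int_X\tau(v)\,d\mu$ for $v\in K$ by the finiteness hypothesis, continuity of $\tau\colon K\to L^1(\mu)$ yields continuity of $L_\mu$.

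For (i) $\Rightarrow$ (iii) I argue by contradiction. If uniform integrability fails one extracts $\{u_k\}\subset K$, $t_k\to+\infty$ and $\varepsilon>0$ with $\int_{t_k}^{\infty}\mu\{u_k\leq P_\omega[u_k]-t\}\,dt\geq\varepsilon$. Consider the upward truncation $\tilde{u}_k:=\max\bigl(u_k,P_\omega[u_k]-t_k\bigr)$: it is $\omega$-psh, $\tilde{u}_k\geq u_k$, and $P_\omega[\tilde{u}_k]=P_\omega[u_k]$ because truncation preserves the singularity type. A layer-cake identity then gives
\[
L_\mu(\tilde{u}_k)-L_\mu(u_k)=\int\bigl(P_\omega[u_k]-t_k-u_k\bigr)_+\,d\mu=\int_{t_k}^{\infty}\mu\{u_k\leq P_\omega[u_k]-t\}\,dt\geq\varepsilon.
\]
Meanwhile the sets $\{u_k\leq P_\omega[u_k]-t_k\}$ have vanishing Lebesgue measure uniformly in $k$, and the exponential integrability of $\omega$-psh functions over small-volume sets forces $\tilde{u}_k-u_k\to 0$ in $L^1(X,\omega^n)$. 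Passing to a subsequence, weak compactness of $K$ gives $u_k\to u\in K$ and therefore $\tilde{u}_k\to u$ weakly; the uniform energy bound on $K$ together with Lemma \ref{lem:Contra} further provides $P_\omega[u_k]=P_\omega[\tilde{u}_k]\to P_\omega[u]$ weakly. Applying Proposition \ref{prop:PropertiesL}(v) to the sequence $\tilde{u}_k\to u$ we conclude $L_\mu(u)\geq\limsup_k L_\mu(\tilde{u}_k)\geq L_\mu(u)+\varepsilon$, a contradiction.

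For (iii) $\Rightarrow$ (ii), fix $u_k\to u$ in $K$ and write $\tau(v)=\tau_m(v)+\bigl(\tau(v)-\tau_m(v)\bigr)$ with $\tau_m(v):=\max\bigl(\tau(v),-m\bigr)$. Condition (iii) is exactly the statement
\[
\sup_{v\in K}\|\tau(v)-\tau_m(v)\|_{L^1(\mu)}\leq\sup_{v\in K}\int_{m}^{\infty}\mu\{v\leq P_\omega[v]-t\}\,dt\xrightarrow{m\to\infty}0,
\]
so the problem reduces to showing $\tau_m(u_k)\to\tau_m(u)$ in $L^1(\mu)$ for every fixed $m$. The family $\{\tau_m(u_k)\}_k$ is uniformly bounded in $L^\infty(\mu)$, hence by bounded convergence it is enough to prove $\mu$-a.e.\ convergence. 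Since $P_\omega[u_k]\to P_\omega[u]$ weakly by Lemma \ref{lem:Contra}, enveloping the bounded truncations from above and below by monotone sequences and invoking Hartogs together with the non-pluripolarity of $\mu$ yields $\mu$-a.e.\ convergence along a subsequence; uniqueness of the limit then promotes it to the full sequence.

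\textbf{Main obstacle.} The delicate step is the $\mu$-a.e.\ convergence inside (iii) $\Rightarrow$ (ii): weak $L^1(\omega^n)$-convergence of $\omega$-psh functions does not a priori pass to $L^1(\mu)$ for an arbitrary non-pluripolar measure, which may be mutually singular with Lebesgue. The $L^\infty$ truncation circumvents this by reducing the issue to a Hartogs-type envelope argument that is available on any non-pluripolar $\mu$. A subsidiary technical point is the exceptional case $P_\omega[u]\in\mathcal{M}\setminus\mathcal{M}^+$: the hypothesis that at most one such element lies in the closure of $\{P_\omega[v]:v\in K\}$, together with the total ordering of $\{P_\omega[v]:v\in K\}$, guarantees the monotone limiting structure that both Lemma \ref{lem:Contra} and the envelope arguments require.
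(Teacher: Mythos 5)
Your cycle (ii) $\Rightarrow$ (i) $\Rightarrow$ (iii) $\Rightarrow$ (ii) differs from the paper, which establishes (i) $\Leftrightarrow$ (ii), (ii) $\Rightarrow$ (iii) and the closedness of the graph of $\tau$ by the arguments of Theorem 3.10 in \cite{BBGZ09}, and then proves the hard implication (iii) $\Rightarrow$ (i) via Dunford--Pettis (uniform integrability $=$ weak relative compactness of $\tau(K)$ in $L^{1}(\mu)$), Fatou for one inequality, and a Hahn--Banach/Mazur argument: one extracts finite convex combinations $v_{k}$ with $\tau(v_{k})\to\tau(u)$ in $L^{1}(\mu)$ and uses the convexity of $L_{\mu}$ (Proposition \ref{prop:PropertiesL}.(iv)) together with the closed graph of $\tau$ to get the reverse inequality. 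Your implications (ii) $\Rightarrow$ (i) and (i) $\Rightarrow$ (iii) are fine; the contradiction argument for (i) $\Rightarrow$ (iii) with the truncations $\tilde u_{k}=\max(u_{k},P_{\omega}[u_{k}]-t_{k})$, the layer-cake identity and Proposition \ref{prop:PropertiesL}.(v) is a genuinely direct alternative to quoting \cite{BBGZ09}, and its auxiliary claims (uniform smallness of $\mathrm{Leb}\{u_{k}\leq -t_{k}\}$, $\tilde u_{k}-u_{k}\to 0$ in $L^{1}(\omega^{n})$, $P_{\omega}[\tilde u_{k}]=P_{\omega}[u_{k}]$) are justifiable.

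The gap is in (iii) $\Rightarrow$ (ii), which is where the whole difficulty of the theorem sits. After the (correct) reduction to the bounded truncations $\tau_{m}(u_{k})$, you assert $\mu$-a.e.\ (subsequential) convergence of $\tau_{m}(u_{k})$ to $\tau_{m}(u)$ by ``enveloping from above and below by monotone sequences and invoking Hartogs together with the non-pluripolarity of $\mu$''. Only the upper half of this works: the decreasing envelopes $(\sup_{j\geq k}u_{j})^{*}$ give $\limsup_{k}u_{k}\leq u$ everywhere, hence the usc-type inequality. There is no Hartogs-type statement from below: the $\omega$-psh envelope of $\inf_{j\geq k}u_{j}$ need not increase to $u$, and weak $L^{1}(\omega^{n})$ convergence gives no control of $\liminf_{k}u_{k}$ off a set that a non-pluripolar $\mu$ may well charge. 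Concretely, take on $\mathbbm{P}^{1}$ the sequence of relative extremal functions of arcs of length $2\pi/N_{r}$ covering the unit circle round after round, and $\mu$ the arc-length measure: these are uniformly bounded $\omega$-psh functions converging to $0$ in $L^{1}(\omega^{n})$, yet at every point of the circle the sequence takes the value $-1$ infinitely often, so $\mu$-a.e.\ convergence of the full sequence fails (the conclusion $\tau_{m}(u_{k})\to\tau_{m}(u)$ in $L^{1}(\mu)$ is nevertheless true there --- but not by any envelope argument). Moreover, producing \emph{one} $\mu$-a.e.\ convergent subsequence does not ``promote to the full sequence by uniqueness of the limit''; for that you would need every subsequence to contain a further $\mu$-a.e.\ convergent subsequence, i.e.\ convergence in $\mu$-measure, which is essentially the statement you are trying to prove. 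This lower-bound/Fatou gap is exactly what the paper closes with the Dunford--Pettis compactness, the Mazur-type convex combinations and the convexity of $L_{\mu}$; without an argument of that kind (or some other genuine use of hypothesis (iii) beyond the reduction to $\tau_{m}$), your proof of (iii) $\Rightarrow$ (ii) is incomplete.
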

\begin{proof}
We first observe that if $u_{k}\in K$ converges to $u\in K$ then by Lemma \ref{lem:Contra} $\psi_{k}\to \psi$ where we set $\psi_{k}:=P_{\omega}[u_{k}], \psi:=P_{\omega}[u]$.\\
Then we can proceed exactly as in Theorem $3.10$ in \cite{BBGZ09} to get the equivalence between $(i)$ and $(ii)$, $(ii)\Rightarrow (iii)$ and the fact that the graph of $\tau$ is closed. It is important to underline that $(iii)$ is equivalent to say that $\tau(K)$ is \emph{weakly} relative compact by Dunford-Pettis Theorem, i.e. with respect to the weak topology on $L^{1}(\mu)$ induced by $L^{\infty}(\mu)=L^{1}(\mu)^{*}$.\\
Finally assuming that $(iii)$ holds, it remains to prove $(i)$. So, letting $u_{k},u\in K$ such that $u_{k}\to u$, we have to show that $\int_{X}\tau(u_{k})\mu\to \int_{X}\tau(u)\mu$. Since $\tau(K)\subset L^{1}(\mu)$ is bounded, unless considering a subsequence, we may suppose $\int_{X}\tau(u_{k})\to L\in\mathbbm{R}$. By Fatou's Lemma, 
\begin{equation}
\label{eqn:1Fir}
L=\lim_{k\to \infty}\int_{X}\tau(u_{k})\mu\leq \int_{X}\tau(u)\mu.
\end{equation}
Then for any $k\in\mathbbm{N}$ the closed convex envelope
$$
C_{k}:=\overline{\mbox{Conv}\{\tau(u_{j})\, : \, j\geq k\}},
$$
is weakly closed in $L^{1}(\mu)$ by Hahn-Banach Theorem, which implies that $C_{k}$ is weakly compact since it is contained in $\tau(K)$. Thus since $C_{k}$ is a decreasing sequence of non-empty weakly compact sets, there exists $f\in \bigcap_{k\geq 1}C_{k}$ and there exist elements $v_{k}\in\mbox{Conv}(u_{j}\, :\, j\geq k)$ given as finite convex combination such that $\tau(v_{k})\to f$ in $L^{1}(\mu)$. Moreover by the closed graph property $f=\tau(u)$ since $v_{k}\to u$ as a consequence of $u_{k}\to u$. On the other hand by Proposition \ref{prop:PropertiesL}$.(iv)$ we get
$$
\int_{X}\tau(v_{k})\mu\leq\sum_{l=1}^{m_{k}}a_{l,k}\int_{X}\tau(u_{k_{l}})\mu
$$
if $v_{k}=\sum_{l=1}^{m_{k}}a_{l,k}u_{k_{l}}$.
Hence $L\geq\int_{X}\tau(u)\mu$, which together (\ref{eqn:1Fir}) implies $L=\int_{X}\tau(u)\mu$ and concludes the proof.
\end{proof}
\begin{cor}
\label{cor:Useful}
Let $\psi\in\mathcal{M}^{+}$ and $\mu\in\mathcal{C}_{A,\psi}$. Then $L_{\mu}$ is continuous on $\mathcal{E}^{1}_{C}(X,\omega,\psi)$ for any $C\in\mathbbm{R}_{>0}$. In particular if $\mu=MA_{\omega}(u)/V_{\psi}$ for $u\in\mathcal{E}^{1}(X,\omega,\psi)$ with $\psi$-relative minimal singularities then $L_{\mu}$ is continuous on $\mathcal{E}^{1}_{C}(X,\omega,\psi)$ for any $C\in\mathbbm{R}_{>0}$. 
\end{cor}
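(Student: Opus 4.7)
The plan is to apply Theorem~\ref{thm:3.10} to the set $K := \mathcal{E}^1_C(X,\omega,\psi)$, which is weakly compact and convex by Proposition~\ref{prop:PropertiesE} and the discussion in subsection~\ref{ssec:Preli21}. Since every $u \in \mathcal{E}^1(X,\omega,\psi)$ satisfies $P_\omega[u]=\psi$ by Theorem~\ref{thm:ClassE}, the set $\{P_\omega[u]:u\in K\}=\{\psi\}\subset\mathcal{M}^+$ is trivially totally ordered with no element in $\mathcal{M}\setminus\mathcal{M}^+$. The bound $E_\psi(u) \geq -C$ is built into the definition of $\mathcal{E}^1_C$, while the monotonicity of $E_\psi$ applied to $u\leq \psi+\sup_X u$ (which follows from $u-\sup_X u \leq P_\omega[u-\sup_X u](0)=P_\omega[u]=\psi$) combined with the scaling identity $E_\psi(\psi+c)=cV_\psi$ yields $E_\psi(u)\leq V_\psi C$. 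Hence $|E_{P_\omega[u]}(u)|$ is uniformly bounded on $K$.

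The measure $\mu\in\mathcal{C}_{A,\psi}$ is non-pluripolar since $\mathrm{Cap}_\psi$ vanishes on pluripolar sets. The heart of the argument is to derive a uniform $L^2(\mu)$-bound on $u-\psi$ over $K$. Given $u\in K$, set $v:=u-\sup_X u$, so $v\in PSH(X,\omega,\psi)$ and $\sup_X v=0$. By Lemma~\ref{lem:UniEst} and Remark~\ref{rem:Usef}, $|\sup_X u|$ is bounded uniformly on $K$ (above by $C$, below via $V_\psi\sup_X u \geq -d(\psi,u)\geq -C(2V_\psi+1)$), so $|E_\psi(v)|=|E_\psi(u)-V_\psi\sup_X u|$ is uniformly bounded as well. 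Then Lemma~\ref{lem:4.18} supplies a uniform constant with $\int_X(v-\psi)^2\mu\leq B(|E_\psi(v)|+1)\leq C'$, and expanding $u-\psi=(v-\psi)+\sup_X u$ together with $(a+b)^2\leq 2a^2+2b^2$ produces a uniform estimate $\int_X(u-\psi)^2\mu\leq C''$ on $K$.

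Finiteness of $L_\mu(u)=\int_X(u-\psi)\mu$ on $K$ is then immediate from Cauchy--Schwarz. For hypothesis~(iii) of Theorem~\ref{thm:3.10}, Chebyshev's inequality gives $\mu\{u\leq \psi-t\}\leq t^{-2}\int_X(u-\psi)^2\mu\leq C''/t^2$, so $\int_m^\infty\mu\{u\leq\psi-t\}\,dt\leq C''/m\to 0$ uniformly on $K$. All hypotheses being verified, Theorem~\ref{thm:3.10} yields the continuity of $L_\mu$ on $\mathcal{E}^1_C(X,\omega,\psi)$. The ``in particular'' statement follows from Proposition~\ref{prop:CapMS}, which ensures that $MA_\omega(u)/V_\psi\in\mathcal{C}_{A,\psi}$ whenever $u\in\mathcal{E}^1(X,\omega,\psi)$ has $\psi$-relative minimal singularities, reducing it to the first claim.

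The main technical point is the normalization step $v=u-\sup_X u$: one has to verify both that $|\sup_X u|$ and $|E_\psi(v)|$ remain under uniform control on $K$ (so that Lemma~\ref{lem:4.18} applies with a uniform constant), which is precisely where the equivalence between $\mathcal{E}^1_C$-boundedness and $d$-boundedness recorded in Remark~\ref{rem:Usef} enters. Once this step is cleared, Chebyshev's inequality handles the uniform integrability essentially automatically, and the conclusion follows by invoking Theorem~\ref{thm:3.10}.
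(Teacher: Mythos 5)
Your argument is correct and is essentially the paper's own proof: both establish a uniform $L^{2}(\mu)$-bound on $\tau(u)=u-\psi$ over $\mathcal{E}^{1}_{C}(X,\omega,\psi)$ via Lemma \ref{lem:4.18} (after the standard normalization handled by Lemma \ref{lem:UniEst} and Remark \ref{rem:Usef}), deduce uniform integrability, invoke Theorem \ref{thm:3.10}, and get the last assertion from Proposition \ref{prop:CapMS}. The only cosmetic difference is that you verify condition (iii) by Chebyshev's inequality where the paper cites H\"older's inequality, and you spell out the hypotheses of Theorem \ref{thm:3.10} (non-pluripolarity of $\mu$, $P_{\omega}[u]=\psi$, the uniform energy bound) that the paper leaves implicit.
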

\begin{proof}
With the notations of Theorem \ref{thm:3.10}, $\tau\big(\mathcal{E}_{C}^{1}(X,\omega,\psi)\big)$ is bounded in $L^{2}(\mu)$ by Lemma \ref{lem:4.18}. Hence by Holder's inequality $\tau\big(\mathcal{E}^{1}_{C}(X,\omega,\psi)\big)$ is uniformly integrable and Theorem \ref{thm:3.10} yields the continuity of $L_{\mu}$ on $\mathcal{E}^{1}_{C}(X,\omega,\psi)$ for any $C\in\mathbbm{R}_{>0}$.\\
The last assertion follows directly from Proposition \ref{prop:CapMS}.
\end{proof}
The following Lemma will be essential to prove Theorem \ref{thmA}, Theorem \ref{thmB}.
\begin{lem}
\label{lem:Equicontinuity}
Let $\varphi\in\mathcal{H}_{\omega}$ and let $\mathcal{A}\subset \mathcal{M}$ be a totally ordered subset. Set also $v_{\psi}:=P_{\omega}[\psi](\varphi)$ for any $\psi\in\mathcal{A}$. Then the actions $\{V_{\psi}L_{MA_{\omega}(v_{\psi})/V_{\psi}}\}_{\psi\in\mathcal{A}}$ take finite values and they are equicontinuous on any compact set $K\subset PSH(X,\omega)$ such that $\{P_{\omega}[u]\, :\, u\in K\}$ is a totally ordered set whose closure in $PSH(X,\omega)$ has at most one element in $\mathcal{M}\setminus\mathcal{M}^{+}$ and such that $|E_{P_{\omega}[u]}(u)|\leq C$ uniformly for any $u\in K$. If $\psi\in\mathcal{M}\setminus \mathcal{M}^{+}$, for the action $V_{\psi}L_{MA_{\omega}(v_{\psi})/V_{\psi}}$ we mean the null action. In particular if $\psi_{k}\to \psi$ monotonically almost everywhere and $\{u_{k}\}_{k\in\mathbbm{N}}\subset K$ converges weakly to $u\in K$, then
\begin{gather}
\label{eqn:Mab}
\int_{X}\big(u_{k}-P_{\omega}[u_{k}]\big)MA_{\omega}(v_{\psi_{k}})\to \int_{X}\big(u-P_{\omega}[u]\big)MA_{\omega}(v_{\psi}).
\end{gather}
\end{lem}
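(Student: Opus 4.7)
The plan is to exploit the fact that $\varphi \in \mathcal{H}_\omega$ is smooth in order to replace each $MA_\omega(v_\psi)$ by a sub-measure of one fixed smooth volume form, which reduces the whole problem to a routine $L^1(\omega^n)$-continuity statement.

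First I would identify the measures. By Proposition \ref{prop:PropProie} applied with $\psi_1=\psi_2=\psi$ and $\psi_3=0$, one has $|v_\psi - \psi| \leq \|\varphi\|_{L^\infty}$, so each $v_\psi$ has $\psi$-relative minimal singularities with a bound uniform in $\psi \in \mathcal{A}$. Since $\varphi \in C^\infty(X)$ has bounded $\omega$-Laplacian, the remark following Theorem \ref{thm:3.8} gives the equality
$$MA_\omega(v_\psi) = \mathbbm{1}_{\{v_\psi = \varphi\}}\,MA_\omega(\varphi),$$
so every $MA_\omega(v_\psi)$ is dominated by the single fixed smooth positive measure $F:=MA_\omega(\varphi) \leq C_\varphi\,\omega^n$ (the cases $\psi \in \mathcal{A}\cap(\mathcal{M}\setminus\mathcal{M}^+)$ are trivial by convention). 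Writing $\tau(u):=u-P_\omega[u]$, the monotone approximation $\max(u,P_\omega[u]-j) \searrow u$ as $j\to\infty$ reduces the inf-definition of $L_{\mu_\psi}$ on $K$ to the integral expression $V_\psi L_{\mu_\psi}(u) = \int_X \tau(u)\,MA_\omega(v_\psi)$, and the crude bound $|V_\psi L_{\mu_\psi}(u)| \leq C_\varphi\bigl(\|u\|_{L^1(\omega^n)} + \|P_\omega[u]\|_{L^1(\omega^n)}\bigr)$ is uniform on $K$ (using $P_\omega[u] \leq 0$ for any $P_\omega[u] \in \mathcal{M}$ and weak compactness of $K$ together with Proposition $8.5$ of \cite{GZ17}), giving the finiteness claim.

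The equicontinuity then follows from the $\psi$-independent estimate
$$|V_\psi L_{\mu_\psi}(u_1) - V_\psi L_{\mu_\psi}(u_2)| \leq \int_X|\tau(u_1)-\tau(u_2)|\,F \leq C_\varphi\,\|\tau(u_1)-\tau(u_2)\|_{L^1(\omega^n)},$$
which reduces the problem to continuity of $\tau:(K,\mathrm{weak})\to L^1(\omega^n)$. If $u_k\to u$ weakly in $K$, standard $PSH$ theory yields $u_k \to u$ in $L^1(\omega^n)$; the uniform bound $|E_{P_\omega[u]}(u)| \leq C$ on $K$, together with the hypothesis that $\{P_\omega[u]:u\in K\}$ meets $\mathcal{M}\setminus\mathcal{M}^+$ in at most one element of its closure, is precisely what allows one to invoke Lemma \ref{lem:Contra} and conclude $P_\omega[u_k]\to P_\omega[u]$ weakly, hence also in $L^1(\omega^n)$. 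Therefore $\tau(u_k)\to\tau(u)$ in $L^1(\omega^n)$ as required.

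For the ``in particular'' limit (\ref{eqn:Mab}) I would split
$$V_{\psi_k}L_{\mu_{\psi_k}}(u_k) - V_\psi L_{\mu_\psi}(u) = \bigl[V_{\psi_k}L_{\mu_{\psi_k}}(u_k) - V_{\psi_k}L_{\mu_{\psi_k}}(u)\bigr] + \bigl[V_{\psi_k}L_{\mu_{\psi_k}}(u) - V_\psi L_{\mu_\psi}(u)\bigr];$$
the first bracket tends to zero by the equicontinuity just proved. For the second, Lemma \ref{lem:Referee} (applicable since $\varphi$ is bounded, hence $\psi_k \preccurlyeq \varphi$ for every $k$) combined with the monotonicity of $P_\omega[\cdot](\varphi)$ in its first argument gives $v_{\psi_k}\to v_\psi$ monotonically a.e., so $\mathbbm{1}_{\{v_{\psi_k}=\varphi\}}\to \mathbbm{1}_{\{v_\psi=\varphi\}}$ monotonically a.e.\ with respect to $F$, and dominated convergence with the integrable dominant $|\tau(u)|F$ concludes. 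The main obstacle is verifying the hypothesis of Lemma \ref{lem:Contra} so as to obtain weak continuity of $u\mapsto P_\omega[u]$ on $K$, which is exactly the motivation for the constraint on the singularity types in $K$; once the measures are seen to be sub-measures of a single fixed smooth volume form, the rest is a clean $L^1(\omega^n)$ manipulation.
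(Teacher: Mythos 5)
Your proposal is correct and follows essentially the same route as the paper: domination of every $MA_{\omega}(v_{\psi})$ by the single fixed volume form $MA_{\omega}(\varphi)$ (Theorem \ref{thm:3.8} and the remark after it), finiteness and equicontinuity via the $\psi$-independent bound by $\int_{X}|\tau(u_{1})-\tau(u_{2})|MA_{\omega}(\varphi)$ together with Lemma \ref{lem:Contra} to get $P_{\omega}[u_{k}]\to P_{\omega}[u]$, and the same two-term splitting for (\ref{eqn:Mab}), with the fixed-$u$ term treated through the densities $\mathbbm{1}_{\{v_{\psi_{k}}=\varphi\}}$ of $MA_{\omega}(v_{\psi_{k}})$ with respect to $MA_{\omega}(\varphi)$ (the paper phrases this as monotone $L^{\infty}$ densities converging in $L^{p}$). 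The only assertion left unjustified is the $MA_{\omega}(\varphi)$-a.e.\ convergence $\mathbbm{1}_{\{v_{\psi_{k}}=\varphi\}}\to\mathbbm{1}_{\{v_{\psi}=\varphi\}}$ in the increasing case $\psi_{k}\nearrow\psi$, where the increasing union of the contact sets need not exhaust $\{v_{\psi}=\varphi\}$ pointwise; it does hold $MA_{\omega}(\varphi)$-a.e.\ because $\int_{X}\mathbbm{1}_{\{v_{\psi_{k}}=\varphi\}}MA_{\omega}(\varphi)=V_{\psi_{k}}\to V_{\psi}=\int_{X}\mathbbm{1}_{\{v_{\psi}=\varphi\}}MA_{\omega}(\varphi)$, a one-line fix comparable in brevity to the paper's own ``$f_{k}\to f$ in $L^{p}$'' step.
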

\begin{proof}
By Theorem \ref{thm:3.8}, $\Big|V_{\psi}L_{MA_{\omega}(v_{\psi})/V_{\psi}}(u)\Big|\leq\int_{X}|u-P_{\omega}[u]|MA_{\omega}(\varphi)$ for any $u\in PSH(X,\omega)$ and any $\psi\in\mathcal{A}$, so the actions in the statement assume finite values. Then the equicontinuity on any weak compact set $K\subset PSH(X,\omega)$ satisfying the assumptions of the Lemma follows from
$$
V_{\psi}\Big|L_{MA_{\omega}(v_{\psi})/V_{\psi}}(w_{1})-L_{MA_{\omega}(v_{\psi})/V_{\psi}}(w_{2})\Big|\leq\int_{X}\big|w_{1}-P_{\omega}[w_{1}]-w_{2}+P_{\omega}[w_{2}]\big|MA_{\omega}(\varphi)
$$
for any $w_{1},w_{2}\in PSH(X,\omega)$ since $MA_{\omega}(\varphi)$ is a volume form on $X$ and $P_{\omega}[w_{k}]\to P_{\omega}[w]$ if $\{w_{k}\}_{k\in\mathbbm{N}}\subset K$ converges to $w\in K$ under our hypothesis by Lemma \ref{lem:Contra}.\\
For the second assertion, if $\psi_{k}\searrow \psi$ (resp. $\psi_{k}\nearrow \psi$ almost everywhere), letting $f_{k},f\in L^{\infty}$ such that $MA_{\omega}(v_{\psi_{k}})=f_{k}MA_{\omega}(\varphi)$ and $MA_{\omega}(v_{\psi})=f MA_{\omega}(\varphi)$ (Theorem \ref{thm:3.8}), we have $0\leq f_{k}\leq 1$, $0\leq f\leq 1$ and $\{f_{k}\}_{k\in\mathbbm{N}}$ is a monotone sequence. Therefore $f_{k}\to f$ in $L^{p}$ for any $p>1$ as $k\to \infty$ which implies
$$
\int_{X}\big(u-P_{\omega}[u]\big)MA_{\omega}(v_{\psi_{k}})\to \int_{X}\big(u-P_{\omega}[u]\big)MA_{\omega}(v_{\psi})
$$
as $k\to \infty$ since $MA_{\omega}(\varphi)$ is a volume form. Hence (\ref{eqn:Mab}) follows since by the first part of the proof
$$
\int_{X}\big(u_{k}-P_{\omega}[u_{k}]-u+P_{\omega}[u]\big)MA_{\omega}(v_{\psi_{k}})\to 0.
$$
\end{proof}
\section{Theorem \ref{thmA}}
\label{sec:Varia}
In this section we fix $\psi\in\mathcal{M}^{+}$ and using a variational approach we first prove the bijectivity of the Monge-Ampère operator between $\mathcal{E}^{1}_{norm}(X,\omega,\psi)$ and $\mathcal{M}^{1}(X,\omega,\psi)$, and then we prove that it is actually a homeomorphism considering the strong topologies.
\subsection{Degenerate complex Monge-Ampère equations.}
\label{ssec:Dege}
Letting $\mu$ be a probability measure and $\psi\in\mathcal{M}$, we define the functional $F_{\mu,\psi}:\mathcal{E}^{1}(X,\omega,\psi)\to \mathbbm{R}\cup\{-\infty\}$ as
$$
F_{\mu,\psi}(u):=(E_{\psi}-V_{\psi}L_{\mu})(u)
$$
where we recall that $L_{\mu}(u)=\lim_{j\to \infty}L_{\mu}\big(\max(u,\psi-j)\big)=\lim_{j\to \infty}\int_{X}\big(\max(u,\psi-j)-\psi\big)\mu$ (see section \ref{sec:ActionMeasure}). $F_{\mu,\psi}$ is clearly a translation invariant functional and $F_{\mu,\psi}\equiv 0$ for any $\mu$ if $V_{\psi}=0$.
\begin{prop}
\label{prop:Coerc}
Let $\mu$ be a probability measure, $\psi\in\mathcal{M}^{+}$ and let $F:=F_{\mu,\psi}$. If $L_{\mu}$ is continuous then $F$ is upper semicontinuous on $\mathcal{E}^{1}(X,\omega,\psi)$. Moreover if $L_{\mu}$ is finite valued on $\mathcal{E}^{1}(X,\omega,\psi)$ then there exist $A,B>0$ such that
$$
F(v)\leq -Ad(\psi,v)+B
$$
for any $v\in\mathcal{E}^{1}_{norm}(X,\omega,\psi)$, i.e. $F$ is $d$\emph{-coercive}. In particular $F$ is upper semicontinuous on $\mathcal{E}^{1}(X,\omega,\psi)$ and $d$-coercive on $\mathcal{E}^{1}_{norm}(X,\omega,\psi)$ if $\mu=MA_{\omega}(u)/V_{\psi}$ for $u\in\mathcal{E}^{1}(X,\omega,\psi)$.
\end{prop}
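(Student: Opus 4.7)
The plan is to handle the two claims separately, exploiting that $F=E_{\psi}-V_{\psi}L_{\mu}$ is a difference of two quantities whose behavior has been prepared by sections \ref{sec:Tools} and \ref{sec:ActionMeasure}. For the upper semicontinuity, given $u_{k}\to u$ weakly in $\mathcal{E}^{1}(X,\omega,\psi)$, I would combine the weak u.s.c. of $E_{\psi}$ from Proposition \ref{prop:PropertiesE}(vi), which gives $\limsup_{k}E_{\psi}(u_{k})\le E_{\psi}(u)$, with the hypothesis that $L_{\mu}$ is continuous, yielding $L_{\mu}(u_{k})\to L_{\mu}(u)$; together these immediately produce $\limsup_{k}F(u_{k})\le F(u)$.

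For coercivity, the main idea is to invoke Proposition \ref{prop:Imp}(ii) applied to $L:=L_{\mu}$ on $A:=\mathcal{E}^{1}(X,\omega,\psi)$. I would first verify its hypotheses: the scaling law and monotonicity come from Proposition \ref{prop:PropertiesL}(i)--(ii), while convexity is Proposition \ref{prop:PropertiesL}(iv), since the finite-valuedness of $L_{\mu}$ on $\mathcal{E}^{1}(X,\omega,\psi)$ forces $\mu$ not to charge pluripolar sets (otherwise $L_{\mu}$ would be $-\infty$ on some element of $\mathcal{E}^{1}(X,\omega,\psi)$). Proposition \ref{prop:Imp}(ii) then furnishes a constant $c$ such that $|L_{\mu}(v)|\le c\bigl(1+(-E_{\psi}(v))^{1/2}\bigr)$ for every $v\in\mathcal{E}^{1}_{\mathrm{norm}}(X,\omega,\psi)$; the elementary inequality $-C+cV_{\psi}\sqrt{C}\le -C/2+B_{0}$ with $C:=-E_{\psi}(v)$ turns this into $F(v)\le \tfrac{1}{2}E_{\psi}(v)+B_{0}$. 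To convert to a bound in the distance $d$, I would use Remark \ref{rem:Usef}, which implies $-E_{\psi}(v)\ge d(\psi,v)/(2V_{\psi}+1)$ on $\mathcal{E}^{1}_{\mathrm{norm}}(X,\omega,\psi)$; choosing $A:=1/(2(2V_{\psi}+1))$ and $B$ absorbing $B_{0}$ and the contribution from bounded $-E_{\psi}(v)$ would yield $F(v)\le -Ad(\psi,v)+B$.

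For the concluding \emph{in particular} statement with $\mu=MA_{\omega}(u)/V_{\psi}$, finiteness of $L_{\mu}$ on $\mathcal{E}^{1}(X,\omega,\psi)$ is exactly Proposition \ref{prop:PropertiesL}(vi) (resting on Lemma \ref{lem:UniBounded}), so the coercivity piece is immediate. Continuity of $L_{\mu}$ when $u$ has $\psi$-relative minimal singularities is exactly the second half of Corollary \ref{cor:Useful}. The hard part will be extending this continuity to an arbitrary $u\in\mathcal{E}^{1}(X,\omega,\psi)$ without minimal singularities, which seems to be the principal obstacle: I would approximate by $u_{j}:=\max(u,\psi-j)$ --- these do have $\psi$-relative minimal singularities, and $MA_{\omega}(u_{j})\to MA_{\omega}(u)$ weakly by Lemma \ref{lem:KeyConv} --- and then transfer continuity by combining the equicontinuity afforded by Lemma \ref{lem:Equicontinuity} with the uniform $L^{2}(\mu)$-bound of Lemma \ref{lem:4.18}, so that the defining integrals $\int_{X}(v-\psi)\,MA_{\omega}(u_{j})/V_{\psi}$ converge uniformly on each $\mathcal{E}^{1}_{C}(X,\omega,\psi)$ to $\int_{X}(v-\psi)\,d\mu$.
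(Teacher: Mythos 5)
Your treatment of the first two claims matches the paper's proof and is sound: upper semicontinuity is indeed just Proposition \ref{prop:PropertiesE}(vi) plus the assumed continuity of $L_{\mu}$, and the coercivity is Proposition \ref{prop:Imp}(ii) (whose hypotheses you correctly verify, including convexity via non-pluripolarity of $\mu$, which does follow from finite-valuedness by the argument of Lemma \ref{lem:Pluripolar}) combined with $d(\psi,\cdot)\simeq -E_{\psi}(\cdot)$ on $\mathcal{E}^{1}_{norm}(X,\omega,\psi)$. Likewise, your reduction of the ``in particular'' statement to continuity of $L_{\mu}$, and the approximation $u_{j}:=\max(u,\psi-j)$, $\mu_{j}:=MA_{\omega}(u_{j})/V_{\psi}$ with Corollary \ref{cor:Useful} giving continuity of each $L_{\mu_{j}}$, is exactly the paper's strategy.

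The gap is in how you propose to pass from the $L_{\mu_{j}}$ to $L_{\mu}$. Neither of the two lemmas you invoke yields the required uniform convergence on $\mathcal{E}^{1}_{C}(X,\omega,\psi)$. Lemma \ref{lem:Equicontinuity} concerns only the actions of measures $MA_{\omega}\big(P_{\omega}[\psi'](\varphi)\big)$ with $\varphi\in\mathcal{H}_{\omega}$, which are dominated by the fixed volume form $MA_{\omega}(\varphi)$; it says nothing about the family $\{MA_{\omega}(u_{j})\}_{j}$. Lemma \ref{lem:4.18} requires $\mu_{j}\in\mathcal{C}_{A,\psi}$ with a constant $A$, and by Proposition \ref{prop:CapMS} the constant for $\mu_{j}$ is of order $j^{n}$, so the $L^{2}(\mu_{j})$ bounds degenerate as $j\to\infty$; moreover the limit measure $\mu=MA_{\omega}(u)/V_{\psi}$ need not lie in any $\mathcal{C}_{A,\psi}$ at all, which is precisely why the general case is delicate. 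The missing ingredient is Proposition \ref{prop:LastLast}: for $v$ in a $d$-bounded set one has
$$
\Big|\int_{X}(v-\psi)\big(MA_{\omega}(u_{j})-MA_{\omega}(u)\big)\Big|\leq \tilde{C}\, I_{\psi}(u_{j},u)^{\frac{1}{2}},
$$
with $\tilde{C}$ depending only on the $d$-bound, and $I_{\psi}(u_{j},u)\to 0$ since $u_{j}\searrow u$ strongly (Proposition \ref{prop:ForL1}). This gives $L_{\mu_{j}}\to L_{\mu}$ uniformly on each $\mathcal{E}^{1}_{C}(X,\omega,\psi)$ (using Remark \ref{rem:Usef} to translate the $\mathcal{E}^{1}_{C}$ bound into a $d$-bound), and hence continuity of $L_{\mu}$ there, which is how the paper closes the argument. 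Replace your appeal to Lemmas \ref{lem:Equicontinuity} and \ref{lem:4.18} by this estimate and the proof is complete.
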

\begin{proof}
If $L_{\mu}$ is continuous then $F$ is easily upper semicontinuous by Proposition \ref{prop:PropertiesE}.\\
Then, since $d(\psi,v)=-E_{\psi}(v)$ on $\mathcal{E}^{1}_{norm}(X,\omega,\psi)$, it is easy to check that the coercivity requested is equivalent to
$$
\sup_{\mathcal{E}^{1}_{C}(X,\omega,\psi)\cap \mathcal{E}^{1}_{norm}(X,\omega,\psi)}|L_{\mu}|\leq \frac{(1-A)}{V_{\psi}} C+O(1),
$$
which holds by Proposition \ref{prop:Imp}.(ii).\\
Next assuming $\mu=MA_{\omega}(u)/V_{\psi}$ it is sufficient to check the continuity of $L_{\mu}$ since $L_{\mu}$ is finite valued on $\mathcal{E}^{1}(X,\omega,\psi)$ by Proposition \ref{prop:PropertiesL}. We may suppose without loss of generality that $u\leq \psi$. By Proposition \ref{prop:LastLast} and Remark 	\ref{rem:Usef}, for any $C\in\mathbbm{R}_{>0}$, $L_{\mu}$ restricted to $\mathcal{E}^{1}_{C}(X,\omega,\psi)$ is the uniform limit of $L_{\mu_{j}}$, where $\mu_{j}:=MA_{\omega}\big(\max(u,\psi-j)\big)$, since $I_{\psi}\big(\max(u,\psi-j),u\big)\to 0$ as $j\to \infty$. Therefore $L_{\mu}$ is continuous on $\mathcal{E}^{1}_{C}(X,\omega,\psi)$ since uniform limit of continuous functionals $L_{\mu_{j}}$ (Corollary \ref{cor:Useful}).
\end{proof}
As a consequence of the concavity of $E_{\psi}$ if $\mu=MA_{\omega}(u)/V_{\psi}$ for $u\in\mathcal{E}^{1}(X,\omega,\psi)$ where $V_{\psi}>0$ then
$$
J_{u}^{\psi}(\psi)=F_{\mu,\psi}(u)=\sup_{\mathcal{E}^{1}(X,\omega,\psi)}F_{\mu,\psi},
$$
i.e. $u$ is a maximizer for $F_{\mu,\psi}$. The other way around also holds as the next result shows.
\begin{prop}
\label{prop:IFF}
Let $\psi\in\mathcal{M}^{+}$ and let $\mu$ be a probability measure such that $L_{\mu}$ is finite valued on $\mathcal{E}^{1}(X,\omega,\psi)$. Then $\mu=MA_{\omega}(u)/V_{\psi}$ for $u\in\mathcal{E}^{1}(X,\omega,\psi)$ if and only if $u$ is a maximizer of $F_{\mu,\psi}$.
\end{prop}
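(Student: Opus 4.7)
The plan is to prove both implications. The forward direction, already sketched in the paragraph preceding the statement, is immediate: if $\mu = MA_{\omega}(u)/V_{\psi}$ and $v \in \mathcal{E}^{1}(X,\omega,\psi)$, both $u$ and $v$ have model envelope $\psi$ by Theorem~\ref{thm:ClassE}, so $L_{\mu}(v) - L_{\mu}(u) = \int_{X}(v-u)\,\mu$ (via Proposition~\ref{prop:PropertiesL}.(iii) and dominated convergence, using the uniform bound $|\max(\cdot,\psi-j)-\psi|\leq j$); combining with $E_{\psi}(v) - E_{\psi}(u) \leq \int_{X}(v-u) MA_{\omega}(u) = V_{\psi}\int_{X}(v-u)\,\mu$ from Proposition~\ref{prop:PropertiesE}.(viii) yields $F_{\mu,\psi}(v) \leq F_{\mu,\psi}(u)$.

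For the converse, I fix $\chi \in \mathcal{C}^{0}(X)$ and consider the Perron envelope path
$$u_{t} := \bigl(\sup\{v \in PSH(X,\omega) \, : \, v \leq u + t\chi\}\bigr)^{*}, \qquad t > 0.$$
The sandwich $u - t\|\chi\|_{\infty} \leq u_{t} \leq u + t\chi \leq u + t\|\chi\|_{\infty}$ gives $|u_{t} - u| \leq t\|\chi\|_{\infty}$ uniformly on $X$, so $u_{t}$ lies in $\mathcal{E}^{1}(X,\omega,\psi)$ and shares the $\psi$-relative singularities of $u$.

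Set $g(t) := F_{\mu,\psi}(u_{t})$; by hypothesis $g(t) \leq g(0)$ for every $t > 0$. Proposition~\ref{prop:PropertiesE}.(vii) supplies the right-derivative of the energy term at $t=0$, namely $\int_{X}\chi\, MA_{\omega}(u)$. For the action term, the uniform $L^{\infty}$ bound on $u_{t} - u$, together with the representation $L_{\mu}(w) = \int_{X}(w-\psi)\,\mu$ valid on $\mathcal{E}^{1}(X,\omega,\psi)$ (via Proposition~\ref{prop:PropertiesL}.(iii) and dominated convergence), yields
$$L_{\mu}(u_{t}) - L_{\mu}(u) = \int_{X}(u_{t}-u)\,\mu \leq t\int_{X}\chi\,\mu,$$
the last inequality being $u_{t} - u \leq t\chi$. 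Dividing by $t > 0$, letting $t \to 0^{+}$, and using $g(t) \leq g(0)$ gives
$$\int_{X}\chi\, MA_{\omega}(u) \leq V_{\psi}\int_{X}\chi\,\mu.$$
Applying the same argument with $\chi$ replaced by $-\chi$ produces the reverse inequality, so equality holds for every $\chi \in \mathcal{C}^{0}(X)$, whence $MA_{\omega}(u) = V_{\psi}\mu$.

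The main subtlety is the interplay between the envelope curve and the action $L_{\mu}$: one must verify that $u_{t} \in \mathcal{E}^{1}(X,\omega,\psi)$ so that Proposition~\ref{prop:PropertiesE}.(vii) applies and $L_{\mu}(u_{t})$ is represented by integration against $\mu$ via the minimal-singularities approximation. Both are secured by the uniform sandwich bound $|u_{t}-u|\leq t\|\chi\|_{\infty}$ and the preservation of the model envelope $\psi$.
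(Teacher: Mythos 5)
Your proof is correct, but for the reverse implication it takes a genuinely different (more self-contained) route than the paper: the paper disposes of the converse in one line by citing Theorem $4.22$ in \cite{DDNL17b}, whereas you unfold the variational mechanism behind that citation — perturbing along the envelope curve $u_{t}=\big(\sup\{v\in PSH(X,\omega):v\leq u+t\chi\}\big)^{*}$, using the differentiability of $t\mapsto E_{\psi}(u_{t})$ from Proposition \ref{prop:PropertiesE}.(vii) together with the pointwise bound $u_{t}-u\leq t\chi$ for the action term, and then symmetrizing in $\chi$ to identify $MA_{\omega}(u)=V_{\psi}\mu$. The forward implication is the same in both treatments (concavity via Proposition \ref{prop:PropertiesE}.(viii), as in the paragraph preceding the statement). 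Two small points would tighten your write-up: first, the representation $L_{\mu}(w)=\int_{X}(w-\psi)\,\mu$ on $\mathcal{E}^{1}(X,\omega,\psi)$ comes from Proposition \ref{prop:PropertiesL}.(iii) by \emph{monotone} convergence (the integrands $\max(w,\psi-j)-\psi$ decrease and are uniformly bounded above, and finiteness is exactly your hypothesis), not dominated convergence, since $w-\psi$ need not be bounded below; second, if one reads Proposition \ref{prop:PropertiesE}.(vii) as giving the derivative only for $t>0$, you should add that $\int_{X}\chi\,MA_{\omega}(u_{s})\to\int_{X}\chi\,MA_{\omega}(u)$ as $s\to 0^{+}$ — immediate because $|u_{s}-u|\leq s\,||\chi||_{L^{\infty}}$ gives convergence in capacity and hence weak convergence of the measures (Lemma \ref{lem:KeyConv}) — so that the right derivative at $t=0$ is indeed $\int_{X}\chi\,MA_{\omega}(u)$. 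What your route buys is independence from Theorem $4.22$ of \cite{DDNL17b} (at the cost of leaning on its Proposition $4.20$, quoted here as item (vii)); what the paper's route buys is brevity.
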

\begin{proof}
As said before, it is clear that $\mu=MA_{\omega}(u)/V_{\psi}$ implies that $u$ is a maximizer for $F_{\mu,\psi}$. Conversely, if $u$ is a maximizer of $F_{\mu,\psi}$ then by Theorem $4.22$ in \cite{DDNL17b} $\mu=MA_{\omega}(u)/V_{\psi}$.
\end{proof}
Similarly to \cite{BBGZ09} we, thus, define the \emph{$\psi$-relative energy} for $\psi\in\mathcal{M}$ of a probability measure $\mu$ as
$$
E^{*}_{\psi}(\mu):=\sup_{u\in\mathcal{E}^{1}(X,\omega,\psi)}F_{\mu,\psi}(u)
$$
i.e. essentially as the Legendre trasform of $E_{\psi}$. It takes non-negative values ($F_{\mu,\psi}(\psi)=0$) and it is easy to check that $ E^{*}_{\psi}$ is a convex function.\\
Moreover defining
$$
\mathcal{M}^{1}(X,\omega,\psi):=\{V_{\psi}\mu \, :\, \mu\, \mbox{is a probability measure satisfying}\, E_{\psi}^{*}(\mu)<\infty\},
$$
we note that $\mathcal{M}^{1}(X,\omega,\psi)$ consists only of the null measure if $V_{\psi}=0$ while in $V_{\psi}>0$ any probability measure $\mu$ such that $V_{\psi}\mu\in\mathcal{M}^{1}(X,\omega,\psi)$ is non-pluripolar as the next Lemma shows.
\begin{lem}
\label{lem:Pluripolar}
Let $A\subset X$ be a (locally) pluripolar set. Then there exists $u\in\mathcal{E}^{1}(X,\omega,\psi)$ such that $A\subset\{u=-\infty\}$. In particular if $V_{\psi}\mu\in \mathcal{M}^{1}(X,\omega,\psi)$ for $\psi\in\mathcal{M}^{+}$ then $\mu$ is non-pluripolar.
\end{lem}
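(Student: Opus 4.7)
The plan is to deduce both statements from a single construction: first produce $u \in \mathcal{E}^{1}(X,\omega,\psi)$ with $A \subset \{u = -\infty\}$, and then use such a $u$ as a test function against $F_{\mu,\psi}$ to force the variational supremum to be $+\infty$.  For the existence part I would invoke the classical analogue in the absolute setting $\psi = 0$: by Josefson's theorem on compact Kähler manifolds any pluripolar $A$ sits inside $\{v = -\infty\}$ for some $v \in PSH(X,\omega)$ with $\sup_{X} v \leq 0$, and a standard convex combination of truncates of $v$ (e.g.\ $\sum_{k} 2^{-k}(\max(v,-2^{k}) + 2^{k})$, suitably renormalized) yields $w \in \mathcal{E}^{1}(X,\omega)$ with $\sup_{X} w \leq 0$ and $A \subset \{w = -\infty\}$ (see for example \cite{BBGZ09} or \cite{GZ17}).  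Setting $u := P_{\omega}[\psi](w)$ and applying Proposition~\ref{prop:PropProie} with $\psi_{1} = \psi \preccurlyeq 0 = \psi_{2}$ in $\mathcal{M}$ and $w \in \mathcal{E}^{1}(X,\omega,0)$, I obtain $u \in \mathcal{E}^{1}(X,\omega,\psi)$; moreover every candidate $P_{\omega}(\psi + C, w)$ in the defining envelope of $P_{\omega}[\psi](w)$ is pointwise majorized by $w$, so $u \leq w$ and $A \subset \{w = -\infty\} \subset \{u = -\infty\}$.

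For the non-pluripolarity assertion I would argue by contradiction, assuming $V_{\psi}\mu \in \mathcal{M}^{1}(X,\omega,\psi)$ but $\mu(A) > 0$ for some pluripolar $A$.  If in addition $\mu(\{\psi = -\infty\}) > 0$, then $P_{\omega}[\psi] = \psi$ together with the first clause of the definition of $L_{\mu}$ forces $L_{\mu}(\psi) = -\infty$, whence $F_{\mu,\psi}(\psi) = E_{\psi}(\psi) - V_{\psi} L_{\mu}(\psi) = +\infty$, contradicting $E^{*}_{\psi}(\mu) < \infty$.  Otherwise, after replacing $A$ by $A \setminus \{\psi = -\infty\}$ (still pluripolar and still $\mu$-charged), the first part supplies $u \in \mathcal{E}^{1}(X,\omega,\psi)$ with $A \subset \{u = -\infty\}$.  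Since $\psi > -\infty$ on $A$ while $u = -\infty$ there, $u$ does not have $\psi$-relative minimal singularities, and so $L_{\mu}(u)$ is computed by the infimum clause over majorants $v \geq u$ with $\psi$-relative minimal singularities.  I would test with the natural decreasing family $v_{k} := \max(u, \psi - k)$, which has $\psi$-relative minimal singularities for every $k$ and satisfies $v_{k} \searrow u$; using that $u - \psi$ is bounded above by a constant $C$ (from $u \preccurlyeq \psi$), while $v_{k} - \psi = -k$ on $A$, a direct estimate gives $L_{\mu}(v_{k}) = \int_{X}(v_{k}-\psi)\mu \leq -k\,\mu(A) + C \to -\infty$.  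Hence $L_{\mu}(u) = -\infty$, so $F_{\mu,\psi}(u) = +\infty$, again a contradiction.

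The only genuinely delicate step is the transfer from absolute to relative in the first part: one needs not only that $\{u = -\infty\} \supset \{w = -\infty\}$ (immediate from $u \leq w$), but also that $u$ retains finite $\psi$-relative energy, and this is exactly the content of the contraction property of $P_{\omega}[\psi]$ recorded in Proposition~\ref{prop:PropProie}.  The rest is essentially bookkeeping with the three-case definition of $L_{\mu}$ and the fact that $v_{k} - \psi$ on the charged pluripolar set $A$ can be made arbitrarily negative.
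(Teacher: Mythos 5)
Your proof is correct and follows essentially the same route as the paper: the paper also obtains the first claim by citing Corollary $2.11$ in \cite{BBGZ09} for some $\varphi\in\mathcal{E}^{1}(X,\omega)$ with $A\subset\{\varphi=-\infty\}$ and then setting $u:=P_{\omega}[\psi](\varphi)$ (finiteness of $E_{\psi}$ via Proposition \ref{prop:PropProie}), and it proves non-pluripolarity by the same contradiction, namely that $\mu$ cannot charge $\{\psi=-\infty\}$ and that then $L_{\mu}(u)=-\infty$ by the truncation property (your $v_{k}=\max(u,\psi-k)$ is exactly Proposition \ref{prop:PropertiesL}$.(iii)$), forcing $E^{*}_{\psi}(\mu)=+\infty$. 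Your explicit convex-combination formula is only an inessential illustration (as written the added constants diverge, and in general the truncation levels come from the cited result), but since you rely on \cite{BBGZ09} for this step just as the paper does, there is no gap.
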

\begin{proof}
By Corollary $2.11$ in \cite{BBGZ09} there exists $\varphi\in\mathcal{E}^{1}(X,\omega)$ such that $A\subset \{\varphi=-\infty\}$. Therefore setting $u:=P_{\omega}[\psi](\varphi)$ proves the first part.\\
Next let $V_{\psi}\mu\in\mathcal{M}^{1}(X,\omega,\psi)$ for $\psi\in\mathcal{M}^{+}$ and $\mu$ probability measure and assume by contradiction that $\mu$ takes mass on a pluripolar set $A$. Then by the first part of the proof there exists $u\in \mathcal{E}^{1}(X,\omega,\psi)$ such that $A\subset \{u=-\infty\}$. On the other hand, since $V_{\psi}\mu\in\mathcal{M}^{1}(X,\omega,\psi)$ by definition $\mu$ does not charge $\{\psi=-\infty\}$. Thus by Proposition \ref{prop:PropertiesL}$.(iii)$ we obtain $L_{\mu}(u)=-\infty$, which is a contradiction.
\end{proof}
We can now prove that the Monge-Ampère operation is a bijection between $\mathcal{E}^{1}(X,\omega,\psi)$ and $\mathcal{M}^{1}(X,\omega,\psi)$.
\begin{lem}
\label{lem:CapacityOk}
Let $\psi\in\mathcal{M}^{+}$ and let $\mu\in\mathcal{C}_{A,\psi}$ where $A\in\mathbbm{R}$. Then there exists $u\in\mathcal{E}^{1}_{norm}(X,\omega,\psi)$ maximizing $F_{\mu,\psi}$.
\end{lem}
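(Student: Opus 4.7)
The goal is a classical variational existence argument: show that $F_{\mu,\psi}$ is upper semicontinuous and $d$-coercive, then extract a convergent maximizing sequence. The plan is as follows.

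First, I would record that $L_{\mu}$ is finite-valued on $\mathcal{E}^{1}(X,\omega,\psi)$. Indeed, for $u\in\mathcal{E}^{1}(X,\omega,\psi)$ normalized so that $\sup_{X}u=0$, Lemma \ref{lem:4.18} bounds $\int_{X}(u-\psi)^{2}\mu$ in terms of $|E_{\psi}(u)|$, and Cauchy--Schwarz then gives $\int_{X}|u-\psi|\mu<\infty$; for general $u$ one subtracts a constant using the scaling property of $L_\mu$. Second, Corollary \ref{cor:Useful} says that $L_{\mu}$ is continuous on each sublevel set $\mathcal{E}^{1}_{C}(X,\omega,\psi)$. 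Together with Proposition \ref{prop:Coerc} this gives the key two inputs: upper semicontinuity of $F_{\mu,\psi}$ on each $\mathcal{E}^{1}_{C}(X,\omega,\psi)$, and the $d$-coercivity estimate $F_{\mu,\psi}(v)\leq -A\,d(\psi,v)+B$ on $\mathcal{E}^{1}_{norm}(X,\omega,\psi)$.

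Next, set $M:=\sup_{\mathcal{E}^{1}_{norm}(X,\omega,\psi)}F_{\mu,\psi}$. Since $\psi\in\mathcal{E}^{1}_{norm}(X,\omega,\psi)$ and $F_{\mu,\psi}(\psi)=0$, one has $0\leq M\leq B<+\infty$. Pick a maximizing sequence $\{u_{k}\}\subset \mathcal{E}^{1}_{norm}(X,\omega,\psi)$. By coercivity, $d(\psi,u_{k})$ stays bounded, so by Remark \ref{rem:Usef} the sequence lies inside some fixed $\mathcal{E}^{1}_{D}(X,\omega,\psi)$, which is weakly compact and convex. Extracting a subsequence, $u_{k}\to u$ weakly in $PSH(X,\omega)$.

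The one delicate point (and main obstacle) is to ensure that the weak limit $u$ stays inside $\mathcal{E}^{1}(X,\omega,\psi)$, i.e.\ that its singularity type does not drop to something strictly smaller than $\psi$. For this I would invoke Lemma \ref{lem:Contra} with $\mathcal{A}=\{\psi\}$: since $E_{\psi}(u_{k})=F_{\mu,\psi}(u_{k})+V_{\psi}L_{\mu}(u_{k})$ is uniformly bounded (both terms are, by the maximizing property and by boundedness of $L_\mu$ on $\mathcal{E}^{1}_{D}$), the hypothesis of Lemma \ref{lem:Contra} is met and the constant sequence $P_{\omega}[u_{k}]=\psi$ forces $P_{\omega}[u]=\psi$, hence $u\in\mathcal{E}(X,\omega,\psi)$ by Theorem \ref{thm:ClassE}. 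The weak upper semicontinuity from Proposition \ref{prop:USC} (or Proposition \ref{prop:PropertiesE}(vi)) then gives $E_{\psi}(u)\geq \limsup E_{\psi}(u_{k})>-\infty$, so $u\in\mathcal{E}^{1}(X,\omega,\psi)\cap \mathcal{E}^{1}_{D}(X,\omega,\psi)$.

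Finally, by upper semicontinuity of $F_{\mu,\psi}$ on $\mathcal{E}^{1}_{D}(X,\omega,\psi)$ one obtains $F_{\mu,\psi}(u)\geq\limsup_{k}F_{\mu,\psi}(u_{k})=M$, so $u$ is a maximizer. Since $F_{\mu,\psi}$ is translation invariant (as $E_{\psi}(u+c)-V_{\psi}L_{\mu}(u+c)=E_{\psi}(u)-V_{\psi}L_{\mu}(u)$), the function $\tilde{u}:=u-\sup_{X}u$ is still in $\mathcal{E}^{1}(X,\omega,\psi)$, is normalized, and remains a maximizer, concluding the proof.
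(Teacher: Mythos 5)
Your argument is correct and follows essentially the same route as the paper's proof: finiteness of $L_{\mu}$ via Lemma \ref{lem:4.18}, continuity of $L_{\mu}$ on each $\mathcal{E}^{1}_{C}(X,\omega,\psi)$ via Corollary \ref{cor:Useful}, upper semicontinuity and $d$-coercivity of $F_{\mu,\psi}$ from Proposition \ref{prop:Coerc}, and then weak compactness of the sublevel sets to extract a maximizer. The only remark is that your detour through Lemma \ref{lem:Contra} is unnecessary, and in fact mildly circular since that lemma already assumes the weak limit lies in $X_{\mathcal{A}}=\mathcal{E}^{1}(X,\omega,\psi)$; the weak compactness (hence weak closedness) of $\mathcal{E}^{1}_{D}(X,\omega,\psi)$ recalled in subsection \ref{ssec:Preli21} already guarantees that the limit of your maximizing sequence stays in $\mathcal{E}^{1}(X,\omega,\psi)$, which is exactly how the paper concludes.
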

\begin{proof}
By Lemma \ref{lem:4.18} $L_{\mu}$ is finite valued on $\mathcal{E}^{1}(X,\omega,\psi)$, and it is continuous on $\mathcal{E}^{1}_{C}(X,\omega,\psi)$ for any $C\in\mathbbm{R}$ thanks to Corollary \ref{cor:Useful}. Therefore it follows from Proposition \ref{prop:Coerc} that $F_{\mu,\psi}$ is upper semicontinuous and $d$-coercive on $\mathcal{E}^{1}_{norm}(X,\omega,\psi)$. Hence $F_{\mu,\psi}$ admits a maximizer $u\in\mathcal{E}^{1}_{norm}(X,\omega,\psi)$ as easy consequence of the weak compactness of $\mathcal{E}^{1}_{C}(X,\omega,\psi)$.
\end{proof}
\begin{prop}
\label{prop:OldPropC}
Let $\psi\in\mathcal{M}^{+}$. Then the Monge-Ampère map $MA:\mathcal{E}^{1}_{norm}(X,\omega,\psi)\to \mathcal{M}^{1}(X,\omega,\psi)$, $u\to MA(u)$ is bijective. Furthermore if $V_{\psi}\mu=MA_{\omega}(u)\in\mathcal{M}^{1}(X,\omega,\psi)$ for $u\in\mathcal{E}^{1}(X,\omega,\psi)$ then any maximizing sequence $u_{k}\in\mathcal{E}^{1}_{norm}(X,\omega,\psi)$ for $F_{\mu,\psi}$ necessarily converges weakly to $u$.
\end{prop}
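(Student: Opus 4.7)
The proposition divides into four assertions: that $MA_\omega$ lands in $\mathcal{M}^1(X,\omega,\psi)$, injectivity, surjectivity, and the convergence claim for maximizing sequences. I would treat them in sequence.

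For the range, given $u\in\mathcal{E}^1_{\mathrm{norm}}(X,\omega,\psi)$ set $\mu:=MA_\omega(u)/V_\psi$; this is a probability measure (since $V_u=V_\psi$ for $u\in\mathcal{E}(X,\omega,\psi)$), and Proposition \ref{prop:IFF} identifies $u$ as a maximizer of $F_{\mu,\psi}$. Hence $E^*_\psi(\mu)=F_{\mu,\psi}(u)=E_\psi(u)-V_\psi L_\mu(u)$, which is finite: the first term by the hypothesis $u\in\mathcal{E}^1$ and the second by Proposition \ref{prop:PropertiesL}(vi). Injectivity reduces to uniqueness of maximizers of $F_{\mu,\psi}$ up to an additive constant: if $MA_\omega(u_1)=MA_\omega(u_2)=V_\psi\mu$ then both $u_i$ maximize $F_{\mu,\psi}$ by Proposition \ref{prop:IFF}, and the domination principle (Proposition 3.11 in \cite{DDNL17b}) applied pairwise yields $u_1=u_2+c$, so the normalization $\sup u_1=\sup u_2=0$ forces $c=0$.

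The core of the proof is surjectivity: given $V_\psi\mu\in\mathcal{M}^1(X,\omega,\psi)$, the plan is to maximize $F_{\mu,\psi}$ on $\mathcal{E}^1_{\mathrm{norm}}(X,\omega,\psi)$ and invoke Proposition \ref{prop:IFF}. The finiteness of $E^*_\psi(\mu)$ forces $L_\mu$ to be finite-valued on $\mathcal{E}^1(X,\omega,\psi)$, so Proposition \ref{prop:Imp}(ii) together with the argument of Proposition \ref{prop:Coerc} furnishes the $d$-coercivity of $F_{\mu,\psi}$. Consequently any maximizing sequence $\{u_k\}\subset\mathcal{E}^1_{\mathrm{norm}}$ is $d$-bounded, thus contained in some weakly compact $\mathcal{E}^1_C(X,\omega,\psi)$; extract a weak limit $u_k\to u\in\mathcal{E}^1_C$.

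The main obstacle is upper semicontinuity of $F_{\mu,\psi}$ along $u_k\to u$, since $L_\mu$ is not known to be continuous under weak convergence for a general non-pluripolar $\mu$. The standard remedy is to pass to the decreasing envelope $v_k:=(\sup_{j\geq k}u_j)^*$. Lemma \ref{lem:UniEst} provides a uniform bound $u_k\leq\psi+C'$, so $v_k\leq\psi+C'$; combined with $v_k\geq u_k$ and Theorem \ref{thm:ClassE}, this shows $v_k\in\mathcal{E}^1(X,\omega,\psi)$ with $v_k\searrow u$. Continuity along decreasing sequences (Proposition \ref{prop:PropertiesE}(iii)) gives $E_\psi(v_k)\to E_\psi(u)$, while monotone convergence applied to $v_k-\psi$ (uniformly bounded above, and $\mu$ does not charge $\{\psi=-\infty\}$ since $V_\psi\mu\in\mathcal{M}^1$) gives $L_\mu(v_k)\to L_\mu(u)$, hence $F_{\mu,\psi}(v_k)\to F_{\mu,\psi}(u)$. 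Exploiting the monotonicity of $E_\psi$ and $L_\mu$ on $\mathcal{E}^1(X,\omega,\psi)$ together with the concavity of $F_{\mu,\psi}$, one compares $F_{\mu,\psi}(v_k)$ to $F_{\mu,\psi}(u_k)$ and concludes $F_{\mu,\psi}(u)\geq \limsup F_{\mu,\psi}(u_k)=E^*_\psi(\mu)$, identifying $u$ as a maximizer. Proposition \ref{prop:IFF} then delivers $MA_\omega(u)=V_\psi\mu$ and normalization gives $u\in\mathcal{E}^1_{\mathrm{norm}}$. For the final convergence statement, once the maximizer $u$ is shown to be unique, every weak subsequential limit of a $d$-bounded maximizing sequence is itself a maximizer, hence coincides with $u$; a standard subsequence argument yields $u_k\to u$ weakly.
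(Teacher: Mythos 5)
There is a genuine gap at the heart of your surjectivity argument. You correctly identify the obstacle — $L_{\mu}$ is not continuous under weak convergence for a general non-pluripolar $\mu$ — but the proposed remedy does not close it. Passing to the decreasing envelopes $v_{k}:=(\sup_{j\geq k}u_{j})^{*}$ and using monotonicity helps only for functionals that are monotone, and $F_{\mu,\psi}=E_{\psi}-V_{\psi}L_{\mu}$ is a \emph{difference} of two non-decreasing functionals: from $u_{k}\leq v_{k}$ you get both $E_{\psi}(u_{k})\leq E_{\psi}(v_{k})$ and $L_{\mu}(u_{k})\leq L_{\mu}(v_{k})$, so no comparison between $F_{\mu,\psi}(u_{k})$ and $F_{\mu,\psi}(v_{k})$ follows. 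To conclude that the weak limit $u$ of a maximizing sequence is a maximizer you would need $\liminf_{k}L_{\mu}(u_{k})\geq L_{\mu}(u)$, i.e. \emph{lower} semicontinuity of $L_{\mu}$ along the sequence; what is actually available (Proposition \ref{prop:PropertiesL}$.(v)$, or Fatou via the envelopes) is the \emph{upper} semicontinuity $\limsup_{k}L_{\mu}(u_{k})\leq L_{\mu}(u)$, which points the wrong way. Concavity of $F_{\mu,\psi}$ does not repair this. This is precisely why the paper does not run the direct method for a general $\mu\in\mathcal{M}^{1}(X,\omega,\psi)$: it first produces maximizers only for measures dominated by capacity, $\mu\in\mathcal{C}_{A,\psi}$, where $L_{\mu}$ \emph{is} continuous on $\mathcal{E}^{1}_{C}(X,\omega,\psi)$ (Lemma \ref{lem:4.18}, Corollary \ref{cor:Useful}, Lemma \ref{lem:CapacityOk}); for general $\mu$ it writes $\mu=f\nu$ with $\nu\in\mathcal{C}_{1,\psi}$ (Lemma $4.26$ in \cite{DDNL17b}, using non-pluripolarity from Lemma \ref{lem:Pluripolar}), solves for the truncations $\mu_{k}=(1+\epsilon_{k})\min(f,k)\nu$, and passes to the limit: Lemma $2.8$ in \cite{DDNL18b} gives $MA_{\omega}(u)\geq V_{\psi}\mu$ for the weak limit $u$, the mass comparison of \cite{WN17} upgrades this to equality, and the uniform bound $\mu_{k}\leq 2\mu$ combined with Proposition \ref{prop:Imp} and Lemma \ref{lem:Related} keeps $d(\psi,u_{k})$ bounded so that $u\in\mathcal{E}^{1}(X,\omega,\psi)$. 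Your proposal is missing this approximation scheme, and the step it replaces it with would fail.

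Two further points. For injectivity, the one-line appeal to the domination principle is not a proof: Proposition $3.11$ in \cite{DDNL17b} requires the a priori inequality $u_{1}\geq u_{2}$ a.e.\ with respect to $MA_{\omega}(u_{2})$, which is exactly what is not known; uniqueness of solutions with the same Monge--Amp\`ere measure in the relative class is a substantive result and the paper simply quotes Theorem $4.8$ in \cite{DDNL18b}. For the final statement on maximizing sequences your outline is essentially right, but you should say explicitly why upper semicontinuity of $F_{\mu,\psi}$ is legitimate there: the measure has the special form $\mu=MA_{\omega}(u)/V_{\psi}$ with $u\in\mathcal{E}^{1}(X,\omega,\psi)$, and for such $\mu$ the paper proves continuity of $L_{\mu}$ on $\mathcal{E}^{1}_{C}(X,\omega,\psi)$ (Proposition \ref{prop:Coerc}, via uniform approximation by $MA_{\omega}(\max(u,\psi-j))$ and Proposition \ref{prop:LastLast}); this is what makes the weak subsequential limit a maximizer, after which uniqueness and the normalization give the claimed weak convergence.
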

\begin{proof}
The proof is inspired by Theorem $4.7$ in \cite{BBGZ09}.\\
The map is well-defined as a consequence of Proposition \ref{prop:Coerc}, i.e. $MA_{\omega}(u)\in\mathcal{M}^{1}(X,\omega,\psi)$ for any $u\in\mathcal{E}^{1}(X,\omega,\psi)$. Moreover the injectivity follows from Theorem $4.8$ in \cite{DDNL18b}.\\
Let $u_{k}\in\mathcal{E}^{1}_{norm}(X,\omega,\psi)$ be a sequence such that $F_{\mu,\psi}(u_{k})\nearrow \sup_{\mathcal{E}^{1}(X,\omega,\psi)}F_{\mu,\psi}$ where $\mu=MA_{\omega}(u)/V_{\psi}$ is a probability measure and $u\in\mathcal{E}^{1}_{norm}(X,\omega,\psi)$. Up to considering a subsequence, we may also assume that $u_{k}\to v\in PSH(X,\omega)$. Then, by the upper semicontinuity and the $d$-coercivity of $F_{\mu,\psi}$ (Proposition \ref{prop:Coerc}) it follows that $v\in\mathcal{E}^{1}_{norm}(X,\omega,\psi)$ and $F_{\mu,\psi}(v)=\sup_{\mathcal{E}^{1}(X,\omega,\psi)}F_{\mu,\psi}$. Thus by Proposition \ref{prop:IFF} we get $\mu=MA_{\omega}(v)/V_{\psi}$. Hence $v=u$ since $\sup_{X}v=\sup_{X}u=0$.\\
Then let $\mu$ be a probability measure such that $V_{\psi}\mu\in\mathcal{M}^{1}(X,\omega,\psi)$. Again by Proposition \ref{prop:IFF}, to prove the existence of $u\in\mathcal{E}^{1}_{norm}(X,\omega,\psi)$ such that $\mu=MA_{\omega}(u)/V_{\psi}$ it is sufficient to check that $F_{\mu,\psi}$ admits a maximum over $\mathcal{E}^{1}_{norm}(X,\omega,\psi)$. Moreover by Proposition \ref{prop:Coerc} we also know that $F_{\mu,\psi}$ is $d$-coercive on $\mathcal{E}^{1}_{norm}(X,\omega,\psi)$. Thus if there exists a constant $A>0$ such that $\mu\in\mathcal{C}_{A,\psi}$ then Corollary \ref{cor:Useful} leads to the upper semicontinuity of $F_{\mu,\psi}$ which clearly implies that $V_{\psi}\mu=MA_{\omega}(u)$ for $u\in\mathcal{E}^{1}(X,\omega,\psi)$ since $\mathcal{E}^{1}_{C}(X,\omega,\psi)\subset PSH(X,\omega)$ is compact for any $C\in\mathbbm{R}_{>0}$. \\
In the general case by Lemma $4.26$ in \cite{DDNL17b} (see also \cite{Ceg98}) $\mu$ is absolutely continuous with respect to $\nu\in\mathcal{C}_{1,\psi}$ using also that $\mu$ is a non-pluripolar measure (Lemma \ref{lem:Pluripolar}). Therefore letting $f\in L^{1}(\nu)$ such that $\mu=f\nu$, we define for any $k\in\mathbbm{N}$
$$
\mu_{k}:=(1+\epsilon_{k})\min(f,k)\nu
$$
where $\epsilon_{k}>0$ are chosen so that $\mu_{k}$ is a probability measure, noting that $(1+\epsilon_{k})\min(f,k)\to f$ in $L^{1}(\nu)$. Then by Lemma \ref{lem:CapacityOk} it follows that $\mu_{k}=MA_{\omega}(u_{k})/V_{\psi}$ for $u_{k}\in\mathcal{E}^{1}_{norm}(X,\omega,\psi)$.\\
Moreover by weak compactness, without loss of generality, we may also assume that $u_{k}\to u\in PSH(X,\omega)$. Note that $u\leq \psi$ since $u_{k}\leq \psi$ for any $k\in\mathbbm{N}$. Then by Lemma $2.8$ in \cite{DDNL18b} we obtain
$$
MA_{\omega}(u)\geq V_{\psi}f\nu=V_{\psi}\mu,
$$
which implies $MA_{\omega}(u)=V_{\psi}\mu $ by \cite{WN17} since $u$ is more singular than $\psi$ and $\mu$ is a probability measure. It remains to prove that $u\in\mathcal{E}^{1}(X,\omega,\psi)$.\\
It is not difficult to see that $\mu_{k}\leq 2\mu$ for $k\gg 0$, thus Proposition \ref{prop:Imp} implies that there exists a constant $B>0$ such that
$$
\sup_{\mathcal{E}^{1}_{C}(X,\omega,\psi)}|L_{\mu_{k}}|\leq 2\sup_{\mathcal{E}^{1}_{C}(X,\omega,\psi)}|L_{\mu}|\leq 2B(1+C^{1/2})
$$
for any $C\in\mathbbm{R}_{>0}$. Therefore
$$
J^{\psi}_{u_{k}}(\psi)=E_{\psi}(u_{k})+V_{\psi}|L_{\mu_{k}}(u_{k})|\leq \sup_{C>0}\big( 2V_{\psi}B(1+C^{1/2})-C\big)
$$
and Lemma \ref{lem:Related} yields $d(\psi,u_{k})\leq D $ for a uniform constant $D$, i.e. $u_{k}\in\mathcal{E}^{1}_{D'}(X,\omega,\psi)$ for any $k\in\mathbbm{N}$ for a uniform constant $D'$ (Remark \ref{rem:Usef}). Hence since $\mathcal{E}^{1}_{D'}(X,\omega,\psi)$ is weakly compact we obtain $u\in\mathcal{E}^{1}_{D'}(X,\omega,\psi)$. 
\end{proof}
\subsection{Proof of Theorem \ref{thmA}.}
We first need to explore further the properties of the strong topology on $\mathcal{E}^{1}(X,\omega,\psi)$.\\

By Proposition \ref{prop:L1} the strong convergence implies the weak convergence. Moreover the strong topology is the coarsest refinement of the weak topology such that $E_{\psi}(\cdot)$ becomes continuous.
\begin{prop}
\label{prop:PropAOne}
Let $\psi\in\mathcal{M}^{+} $ and $u_{k},u\in\mathcal{E}^{1}(X,\omega,\psi)$. Then $u_{k}\to u$ strongly if and only if $u_{k}\to u$ weakly and $E_{\psi}(u_{k})\to E_{\psi}(u)$.
\end{prop}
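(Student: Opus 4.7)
The plan is to prove the two implications separately. The forward direction reduces to bookkeeping with the definition of $d$. Assuming $d(u_k,u)\to 0$, Proposition \ref{prop:L1} gives $u_k\to u$ weakly at once. For the energy, note that $P_\omega(u_k,u)\leq u_k$ and $P_\omega(u_k,u)\leq u$, so the monotonicity of $E_\psi$ (Proposition \ref{prop:PropertiesE}$(i)$) makes both brackets non-negative in the splitting
\begin{equation*}
d(u_k,u)=\bigl[E_\psi(u_k)-E_\psi(P_\omega(u_k,u))\bigr]+\bigl[E_\psi(u)-E_\psi(P_\omega(u_k,u))\bigr].
\end{equation*}
Consequently both summands tend to $0$, and subtracting them yields $E_\psi(u_k)\to E_\psi(u)$.

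For the reverse direction, the strategy is to squeeze $d(u_k,u)$ via the upper envelope $\tilde u_k:=\max(u_k,u)$. A routine check based on $u,u_k\in\mathcal{E}^1(X,\omega,\psi)$ gives $\tilde u_k\in\mathcal{E}^1(X,\omega,\psi)$: indeed $u\preccurlyeq\psi$ and $u_k\preccurlyeq\psi$ imply $\tilde u_k\preccurlyeq\psi$; the sandwich $u\leq\tilde u_k\preccurlyeq\psi$ yields $V_{\tilde u_k}=V_\psi$; and $E_\psi(\tilde u_k)\geq E_\psi(u)>-\infty$ by monotonicity. Since $\tilde u_k-u=(u_k-u)_+$, the weak convergence $u_k\to u$ (which in the paper's terminology is $L^1$ convergence) gives $\|\tilde u_k-u\|_{L^1}\leq\|u_k-u\|_{L^1}\to 0$, i.e. $\tilde u_k\to u$ weakly. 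Combining monotonicity with Proposition \ref{prop:PropertiesE}$(vi)$ then forces $E_\psi(u)\leq E_\psi(\tilde u_k)$ and $\limsup_k E_\psi(\tilde u_k)\leq E_\psi(u)$, hence $E_\psi(\tilde u_k)\to E_\psi(u)$.

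To conclude, observe that $u_k\leq \tilde u_k$ forces $P_\omega(u_k,\tilde u_k)=u_k$ and similarly $P_\omega(u,\tilde u_k)=u$, so directly from the definition of $d$,
\begin{equation*}
d(u_k,\tilde u_k)=E_\psi(\tilde u_k)-E_\psi(u_k),\qquad d(u,\tilde u_k)=E_\psi(\tilde u_k)-E_\psi(u).
\end{equation*}
The triangle inequality then gives
\begin{equation*}
d(u_k,u)\leq d(u_k,\tilde u_k)+d(\tilde u_k,u)=2E_\psi(\tilde u_k)-E_\psi(u_k)-E_\psi(u)\;\longrightarrow\;0,
\end{equation*}
using $E_\psi(u_k)\to E_\psi(u)$ (hypothesis) and $E_\psi(\tilde u_k)\to E_\psi(u)$ (previous paragraph). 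The only genuinely delicate step is the upper bound $\limsup E_\psi(\tilde u_k)\leq E_\psi(u)$, which rests on the weak upper semicontinuity of $E_\psi$ on $\mathcal{E}^1(X,\omega,\psi)$ recorded in Proposition \ref{prop:PropertiesE}$(vi)$; every other step is a rearrangement of definitions together with the monotonicity of $E_\psi$.
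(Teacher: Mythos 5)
Your proof is correct and follows essentially the same strategy as the paper: weak convergence via Proposition \ref{prop:L1} plus elementary manipulation of $d$ in one direction, and in the other a potential lying above both $u_k$ and $u$ whose energy converges to $E_\psi(u)$, followed by the triangle inequality $d(u_k,u)\leq 2E_\psi(\cdot)-E_\psi(u_k)-E_\psi(u)$. The only (harmless) differences are cosmetic: the paper takes $w_k:=(\sup\{u_j:j\geq k\})^*$, which decreases to $u$, and invokes continuity of $E_\psi$ along decreasing sequences, whereas you take $\max(u_k,u)$ and combine monotonicity with the weak upper semicontinuity in Proposition \ref{prop:PropertiesE}$(vi)$; likewise your splitting of $d(u_k,u)$ into the two non-negative brackets through $P_\omega(u_k,u)$ replaces the paper's comparison with $d(\psi+A,\cdot)$, and both are valid.
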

\begin{proof}
Assume that $u_{k}\to u$ weakly and that $E_{\psi}(u_{k})\to E_{\psi}(u)$. Then $w_{k}:=(\sup\{u_{j}\, : \, j\geq k\}\big)^{*}\in \mathcal{E}^{1}(X,\omega,\psi)$ and it decreases to $u$. Thus by Proposition \ref{prop:PropertiesE} $E_{\psi}(w_{k})\to E_{\psi}(u)$ and
$$
d(u_{k},u)\leq d(u_{k},w_{k})+d(w_{k},u)=2E_{\psi}(w_{k})-E_{\psi}(u_{k})-E_{\psi}(u)\to 0.
$$
Conversely, assuming that $d(u_{k},u)\to 0 $, we immediately get that $u_{k}\to u$ weakly as said above (Proposition \ref{prop:L1}). Moreover $\sup_{X}u_{k},\sup_{X}u\leq A$ uniformly for a constant $A\in\mathbbm{R}$. Thus
$$
|E_{\psi}(u_{k})-E_{\psi}(u)|=|d(\psi+A,u_{k})-d(\psi+A,u)|\leq d(u_{k},u)\to 0,
$$
which concludes the proof.
\end{proof}
Then we also observe that the strong convergence implies the convergence in $\psi'$-capacity for any $\psi'\in\mathcal{M}^{+}$.
\begin{prop}
\label{prop:CapacityPsi}
Let $\psi\in\mathcal{M}^{+}$ and $u_{k},u\in\mathcal{E}^{1}(X,\omega,\psi)$ such that $d(u_{k},u)\to 0$. Then there exists a subsequence $\{u_{k_{j}}\}_{j\in\mathbbm{N}}$ such that $w_{j}:=\big(\sup\{u_{k_{h}}\, : \, h\geq j\}\big)^{*}$, $v_{j}:=P_{\omega}(u_{k_{j}},u_{k_{j+1}},\dots)$ belong to $\mathcal{E}^{1}(X,\omega,\psi)$ and converge monotonically almost everywhere to $u$. In particular $u_{k}\to u$ in $\psi'$-capacity for any $\psi'\in\mathcal{M}^{+}$ and $MA_{\omega}(u_{k}^{j},\psi^{n-j})\to MA_{\omega}(u^{j},\psi^{n-j})$ weakly for any $j=0,\dots,n$.
\end{prop}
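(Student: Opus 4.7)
The plan is to adapt Darvas' construction of monotone envelopes for a Cauchy subsequence to the $\psi$-relative setting. By Proposition \ref{prop:L1}, $d$-convergence forces $L^{1}$-convergence, so after passing to a subsequence we may assume $u_{k_{j}}\to u$ almost everywhere and $d(u_{k_{j}},u_{k_{j+1}})\leq 2^{-j}$; Hartogs' Lemma combined with $u\preccurlyeq\psi$ also yields, along a tail, a uniform bound on $\sup_{X}(u_{k_{j}}-\psi)$.

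The upper envelope is the easy half. The pointwise suprema $\sup_{h\geq j}u_{k_{h}}$ decrease to $\limsup_{h}u_{k_{h}}=u$ on the full-measure set where $u_{k_{h}}\to u$; the usc regularization only affects a pluripolar set, so $w_{j}\searrow u$ a.e. The uniform control on $\sup_{X}(u_{k_{j}}-\psi)$ together with Lemma \ref{lem:UniEst} ensures that each $w_{j}$ is $\omega$-psh with $w_{j}\preccurlyeq\psi$. Moreover $w_{j}\geq u_{k_{j}}$ forces $P_{\omega}[w_{j}]\geq P_{\omega}[u_{k_{j}}]=\psi$ while $w_{j}\preccurlyeq\psi$ gives $P_{\omega}[w_{j}]\preccurlyeq\psi$; as both are model envelopes these combine to $P_{\omega}[w_{j}]=\psi$, so $w_{j}\in\mathcal{E}(X,\omega,\psi)$, and $E_{\psi}(w_{j})\geq E_{\psi}(u_{k_{j}})\to E_{\psi}(u)$ (monotonicity and Proposition \ref{prop:PropAOne}) upgrades this to $w_{j}\in\mathcal{E}^{1}(X,\omega,\psi)$.

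The lower envelope is where the real work lies. I would approximate $v_{j}$ from above by the finite rooftops $v_{j}^{(m)}:=P_{\omega}(u_{k_{j}},\dots,u_{k_{j+m}})\in\mathcal{E}^{1}(X,\omega,\psi)$, obtained by iterating Proposition $2.13$ of \cite{Tru19}. Using the elementary contraction $d(v,P_{\omega}(v,c))\leq d(v,c)$ together with the Lipschitz property of the rooftop $a\mapsto P_{\omega}(a,c)$ in the $d$-distance and the inequality $v_{j}^{(l)}\leq u_{k_{j+l}}$, one obtains
$$
d(v_{j}^{(l)},v_{j}^{(l+1)})=d\bigl(v_{j}^{(l)},P_{\omega}(v_{j}^{(l)},u_{k_{j+l+1}})\bigr)\leq d(u_{k_{j+l}},u_{k_{j+l+1}})\leq 2^{-(j+l)}.
$$
The chain additivity of $d$ (Lemma $3.1$ of \cite{Tru19}, applicable since $v_{j}^{(0)}\geq v_{j}^{(1)}\geq\cdots$) then telescopes to $d(u_{k_{j}},v_{j}^{(m)})\leq 2^{-(j-1)}$ uniformly in $m$. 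Continuity of $E_{\psi}$ along decreasing sequences (Proposition \ref{prop:PropertiesE}(iii)) yields $v_{j}^{(m)}\searrow v_{j}\in\mathcal{E}^{1}(X,\omega,\psi)$ with $d(u_{k_{j}},v_{j})\leq 2^{-(j-1)}$, so by the triangle inequality $d(v_{j},u)\to 0$ and $v_{j}\to u$ in $L^{1}$; since $v_{j}\leq u_{k_{h}}$ for $h\geq j$ forces $\lim v_{j}\leq u$ a.e., the built-in monotonicity $v_{j}\leq v_{j+1}$ then delivers $v_{j}\nearrow u$ a.e.

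For the remaining assertions, the sandwich $v_{j}\leq u_{k_{h}}\leq w_{j}$ for $h\geq j$ produces the inclusion
$$
\{|u_{k_{h}}-u|\geq\delta\}\subseteq\{u-v_{j}\geq\delta\}\cup\{w_{j}-u\geq\delta\},
$$
and the monotone convergences $v_{j}\nearrow u$, $w_{j}\searrow u$ in $\mathcal{E}^{1}(X,\omega,\psi)$ combined with standard capacity estimates (via Lemma \ref{lem:4.18} and Chebyshev) force the $\psi'$-capacity of both right-hand sets to vanish as $j\to\infty$, yielding convergence in $\psi'$-capacity for every $\psi'\in\mathcal{M}^{+}$. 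The weak convergence $MA_{\omega}(u_{k}^{j},\psi^{n-j})\to MA_{\omega}(u^{j},\psi^{n-j})$ is then an immediate application of Lemma \ref{lem:KeyConv} with the constant sequences $\psi_{k}\equiv\psi$ and $v_{k}\equiv\psi$. The main obstacle is the telescoping bound for $v_{j}^{(m)}$: a naive triangle iteration grows geometrically, and the clean estimate relies on invoking the Lipschitz contraction of $P_{\omega}(\cdot,c)$ together with the monotonicity $v_{j}^{(l)}\leq u_{k_{j+l}}$ to collapse each telescoping step into a single Cauchy increment.
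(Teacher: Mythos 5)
Your overall architecture is the same as the paper's: pass to a subsequence with geometrically small Cauchy increments, take the upper envelopes $w_{j}$ and the finite rooftops $v_{j}^{(m)}=P_{\omega}(u_{k_{j}},\dots,u_{k_{j+m}})$, prove a $d$-bound on $d(u_{k_{j}},v_{j}^{(m)})$ uniform in $m$, pass to the limit to get $v_{j}\nearrow u$, and finish with the sandwich inclusion, monotone convergence in capacity, and Lemma \ref{lem:KeyConv}. The upper-envelope half and the final capacity/measure step are fine (modulo the slightly off-target citation of Lemma \ref{lem:4.18}, which bounds integrals against measures dominated by capacity rather than capacities of sublevel sets; what is really used, as in the paper, is the standard fact that monotone a.e.\ convergence of quasi-psh functions implies convergence in capacity).

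The genuine gap is the pivotal telescoping estimate $d\bigl(v_{j}^{(l)},P_{\omega}(v_{j}^{(l)},u_{k_{j+l+1}})\bigr)\leq d(u_{k_{j+l}},u_{k_{j+l+1}})$, which you justify by invoking a ``Lipschitz property of the rooftop $a\mapsto P_{\omega}(a,c)$ in the $d$-distance.'' No such statement is available in this paper or in the results it quotes: the only contraction established is Proposition \ref{prop:PropProie}, which concerns the singularity-type projection $P_{\omega}[\psi'](\cdot)$, not the rooftop operator $P_{\omega}(\cdot,\cdot)$ in a potential slot, and in the $\psi$-relative setting the $1$-Lipschitz bound you use is itself a nontrivial result whose proof (via contact sets and derivatives of the energy along envelopes) is of the same depth as the estimate you are trying to prove. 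You even flag it as the main obstacle, but then treat it as a black box, so as written the crux of the argument is unproven. The gap is repairable without the sharp constant: for $c\leq b$ one gets from Proposition \ref{prop:PropertiesE}(viii) and Lemma $3.7$ in \cite{DDNL17b} that
$$
E_{\psi}\big(P_{\omega}(a,b)\big)-E_{\psi}\big(P_{\omega}(a,c)\big)\leq \int_{\{P_{\omega}(a,c)=c\}}(b-c)\,MA_{\omega}\big(P_{\omega}(a,c)\big)\leq \int_{X}(b-c)\,MA_{\omega}(c)\leq (n+1)\,d(b,c),
$$
and the general case follows by chain additivity through $P_{\omega}(b,c)$; a Lipschitz constant $n+1$ still makes your series summable with bound $(n+1)2^{-(j-1)}$. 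Note that the paper sidesteps the issue entirely by comparing with $u$: it bounds $d\big(u,P_{\omega}(v_{k,j},u)\big)$ by $\sum_{s}\int_{X}(w_{s}-u_{s})MA_{\omega}(u_{s})\leq (n+1)\sum_{s}d(w_{s},u_{s})$ using the contact-set inequality for the multi-rooftop, which is exactly the kind of argument you would need to supply to close your version.
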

\begin{proof}
Since the strong convergence implies the weak convergence by Proposition \ref{prop:PropAOne} it is clear that $w_{k}\in\mathcal{E}^{1}(X,\omega,\psi)$ and that it decreases to $u$. In particular up to considering a subsequence we may assume that $d(u_{k},w_{k})\leq 1/2^{k}$ for any $k\in\mathbbm{N}$.\\
Next for any $j\geq k$ we set $v_{k,j}:=P_{\omega}(u_{k},\dots,u_{j})\in\mathcal{E}^{1}(X,\omega,\psi)$ and $v_{k,j}^{u}:=P_{\omega}(v_{k,j},u)\in\mathcal{E}^{1}(X,\omega,\psi)$. Then it follows from Proposition \ref{prop:PropertiesE} and Lemma $3.7$ in \cite{DDNL17b} that
\begin{multline*}
d(u,v_{k,j}^{u})\leq \int_{X}(u-v_{k,j}^{u})MA_{\omega}(v_{k,j}^{u})\leq \int_{\{v_{k,j}^{u}=v_{k,j}\}}(u-v_{k,j})MA_{\omega}(v_{k,j})\leq\\
\leq\sum_{s=k}^{j}\int_{X}(w_{s}-u_{s})MA_{\omega}(u_{s})\leq (n+1)\sum_{s=k}^{j}d(w_{s},u_{s})\leq \frac{(n+1)}{2^{k-1}}.
\end{multline*}
Therefore by Proposition \ref{prop:CompactL1} $v_{k,j}^{u}$ decreases (hence converges strongly) to a function $\phi_{k}\in\mathcal{E}^{1}(X,\omega,\psi)$ as $j\to \infty$. Similarly we also observe that
$$
d(v_{k,j},v_{k,j}^{u})\leq \int_{\{v_{k,j}^{u}=u\}}(v_{k,j}-u)MA_{\omega}(u)\leq \int_{X}|v_{k,1}-u|MA_{\omega}(u)\leq C
$$
uniformly in $j$ by Corollary \ref{cor:ForL1}. Hence by definition $d(u,v_{k,j})\leq C+\frac{(n+1)}{2^{k-1}}$, i.e. $v_{k,j}$ decreases and converges strongly as $j\to \infty$ to the function $v_{k}=P_{\omega}(u_{k},u_{k+1}\dots)\in\mathcal{E}^{1}(X,\omega,\psi)$ again by Proposition \ref{prop:CompactL1}. Moreover by construction $u_{k}\geq v_{k}\geq \phi_{k}$ since $v_{k}\leq v_{k,j}\leq u_{k}$ for any $j\geq k$. Hence
$$
d(u,v_{k})\leq d(u,\phi_{k})\leq \frac{(n+1)}{2^{k-1}}\to 0
$$
as $k\to \infty$, i.e. $v_{k}\nearrow u$ strongly.\\
The convergence in $\psi'$-capacity for $\psi'\in\mathcal{M}^{+}$ in now clearly an immediate consequence. Indeed by an easy contradiction argument it is enough to prove that any arbitrary subsequence, which we will keep denoting with $\{u_{k}\}_{k\in\mathbbm{N}}$ for the sake of simplicity, admits a further subsequence $\{u_{k_{j}}\}_{j\in\mathbbm{N}}$ converging in $\psi'$-capacity to $u$. Thus taking the subsequence satisfying $v_{j}\leq u_{k_{j}}\leq w_{j}$ where $v_{j},w_{j}$ are the monotonic sequence of the first part of the Proposition, the convergence in $\psi'$-capacity follows from the inclusions
\begin{gather*}
\{|u-u_{k_{j}}|>\delta\}=\{u-u_{k_{j}}>\delta\}\cup\{u_{k_{j}}-u>\delta\}\subset \{u-v_{j}>\delta\}\cup \{w_{j}-u>\delta\}
\end{gather*}
for any $\delta>0$. Finally Lemma \ref{lem:KeyConv} gives the weak convergence of the measures.
\end{proof}
We can now endow the set $\mathcal{M}^{1}(X,\omega,\psi)=\{V_{\psi}\mu\, :\, \mu \, \mbox{is a probability measure satisfying} \, E^{*}_{\psi}(\mu)<+\infty\}$ (subsection \ref{ssec:Dege}) with its natural strong topology given as the coarsest refinement of the weak topology such that $E_{\psi}^{*}(\cdot)$ becomes continuous, and prove our Theorem \ref{thmA}.
\begin{reptheorem}{thmA}
Let $\psi\in\mathcal{M}^{+}$. Then
$$
MA_{\omega}:\big(\mathcal{E}^{1}_{norm}(X,\omega,\psi),d\big)\to \big(\mathcal{M}^{1}(X,\omega,\psi),strong\big)
$$
is a homeomorphism.
\end{reptheorem}
\begin{proof}
The map is bijective as immediate consequence of Proposition \ref{prop:OldPropC}.\\
Next, letting $u_{k}\in\mathcal{E}^{1}_{norm}(X,\omega,\psi)$ converging strongly to $u\in\mathcal{E}^{1}_{norm}(X,\omega,\psi)$, Proposition \ref{prop:CapacityPsi} gives the weak convergence of $MA_{\omega}(u_{k})\to MA_{\omega}(u)$ as $k\to \infty$. Moreover since $E_{\psi}^{*}\big(MA_{\omega}(v)/V_{\psi}\big)=J_{v}^{\psi}(\psi)$ for any $v\in\mathcal{E}^{1}(X,\omega,\psi)$, we get
\begin{multline}
\label{eqn:ThmA1}
\Big|E^{*}_{\psi}\big(MA_{\omega}(u_{k})/V_{\psi}\big)-E_{\psi}^{*}\big(MA_{\omega}(u)/V_{\psi}\big)\Big|\leq \\
\leq\big|E_{\psi}(u_{k})-E_{\psi}(u)\big|+\Big|\int_{X}(\psi-u_{k})MA_{\omega}(u_{k})-\int_{X}(\psi-u)MA_{\omega}(u)\Big|\leq\\
\leq \big|E_{\psi}(u_{k})-E_{\psi}(u)\big|+\Big|\int_{X}(\psi-u_{k})\big(MA_{\omega}(u_{k})-MA_{\omega}(u)\big)\Big|+\int_{X}|u_{k}-u|MA_{\omega}(u).
\end{multline}
Hence $MA_{\omega}(u_{k})\to MA_{\omega}(u)$ strongly in $\mathcal{M}^{1}(X,\omega,\psi)$ since each term on the right-hand side of (\ref{eqn:ThmA1}) goes to $0$ as $k\to +\infty$ combining Proposition \ref{prop:PropAOne}, Proposition \ref{prop:LastLast} and Corollary \ref{cor:ForL1} recalling that by Proposition \ref{prop:ForL1} $I_{\psi}(u_{k},u)\to 0$ as $k\to \infty$.\\
Conversely, suppose that $MA_{\omega}(u_{k})\to MA_{\omega}(u)$ strongly in $\mathcal{M}^{1}(X,\omega,\psi)$ where $u_{k},u\in\mathcal{E}^{1}_{norm}(X,\omega,\psi)$. Then, letting $\{\varphi_{j}\}_{j\in\mathbbm{N}}\subset \mathcal{H}_{\omega}$ such that $\varphi_{j}\searrow u$ (\cite{BK07}) and setting $v_{j}:=P_{\omega}[\psi](\varphi_{j})$, by Lemma \ref{lem:Related}
\begin{multline}
(n+1)I_{\psi}(u_{k},v_{j})\leq E_{\psi}(u_{k})-E_{\psi}(v_{j})+\int_{X}(v_{j}-u_{k})MA_{\omega}(u_{k})=\\
=E_{\psi}^{*}\big(MA_{\omega}(u_{k})/V_{\psi}\big)-E_{\psi}^{*}\big(MA_{\omega}(v_{j})/V_{\psi}\big)+\int_{X}(v_{j}-\psi)\big(MA_{\omega}(u_{k})-MA_{\omega}(v_{j})\big).
\end{multline}
By construction and the first part of the proof, it follows that $E^{*}_{\psi}\big(MA_{\omega}(u_{k})/V_{\psi}\big)-E_{\psi}^{*}\big(MA_{\omega}(v_{j})/V_{\psi}\big)\to 0$ as $k,j\to \infty$. While setting $f_{j}:=v_{j}-\psi$ we want to prove that
$$
\limsup_{k\to \infty}\int_{X}f_{j}MA_{\omega}(u_{k})=\int_{X}f_{j}MA_{\omega}(u),
$$
which would imply $\limsup_{j\to \infty}\limsup_{k\to \infty}I_{\psi}(u_{k},v_{j})=0$ since $\int_{X}f_{j}\big(MA_{\omega}(u)-MA_{\omega}(v_{j})\big)\to 0$ as a consequence of Propositions \ref{prop:LastLast} and \ref{prop:ForL1}.\\
We observe that $||f_{j}||_{L^{\infty}}\leq ||\varphi_{j}||_{L^{\infty}}$ by Proposition \ref{prop:PropProie} and we denote by $\{f_{j}^{s}\}_{s\in\mathbbm{N}}\subset C^{\infty}$ a sequence of smooth functions converging in capacity to $f_{j}$ such that $||f_{j}^{s}||_{L^{\infty}}\leq 2||f_{j}||_{L^{\infty}}$. We recall here briefly how to construct such sequence. Let $\{g_{j}^{s}\}_{s\in\mathbbm{N}}$ be the sequence of bounded functions converging in capacity to $f_{j}$ defined as $g_{j}^{s}:=\max(v_{j},-s)-\max(\psi,-s)$. We have that $||g_{j}^{s}||_{L^{\infty}}\leq ||f_{j}||_{L^{\infty}}$ and that $\max(v_{j},-s), \max(\psi,-s)\in PSH(X,\omega)$. Therefore by a regularization process (see for instance \cite{BK07}) and a diagonal argument we can now construct a sequence $\{f_{j}^{s}\}_{j\in\mathbbm{N}}\subset C^{\infty}$ converging in capacity to $f_{j}$ such that $||f_{j}^{s}||_{L^{\infty}}\leq2||g_{j}^{s}||\leq 2||f_{j}||_{L^{\infty}}$ where $f_{j}^{s}=v_{j}^{s}-\psi^{s}$ with $v_{j}^{s},\psi^{s}$ quasi-psh functions decreasing respectively to $v_{j},\psi$.\\
Then letting $\delta>0$ we have
$$
\int_{X}(f_{j}-f_{j}^{s})MA_{\omega}(u_{k})\leq \delta V_{\psi}+3||\varphi_{j}||_{L^{\infty}}\int_{\{f_{j}-f_{j}^{s}>\delta\}}MA_{\omega}(u_{k})\leq \delta V_{\psi}+3||\varphi_{j}||_{L^{\infty}}\int_{\{\psi^{s}-\psi>\delta\}}MA_{\omega}(u_{k})
$$
from the trivial inclusion $\{f_{j}-f_{j}^{s}>\delta\}\subset \{\psi^{s}-\psi>\delta\}$. Therefore
\begin{multline*}
\limsup_{s\to \infty}\limsup_{k\to \infty}\int_{X}(f_{j}-f_{j}^{s})MA_{\omega}(u_{k})\leq \delta V_{\psi}+\limsup_{s\to \infty}\limsup_{k\to \infty}\int_{\{\psi^{s}-\psi\geq \delta\}}MA_{\omega}(u_{k})\leq\\
\leq \delta V_{\psi}+\limsup_{s\to \infty}\int_{\{\psi^{s}-\psi\geq \delta\}}MA_{\omega}(u)=\delta V_{\psi},
\end{multline*}
where we used that $\{\psi^{s}-\psi\geq \delta\}$ is a closed set in the plurifine topology. Hence since $f_{j}^{s}\in C^{\infty}$ we obtain
\begin{multline*}
\limsup_{k\to \infty}\int_{X}f_{j}MA_{\omega}(u_{k})=\limsup_{s\to \infty}\limsup_{k\to \infty}\Big(\int_{X}(f_{j}-f_{j}^{s})MA_{\omega}(u_{k})+\int_{X}f_{j}^{s}MA_{\omega}(u_{k})\Big)\leq\\
\leq \limsup_{s\to \infty}\int_{X}f_{j}^{s}MA_{\omega}(u)=\int_{X}f_{j}MA_{\omega}(u),
\end{multline*}
which as said above implies $I_{\psi}(u_{k},v_{j})\to 0$ letting $k,j\to \infty$ in this order. \\
Next, again by Lemma \ref{lem:Related}, we obtain $u_{k}\in\mathcal{E}^{1}_{C}(X,\omega,\psi)$ for some $C\in\mathbbm{N}$ big enough since $J_{u_{k}}^{\psi}(\psi)=E^{*}_{\psi}\big(MA_{\omega}(u_{k})/V_{\psi}\big)$. In particular, up to considering a subsequence, $u_{k}\to w\in\mathcal{E}^{1}_{norm}(X,\omega,\psi)$ weakly by Proposition \ref{prop:CompactL1}. Observe also that by Proposition \ref{prop:LastLast}
\begin{equation}
\label{eqn:CP1}
\Big|\int_{X}(\psi-u_{k})\big(MA_{\omega}(v_{j})-MA_{\omega}(u_{k})\big)\Big|\to 0
\end{equation}
as $k,j\to \infty$ in this order. Moreover by Proposition \ref{prop:USC} and Lemma \ref{lem:Equicontinuity}
\begin{multline}
\label{eqn:CP2}
\limsup_{k\to \infty}\Big(E^{*}_{\psi}\big(MA_{\omega}(u_{k})/V_{\psi}\big)+\int_{X}(\psi-u_{k})\big(MA_{\omega}(v_{j})-MA_{\omega}(u_{k})\big)\Big)=\\
=\limsup_{k\to \infty}\Big(E_{\psi}(u_{k})+\int_{X}(\psi-u_{k})MA_{\omega}(v_{j})\Big)\leq E_{\psi}(w)+\int_{X}(\psi-w)MA_{\omega}(v_{j}).
\end{multline}
Therefore combining (\ref{eqn:CP1}) and (\ref{eqn:CP2}) with the strong convergence of $v_{j}$ to $u$ we obtain
\begin{multline*}
E_{\psi}(u)+\int_{X}(\psi-u)MA_{\omega}(u)=\lim_{k\to \infty}E^{*}_{\psi}\big(MA_{\omega}(u_{k})/V_{\psi}\big)\leq\\
\leq \limsup_{j\to \infty}\Big(E_{\psi}(w)+\int_{X}(\psi-w)MA_{\omega}(v_{j})\Big)=E_{\psi}(w)+\int_{X}(\psi-w)MA_{\omega}(u),
\end{multline*}
i.e. $w$ is a maximizer of $F_{MA_{\omega}(u)/V_{\psi},\psi}$. Hence $w=u$ (Proposition \ref{prop:OldPropC}), i.e. $u_{k}\to u$ weakly. Furthermore again by Lemma \ref{lem:Related} and Lemma \ref{lem:Equicontinuity}
\begin{multline}
\limsup_{k\to\infty}\big(E_{\psi}(v_{j})-E_{\psi}(u_{k})\big)\leq \limsup_{k\to \infty}\Big(\frac{n}{n+1}I_{\psi}(u_{k},v_{j})+\Big|\int_{X}(u_{k}-v_{j})MA_{\omega}(v_{j})\Big|\Big)\leq\\
\leq\Big|\int_{X}(u-v_{j})MA_{\omega}(v_{j})\Big|+\limsup_{k\to \infty}\frac{n}{n+1}I_{\psi}(u_{k},v_{j}).
\end{multline}
Finally letting $j\to \infty$, since $v_{j}\searrow u$ strongly, we obtain $\liminf_{j\to \infty}E_{\psi}(u_{k})\geq\lim_{j\to \infty} E_{\psi}(v_{j})= E_{\psi}(u)$ which implies that $E_{\psi}(u_{k})\to E_{\psi}(u)$ and that $u_{k}\to u$ strongly by Proposition \ref{prop:PropAOne}. 
\end{proof}
The main difference between the proof of Theorem \ref{thmA} with respect to the same result in the absolute setting, i.e. when $\psi=0$, is that for fixed $u\in\mathcal{E}^{1}(X,\omega,\psi)$ the action $\mathcal{M}^{1}(X,\omega,\psi)\ni MA_{\omega}(v)\to \int_{X}(u-\psi)MA_{\omega}(v)$ is not a priori continuous with respect to the weak topologies of measures even if we restrict the action on $\mathcal{M}^{1}_{C}(X,\omega,\psi):=\{V_{\psi}\mu\, : \, E^{*}_{\psi}(\mu)\leq C\}$ for $C\in\mathbbm{R}$ while in the absolute setting this is given by Proposition $1.7.$ in \cite{BBEGZ16} where the authors used the fact that any $u\in\mathcal{E}^{1}(X,\omega)$ can be approximated inside the class $\mathcal{E}^{1}(X,\omega)$ by a sequence of continuous functions.
\section{Strong Topologies.}
\label{sec:Strong}
In this section we investigate the strong topology on $X_{\mathcal{A}}$ in detail, proving that it is the coarsest refinement of the weak topology such that $E_{\cdot}(\cdot)$ becomes continuous (Theorem \ref{thm:OldPropA}) and proving that the strong convergence implies the convergence in $\psi$-capacity for any $\psi\in\mathcal{M}^{+}$ (Theorem \ref{thm:OldPropB}), i.e. we extend all the typical properties of the $L^{1}$-metric geometry to the bigger space $X_{\mathcal{A}}$, justifying further the construction of the distance $d_{\mathcal{A}}$ (\cite{Tru19}) and its naturality. Moreover we define the set $Y_{\mathcal{A}}$, and we prove Theorem \ref{thmB}.
\subsection{About $\big(X_{\mathcal{A}},d_{\mathcal{A}}\big)$.}
First we prove that the strong convergence in $X_{\mathcal{A}}$ implies the weak convergence, recalling that for weak convergence of $u_{k}\in \mathcal{E}^{1}(X,\omega,\psi_{k})$ to $P_{\psi_{\min}}$ where $\psi_{\min}\in \mathcal{M}$ with $V_{\psi_{\min}}=0$ we mean that $|\sup_{X}u_{k}|\leq C$ and that any weak accumulation point of $\{u_{k}\}_{k\in\mathbbm{N}}$ is more singular than $\psi_{\min}$.
\begin{prop}
\label{prop:StrongImpliesWeak}
Let $u_{k},u\in X_{\mathcal{A}}$ such that $u_{k}\to u$ strongly. If $u\neq P_{\psi_{\min}}$ then $u_{k}\to u$ weakly. If instead $u=P_{\psi_{\min}}$ the following dichotomy holds:
\begin{itemize}
\item[(i)] $u_{k}\to P_{\psi_{\min}}$ weakly;
\item[(ii)] $\limsup_{k\to \infty}|\sup_{X}u_{k}|=+\infty$.
\end{itemize}
\end{prop}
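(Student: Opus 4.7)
The plan is to extract from the strong convergence $d_{\mathcal{A}}(u_{k}, u) \to 0$ a mass-type convergence and uniform $L^\infty$/energy bounds, apply weak compactness to get subsequential weak limits $\tilde u$, and finally identify them with $u$ (Case 1) or show they are $\psi_{\min}$-singular (Case 2). Set $\psi_{k} := P_{\omega}[u_{k}]$ and let $\psi \in \overline{\mathcal{A}}$ be the analogue for $u$ (with $\psi = \psi_{\min}$ in the singleton case). By Proposition \ref{prop:AllNecessary}.iv), $|V_{\psi_{k}} - V_\psi| \leq d_{\mathcal{A}}(u_{k}, u) \to 0$, and Lemma \ref{lem:HomeoV} upgrades this to $\psi_{k} \to \psi$ weakly; since $\overline{\mathcal{A}}$ is totally ordered, up to subsequences we may assume $\psi_{k}$ converges monotonically almost everywhere. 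The same proposition also gives $d_{\mathcal{A}}(\psi_{k}, \psi) = |V_{\psi_{k}} - V_\psi|$, so the triangle inequality produces the key uniform estimate $d(\psi_{k}, u_{k}) \leq |V_{\psi_{k}} - V_\psi| + d_{\mathcal{A}}(\psi, u) + d_{\mathcal{A}}(u, u_{k}) \leq C$.

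In the case $u \neq P_{\psi_{\min}}$, where $V_\psi > 0$, Lemma \ref{lem:UniEst} and Remark \ref{rem:Usef} convert the estimate above into uniform bounds on $\sup_{X} u_{k}$ and $E_{\psi_{k}}(u_{k})$, and Proposition \ref{prop:CompactL1} places $\{u_{k}\}$ inside a weakly compact subset of $X_{\mathcal{A}}$. Passing to a weakly convergent subsequence $u_{k} \to \tilde u$, Lemma \ref{lem:Contra} forces $P_{\omega}[\tilde u] = \psi$ and Proposition \ref{prop:USC} places $\tilde u \in \mathcal{E}^{1}(X,\omega,\psi)$. We then treat the subcase $\psi_{k} \searrow \psi$ in detail; the symmetric case $\psi_{k} \nearrow \psi$ is handled by projecting $u$ instead via Lemma \ref{lem:Referee} and Proposition \ref{prop:AllNecessary}.iii). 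Proposition \ref{prop:AllNecessary}.i) yields $d(P_{\omega}[\psi](u_{k}), u) \leq d_{\mathcal{A}}(u_{k}, u) \to 0$, so $P_{\omega}[\psi](u_{k}) \to u$ strongly in $\mathcal{E}^{1}(X,\omega,\psi)$, hence weakly in $L^{1}$ by Proposition \ref{prop:L1}. As $P_{\omega}[\psi](u_{k}) \leq u_{k}$, passage to weak limits yields $u \leq \tilde u$ almost everywhere.

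For the reverse inequality we compare energies. By Proposition \ref{prop:PropAOne}, $E_{\psi}(P_{\omega}[\psi](u_{k})) \to E_{\psi}(u)$. Theorem \ref{thm:3.8} forces $MA_{\omega}(u_{k})$ to concentrate on the contact set $\{P_{\omega}[\psi](u_{k}) = u_{k}\}$ up to a discrepancy bounded by $V_{\psi_{k}} - V_\psi \to 0$, and combining this with the integration-by-parts convergences of Lemma \ref{lem:KeyConv} should give $E_{\psi_{k}}(u_{k}) - E_{\psi}(P_{\omega}[\psi](u_{k})) \to 0$, whence $\lim_{k} E_{\psi_{k}}(u_{k}) = E_{\psi}(u)$. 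Coupled with the upper-semicontinuity estimate $\limsup_{k} E_{\psi_{k}}(u_{k}) \leq E_{\psi}(\tilde u)$ of Proposition \ref{prop:USC} and the inequality $u \leq \tilde u$, Proposition \ref{prop:PropertiesE}.(viii) forces the energy equality $E_{\psi}(u) = E_{\psi}(\tilde u)$; expanded along $u \leq \tilde u$ this identity gives $u = \tilde u$ $MA_{\omega}(\tilde u)$-almost everywhere, and the domination principle (Proposition $3.11$ in \cite{DDNL17b}) promotes this to pointwise equality.

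The residual case $u = P_{\psi_{\min}}$ with $V_{\psi_{\min}} = 0$ follows directly from the triangle estimate above, which now specializes to $d(\psi_{k}, u_{k}) \to 0$. If $\limsup_{k} |\sup_{X} u_{k}| = +\infty$ we are in alternative (ii); otherwise $|\sup_{X} u_{k}|$ is uniformly bounded, so Proposition $8.5$ of \cite{GZ17} makes the sequence weakly precompact, and any weak limit $\tilde u$ satisfies $\tilde u \leq \psi_{\min} + C$ upon passage to the limit in $u_{k} \leq \psi_{k} + \sup_{X} u_{k}$, i.e.\ $\tilde u \preccurlyeq \psi_{\min}$, which is exactly the convention for $u_{k} \to P_{\psi_{\min}}$ weakly. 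The main obstacle of the whole argument is the vanishing $E_{\psi_{k}}(u_{k}) - E_{\psi}(P_{\omega}[\psi](u_{k})) \to 0$ in Case 1: one must show that the asymmetry between the two singularity levels $\psi_{k}$ and $\psi$ costs only the mass defect $V_{\psi_{k}} - V_\psi$, which vanishes in the limit.
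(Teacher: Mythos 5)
Your setup (the uniform bound $d(\psi_k,u_k)\leq C$, the identification $\psi_k\to\psi$ via Proposition \ref{prop:AllNecessary}.(iv) and Lemma \ref{lem:HomeoV}, the compactness of $X_{\mathcal{A},C}^{a}$, and the inequality $u\leq\tilde u$ obtained from $d\big(P_{\omega}[\psi](u_k),u\big)\leq d_{\mathcal{A}}(u_k,u)$ when $\psi_k\searrow\psi$) is sound, and your treatment of the case $u=P_{\psi_{\min}}$ matches the paper. But the identification $\tilde u=u$ does not close as written, because all of your inequalities point in the same direction. Granting your claimed limit $\lim_k E_{\psi_k}(u_k)=E_{\psi}(u)$, Proposition \ref{prop:USC} gives $\limsup_k E_{\psi_k}(u_k)\leq E_{\psi}(\tilde u)$, i.e. $E_{\psi}(u)\leq E_{\psi}(\tilde u)$ --- which is exactly what monotonicity of $E_{\psi}$ already gives from $u\leq\tilde u$. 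Nothing you cite produces the reverse inequality $E_{\psi}(\tilde u)\leq E_{\psi}(u)$, and without it Proposition \ref{prop:PropertiesE}.(viii) and the domination principle cannot be invoked; a weak limit such as $\tilde u=u+1$ is not excluded by any of your ingredients (energy is only \emph{upper} semicontinuous along weak convergence, so there is no cheap lower bound $E_{\psi}(\tilde u)\leq\lim_k E_{\psi_k}(u_k)$). This is a genuine gap, not a presentational one: fixing it requires controlling $u_k$ from \emph{above} by something converging to $u$, which is precisely what your scheme lacks.

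Separately, the step you yourself flag as the main obstacle, $E_{\psi_k}(u_k)-E_{\psi}\big(P_{\omega}[\psi](u_k)\big)\to 0$, is not justified by the tools you name: Theorem \ref{thm:3.8} only says $MA_{\omega}(u_k)$ puts mass at most $V_{\psi_k}-V_{\psi}$ outside the contact set, which does not control the unbounded integrands in the energy, and Lemma \ref{lem:KeyConv} requires convergence in capacity and uniform bounds on $|u_k-v_k|$, neither of which is available at this stage. (The statement itself is true, but the clean way to get it is the $d_{\mathcal{A}}$-triangle-inequality comparison $|E_{\psi_k}(u_k)-E_{\psi}(u)|\leq d_{\mathcal{A}}(u_k,u)+d_{\mathcal{A}}\big(P_{\omega}[\psi_k](A),P_{\omega}[\psi](A)\big)+A|V_{\psi_k}-V_{\psi}|$ used later in Theorem \ref{thm:OldPropA}, after a uniform bound on $\sup_X u_k$.) For comparison, the paper avoids weak-limit identification altogether: it approximates $u$ by $P_{\omega}[\psi_k](h_j)$ with $h_j\in\mathcal{H}_{\omega}$, bounds $d\big(u_k,P_{\omega}[\psi_k](h_j)\big)$ via Proposition \ref{prop:AllNecessary}, and converts this into an $L^{1}$ bound \emph{uniformly in} $k$ by running Proposition \ref{prop:L1} with the auxiliary potentials $\phi_k$ solving $MA_{\omega}(\phi_k)=\frac{V_{\psi_k}}{V_0}MA_{\omega}(0)$, whose $d(\psi_k,\phi_k)$ is uniformly bounded; this uniformization of the strong-to-$L^1$ estimate across the fibers $\mathcal{E}^{1}(X,\omega,\psi_k)$ is the ingredient your argument would also need (your increasing case $\psi_k\nearrow\psi$, only sketched, runs into exactly the same issue when passing from $d\big(P_{\omega}[\psi_k](u),u_k\big)\to 0$ to $L^{1}$ convergence).
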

\begin{proof}
The dichotomy for the case $u=P_{\psi_{\min}}$ follows by definition. Indeed if $|\sup_{X}u_{k}|\leq C$ and $d_{\mathcal{A}}(u_{k},u)\to 0$ as $k\to \infty$, then
$V_{\psi_{k}}\to V_{\psi_{\min}}=0$ by Proposition \ref{prop:AllNecessary}$.(iv)$ which implies that $\psi_{k}\to \psi_{\min}$ by Lemma \ref{lem:HomeoV}. Hence any weak accumulation point $u$ of $\{u_{k}\}_{k\in\mathbbm{N}}$ satisfies $u\leq \psi_{\min}+C$.\\
Thus, let $\psi_{k},\psi\in\mathcal{A}$ such that $u_{k}\in\mathcal{E}^{1}(X,\omega,\psi_{k})$ and $u\in\mathcal{E}^{1}(X,\omega,\psi)$ where $\psi\in\mathcal{M}^{+}$. Observe that
$$
d(u_{k},\psi_{k})\leq d_{\mathcal{A}}(u_{k},u)+d(u,\psi)+d_{\mathcal{A}}(\psi,\psi_{k})\leq A
$$
for a uniform constant $A>0$ by Proposition \ref{prop:AllNecessary}$.(iv)$ \\ 
On the other hand for any $j\in\mathbbm{N}$ by \cite{BK07} there exists $h_{j}\in\mathcal{H}_{\omega}$ such that $h_{j}\geq u$, $||h_{j}-u||_{L^{1}}\leq 1/j$ and $d\big(u,P_{\omega}[\psi](h_{j})\big)\leq 1/j$. In particular by the triangle inequality and Proposition \ref{prop:AllNecessary} we have
\begin{equation}
\label{eqn:1Cha}
\limsup_{k\to \infty}d\big(P_{\omega}[\psi_{k}](h_{j}),\psi_{k}\big)\leq \limsup_{k\to \infty}\Big(d_{\mathcal{A}}\big(P_{\omega}[\psi_{k}](h_{j}),P_{\omega}[\psi](h_{j})\big)+\frac{1}{j}+d(u,\psi)+d(\psi,\psi_{k})\Big)\leq d(u,\psi)+\frac{1}{j},
\end{equation}
Similarly again by the triangle inequality and Proposition \ref{prop:AllNecessary}
\begin{equation}
\label{eqn:2Cha}
\limsup_{k\to \infty}d\big(u_{k},P_{\omega}[\psi_{k}](h_{j})\big)\leq\limsup_{k\to \infty}\Big(d_{\mathcal{A}}\big(P_{\omega}[\psi_{k}](h_{j}),P_{\omega}[\psi](h_{j})\big)+\frac{1}{j}+d_{\mathcal{A}}(u,u_{k})\Big)\leq \frac{1}{j}
\end{equation}
and
\begin{multline}
\label{eqn:KLM}
\limsup_{k\to \infty}||u_{k}-u||_{L^{1}}\leq\limsup_{k\to \infty}\Big( ||u_{k}-P_{\omega}[\psi_{k}](h_{j})||_{L^{1}}+||P_{\omega}[\psi_{k}](h_{j})-P_{\omega}[\psi](h_{j})||_{L^{1}}+||P_{\omega}[\psi](h_{j})-u||_{L^{1}}\Big)\leq\\
\leq\frac{1}{j}+\limsup_{k\to \infty}||u_{k}-P_{\omega}[\psi_{k}](h_{j})||_{L^{1}}
\end{multline}
where we also used Lemma \ref{lem:Referee}. In particular from (\ref{eqn:1Cha}) and (\ref{eqn:2Cha}) we deduce that $d\big(\psi_{k},P_{\omega}[\psi_{k}](h_{j})\big), d(\psi_{k},u_{k})\leq C$ for a uniform constant $C\in\mathbbm{R}$. Next let $\phi_{k}\in\mathcal{E}^{1}_{norm}(X,\omega,\psi)$ the unique solution of $MA_{\omega}(\phi_{k})=\frac{V_{\psi_{k}}}{V_{0}}MA_{\omega}(0)$ and observe that by Proposition \ref{prop:PropertiesE}
$$
d(\psi_{k},\phi_{k})=-E_{\psi_{k}}(\phi_{k})\leq \int_{X}(\psi_{k}-\phi_{k})MA_{\omega}(\phi_{k})\leq \frac{V_{\psi_{k}}}{V_{0}}\int_{X}|\phi_{k}|MA_{\omega}(0)\leq ||\phi_{k}||_{L^{1}}\leq C'
$$
since $\phi_{k}$ belongs to a compact (hence bounded) subset of $PSH(X,\omega)\subset L^{1}$. Therefore, since $V_{\psi_{k}}\geq a>0$ for $k\gg 0$ big enough, by Proposition \ref{prop:L1} it follows that there exists a continuous increasing function $f:\mathbbm{R}_{\geq 0}\to \mathbbm{R}_{\geq 0}$ with $f(0)=0$ such that
$$
||u_{k}-P_{\omega}[\psi_{k}](h_{j})||_{L^{1}}\leq f\big(d(u_{k},P_{\omega}[\psi_{k}](h_{j}))\big)
$$
for any $k,j$ big enough. Hence combining $(\ref{eqn:2Cha})$ and $(\ref{eqn:KLM})$ the convergence requested follows letting $k,j\to +\infty$ in this order.
\end{proof}
We can now prove the important characterization of the strong convergence as the coarsest refinement of the weak topology such that $E_{\cdot}(\cdot)$ becomes continuous.
\begin{thm}
\label{thm:OldPropA}
Let $u_{k}\in \mathcal{E}^{1}(X,\omega,\psi_{k}),u\in\mathcal{E}^{1}(X,\omega,\psi)$ for $\{\psi_{k}\}_{k\in\mathbbm{N}},\psi\in\overline{\mathcal{A}}$. If $\psi\neq \psi_{\min}$ or $V_{\psi_{\min}}>0$ then the followings are equivalent:
\begin{itemize}
\item[i)] $u_{k}\to u$ strongly;
\item[ii)] $u_{k}\to u$ weakly and $E_{\psi_{k}}(u_{k})\to E_{\psi}(u)$. 
\end{itemize}
In the case $\psi=\psi_{\min}$ and $V_{\psi_{\min}}=0$, if $u_{k}\to P_{\psi_{\min}}$ weakly and $E_{\psi_{k}}(u_{k})\to 0$ then $u_{k}\to P_{\psi_{\min}}$ strongly. Finally if $d_{\mathcal{A}}(u_{k},P_{\psi_{\min}})\to 0$ as $k\to \infty$, then the following dichotomy holds:
\begin{itemize}
\item[a)] $u_{k}\to P_{\psi_{\min}}$ weakly and $E_{\psi_{k}}(u_{k})\to 0$;
\item[b)] $\limsup_{k\to \infty}|\sup_{X}u_{k}|= \infty$.
\end{itemize}
\end{thm}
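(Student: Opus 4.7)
The plan is to adapt the proof of Proposition \ref{prop:PropAOne} (the analogous characterization for fixed singularity type) to the setting of varying $\psi_k$. The new technical ingredients are the strong-implies-weak statement of Proposition \ref{prop:StrongImpliesWeak}; the upper semicontinuity of $E_\cdot(\cdot)$ in Proposition \ref{prop:USC}; the metric identities on $d_\mathcal{A}$ collected in Proposition \ref{prop:AllNecessary}, particularly the equality $d_\mathcal{A}(\psi_1,\psi_2)=|V_{\psi_1}-V_{\psi_2}|$ of item (iv) and the monotone convergence statements of items (ii)--(iii); and Lemma \ref{lem:HomeoV} identifying $\psi_k \to \psi$ weakly with $V_{\psi_k} \to V_\psi$.

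For Case 1 ($V_\psi > 0$ or $\psi \neq \psi_{\min}$), the forward implication $(i) \Rightarrow (ii)$ proceeds as follows. Proposition \ref{prop:StrongImpliesWeak} yields the weak convergence, and Proposition \ref{prop:AllNecessary}(iv) combined with Lemma \ref{lem:HomeoV} yields $\psi_k \to \psi$ weakly with $d_\mathcal{A}(\psi_k, \psi) \to 0$. After reducing to the normalized case $\sup_X u_k = \sup_X u = 0$ (using the bound on $|\sup_X u_k|$ coming from Lemma \ref{lem:UniEst} and Remark \ref{rem:Usef}, together with Hartogs' lemma for the convergence of suprema), Theorem \ref{thm:ClassE} gives $u_k \leq \psi_k$ and $u \leq \psi$, so $d_\mathcal{A}(u_k, \psi_k) = -E_{\psi_k}(u_k)$ and $d_\mathcal{A}(u, \psi) = -E_\psi(u)$. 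The triangle inequality $|d_\mathcal{A}(u_k,\psi_k)-d_\mathcal{A}(u,\psi)|\leq d_\mathcal{A}(u_k,u)+d_\mathcal{A}(\psi_k,\psi)$ then forces $E_{\psi_k}(u_k) \to E_\psi(u)$.

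For the reverse implication $(ii) \Rightarrow (i)$ in Case 1, I extract a subsequence along which $\psi_k$ is monotonic (using the total order on $\overline{\mathcal{A}}$ and the compactness from Lemma \ref{lem:HomeoV}). In the case $\psi_k \searrow \psi$, I define the sup envelope $w_k := (\sup_{j\geq k} u_j)^*$; the weak convergence of $u_j$ together with Hartogs gives $w_k \searrow u$, while normalization together with $P_\omega[w_k] \geq P_\omega[u_k] = \psi_k$ and $w_k \leq \psi_k$ places $w_k$ in $\mathcal{E}^1(X,\omega,\psi_k)$. Since $u_k \leq w_k$, one has $d(u_k, w_k) = E_{\psi_k}(w_k) - E_{\psi_k}(u_k)$; combining Proposition \ref{prop:USC} (which gives $\limsup E_{\psi_k}(w_k) \leq E_\psi(u)$) with the monotonicity bound $E_{\psi_k}(w_k) \geq E_{\psi_k}(u_k) \to E_\psi(u)$ from Proposition \ref{prop:PropertiesE}(i) then forces $d(u_k, w_k) \to 0$. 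For the remaining $d_\mathcal{A}(w_k, u) \to 0$, I approximate $u$ from above by $\varphi_j \in \mathcal{H}_\omega$ with $\varphi_j \searrow u$ and route the estimate through $P_\omega[\psi_k](\varphi_j)$, exploiting Proposition \ref{prop:AllNecessary}(iii) and the contraction Proposition \ref{prop:PropProie}, then take a double limit first in $k$ and then in $j$. The case $\psi_k \nearrow \psi$ is analogous, with $w_k$ now landing in $\mathcal{E}^1(X,\omega,\psi)$ since $\sup_{j\geq k}\psi_j = \psi$.

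For the degenerate regime ($\psi = \psi_{\min}$, $V_{\psi_{\min}} = 0$): $E_{\psi_{\min}} \equiv 0$ by Remark \ref{rem:0Mass}, so Case 2 follows directly from the bound $d_\mathcal{A}(u_k, P_{\psi_{\min}}) \leq d_\mathcal{A}(u_k, \psi_k) + d_\mathcal{A}(\psi_k, P_{\psi_{\min}}) = |E_{\psi_k}(u_k)| + V_{\psi_k} \to 0$ after the same normalization as above, using Proposition \ref{prop:AllNecessary}(iv) for the second term. The dichotomy in Case 3 is then immediate from Proposition \ref{prop:StrongImpliesWeak}. The main obstacle will be the step $d_\mathcal{A}(w_k, u) \to 0$ in the $(ii) \Rightarrow (i)$ argument: as $w_k$ and $u$ live in different classes, Proposition \ref{prop:AllNecessary}(ii) does not apply directly to the pair $(w_k, u)$, so the construction of a suitable comparison sequence via Kähler potentials and the interplay with the projections $P_\omega[\psi_k]$ is the key technical step, where the contraction property of Proposition \ref{prop:PropProie} and the uniform convergence of Proposition \ref{prop:AllNecessary}(iii) do the heavy lifting.
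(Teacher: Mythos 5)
Your overall architecture matches the paper's (energy convergence plus sup-envelopes, density of projected Kähler potentials, Proposition \ref{prop:AllNecessary} and the contraction property), and your treatment of $(i)\Rightarrow(ii)$, of the degenerate regime, and of the dichotomy is essentially the paper's argument up to minor unjustified steps (for instance, in $(ii)\Rightarrow(i)$ you assume the monotone limit of the $\psi_k$ is $\psi$, and in Case 2 that $V_{\psi_k}\to 0$; both need the energy bound together with Lemma \ref{lem:Contra}, which you never invoke). The genuine gap is exactly the step you defer as ``the key technical step'': the estimate of $d_{\mathcal A}(w_k,u)$, and the tools you earmark for it do not close it. In the decreasing case $\psi_k\searrow\psi$ the natural splitting is $d_{\mathcal A}(w_k,u)\le d\big(w_k,P_\omega[\psi_k](\varphi_j)\big)+d_{\mathcal A}\big(P_\omega[\psi_k](\varphi_j),P_\omega[\psi](\varphi_j)\big)+d\big(P_\omega[\psi](\varphi_j),u\big)$; the last two terms are handled by Proposition \ref{prop:AllNecessary}.(iii) and Lemma \ref{lem:Density}, but neither the contraction property nor Proposition \ref{prop:AllNecessary}.(iii) controls the first term: $w_k$ and $P_\omega[\psi_k](\varphi_j)$ already live in the same class $\mathcal{E}^1(X,\omega,\psi_k)$, so contraction gives nothing, and Proposition \ref{prop:AllNecessary}.(ii) is inapplicable because $w_k$ does not have $\psi_k$-relative minimal singularities, so $|w_k-P_\omega[\psi_k](\varphi_j)|$ is not uniformly bounded. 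Writing $d$ through the rooftop envelope does not help either: one would need a \emph{lower} semicontinuity of $E_{\psi_k}\big(P_\omega(w_k,P_\omega[\psi_k](\varphi_j))\big)$ along a decreasing cross-class sequence, which is not among the available tools.

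The paper's missing idea here is the truncation device: it replaces the envelope by $v_k^{C}:=\max(v_k,\psi_k-C)$ and $u^{C}:=\max(u,\psi-C)$, so that the pairs occurring (e.g.\ $v_k^{C}$ against $P_\omega[\psi_k](\varphi)$) have uniformly bounded relative potentials and Proposition \ref{prop:AllNecessary}.(ii) applies; the cost $d(v_k,v_k^{C})$ is then shown to converge to $d(u,u^{C})$ by the same energy bookkeeping that gave $d(u_k,v_k)\to 0$, and finally one lets $C\to\infty$ using $d(u,u^{C})\to 0$. In the increasing case $\psi_k\nearrow\psi$ your ``analogous'' is also not quite right as stated: your sup-envelope now lies in $\mathcal{E}^1(X,\omega,\psi)$, so the cross-class difficulty reappears in the pair $(u_k,w_k)$ and the identity $d(u_k,w_k)=E(w_k)-E(u_k)$ no longer makes sense; the paper instead projects the envelope down, working with $P_\omega[\psi_k](v_k)\in\mathcal{E}^1(X,\omega,\psi_k)$, controls its energy via Proposition \ref{prop:USC} and Proposition \ref{prop:PropProie}, and only then does the contraction property do the work you attribute to it, since it compares $P_\omega[\psi_k](\phi_\epsilon)$ with $P_\omega[\psi_k](v_k)$ for $\phi_\epsilon,v_k$ in the common class $\mathcal{E}^1(X,\omega,\psi)$. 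Without these two devices your plan does not yield $d_{\mathcal A}(u_k,u)\to 0$.
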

\begin{proof}
\textbf{Implication $\mathbf{(ii)\Rightarrow (i)}$.\\}
Assume that $(ii)$ holds where we include the case $u=P_{\psi_{\min}}$ setting $E_{\psi}(P_{\psi_{\min}}):=0$. Clearly it is enough to prove that any subsequence of $\{u_{k}\}_{k\in\mathbbm{N}}$ admits a subsequence which is $d_{\mathcal{A}}-$convergent to $u$. For the sake of simplicity we denote by $\{u_{k}\}_{k\in\mathbbm{N}}$ the arbitrary initial subsequence, and since $\mathcal{A}$ is totally ordered by Lemma \ref{lem:Contra} we may also assume either $\psi_{k}\searrow \psi$ or $\psi_{k}\nearrow \psi$ almost everywhere. In particular even if $u=P_{\psi_{\min}}$ we may suppose that $u_{k}$ converges weakly to a proper element $v\in\mathcal{E}^{1}(X,\omega,\psi)$ up to considering a further subsequence by definition of weak convergence to the point $P_{\psi_{\min}}$. In this case by abuse of notation we denote the function $v$, which depends on the subsequence chosen, by $u$. Note also that by Hartogs' Lemma we have $u_{k}\leq \psi_{k}+A,u\leq \psi+A$ for a uniform constant $A\in\mathbbm{R}_{\geq 0}$ since $|\sup_{X}u_{k}|\leq A$.\\
In the case $\psi_{k}\searrow \psi$, $v_{k}:=\big(\sup\{u_{j}\,:\, j\geq k\}\big)^{*}\in \mathcal{E}^{1}(X,\omega,\psi_{k})$ decreases to $u$. Thus $w_{k}:=P_{\omega}[\psi](v_{k})\in \mathcal{E}^{1}(X,\omega,\psi)$ decreases to $u$, which implies $d(u,w_{k})\to 0$ as $k\to \infty$ (if $u=P_{\psi_{\min}}$ we immediately have $w_{k}=P_{\psi_{\min}}$).\\
Moreover by Propositions \ref{prop:PropertiesE} and \ref{prop:PropProie} it follows that
\begin{multline*}
E_{\psi}(u)=\lim_{k\to \infty}E_{\psi}(w_{k})=AV_{\psi}-\lim_{k\to \infty}d(\psi+A,w_{k})\geq \lim_{k\to \infty}\big(AV_{\psi_{k}}-d(\psi_{k}+A,v_{k})\big)=\\
=\limsup_{k\to \infty} E_{\psi_{k}}(v_{k})\geq \lim_{k\to \infty}E_{\psi_{k}}(u_{k})=E_{\psi}(u)
\end{multline*}
since $\psi_{k}+A=P_{\omega}[\psi_{k}](A)$. Hence $\limsup_{k\to \infty}d(v_{k},u_{k})=\limsup_{k\to \infty}d(\psi_{k}+A,u_{k})-d(v_{k},\psi_{k}+A)=\lim_{k\to \infty} E_{\psi_{k}}(v_{k})-E_{\psi_{k}}(u_{k})=0$. Thus by the triangle inequality it is sufficient to show that $\limsup_{k\to \infty}d_{\mathcal{A}}(u,v_{k})=0$.\\
Next for any $C\in\mathbbm{R}$ we set $v_{k}^{C}:=\max(v_{k},\psi_{k}-C), u^{C}:=\max(u,\psi-C)$ and we observe that $d(\psi_{k}+A,v_{k}^{C})\to d(\psi+A,u^{C})$ by Proposition \ref{prop:AllNecessary} since $v_{k}^{C}\searrow u^{C}$. This implies that
\begin{multline*}
d(v_{k},v_{k}^{C})=d(\psi_{k}+A,v_{k})-d(\psi_{k}+A,v_{k}^{C})=AV_{\psi_{k}}-E_{\psi_{k}}(v_{k})-d(\psi_{k}+A,v_{k}^{C})\longrightarrow\\
\longrightarrow AV_{\psi}-E_{\psi}(u)-d(\psi+A,u^{C})= d(\psi+A,u)-d(\psi+A,u^{C})=d(u,u^{C}).
\end{multline*}
Thus, since $u^{C}\to u$ strongly, again by the triangle inequality it remains to estimate $d_{\mathcal{A}}(u,v_{k}^{C})$. Fix $\epsilon>0$ and $\phi_{\epsilon}\in \mathcal{P}_{\mathcal{H}_{\omega}}(X,\omega,\psi)$ such that $d(\phi_{\epsilon},u)\leq \epsilon $ (by Lemma \ref{lem:Density}). Then letting $\varphi\in\mathcal{H}_{\omega}$ such that $\phi_{\epsilon}=P_{\omega}[\psi](\varphi)$ and setting $\phi_{\epsilon,k}:=P_{\omega}[\psi_{k}](\varphi)$ by Proposition \ref{prop:AllNecessary} we have
$$
\limsup_{k\to \infty} d_{\mathcal{A}}(u,v_{k}^{C})\leq \limsup_{k\to \infty}\big(d(u,\phi_{\epsilon})+d_{\mathcal{A}}(\phi_{\epsilon},\phi_{\epsilon,k})+d(\phi_{\epsilon,k},v_{k}^{C})\big)\leq \epsilon+d(\phi_{\epsilon},u^{C})\leq 2\epsilon+d(u,u^{C}),
$$
which concludes the first case of $(ii)\Rightarrow (i)$ by the arbitrariety of $\epsilon$ since $u^{C}\to u$ strongly in $\mathcal{E}^{1}(X,\omega,\psi)$.\\
Next assume that $\psi_{k}\nearrow \psi$ almost everywhere. In this case we clearly may assume $V_{\psi_{k}}>0$ for any $k\in\mathbbm{N}$. Then $v_{k}:=\big(\sup\{u_{j}\, :\, j\geq k\}\big)^{*}\in\mathcal{E}^{1}(X,\omega,\psi)$ decreases to $u$. Moreover setting $w_{k}:=P_{\omega}[\psi_{k}](v_{k})\in\mathcal{E}^{1}(X,\omega,\psi_{k})$ and combining the monotonicity of $E_{\psi_{k}}(\cdot)$, the upper semicontinuity of $E_{\cdot}(\cdot)$ (Proposition \ref{prop:USC}) and the contraction property of Proposition \ref{prop:PropProie} we obtain
\begin{multline*}
E_{\psi}(u)=\lim_{k\to \infty}E_{\psi}(v_{k})=AV_{\psi}-\lim_{k\to \infty}d(v_{k},\psi+A)\leq\liminf_{k\to \infty}\big(AV_{\psi_{k}}-d(w_{k},\psi_{k}+A)\big)=\\
=\liminf_{k\to \infty}E_{\psi_{k}}(w_{k})\leq \limsup_{k\to \infty} E_{\psi_{k}}(w_{k})\leq E_{\psi}(u),
\end{multline*}
i.e. $E_{\psi_{k}}(w_{k})\to E_{\psi}(u)$ as $k\to \infty$. As a easy consequence we also get $d(w_{k},u_{k})=E_{\psi_{k}}(w_{k})-E_{\psi_{k}}(u_{k})\to 0$, thus it is sufficient to prove that
$$
\limsup_{k\to \infty}d_{\mathcal{A}}(u,w_{k})=0.
$$
Similarly to the previous case, fix $\epsilon>0$ and let $\phi_{\epsilon}=P_{\omega}[\psi](\varphi_{\epsilon})$ for $\varphi\in\mathcal{H}_{\omega}$ such that $d(u,\phi_{\epsilon})\leq \epsilon$. Again Proposition \ref{prop:PropProie} and Proposition \ref{prop:AllNecessary} yield
\begin{multline*}
\limsup_{k\to \infty}d_{\mathcal{A}}(u,w_{k})\leq \epsilon+\limsup_{k\to \infty}\big( d_{\mathcal{A}}\big(\phi_{\epsilon},P_{\omega}[\psi_{k}](\phi_{\epsilon})\big)+d\big(P_{\omega}[\psi_{k}](\phi_{\epsilon}),w_{k}\big)\big)\leq\\
\leq \epsilon+ \limsup_{k\to \infty} \big( d_{\mathcal{A}}\big(\phi_{\epsilon},P_{\omega}[\psi_{k}](\phi_{\epsilon})\big)+d(\phi_{\epsilon},v_{k})\big)\leq 2\epsilon,
\end{multline*}
which concludes the first part.\\
\textbf{Implication $\mathbf{(i)\Rightarrow (ii)}$ if $\mathbf{u\neq P_{\psi_{\min}}}$ while $\mathbf{(i)}$ implies the dichotomy if $\mathbf{u=P_{\psi_{\min}}}$.\\}
If $u\neq P_{\psi_{\min}}$, Proposition \ref{prop:StrongImpliesWeak} implies that $u_{k}\to u$ weakly and in particular that $|\sup_{X}u_{k}|\leq A$. Thus it remains to prove that $E_{\psi_{k}}(u_{k})\to E_{\psi}(u)$.\\
If $u=P_{\psi_{\min}}$ then again by Proposition \ref{prop:StrongImpliesWeak} it remains to show that $E_{\psi_{k}}(u_{k})\to 0$ assuming $u_{k_{h}}\to P_{\psi_{\min}}$ strongly and weakly. Note that we also have $|\sup_{X}u_{k}|\leq A$ for a uniform constant $A\in\mathbbm{R}$ by definition of weak convergence to $P_{\psi_{\min}}$.\\
So, since by an easy contradiction argument it is enough to prove that any subsequence of $\{u_{k}\}_{k\in\mathbbm{N}}$ admits a further subsequence such that the convergence of the energies holds, without loss of generality we may assume that $u_{k}\to u\in\mathcal{E}^{1}(X,\omega,\psi)$ weakly even in the case $V_{\psi}=0$ (i.e. when, with abuse of notation, $u=P_{\psi_{\min}}$).\\
Therefore we want to show the existence of a further subsequence $\{u_{k_{h}}\}_{h\in\mathbbm{N}}$ such that $E_{\psi_{k_{h}}}(u_{k_{h}})\to E_{\psi}(u)$ (note that if $V_{\psi}=0$ then $E_{\psi}(u)=0$). It easily follows that
$$
|E_{\psi_{k}}(u_{k})-E_{\psi}(u)|\leq|d(\psi_{k}+A,u_{k})-d(\psi+A,u)|+A|V_{\psi_{k}}-V_{\psi}|\leq d_{\mathcal{A}}(u,u_{k})+d(\psi_{k}+A,\psi+A)+A|V_{\psi_{k}}-V_{\psi}|,
$$
and this leads to $\lim_{k\to \infty}E_{\psi_{k}}(u_{k})=E_{\psi}(u)$ by Proposition \ref{prop:AllNecessary} since $\psi_{k}+A=P_{\omega}[\psi_{k}](A)$ and $\psi+A=P_{\omega}[\psi](A)$. Hence $E_{\psi_{k}}(u_{k})\to E_{\psi}(u)$ as requested.
\end{proof}
Note that in Theorem \ref{thm:OldPropA} the case $(b)$ may happen (Remark \ref{rem:ImpRem}) but obviously one can consider
$$
X_{\mathcal{A},norm}=\bigsqcup_{\psi\in\overline{\mathcal{A}}}\mathcal{E}^{1}_{norm}(X,\omega,\psi)
$$
to exclude such pathology.\\
The strong convergence also implies the convergence in $\psi'$-capacity for any $\psi'\in\mathcal{M}^{+}$ as our next result shows.
\begin{thm}
\label{thm:OldPropB}
Let $\psi_{k},\psi\in\mathcal{A}$, and let $u_{k}\in\mathcal{E}^{1}(X,\omega,\psi_{k})$ strongly converging to $u\in\mathcal{E}^{1}(X,\omega,\psi)$. Assuming also that $V_{\psi}>0$. Then there exists a subsequence $\{u_{k_{j}}\}_{j\in\mathbbm{N}}$ such that the sequences $w_{j}:=\big(\sup\{u_{k_{s}}\, : \, s\geq j\}\big)^{*}$, $v_{j}:=P_{\omega}(u_{k_{j}},u_{k_{j+1}},\dots)$ belong to $X_{\mathcal{A}}$, satisfy $v_{j}\leq u_{k_{j}}\leq w_{j}$ and converge strongly and monotonically to $u$. In particular $u_{k}\to u$ in $\psi'$-capacity for any $\psi'\in\mathcal{M}^{+}$ and $MA_{\omega}(u_{k}^{j},\psi_{k}^{n-j})\to MA_{\omega}(u^{k},\psi^{n-j})$ weakly for any $j\in\{0,\dots,n\}$.
\end{thm}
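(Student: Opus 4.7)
The strategy is to adapt the blueprint of Proposition \ref{prop:CapacityPsi} to varying singularities, exploiting the total order of $\mathcal{A}$ and the contraction/continuity properties of the envelope $P_\omega[\cdot]$ collected in Proposition \ref{prop:AllNecessary}. As a first step, Theorem \ref{thm:OldPropA} (available since $V_\psi>0$) converts $d_\mathcal{A}(u_k,u)\to 0$ into the weak convergence $u_k\to u$ with uniformly bounded $|\sup_X u_k|$, the weak convergence $\psi_k\to \psi$ (through Lemma \ref{lem:HomeoV} applied to $V_{\psi_k}\to V_\psi$), and the energy convergence $E_{\psi_k}(u_k)\to E_\psi(u)$. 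The total order of $\mathcal{A}$, together with the fact that $\succcurlyeq$ coincides with $\geq$ on $\mathcal{M}$, lets me pass to a subsequence along which $\psi_{k_j}$ is monotonic; I treat Case 1 ($\psi_{k_j}\searrow\psi$) and Case 2 ($\psi_{k_j}\nearrow\psi$ a.e.) in parallel, extracting a further diagonal subsequence with $d_\mathcal{A}(u_{k_j},u)\leq 2^{-j}$.

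For the upper envelope $w_j=(\sup_{s\geq j}u_{k_s})^{*}$, the $L^1$-convergence with uniform upper bound yields the monotonic decrease $w_j\searrow u$ as $\omega$-psh functions. The model envelope commutes with upper envelopes: $P_\omega[w_j](0)=(\sup_{s\geq j}\psi_{k_s})^{*}$, which equals $\psi_{k_j}$ in Case 1 and $\psi$ in Case 2. Together with the lower bound $w_j\geq u_{k_j}$ (resp.\ $w_j\geq u$) this places $w_j\in\mathcal{E}^1(X,\omega,\psi_{k_j})$ in Case 1 and $w_j\in\mathcal{E}^1(X,\omega,\psi)$ in Case 2, so $w_j\in X_\mathcal{A}$. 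The monotonicity $E_{P_\omega[w_j]}(w_j)\geq E_{\psi_{k_j}}(u_{k_j})\to E_\psi(u)$ from Proposition \ref{prop:PropertiesE}(i) combined with the upper semicontinuity $\limsup E_{P_\omega[w_j]}(w_j)\leq E_\psi(u)$ from Proposition \ref{prop:USC} forces $E_{P_\omega[w_j]}(w_j)\to E_\psi(u)$, and Theorem \ref{thm:OldPropA} delivers the strong convergence $w_j\to u$ in $X_\mathcal{A}$.

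For the lower envelope $v_j=P_\omega(u_{k_j},u_{k_{j+1}},\ldots)$, in Case 1 the 1-Lipschitz contraction of Proposition \ref{prop:PropProie} and Proposition \ref{prop:AllNecessary}(i) gives $\tilde u_j:=P_\omega[\psi](u_{k_j})\in\mathcal{E}^1(X,\omega,\psi)$ with $\tilde u_j\leq u_{k_j}$ and $d(\tilde u_j,u)\leq d_\mathcal{A}(u_{k_j},u)\leq 2^{-j}$; applying Proposition \ref{prop:CapacityPsi} to $\{\tilde u_j\}$ yields a further subsequence along which $\tilde v_j:=P_\omega(\tilde u_j,\tilde u_{j+1},\ldots)\nearrow u$ strongly in $\mathcal{E}^1(X,\omega,\psi)$. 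Since $\tilde v_j\leq v_j\leq u$ (the upper bound from $v_j\leq u_{k_s}$ and $u_{k_s}\to u$ a.e.) and both extremes converge to $u$, the sandwich gives $v_j\nearrow u$ pointwise and in energy by Proposition \ref{prop:PropertiesE}, hence strongly via Proposition \ref{prop:PropAOne}. Case 2 is handled symmetrically with $\hat u_j:=P_\omega[\psi_{k_j}](u)\in\mathcal{E}^1(X,\omega,\psi_{k_j})$, using Proposition \ref{prop:AllNecessary}(i), (iii) and Lemma \ref{lem:Referee} to control the envelopes across $\psi_{k_j}\nearrow\psi$ and to place $v_j\in\mathcal{E}^1(X,\omega,\psi_{k_j})\subset X_\mathcal{A}$. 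Finally, the sandwich $v_j\leq u_{k_j}\leq w_j$ with $v_j\nearrow u$ and $w_j\searrow u$ together with the inclusion $\{|u_{k_j}-u|>\delta\}\subset\{u-v_j>\delta\}\cup\{w_j-u>\delta\}$ yields the convergence in $\psi'$-capacity for every $\psi'\in\mathcal{M}^{+}$ (exactly as at the end of Proposition \ref{prop:CapacityPsi}), and Lemma \ref{lem:KeyConv} then produces the weak convergence of the mixed Monge-Ampère measures.

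The main obstacle is the identification of the singularity classes of $w_j$ and $v_j$: rooftops and upper envelopes of functions in distinct $\mathcal{E}^1(X,\omega,\psi_k)$ do not a priori belong to one prescribed class, and the argument must work across varying model envelopes. The model-envelope identity $P_\omega[(\sup_s f_s)^{*}](0)=(\sup_s P_\omega[f_s](0))^{*}$, the contraction estimates of Proposition \ref{prop:AllNecessary}, and the injectivity of $\psi\mapsto V_\psi$ on $\overline{\mathcal{A}}$ (Lemma \ref{lem:HomeoV}) are the key tools pinning down the class assignment and making the strong-convergence argument close.
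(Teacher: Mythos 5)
Your skeleton matches the paper's: reduce via Theorem \ref{thm:OldPropA} to weak convergence plus convergence of the energies, pass to a subsequence along which $\psi_{k_j}$ is monotone, settle the upper envelopes $w_j$ by monotonicity and Proposition \ref{prop:USC}, build the rooftop envelopes $v_j$ by quantitative estimates, and conclude the capacity statement by the sandwich and Lemma \ref{lem:KeyConv}. Your treatment of $v_j$ in Case 1 ($\psi_{k_j}\searrow\psi$) is a legitimate and slightly slicker variant: projecting into the fixed limit class via $\tilde u_j=P_{\omega}[\psi](u_{k_j})$, using Proposition \ref{prop:AllNecessary}$(i)$ to get $d(\tilde u_j,u)\leq d_{\mathcal{A}}(u_{k_j},u)$, quoting Proposition \ref{prop:CapacityPsi} once in $\mathcal{E}^{1}(X,\omega,\psi)$, and sandwiching $\tilde v_j\leq v_j\leq u$ does give $v_j\nearrow u$ strongly, whereas the paper reruns the rooftop estimate directly with $v^{u}_{j,l}=P_{\omega}(u,v_{j,l})$.

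The genuine gap is Case 2 ($\psi_{k_j}\nearrow\psi$), which is not ``symmetric.'' There the most singular class entering $v_j=P_{\omega}(u_{k_j},u_{k_{j+1}},\dots)$ is $\psi_{k_j}$, which varies with $j$; the Case-1 trick of projecting once into a single class is unavailable (projecting up does nothing: $P_{\omega}[\psi](u_{k_s})=u_{k_s}$ because $u_{k_s}\preccurlyeq\psi$), and applying Proposition \ref{prop:CapacityPsi} to the projected sequence $\{P_{\omega}[\psi_{k_j}](u_{k_s})\}_{s\geq j}$ in $\mathcal{E}^{1}(X,\omega,\psi_{k_j})$ would produce a further extraction depending on $j$, while the theorem needs one subsequence serving all $j$ simultaneously. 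What is required is a summable, uniform-in-$j$ rooftop estimate across the varying classes; this is precisely where the paper introduces the auxiliary functions $w_{k,k}=P_{\omega}[\psi_k](w_k)$, proves $w_{k,k}\to u$ strongly via Theorem \ref{thm:OldPropA} and Proposition \ref{prop:USC}, extracts once so that $d(u_k,w_{k,k})\leq 2^{-k}$, and reruns the estimate (\ref{eqn:Uf2}) in each class against $u^{j}=P_{\omega}[\psi_j](u)$, finishing with Proposition \ref{prop:AllNecessary}$(iii)$ to absorb $d_{\mathcal{A}}(u,u^{j})$. Your one line ``handled symmetrically with $\hat u_j=P_{\omega}[\psi_{k_j}](u)$'' names the right comparison object but supplies no mechanism ensuring summability and uniformity of the constants (e.g.\ that $f_C$ in Corollary \ref{cor:ForL1} depends only on $C,\omega,n$), so the step does not follow from the propositions you cite. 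Two smaller inaccuracies: the inequality $E_{P_{\omega}[w_j]}(w_j)\geq E_{\psi_{k_j}}(u_{k_j})$ you invoke in Case 2 compares energies relative to different model potentials, which Proposition \ref{prop:PropertiesE}$(i)$ does not provide (harmless, since $w_j\searrow u$ inside the fixed class $\mathcal{E}^{1}(X,\omega,\psi)$ already gives strong convergence); and the commutation identity $P_{\omega}[(\sup_s f_s)^{*}](0)=(\sup_s P_{\omega}[f_s](0))^{*}$ listed among your key tools is neither proved nor needed, as the sandwich $u_{k_j}\leq w_j\leq \psi_{k_j}+A$ (resp.\ $\leq\psi+A$) already pins down the class of $w_j$.
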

\begin{proof}
We first observe that by Theorem \ref{thm:OldPropA} $u_{k}\to u$ weakly and $E_{\psi_{k}}(u_{k})\to E_{\psi}(u)$. In particular $\sup_{X}u_{k}$ is uniformly bounded and the sequence of $\omega$-psh $w_{k}:=\big(\sup\{u_{j}\, :\, j\geq k\}\big)^{*}$ decreases to $u$. \\
Up to considering a subsequence we may assume either $\psi_{k}\searrow \psi$ or $\psi_{k}\nearrow \psi$ almost everywhere. We treat the two cases separately.\\
Assume first that $\psi_{k}\searrow\psi$. Since clearly $w_{k}\in\mathcal{E}^{1}(X,\omega,\psi_{k})$ and $E_{\psi_{k}}(w_{k})\geq E_{\psi_{k}}(u_{k})$, Theorem \ref{thm:OldPropA} and Proposition \ref{prop:USC} yields 
$$
E_{\psi}(u)=\lim_{k\to \infty}E_{\psi_{k}}(u_{k})\leq \limsup_{k\to \infty}E_{\psi_{k}}(w_{k})\leq E_{\psi}(u),
$$
i.e. $w_{k}\to u$ strongly. Thus up to considering a further subsequence we can suppose that $d(u_{k},w_{k})\leq 1/2^{k}$ for any $k\in\mathbbm{N}$.\\
Next similarly as during the proof of Proposition \ref{prop:CapacityPsi} we define $v_{j,l}:=P_{\omega}(u_{j},\dots,u_{j+l})$ for any $j,l\in\mathbbm{N}$, observing that $v_{j,l}\in\mathcal{E}^{1}(X,\omega,\psi_{j+l})$. Thus the function $v_{j,l}^{u}:=P_{\omega}(u,v_{j,l})\in\mathcal{E}^{1}(X,\omega,\psi)$ satisfies
\begin{multline}
\label{eqn:Uf}
d(u,v_{j,l}^{u})\leq\int_{X}(u-v_{j,l}^{u})MA_{\omega}(v_{j,l}^{u})\leq \int_{\{v_{j,l}^{u}=v_{j,l}\}}(u-v_{j,l})MA_{\omega}(v_{j,l})\leq \\
\leq\sum_{s=j}^{j+l}\int_{X}(w_{s}-u_{s})MA_{\omega}(u_{s})\leq (n+1)\sum_{s=j}^{j+l}d(w_{s},u_{s})\leq \frac{(n+1)}{2^{j-1}},
\end{multline}
where we combined Proposition \ref{prop:PropertiesE} and Lemma $3.7$ in \cite{DDNL17b}. Therefore by Proposition \ref{prop:CompactL1} $v_{j,l}^{u}$ converges decreasingly and strongly in $\mathcal{E}^{1}(X,\omega,\psi)$ to a function $\phi_{j}$ which satisfies $\phi_{j}\leq u$. \\
Similarly $\int_{\{P_{\omega}(u,v_{j,l}^{u})=u\}}(v_{j,l}^{u}-u)MA_{\omega}(u)\leq \int_{X}|v_{j,1}^{u}-u|MA_{\omega}(u)<\infty$ by Corollary \ref{cor:ForL1}, which implies that $v_{j,l}$ converges decreasingly to $v_{j}\in\mathcal{E}^{1}(X,\omega,\psi)$ such that $u\geq v_{j}\geq \phi_{j}$ since $v_{j}\leq u_{s}$ for any $s\geq j$ and $v_{j,l}\geq v_{j,l}^{u}$. Hence from (\ref{eqn:Uf}) we obtain
$$
d(u,v_{j})\leq d(u,\phi_{j})=\lim_{l\to \infty}d(u,v_{j,l}^{u})\leq \frac{(n+1)}{2^{j-1}},
$$
i.e. $v_{j}$ converges increasingly and strongly to $u$ as $j\to \infty$.\\
Next assume $\psi_{k}\nearrow \psi$ almost everywhere. In this case $w_{k}\in\mathcal{E}^{1}(X,\omega,\psi)$ for any $k\in\mathbbm{N}$, and clearly $w_{k}$ converges strongly and decreasingly to $u$. On the other hand, letting $w_{k,k}:=P_{\omega}[\psi_{k}](w_{k})$ we observe that $w_{k,k}\to u$ weakly since $w_{k}\geq w_{k,k}\geq u_{k}$ and
$$
E_{\psi}(u)=\lim_{k\to \infty}E_{\psi_{k}}(u_{k})\leq \limsup_{k\to \infty} E_{\psi_{k}}(w_{k,k})\leq E_{\psi}(u)
$$
by Theorem \ref{thm:OldPropA} and Proposition \ref{prop:USC}, i.e. $w_{k,k}\to u$ strongly again by Theorem \ref{thm:OldPropA}. Thus, similarly to the previous case, we may assume that $d(u_{k},w_{k,k})\leq 1/2^{k}$ up to considering a further subsequence. Therefore setting $v_{j,l}:=P_{\omega}(u_{j},\dots,u_{j+l})\in\mathcal{E}^{1}(X,\omega,\psi_{j})$, $u^{j}:=P_{\omega}[\psi_{j}](u)$ and $v_{j,l}^{u^{j}}:=P_{\omega}\big(v_{j,l},u^{j}\big)$ we obtain
\begin{gather}
\label{eqn:Uf2}
d\big(u^{j},v_{j,l}^{u^{j}}\big)\leq \int_{X}\big(u^{j}-v_{j,l}^{u^{j}}\big)MA_{\omega}(v_{j,l}^{u^{j}})\leq \sum_{s=j}^{j+l}\int_{X}(w_{s,s}-u_{s})MA_{\omega}(u_{s})\leq \frac{(n+1)}{2^{j-1}}
\end{gather}
proceeding similarly as before. This implies that $v_{j,l}^{u^{j}}$ and $v_{j,l}$ converge decreasingly and strongly respectively to functions $\phi_{j},v_{j}\in\mathcal{E}^{1}(X,\omega,\psi_{j})$ as $l\to +\infty$ which satisfy $\phi_{j}\leq v_{j}\leq u^{j}$. Therefore combining (\ref{eqn:Uf2}), Proposition \ref{prop:AllNecessary} and the triangle inequality we get
$$
\limsup_{j\to \infty}d_{\mathcal{A}}(u,v_{j})\leq \limsup_{j\to \infty}\Big(d_{\mathcal{A}}(u,u^{j})+d(u^{j},\phi_{j})\Big)\leq \limsup_{j\to \infty}\Big(d_{\mathcal{A}}(u,u^{j})+\frac{(n+1)}{2^{j-1}}\Big)=0.
$$
Hence $v_{j}$ converges strongly and increasingly to $u$, so $v_{j}\nearrow u$ almost everywhere (Propositon \ref{prop:StrongImpliesWeak}) and the first part of the proof is concluded.\\
The convergence in $\psi'$-capacity and the weak convergence of the mixed Monge-Ampère measures follow exactly as seen during the proof of Proposition \ref{prop:CapacityPsi}.
\end{proof}
We observe that the assumption $u\neq P_{\psi_{\min}}$ if $V_{\psi_{\min}}=0$ in Theorem \ref{thm:OldPropB} is obviously necessary as the counterexample of Remark \ref{rem:ImpRem} shows. On the other hand if $d_{\mathcal{A}}(u_{k},P_{\psi_{\min}})\to 0$ then trivially $MA_{\omega}(u_{k}^{j},\psi_{k}^{n-j})\to 0$ weakly as $k\to \infty$ for any $j\in\{0,\dots,n\}$ as a consequence of $V_{\psi_{k}}\searrow 0$.
\subsection{Proof of Theorem \ref{thmB}}
\begin{defn}
We define $Y_{\mathcal{A}}$ as
$$
Y_{\mathcal{A}}:=\bigsqcup_{\psi\in\overline{\mathcal{A}}}\mathcal{M}^{1}(X,\omega,\psi),
$$
and we endow it with its natural \emph{strong topology} given as the coarsest refinement of the weak topology such that $E^{*}_{\cdot}$ becomes continuous, i.e. $V_{\psi_{k}}\mu_{k}$ converges strongly to $V_{\psi}\mu$ if and only if $V_{\psi_{k}}\mu_{k}\to V_{\psi}\mu$ weakly and $E^{*}_{\psi_{k}}(\mu_{k})\to E^{*}_{\psi}(\mu)$ as $k\to \infty$.  
\end{defn}
Observe that $Y_{\mathcal{A}}\subset \{\mbox{non-pluripolar measures of total mass belonging to}\, [V_{\psi_{\min}}, V_{\psi_{\max}}]\}$ where clearly $\psi_{\max}:=\sup\mathcal{A}$. As stated in the Introduction, the denomination is coherent with \cite{BBEGZ16} since if $\psi=0\in \overline{\mathcal{A}}$ then the induced topology on $\mathcal{M}^{1}(X,\omega)$ coincides with the strong topology as defined in \cite{BBEGZ16}.\\
We also recall that
$$
X_{\mathcal{A},norm}:=\bigsqcup_{\psi\in\overline{\mathcal{A}}}\mathcal{E}^{1}_{norm}(X,\omega,\psi)
$$
where $\mathcal{E}^{1}_{norm}(X,\omega,\psi):=\{u\in\mathcal{E}^{1}(X,\omega,\psi)\,\, \mbox{such that}\, \sup_{X}u=0\}$ (if $V_{\psi_{\min}}=0$ then we clearly assume $P_{\psi_{\min}}\in X_{\mathcal{A},norm}$).
\begin{reptheorem}{thmB}
The Monge-Ampère map
$$
MA_{\omega}:(X_{\mathcal{A},norm},d_{\mathcal{A}})\to (Y_{\mathcal{A}},strong)
$$
is a homeomorphism.
\end{reptheorem}
\begin{proof}
The map is a bijection as a consequence of Lemma \ref{lem:HomeoV} and Proposition \ref{prop:OldPropC} defining clearly $MA_{\omega}(P_{\psi_{\min}}):=0$, i.e. to be the null measure.\\
\textbf{Step $1$: Continuity.} Assume first that $V_{\psi_{\min}}=0$ and that $d_{\mathcal{A}}(u_{k},P_{\psi_{\min}})\to 0$ as $k\to \infty$. Then easily $MA_{\omega}(u_{k})\to 0$ weakly. Moreover, assuming $u_{k}\neq P_{\psi_{\min}}$ for any $k$, it follows from Proposition \ref{prop:PropertiesE} that
\begin{gather*}
E_{\psi_{k}}^{*}\big(MA_{\omega}(u_{k})/V_{\psi_{k}}\big)=E_{\psi_{k}}(u_{k})+\int_{X}(\psi_{k}-u_{k})MA_{\omega}(u_{k})\leq\\
\leq \frac{n}{n+1}\int_{X}(\psi_{k}-u_{k})MA_{\omega}(u_{k})\leq -n E_{\psi_{k}}(u_{k})\to 0
\end{gather*}
as $k\to \infty$ where the convergence is given by Theorem \ref{thm:OldPropA}. Hence $MA_{\omega}(u_{k})\to 0$ strongly in $Y_{\mathcal{A}}$.\\
We can now assume that $u\not=P_{\psi_{\min}}$.\\
Theorem \ref{thm:OldPropB} immediately gives the weak convergence of $MA_{\omega}(u_{k})$ to $MA_{\omega}(u)$. Fix $\varphi_{j}\in \mathcal{H}_{\omega}$ be a decreasing sequence converging to $u$ such that $d\big(u,P_{\omega}[\psi](\varphi_{j})\big)\leq 1/j$ for any $j\in\mathbbm{N}$ (\cite{BK07}) and set $v_{k,j}:=P_{\omega}[\psi_{k}](\varphi_{j})$ and $v_{j}:=P_{\omega}[\psi](\varphi_{j})$. Observe also that as a consequence of Proposition \ref{prop:AllNecessary} and Theorem \ref{thm:OldPropA}, for any $j\in\mathbbm{N}$ there exists $k_{j}\gg 0$ big enough such that $d(\psi_{k},v_{k,j})\leq d_{\mathcal{A}}(\psi_{k},\psi)+d(\psi,v_{j})+d_{\mathcal{A}}(v_{j},v_{k,j})\leq d(\psi,v_{j})+1\leq C$ for any $k\geq k_{j}$, where $C$ is a uniform constant independent on $j\in\mathbbm{N}$. Therefore combining again Theorem \ref{thm:OldPropA} with Lemma \ref{lem:Equicontinuity} and Proposition \ref{prop:LastLast} we obtain
\begin{multline}
\label{eqn:Need}
\limsup_{k\to \infty}\Big|E^{*}_{\psi_{k}}\big(MA_{\omega}(u_{k})/V_{\psi_{k}}\big)-E^{*}_{\psi_{k}}\big(MA_{\omega}(v_{k,j})/V_{\psi_{k}}\big)\Big|\leq\\
\leq \limsup_{k\to \infty}\Big(\big|E_{\psi_{k}}(u_{k})-E_{\psi_{k}}(v_{k,j})\big|+\Big|\int_{X}(\psi_{k}-u_{k})\big(MA_{\omega}(u_{k})-MA_{\omega}(v_{k,j})\big)\Big|+\Big|\int_{X}(v_{k,j}-u_{k})MA_{\omega}(v_{k,j})\Big|\Big)\leq\\
\leq \big|E_{\psi}(u)-E_{\psi}(v_{j})\big|+\limsup_{k\to \infty}C I_{\psi_{k}}(u_{k},v_{k,j})^{1/2}+ \int_{X}(v_{j}-u)MA_{\omega}(v_{j})
\end{multline}
since clearly we may assume that either $\psi_{k}\searrow \psi$ or $\psi_{k}\nearrow \psi$ almost everywhere, up to considering a subsequence. On the other hand, if $k\geq k_{j}$, Proposition \ref{prop:ForL1} implies $I_{\psi_{k}}(u_{k},v_{k,j})\leq 2f_{\tilde{C}}\big(d(u_{k},v_{k,j})\big)$ where $\tilde{C}$ is a uniform constant independent of $j,k$ and $f_{\tilde{C}}:\mathbbm{R}_{\geq 0}\to \mathbbm{R}_{\geq 0}$ is a continuous increasing function such that $f_{\tilde{C}}(0)=0$. Hence continuing the estimates in (\ref{eqn:Need}) we get
\begin{equation}
\label{eqn:Need2}
(\ref{eqn:Need})\leq \big|E_{\psi}(u)-E_{\psi}(v_{j})\big|+2Cf_{\tilde{C}}\big(d(u,v_{j})\big)+d(v_{j},u)
\end{equation}
using also Propositions \ref{prop:PropertiesE} and \ref{prop:AllNecessary}. Letting $j\to \infty$ in (\ref{eqn:Need2}), it follows that
$$
\limsup_{j\to \infty}\limsup_{k\to \infty}\Big|E^{*}_{\psi_{k}}\big(MA_{\omega}(u_{k})/V_{\psi_{k}}\big)-E^{*}_{\psi_{k}}\big(MA_{\omega}(v_{k,j})/V_{\psi_{k}}\big)\Big|=0
$$
since $v_{j}\searrow u$. Furthermore it is easy to check that $E^{*}_{\psi_{k}}\big(MA_{\omega}(v_{k,j})/V_{\psi_{k}}\big)\to E^{*}_{\psi}\big(MA_{\omega}(v_{j})/V_{\psi}\big)$ as $k\to \infty$ for $j$ fixed by Lemma \ref{lem:Equicontinuity} and Proposition \ref{prop:AllNecessary}. Therefore the convergence
\begin{equation}
\label{eqn:Need3}
E_{\psi}^{*}\big(MA_{\omega}(v_{j})/V_{\psi}\big)\to E^{*}_{\psi}\big(MA_{\omega}(u)/V_{\psi}\big)
\end{equation}
as $j\to \infty$ given by Theorem \ref{thmA} concludes this step. \\
\textbf{Step $2$: Continuity of the inverse.} Assume $u_{k}\in\mathcal{E}^{1}_{norm}(X,\omega,\psi_{k}), u\in\mathcal{E}^{1}_{norm}(X,\omega,\psi)$ such that $MA_{\omega}(u_{k})\to MA_{\omega}(u)$ strongly. Note that when $\psi=\psi_{\min}$ and $V_{\psi_{\min}}=0$ the assumption does not depend on the function $u$ chosen. Clearly this implies $V_{\psi_{k}}\to V_{\psi}$ which leads to $\psi_{k}\to \psi$ as $k\to \infty$ by Lemma \ref{lem:HomeoV} since $\mathcal{A}\subset \mathcal{M}^{+}$ is totally ordered. Hence, up to considering a subsequence, we may assume that $\psi_{k}\to \psi$ monotonically almost everywhere. We keep the same notations of the previous step for $v_{k,j},v_{j}$. We may also suppose that $V_{\psi_{k}}>0$ for any $k\in\mathbbm{N}$ big enough otherwise it would be trivial.\\
The strategy is to proceed similarly as during the proof of Theorem \ref{thmA}, i.e. we want first to prove that $I_{\psi_{k}}(u_{k},v_{k,j})\to 0$ as $k,j\to \infty$ in this order. Then we want to use this to prove that the unique weak accumulation point of $\{u_{k}\}_{k\in\mathbbm{N}}$ is $u$. Finally we will deduce also the convergence of the $\psi_{k}$-relative energies to conclude that $u_{k}\to u$ strongly thanks to Theorem \ref{thm:OldPropA}.\\
By Lemma \ref{lem:Related}
\begin{multline}
\label{eqn:Suuu}
(n+1)^{-1}I_{\psi_{k}}(u_{k},v_{k,j})\leq E_{\psi_{k}}(u_{k})-E_{\psi_{k}}(v_{k,j})+\int_{X}(v_{k,j}-u_{k})MA_{\omega}(u_{k})=\\
=E^{*}_{\psi_{k}}\big(MA_{\omega}(u_{k})/V_{\psi_{k}}\big)-E^{*}_{\psi_{k}}\big(MA_{\omega}(v_{k,j})/V_{\psi_{k}}\big)+\int_{X}(v_{k,j}-\psi_{k})\big(MA_{\omega}(u_{k})-MA_{\omega}(v_{k,j})\big)
\end{multline}
for any $j,k$. Moreover by Step $1$ and Proposition \ref{prop:AllNecessary} we know that $E^{*}_{\psi_{k}}\big(MA_{\omega}(v_{k,j})/V_{\psi_{k}}\big)$ converges, as $k\to +\infty$, respectively to $0$ if $V_{\psi}=0$ and to $E^{*}_{\psi}\big(MA_{\omega}(v_{j})/V_{\psi}\big)$ if $V_{\psi}>0$. Next by Lemma \ref{lem:Equicontinuity}
$$
\int_{X}(v_{k,j}-\psi_{k})MA_{\omega}(v_{k,j})\to \int_{X}(v_{j}-\psi)MA_{\omega}(v_{j})
$$
letting $k\to \infty$. So if $V_{\psi}=0$ then from $\lim_{k\to \infty}\sup_{X}(v_{k,j}-\psi_{k})=\sup_{X}(v_{j}-\psi)=\sup_{X}v_{j}$ we easily get $\limsup_{k\to \infty} I_{\psi_{k}}(u_{k},v_{k,j})=0$. Thus we may assume $V_{\psi}>0$ and it remains to estimate $\int_{X}(v_{k,j}-\psi_{k})MA_{\omega}(u_{k})$ from above.\\
We set $f_{k,j}:=v_{k,j}-\psi_{k}$ and analogously to the proof of Theorem \ref{thmA} we construct a sequence of smooth functions $f_{j}^{s}:=v_{j}^{s}-\psi^{s}$ converging in capacity to $f_{j}:=v_{j}-\psi$ and satisfying $||f_{j}^{s}||_{L^{\infty}}\leq 2||f_{j}||_{L^{\infty}}\leq 2||\varphi_{j}||_{L^{\infty}}$. Here $v_{j}^{s},\psi^{s}$ are sequences of $\omega$-psh functions decreasing respectively to $v_{j}, \psi$. Then we write
\begin{equation}
\label{eqn:Uffy1}
\int_{X}f_{k,j}MA_{\omega}(u_{k})=\int_{X}(f_{k,j}-f_{j}^{s})MA_{\omega}(u_{k})+\int_{X}f_{j}^{s}MA_{\omega}(u_{k})
\end{equation}
and we observe that $\limsup_{s\to \infty}\limsup_{k\to \infty}\int_{X}f_{j}^{s}MA_{\omega}(u_{k})=\int_{X}f_{j}MA_{\omega}(u)$ since $MA_{\omega}(u_{k})\to MA_{\omega}(u)$ weakly, $f_{j}^{s}\in C^{\infty}$, $f_{j}^{s}$ converges to $f_{j}$ in capacity and $||f_{j}^{s}||_{L^{\infty}}\leq 2||f_{j}||_{L^{\infty}}$. While we claim that the first term on the right-hand side of (\ref{eqn:Uffy1}) goes to $0$ letting $k,s\to \infty$ in this order. Indeed for any $\delta>0$
\begin{multline}
\label{eqn:UffyC1}
\int_{X}(f_{k,j}-f_{j})MA_{\omega}(u_{k})\leq \delta V_{\psi_{k}}+2||\varphi_{j}||_{L^{\infty}}\int_{\{f_{k,j}-f_{j}>\delta\}}MA_{\omega}(u_{k})\leq \delta V_{\psi_{k}}+2||\varphi_{j}||_{L^{\infty}}\int_{\{|h_{k,j}-h_{j}|>\delta\}}MA_{\omega}(u_{k})
\end{multline}
where we set $h_{k,j}:=v_{k,j}, h_{j}:=v_{j}$ if $\psi_{k}\searrow \psi$ and $h_{k,j}:=\psi_{k}, h_{j}:=\psi$ if instead $\psi_{k}\nearrow \psi$ almost everywhere. Moreover since $\{|h_{k,j}-h_{j}|>\delta\}\subset \{|h_{l,j}-h_{j}|>\delta\}$ for any $l\leq k$, from (\ref{eqn:UffyC1}) we obtain
\begin{multline*}
\limsup_{k\to \infty}\int_{X}(f_{k,j}-f_{j})MA_{\omega}(u_{k})\leq\delta V_{\psi}+\limsup_{l\to \infty}\limsup_{k\to \infty} 2||\varphi_{j}||_{L^{\infty}}\int_{\{|h_{l,j}-h_{j}|\geq\delta\}} MA_{\omega}(u_{k})\leq \\
\leq \delta V_{\psi} + \limsup_{l\to \infty} 2||\varphi_{j}||_{L^{\infty}}\int_{\{|h_{l,j}-h_{j}|\geq \delta\}}MA_{\omega}(u)=\delta V_{\psi}
\end{multline*}
where we also used that $\{|h_{l,j}-h_{j}|\geq \delta\}$ is a closed set in the plurifine topology since it is equal to $\{v_{l,j}-v_{j}\geq \delta\}$ if $\psi_{l}\searrow \psi$ and to $\{\psi-\psi_{l}\geq \delta\}$ if $\psi_{l}\nearrow \psi$ almost everywhere. Hence $\limsup_{k\to \infty}\int_{X}(f_{k,j}-f_{j})MA_{\omega}(u_{k})\leq 0$. Similarly we also get $\limsup_{s\to \infty}\limsup_{k\to \infty}\int_{X}(f_{j}-f_{j}^{s})MA_{\omega}(u_{k})\leq 0$. (see also the proof of Theorem \ref{thmA}).\\
Summarizing from (\ref{eqn:Suuu}), we obtain
\begin{multline}
\label{eqn:Uffy2}
\limsup_{k\to\infty}(n+1)^{-1}I_{\psi_{k}}(u_{k},v_{k,j})\leq E^{*}_{\psi}\big(MA_{\omega}(u)/V_{\psi}\big)-E_{\psi}^{*}\big(MA_{\omega}(v_{j})/V_{\psi}\big)+\\
+\int_{X}(v_{j}-\psi)MA_{\omega}(u)-\int_{X}(v_{j}-\psi)MA_{\omega}(v_{j})=:F_{j},
\end{multline}
and $F_{j}\to 0$ as $j\to \infty$ by Step $1$ and Proposition \ref{prop:LastLast} since $\mathcal{E}^{1}(X,\omega,\psi)\ni v_{j}\searrow u\in\mathcal{E}^{1}_{norm}(X,\omega,\psi)$, hence strongly.\\ 
Next  by Lemma \ref{lem:Related} $u_{k}\in X_{\mathcal{A},C}$ for $C\gg 1$ since $E^{*}\big(MA_{\omega}(u_{k})/V_{\psi_{k}}\big)=J_{u_{k}}^{\psi}(\psi)$ and $\sup_{X}u_{k}=0$, thus up to considering a further subsequence $u_{k}\to w\in \mathcal{E}^{1}_{norm}(X,\omega,\psi)$ weakly where $d(w,\psi)\leq C$. Indeed if $V_{\psi}>0$ this follows from Proposition \ref{prop:CompactL1} while it is trivial if $V_{\psi}=0$. In particular by Lemma \ref{lem:Equicontinuity}
\begin{gather}
\label{eqn:Su2}
\int_{X}(\psi_{k}-u_{k})MA_{\omega}(v_{k,j})\to \int_{X}(\psi-w)MA_{\omega}(v_{j})\\
\label{eqn:Su3}
\int_{X}(v_{k,j}-u_{k})MA_{\omega}(v_{k,j})\to \int_{X}(v_{j}-w)MA_{\omega}(v_{j})
\end{gather}
as $j\to \infty$. Therefore if $V_{\psi}=0$ then combining $I_{\psi_{k}}(u_{k},v_{k,j})\to 0$ as $k\to \infty$ with (\ref{eqn:Su3}) and Lemma \ref{lem:Related}, we obtain
\begin{gather*}
\limsup_{k\to \infty}\Big(-E_{\psi_{k}}(u_{k})+E_{\psi_{k}}(v_{k,j})\Big)\leq \limsup_{k\to \infty}\Big(\frac{n}{n+1}I_{\psi_{k}}(u_{k},v_{k,j})+\Big|\int_{X}(v_{k,j}-u_{k})MA_{\omega}(v_{k,j})\Big|\Big)=0.
\end{gather*}
This implies that $d(\psi_{k},u_{k})=-E_{\psi_{k}}(u_{k})\to 0$ as $k\to \infty$, i.e. that $d_{\mathcal{A}}(P_{\psi_{\min}},u_{k})\to 0$ using Theorem \ref{thm:OldPropA}. Thus we may assume from now until the end of the proof that $V_{\psi}>0$. \\
By (\ref{eqn:Su2}) and Proposition \ref{prop:USC} it follows that
\begin{multline}
\label{eqn:SuFin1}
\limsup_{k\to \infty}\Big(E^{*}_{\psi_{k}}\big(MA_{\omega}(u_{k})/V_{\psi_{k}}\big)+\int_{X}(\psi_{k}-u_{k})\big(MA_{\omega}(v_{k,j})-MA_{\omega}(u_{k})\big)\Big)=\\
=\limsup_{k\to\infty}\Big(E_{\psi_{k}}(u_{k})+\int_{X}(\psi_{k}-u_{k})MA_{\omega}(v_{k,j})\Big)\leq E_{\psi}(w)+\int_{X}(\psi-w)MA_{\omega}(v_{j}).
\end{multline}
On the other hand by Proposition \ref{prop:LastLast} and (\ref{eqn:Uffy2}),
\begin{equation}
\label{eqn:SuFin2}
\limsup_{k\to \infty}\Big|\int_{X}(\psi_{k}-u_{k})\big(MA_{\omega}(v_{k,j})-MA_{\omega}(u_{k})\big)\Big|\leq C F_{j}^{1/2}.
\end{equation}
In conclusion by the triangle inequality combining (\ref{eqn:SuFin1}) and (\ref{eqn:SuFin2}) we get
\begin{multline*}
E_{\psi}(u)+\int_{X}(\psi-u)MA_{\omega}(u)=\lim_{k\to\infty} E^{*}\big(MA_{\omega}/(u_{k})/V_{\psi_{k}}\big)\leq\\
\leq \limsup_{j\to \infty}\Big(E_{\psi}(w)+\int_{X}(\psi-w)MA_{\omega}(v_{j})+CF_{j}^{1/2}\Big)=E_{\omega}(w)+\int_{X}(\psi-w)MA_{\omega}(u)
\end{multline*}
since $F_{j}\to 0$, i.e. $w\in\mathcal{E}^{1}_{norm}(X,\omega,\psi)$ is a maximizer of $F_{MA_{\omega}(u)/V_{\psi},\psi}$. Hence $w=u$ (Proposition \ref{prop:OldPropC}), i.e. $u_{k}\to u$ weakly. Furthermore, similarly to the case $V_{\psi}=0$, Lemma \ref{lem:Related} and (\ref{eqn:Su3}) imply
\begin{gather*}
E_{\psi}(v_{j})-\liminf_{k\to \infty}E_{\psi_{k}}(u_{k})=\limsup_{k\to \infty}\Big(-E_{\psi_{k}}(u_{k})+E_{\psi_{k}}(v_{k,j})\Big)\leq\\
\leq \limsup_{k\to \infty}\Big(\frac{n}{n+1}I_{\psi_{k}}(u_{k},v_{k,j})+\Big|\int_{X}(u_{k}-v_{j,k})MA_{\omega}(v_{k,j})\Big|\Big)\leq \frac{n}{n+1}F_{j}+ \Big|\int_{X}(u-v_{j})MA_{\omega}(v_{j})\Big|
\end{gather*}
Finally letting $j\to \infty$, since $v_{j}\to u$ strongly, we obtain $\liminf_{k\to \infty}E_{\psi_{k}}(u_{k})\geq \lim_{j\to \infty}E_{\psi}(v_{j})=E_{\psi}(u)$. Hence $E_{\psi_{k}}(u_{k})\to E_{\psi}(u)$ by Proposition \ref{prop:USC} which implies $d_{\mathcal{A}}(u_{k},u)\to 0$ by Theorem \ref{thm:OldPropA} and concludes the proof.
\end{proof}
\section{Stability of Complex Monge-Ampère equations.}
\label{sec:CMAE}
As stated in the Introduction, we want to use the homeomorphism of Theorem \ref{thmB} to deduce the strong stability of solutions of complex Monge-Ampère equations with prescribed singularities when the measures have uniformly bounded $L^{p}$ density for $p>1$.
\begin{reptheorem}{thmC}
Let $\mathcal{A}:=\{\psi_{k}\}_{k\in\mathbbm{N}}\subset \mathcal{M}^{+}$ be totally ordered, and let $\{f_{k}\}_{k\in\mathbbm{N}}\subset L^{1}$ a sequence of non-negative functions such that $f_{k}\to f\in L^{1}\setminus \{0\}$ and such that $\int_{X}f_{k}\omega^{n}=V_{\psi_{k}}$ for any $k\in\mathbbm{N}$. Assume also that there exists $p>1$ such that $||f_{k}||_{L^{p}},||f||_{L^{p}}$ are uniformly bounded. Then $\psi_{k}\to \psi\in \overline{\mathcal{A}}\subset \mathcal{M}^{+}$, and the sequence of solutions of
\begin{equation}
\label{eqn:K-case}
\begin{cases}
MA_{\omega}(u_{k})=f_{k}\omega^{n}\\
u_{k}\in\mathcal{E}^{1}_{norm}(X,\omega,\psi_{k})
\end{cases}
\end{equation}
converges strongly to $u\in X_{\mathcal{A}}$ which is the unique solution of
\begin{equation}
\label{eqn:Case}
\begin{cases}
MA_{\omega}(u)=f\omega^{n}\\
u\in\mathcal{E}^{1}_{norm}(X,\omega,\psi).
\end{cases}
\end{equation}
In particular $u_{k}\to u$ in capacity.
\end{reptheorem}
\begin{proof}
We first observe that the existence of the unique solutions of $(\ref{eqn:K-case})$ follows by Theorem $A$ in \cite{DDNL18b}. \\
Moreover letting $u$ any weak accumulation point for $\{u_{k}\}_{k\in\mathbbm{N}}$ (there exists at least one by compactness), Lemma $2.8$ in \cite{DDNL18b} yields $MA_{\omega}(u)\geq f\omega^{n}$ and by the convergence of $f_{k}$ to $f$ we also obtain $\int_{X}f\omega^{n}=\lim_{k\to \infty} V_{\psi_{k}}$. Moreover since $u_{k}\leq \psi_{k}$ for any $k\in\mathbbm{N}$, by \cite{WN17} we obtain $\int_{X}MA_{\omega}(u)\leq \lim_{k\to \infty}V_{\psi_{k}}$. Hence $MA_{\omega}(u)=f\omega^{n}$ which in particular means that there is a unique weak accumulation point for $\{u_{k}\}_{k\in\mathbbm{N}}$ and that $\psi_{k}\to \psi$ as $k\to \infty$ since $V_{\psi_{k}}\to V_{\psi}$ (by Lemma \ref{lem:HomeoV}). Then it easily follows combining Fatou's Lemma with Proposition \ref{prop:PropProie} and Lemma \ref{lem:KeyConv} that for any $\varphi\in\mathcal{H}_{\omega}$ 
\begin{multline}
\label{eqn:P1}
\liminf_{k\to \infty} E_{\psi_{k}}^{*}\big(MA_{\omega}(u_{k})/V_{\psi_{k}}\big)\geq \liminf_{k\to \infty}\Big(E_{\psi_{k}}\big(P_{\omega}[\psi_{k}](\varphi)\big)+\int_{X}\big(\psi_{k}-P_{\omega}[\psi_{k}](\varphi)\big)f_{k}\omega^{n}\Big)\geq\\
\geq E_{\psi}\big(P_{\omega}[\psi](\varphi)\big)+\int_{X}\big(\psi-P_{\omega}[\psi](\varphi)\big)f\omega^{n}
\end{multline}
since $\big(\psi_{k}-P_{\omega}[\psi_{k}](\varphi)\big)f_{k}\to \big(\psi-P_{\omega}[\psi](\varphi)\big)f$ almost everywhere by Lemma \ref{lem:Referee}. Thus, for any $v\in\mathcal{E}^{1}(X,\omega,\psi)$ letting $\varphi_{j}\in\mathcal{H}_{\omega}$ be a decreasing sequence converging to $v$ (\cite{BK07}), from the inequality (\ref{eqn:P1}) we get
\begin{multline*}
\liminf_{k\to \infty} E_{\psi_{k}}^{*}\big(MA_{\omega}(u_{k})/V_{\psi_{k}}\big)\geq \limsup_{j\to\infty}\Big(E_{\psi}\big(P_{\omega}[\psi](\varphi_{j})\big)+\int_{X}\big(\psi-P_{\omega}[\psi](\varphi_{j})\big)f\omega^{n}\Big)=E_{\psi}(v)+\int_{X}(\psi-v)f\omega^{n}
\end{multline*}
using Proposition \ref{prop:PropertiesE} and the Monotone Converge Theorem. Hence by definition
\begin{equation}
\label{eqn:P4}
\liminf_{k\to \infty}E_{\psi_{k}}^{*}\big(MA_{\omega}(u_{k})/V_{\psi_{k}}\big)\geq E^{*}_{\psi}\big(f\omega^{n}/V_{\psi}\big).
\end{equation}
On the other hand since $||f_{k}||_{L^{p}},||f||_{L^{p}}$ are uniformly bounded where $p>1$ and $u_{k}\to u$, $\psi_{k}\to \psi$ in $L^{q}$ for any $q\in[1,+\infty)$ (see Theorem $1.48$ in \cite{GZ17}), we also have
$$
\int_{X}(\psi_{k}-u_{k})f_{k}\omega^{n}\to \int_{X}(\psi-u)f\omega^{n}<+\infty,
$$
which implies that $\int_{X}(\psi-u)MA_{\omega}(u)<+\infty$, i.e. $u\in\mathcal{E}^{1}(X,\omega,\psi)$ by Proposition \ref{prop:PropertiesE}. Moreover by Proposition \ref{prop:USC} we also get
$$
\limsup_{k\to \infty}E_{\psi_{k}}^{*}\big(MA_{\omega}(u_{k})/V_{\psi_{k}}\big)\leq E^{*}_{\psi}\big(MA_{\omega}(u)/V_{\psi}\big),
$$
which together with (\ref{eqn:P4}) leads to $MA_{\omega}(u_{k})\to MA_{\omega}(u)$ strongly in $Y_{\mathcal{A}}$ by definition (observe that $MA_{\omega}(u_{k})=f_{k}\omega^{n}\to MA_{\omega}(u)=f\omega^{n}$ weakly). Hence $u_{k}\to u$ strongly by Theorem \ref{thmB} while the convergence in capacity follows from Theorem \ref{thm:OldPropB}.
\end{proof}
\begin{rem}
\label{rem:Ele}
\emph{As said in the Introduction, the convergence in capacity of Theorem \ref{thmC} was already obtained in Theorem $1.4$ in \cite{DDNL19}. Indeed under the hypothesis of Theorem \ref{thmC} it follows from Lemma \ref{lem:KeyConv} and Lemma $3.4$ in \cite{DDNL19} that $d_{S}(\psi_{k},\psi)\to 0$ where $d_{S}$ is the pseudometric on $\{[u]\, : \, u\in PSH(X,\omega)\}$ introduced in \cite{DDNL19} where the class $[u]$ is given by the partial order $\preccurlyeq$.}
\end{rem}

{\small
\bibliographystyle{acm}
\bibliography{main}
}
\end{document}